\newtheorem{theorem}{Theorem}[section]
\newtheorem{proposition}[theorem]{Proposition}
\newtheorem{lemma}[theorem]{Lemma}
\theoremstyle{definition}
\newtheorem{definition}[theorem]{Definition}
\newtheorem{example}[theorem]{Example}
\newtheorem{proposition-definition}[theorem]{Proposition-Definition}
\newtheorem{corollary}[theorem]{Corollary}
\theoremstyle{remark}
\newtheorem{remark}[theorem]{Remark}
\numberwithin{equation}{section}
\begin{document}

\title{The valuation pairing on an upper cluster algebra}



\author{Peigen Cao}

\address{School of Mathematical Sciences, University of Science and Technology of China,
Hefei 230026, Anhui, P. R. China;and
\newline
Graduate School of Mathematics, Nagoya University, Chikusa-ku, Nagoya, 464-8604, Japan; and 
\newline
Universit\'e Paris Cit\'e,
    UFR de Math\'ematiques,
    CNRS,
   Institut de Math\'ematiques de Jussieu--Paris Rive Gauche, IMJ-PRG,
    B\^{a}timent Sophie Germain,
    75205 Paris Cedex 13,
    France
    }

\email{peigencao@126.com}

\author{Bernhard Keller}
\address{ Universit\'e Paris Cit\'e\\
    UFR de Math\'ematiques\\
    CNRS\\
   Institut de Math\'ematiques de Jussieu--Paris Rive Gauche, IMJ-PRG \\
    B\^{a}timent Sophie Germain\\
    75205 Paris Cedex 13\\
    France
}
\email{bernhard.keller@imj-prg.fr}
\urladdr{https://webusers.imj-prg.fr/~bernhard.keller/}

\author{Fan Qin}
\address{
    School of Mathematical Sciences\\
    Shanghai Jiao Tong University\\
    Shanghai 200240\\
    China
}
\email{qin.fan.math@gmail.com}

\subjclass[2010]{13F60; 05E40}

\date{}

\dedicatory{}

\keywords{Cluster algebra, valuation pairing, $d$-vectors, $F$-polynomials, cluster Poisson variables, unique factorization domains}

\begin{abstract}
It is known that many (upper) cluster algebras are not unique factorization domains. We exhibit the local factorization properties with respect to any given seed $t$: any non-zero element in a full rank upper cluster algebra can be uniquely written as the product of a cluster monomial in $t$ and another element not divisible by the cluster variables in $t$. Our approach is based on introducing the valuation pairing on an upper cluster algebra: it counts the maximal multiplicity of a cluster variable among the factorizations of any given element. 

We apply the valuation pairing to obtain many results concerning factoriality, $d$-vectors, $F$-polynomials and the combinatorics of cluster Poisson variables. In particular, we obtain that full rank and primitive upper cluster algebras are factorial; an explanation of $d$-vectors using valuation pairing; a cluster monomial in non-initial cluster variables is determined by its $F$-polynomial; the $F$-polynomials of non-initial cluster variables are irreducible; and the cluster Poisson variables parametrize the exchange pairs of the corresponding upper cluster algebra.
\end{abstract}

\maketitle

\addtocontents{toc}{\protect\setcounter{tocdepth}{1}}

\tableofcontents
  

\section{Introduction}

\subsection{Background}
Around the year 2000, Fomin and Zelevinsky introduced  cluster
algebras \cite{FZ} with the aim of developing a combinatorial approach
to the theory of canonical bases in quantum groups and the closely
related theory of total positivity in algebraic groups. Since then,
cluster algebras have been linked to numerous other subjects
and their study has flourished, cf. for example the surveys
\cite{Leclerc10, Fomin10b, Keller12a, Keller19a}.

A cluster algebra $\mathcal A$  is a subalgebra of an ambient field $\mathcal F$ generated by certain combinatorially defined generators (called {\em cluster variables}), which are grouped into overlapping sets (called {\em clusters}) of constant cardinality $n$.   Different clusters are obtained from each other by a sequence of {\em mutations}.  One remarkable feature of cluster algebras
is the Laurent phenomenon \cite{FZ}, that is,  for any given cluster $A_{t_0}=(A_{1;t_0},\ldots,A_{n;t_0})$, each cluster variable $A_{k;t}$  can be written as
 $$A_{k;t}=\frac{P(A_{1;t_0},\ldots,A_{n;t_0})}{A_{1;t_0}^{d_1}\cdots A_{n;t_0}^{d_n}},$$
where $P$ is a polynomial in variables from $A_{t_0}$ such that  $A_{i;t_0}$ does not divide $P$ for any $i$.  The vector $${\bf d}^{t_0}(A_{k;t})=(d_1,\ldots,d_n)^{\rm T}$$ is called  the
 {\em $d$-vector} of $A_{k;t}$ with respect to $A_{t_0}$ and the polynomial $P$ is called the {\em numerator polynomial} of $A_{k;t}$ with respect to $A_{t_0}$.

For each cluster algebra $\mathcal A$,  Fock--Goncharov  defined  \cite{FG0} a pair of varieties: the
{\em cluster $K_2$-variety} and the {\em cluster Poisson variety} (we follow the terminology of the appendix
to \cite{ShenWeng18}). The {\em upper cluster algebra} $\mathcal U$ is defined to be
the algebra of global functions on the cluster $K_2$-variety, and the {\em cluster Poisson algebra} $\mathcal X$ is defined to be  the algebra of the global functions on the cluster Poisson variety. The clusters of $\mathcal U$ correspond to local toric charts and the cluster variables correspond to local
coordinates (which happen to be global functions) of the cluster $K_2$-variety.  The  cluster Poisson variety
admits a canonical atlas of dual toric charts whose coordinates are the {\em cluster
Poisson variables} (but they are often not global) of $\mathcal X$.

\subsection{Main results}

An upper cluster algebra is constructed from its seeds (see Section \ref{sec2} for details). When the upper cluster algebra has geometric type, each seed $t$ corresponds to an extended exchange matrix $\widetilde{B}_t$, see Definition \ref{def:extended_exchange_mat}. The upper cluster algebra is said to be {\em full rank} or {\em primitive} if $\widetilde{B}_t$ is. Such properties are independent of the choice of the seed $t$.

\subsubsection*{Valuation pairings, factoriality and the Ray Fish Theorem}

It is known from \cite{GLS13} that many (upper) cluster algebras are not unique factorization domains.  In order to study the local factorization properties of upper cluster algebras, we introduce the {\em valuation pairing} (see Definition \ref{defvalu}) on any upper cluster algebra $\mathcal U$. To each pair $(A_{k;t},M)$ consisting of a cluster variable $A_{k;t}$ and an element $M$ in $\mathcal U$, it associates the largest integer $s$ (possibly infinity) such that $M/A_{k;t}^s$ still belongs to $\mathcal U$. We write $(A_{k;t}\mid\mid   M)_v=s$. Using the valuation pairing we prove that any full rank upper cluster algebra has the following {\em local unique factorization property}: For each seed $t$ of $\mathcal U$, any non-zero element $M$ can be uniquely factorized as $M=N\cdot L$, where $N$ is a cluster monomial in $t$ and $L$ is an element in $\mathcal U$ not divisible by any cluster variable in $t$.
We give many applications to $d$-vectors, $F$-polynomials, factoriality of upper cluster algebras and combinatorics of cluster Poisson variables.

As an application to factoriality of upper cluster algebras,  we prove that a full rank upper cluster algebra $\mathcal U$ with initial seed $t_0$ is factorial  if and only if each exchange binomial of $t_0$ is irreducible in the corresponding polynomial ring (see Theorem \ref{thmcufd}). In particular, full rank, primitive upper cluster algebras are factorial (see Theorem \ref{thmufd} (i)).  These include principal coefficient upper cluster algebras as a special case. For full rank, primitive upper cluster algebras, we also prove that the numerator polynomials of non-initial cluster variables are irreducible (see Theorem \ref{thmufd} (ii)).

The Starfish Theorem in \cite{BFZ}, cf. also Theorem \ref{proup} of the present paper, plays a very important role in this paper. It states that any full rank upper cluster algebra $\mathcal U$ can be written as the intersection of $n+1$ Laurent polynomial rings. Here $n$ is the rank of $\mathcal U$. Thanks to the results on factoriality of upper cluster algebras, we show that any full rank, primitive upper cluster algebra can be written as the intersection of {\em two} Laurent polynomial rings (see Theorem \ref{thmstrong}). We call this the {\em Ray Fish Theorem}.

\subsubsection*{Application to $d$-vectors} In \cite{CL1}, Li and the first author of the present paper proved that there exists a well-defined function $(-\mid\mid  -)_d$ on the set of cluster variables, which is called the {\em $d$-compatibility degree}. The values of the $d$-compatibility degree $(-\mid\mid  -)_d$ are given by the components of the $d$-vectors. One remarkable property of the $d$-compatibility degree
is that it uniquely determines how the set of cluster variables is grouped into clusters. As an application to $d$-vectors, we show how to express the $d$-compatibility degree and the $d$-vectors using the valuation pairing for full rank upper cluster algebras (see Theorem \ref{thmind}). As an application, in the full rank, primitive case, we prove that if $M$ is a monomial in non-initial cluster variables, then $M$ and its $d$-vector are uniquely determined by the numerator polynomial $P_M$ of $M$ (see Proposition \ref{prodvec}).

\subsubsection*{Application to $F$-polynomials}

Let $B$ be an $n\times n$ skew-symmetrizable integer matrix. The
{\em $F$-polynomial} $F_{k;t}^{B;t_0}$ associated with $(B,t_0; k,t)$ may be constructed by an explicit recursion or using the cluster
algebra $\mathcal A$ with principal coefficients \cite{FZ3} associated with $(B,t_0)$:
Let $A_{t_0}=(A_{1;t_0},\ldots,A_{n;t_0})$ be the initial cluster of $\mathcal A$ and $A_{k;t}$  a cluster variable of $\mathcal A$. In this case, the
cluster variable
$A_{k;t}$ can be written as a Laurent polynomial in $\mathbb Z[Z_1,\ldots,Z_n][A_{1;t_0}^{\pm1},\ldots,A_{n;t_0}^{\pm1}]$. Then the $F$-polynomial of $A_{k;t}$ is the specialization given by
$$F_{k;t}^{B;t_0}(Z_1,\ldots,Z_n):=A_{k;t}\mid _{A_{1;t_0}=\ldots=A_{n;t_0}=1}\in\mathbb Z[Z_1,\ldots,Z_n].$$
When $B$ and $t_0$ are clear, we simply write $F_{k;t}^{B;t_0}$ as $F_{k;t}$.

Let $M=\prod\limits_{i=1}^s M_i$ be a monomial in cluster variables, where each $M_i$ is a cluster variable of $\mathcal A$.
The {\em $F$-polynomial} of $M$ is  the polynomial
$F_M:=\prod\limits_{i=1}^sF_{M_i}$,
where $F_{M_i}$ is the $F$-polynomial of the cluster variable $M_i$.

The $F$-polynomials
are the non tropical ingredients of the canonical expressions
\cite{NZ} for
both, the cluster variables and the cluster Poisson variables.
 They are fundamental in the additive categorification of cluster
algebras (see for example
\cite{CalderoChapoton06,DWZ} and the surveys \cite{BuanMarsh06,
GeissLeclercSchroeer08a,Keller10b, Keller12a, Plamondon18})
and in their link to Donaldson--Thomas theory
(see for example \cite{KontsevichSoibelman08,N,Bridgeland17}). It is known that $F$-polynomials enjoy many nice properties, for example, they have positive coefficients and constant term $1$. We refer the readers to \cite{LeeSchiffler15,DWZ,GHKK} for these results and to \cite{GyodaYurikusa19,Gyoda19,FG,Fei19a,Fei19b,li-pan-2022} for some recent work on $F$-polynomials.

As an application to $F$-polynomials, we prove that  if $M$ is a monomial in non-initial cluster variables, then $M$ is uniquely determined by its $F$-polynomial $F_M$ (see Theorem \ref{thmfpoly}).  We also prove that the $F$-polynomials of non-initial cluster variables are irreducible (see Theorem \ref{thmffirr}).

\subsubsection*{Application to cluster Poisson variables}
  As an application to combinatorics of cluster Poisson variables, we give  several equivalent characterizations of when two cluster Poisson variables are equal (see Theorem \ref{mainthm}). Recall that each mutation $t^\prime=\mu_k(t)$ gives an {\em $\mathscr A$-exchange pair} $(A_{k;t},A_{k;t^\prime})$ of the upper cluster algebra $\mathcal U$ and  an {\em $\mathscr X$-exchange pair} $(X_{k;t},X_{k;t^\prime})$ of the cluster Poisson algebra $\mathcal X$.

 As the first application of Theorem \ref{mainthm}, we prove that
 the cluster Poisson variables of a cluster Poisson algebra $\mathcal X$ parametrize the $\mathscr A$-exchange pairs of the upper cluster algebra $\mathcal U$ of the same type as $\mathcal X$ (see Theorem \ref{thmbij}). This extends the corresponding result by  Sherman-Bennett \cite{S} from the finite type case to full generality.

 As the second application of Theorem \ref{mainthm}, we prove that the $\mathscr X$-seeds of  $\mathcal X$ whose Poisson clusters contain particular cluster Poisson variables form a connected subgraph of the exchange graph of  $\mathcal X$ (see Theorem \ref{thmgraph}). This is analogous to the result on exchange graphs of cluster algebras given in \cite{CL1}, cf. also Theorem \ref{thmacon} of this paper.

\subsection{Contents}

This paper is organized as follows: In Section \ref{sec2}, some basic definitions, notations and known results are introduced. In Section \ref{sec3}, we  introduce the valuation pairing $(-\mid\mid  -)_v$ on any upper cluster algebra and prove the local unique factorization property for full rank  upper cluster algebras (see Theorem \ref{thmunique}). In Sections \ref{sec4}, \ref{sec5}, \ref{sec6}, \ref{sec7}, we give the applications to $d$-vectors, $F$-polynomials,  factoriality of upper cluster algebras and combinatorics of cluster Poisson variables. To be more precise:

In Section \ref{sec41}, we prove that a full rank upper cluster algebra $\mathcal U$ with initial seed $t_0$ is factorial  if and only if each exchange binomial of $t_0$ is irreducible in the corresponding polynomial ring (see Theorem \ref{thmcufd}). In particular, we show in Section \ref{sec42} that full rank, primitive upper cluster algebras are factorial (see Theorem \ref{thmufd} (i)). Moreover, for these upper cluster algebras, we also show that the numerator polynomials of non-initial cluster variables are irreducible (see Theorem \ref{thmufd} (ii)). In Section \ref{sec43}, we give some examples of non-factorial upper cluster algebras. In Section \ref{sec44}, we prove the  Ray Fish Theorem, which states that any full rank, primitive upper cluster algebra $\mathcal U$ can be written as the intersection of {\em two} Laurent polynomial rings (see Theorem \ref{thmstrong}).

In Section \ref{sec51},  we show how to express the $d$-compatibility degree and the $d$-vectors using the valuation pairing for full rank upper cluster algebras (see Theorem \ref{thmind}). In Section \ref{sec52}, we give a local factorization for cluster monomials (see Theorem \ref{thmup}). As an application, in the full rank, primitive case, we prove that if $M$ is a monomial in non-initial cluster variables, then $M$ and its $d$-vector are uniquely determined by the numerator polynomial $P_M$ of $M$ (see Proposition \ref{prodvec}).

 In Section \ref{sec61},  we prove that  if $M$ is a monomial in non-initial cluster variables, then $M$ is uniquely determined by its $F$-polynomial $F_M$ (see Theorem \ref{thmfpoly}).  In Section \ref{sec62}, we  prove that the $F$-polynomials of non-initial cluster variables are irreducible (see Theorem \ref{thmffirr}).

In Section \ref{sec7}, we give several equivalent characterizations of when two cluster Poisson variables are equal (see Theorem \ref{mainthm}).  As the first application, we prove that the cluster Poisson variables of a cluster Poisson algebra $\mathcal X$ parametrize the $\mathscr A$-exchange pairs of the upper cluster algebra $\mathcal U$ of the same type as $\mathcal X$ (see Theorem \ref{thmbij}).  As the second application, we prove that the $\mathscr X$-seeds of  $\mathcal X$ whose Poisson clusters contain particular cluster Poisson variables form a connected subgraph of the exchange graph of  $\mathcal X$ (see Theorem \ref{thmgraph}).

 The following diagram gives  the logical dependence among the proofs of the main theorems in this paper.
$$
\begin{array}{ccc}
\xymatrix{&{\rm Theorem}\; \ref{thmunique}\;\ar[r]\ar[d]\ar[rd]\;&{\rm Theorem}\; \ref{thmind}\;\ar[rd]\;&{\rm Theorem}\;\ref{thmup}\ar[d]\ar[ld]\\
{\rm Theorem}\;\ref{thmstrong}&{\rm Theorem}\;\ref{thmcufd}\;\;\ref{thmufd}\ar[l]   \;&{\rm Theorem}\;\ref{thmffirr}\;  \;&{\rm Theorem}\; \ref{thmfpoly}\;\ar[ld]\\
&{\rm Theorem}\;\ref{thmbij}   \;&{\rm Theorem}\;\ref{mainthm}\ar[l]\ar[r]\;  \;&{\rm Theorem}\; \ref{thmgraph}}
\end{array}
$$
\subsection{Convention and assumption} \label{rmkexbi}
  Throughout this article,   $\mathbb K$ is assumed to be a factorial domain of characteristic $0$ (e.g., $\mathbb K=\mathbb Z, \mathbb Q, \mathbb R, \mathbb C$)
and all upper cluster algebras are considered as algebras over $\mathbb {KP}$, where $\mathbb P$ is some abelian multiplicative group and $\mathbb{KP}$ the corresponding group ring. The factoriality of $\mathbb K$ is necessary for us, because one of the aims of this paper is to consider the factoriality of upper cluster algebras.

 We always assume that the {\em exchange binomials} of upper cluster algebras in this article are {\em not invertible in $\mathbb {KP}$}. Note that when $\mathbb K=\mathbb Z$, this condition is always satisfied. When $\mathbb K$ is a field and we have a {\em trivial exchange relation} $$A_{k}A_k^\prime=P_k\in(\mathbb{KP})^\times$$ in an upper cluster algebra $\mathcal U$,  we can always freeze the cluster variable $A_k$ and consider the upper cluster algebra $\mathcal U^\dag$ with smaller rank over $\mathbb {KP}^\dag=\mathbb {KP}[A_k^\pm]$. Note that $\mathcal U^\dag$ and $\mathcal U$ are isomorphic as $\mathbb K$-algebras.

\subsection*{Acknowledgements}

The authors are very grateful to Luc Pirio, whose conjectures have inspired a good part of the results of this paper. P. Cao would like to thank Xiaofa Chen, Yu Wang and Yilin Wu for helpful discussions during his stay in Paris. B. Keller is indebted
to Lauren Williams and Melissa Sherman-Bennett for their interest and for stimulating conversations. The authors are grateful to Ana Garcia Elsener and Daniel Smertnig for a helpful email exchange \cite{GS_2022} where, in particular, they defined the function $\lambda$ (cf. also Remark \ref{rmk:atomic} and the Appendix). The authors sincerely thank the anonymous referee for valuable comments including the suggestion of Proposition \ref{prop:compactified_U_factoriality}.

P. Cao thanks the Fondation Sciences Math\'ematiques de Paris (FSMP) for funding his postdoctoral
stay (2019--2020) at Universit\'e de Paris during which the first two authors began work on this paper. He also thanks the European Research Council Grant No. 669655 for funding his postdoctoral stay (2021--2022) at The Hebrew University of Jerusalem. Currently, he is supported by the JSPS Postdoctoral Fellowships for Research in Japan and  the Guangdong Basic and Applied Basic Research
Foundation Grant No. 2021A1515012035. F. Qin thanks the National Natural Science Foundation of China for financial support (Grant No. 12271347).

\section{Preliminaries}\label{sec2}

\subsection{Basics on cluster algebras and cluster Poisson algebras}

An $n\times n$ integer matrix $B$ is said to be {\em  skew-symmetrizable} if there is an integer diagonal matrix $S$ with strictly positive diagonal entries such that $SB$ is skew-symmetric. Such an $S$ is said to be a {\em  skew-symmetrizer} of $B$.

\begin{definition}[Matrix mutation]
Let $\widetilde B=\begin{pmatrix}B_{n\times n}\\C_{m\times n}\end{pmatrix}=(b_{ij})$ be an $(n+m)\times n$ integer matrix with $B$ skew-symmetrizable. The mutation $\mu_k(\widetilde B)$ of $\widetilde B$ at $k\in\{1,\ldots,n\}$ is the new matrix $\mu_k(\widetilde B)=\widetilde B^\prime=
\begin{pmatrix}B_{n\times n}^\prime\\C_{m\times n}^\prime\end{pmatrix}=(b_{ij}^\prime)$ given by
\begin{eqnarray}
b_{ij}^{\prime}=\begin{cases}-b_{ij}~,&\text{if } i=k\text{ or } j=k;\\ b_{ij}+{\rm sgn}(b_{ik}){\rm max}(b_{ik}b_{kj},0)~,&\text{otherwise}.\end{cases}\nonumber
\end{eqnarray}
\end{definition}

It is not hard to check that the submatrix $B^\prime$ of $\widetilde B^\prime$ is still skew-symmetrizable with the same skew-symmetrizer as $B$ and that $\mu_k$ is an involution.

\begin{proposition}
{\rm (}\cite[Lemma 3.2]{BFZ}{\rm ).}
\label{prorank}
Matrix mutations preserve the rank of $\widetilde B$.
\end{proposition}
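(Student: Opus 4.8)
The plan is to realize the mutation $\mu_k$ as a two-sided multiplication of $\tilde B$ by invertible rational matrices, so that rank-preservation becomes manifest. Fix a sign $\varepsilon\in\{+1,-1\}$ and write $[x]_+=\max(x,0)$. Introduce an $(n+m)\times(n+m)$ matrix $E=E_{k,\varepsilon}$ and an $n\times n$ matrix $F=F_{k,\varepsilon}$, where $E$ agrees with the identity matrix outside its $k$-th column, the $(k,k)$-entry of that column being $-1$ and the off-diagonal entries being $E_{ik}=[-\varepsilon b_{ik}]_+$ for $i\neq k$; and $F$ agrees with the identity matrix outside its $k$-th row, the $(k,k)$-entry being $-1$ and the off-diagonal entries being $F_{kj}=[\varepsilon b_{kj}]_+$ for $j\neq k$.

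The first step is the key algebraic identity $\mu_k(\tilde B)=E\,\tilde B\,F$, checked entry by entry in the three cases $i=k$, $j=k$, and $i\neq k\neq j$. When $i=k$ or $j=k$ the computation is immediate: the $k$-th row of $E$ is $-e_k^{\mathrm T}$ and the $k$-th column of $F$ is $-e_k$, so the corresponding entries of $E\tilde BF$ are $-b_{ij}$, matching the rule $b_{ij}'=-b_{ij}$ (one uses $b_{kk}=0$, which holds since $B$ is skew-symmetrizable). The only substantial case is $i\neq k\neq j$: a direct computation gives
\[
(E\tilde BF)_{ij}=b_{ij}+[-\varepsilon b_{ik}]_+\,b_{kj}+b_{ik}\,[\varepsilon b_{kj}]_+,
\]
and substituting $b_{ik}=[b_{ik}]_+-[-b_{ik}]_+$ and $b_{kj}=[b_{kj}]_+-[-b_{kj}]_+$ into the two product terms, the cross terms cancel and one is left with $b_{ij}+[b_{ik}]_+[b_{kj}]_+-[-b_{ik}]_+[-b_{kj}]_+$, which is exactly $b_{ij}'$ (this is the standard $[\,\cdot\,]_+$-rewriting of $\mathrm{sgn}(b_{ik})\max(b_{ik}b_{kj},0)$). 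The same computation applies verbatim to rows $i>n$, i.e. to the $C$-block, since only $b_{kk}=0$ is used there.

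The second step is to observe that $E$ and $F$ are invertible over $\mathbb Q$: each differs from the identity only in one line, so expanding the determinant along the standard-basis columns (resp. rows) leaves a single $1\times1$ block equal to $-1$, whence $\det E=\det F=-1$. Therefore $\operatorname{rank}\mu_k(\tilde B)=\operatorname{rank}(E\tilde BF)=\operatorname{rank}\tilde B$; since the rank of an integer matrix agrees with its rank over $\mathbb Q$ (both equal the largest size of a nonvanishing minor), this proves the proposition.

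I expect the only real obstacle to be bookkeeping: setting up $E$ and $F$ with exactly the right sign conventions, so that it is $[-\varepsilon b_{ik}]_+$ that occupies the $k$-th column of $E$ while it is $[\varepsilon b_{kj}]_+$ that occupies the $k$-th row of $F$, and so that the product telescopes. Once the two matrices are correctly recorded, the verification is a short exercise in the truncation calculus and the determinant and rank statements are immediate.
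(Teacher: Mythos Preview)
Your proof is correct and is precisely the argument given in the cited reference \cite[Lemma~3.2]{BFZ}: the paper itself does not supply an independent proof but defers to that lemma, whose proof realizes $\mu_k(\tilde B)$ as $E\tilde B F$ for the very matrices $E_{k,\varepsilon}$ and $F_{k,\varepsilon}$ you wrote down, and then observes that $\det E=\det F=-1$. Your entrywise verification and determinant computation match the original, so nothing further is needed.
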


Recall that $(\mathbb P, \oplus, \cdot)$ is a {\em semifield} if $(\mathbb P,  \cdot)$ is an abelian multiplicative group endowed with a binary operation of auxiliary addition $\oplus$ which is commutative, associative and satisfies that the multiplication  distributes over the auxiliary addition.

The {\em tropical semifield} $\mathbb P={\rm Trop}(Z_1,\ldots,Z_m)$ is the free (multiplicative) abelian group generated by $Z_1,\ldots,Z_m$
with auxiliary addition $\oplus$ defined by
$$\prod\limits_{i}Z_i^{a_i}\oplus\prod\limits_{i}Z_i^{b_i}=\prod\limits_{i}Z_i^{{\rm min}(a_i,b_i)}.$$

Let $\mathbb Q_{\rm sf}(Z_1,\ldots,Z_m)$ be the set of all non-zero rational functions in
$m$ independent variables $Z_1,\ldots,Z_m$, which can be written as subtraction-free rational expressions in
$Z_1,\ldots,Z_m$.  The set $\mathbb Q_{\rm sf}(Z_1,\ldots,Z_m)$  is a semifield
with respect to the usual operations of multiplication and addition. It is called an {\em universal semifield}.

\begin{definition}[$\mathscr X$-seed and cluster Poisson seed]

(i) A {\em (labeled) $\mathscr X$-seed}  over a semifield $\mathbb P$ is a pair $(B,X)$, where
\begin{itemize}
\item $B=(b_{ij})$ is an $n\times n$  skew-symmetrizable integer matrix, called an {\em exchange matrix };

\item $X=(X_1,\ldots,X_n)$ is an $n$-tuple of elements in $\mathbb P$. We call $X$ the {\em  $\mathscr X$-cluster} and  $X_1,\ldots,X_n$ the {\em  $\mathscr X$-variables} of  $(B,X)$.
\end{itemize}

(ii) Let $(B,X)$ be an $\mathscr X$-seed over a semifield $\mathbb P$. If $\mathbb P=\mathbb Q_{\rm sf}(X_1,\ldots,X_n)$, we call $(B,X)$ a {\em cluster Poisson seed}, $X$ the {\em Poisson cluster}, and $X_1,\ldots,X_n$ the {\em cluster Poisson variables} of $(B,X)$.

\end{definition}

Recall that $\mathbb K$ is assumed to be a factorial domain of characteristic $0$. We take the ambient field $\mathcal F$  to be the field of rational functions in $n$ independent variables with coefficients in $\mathbb {KP}$.

\begin{definition}[$\mathscr A$-seed]
  A {\em (labeled)  $\mathscr A$-seed}  over $\mathbb P$ is a triple $(B, X, A)$, where
\begin{itemize}
\item $(B, X)$ forms an  $\mathscr X$-seed over $\mathbb P$;

\item  $A=(A_1,\dots, A_n)$ is an $n$-tuple such that $\{A_1,\dots, A_n\}$ is a free generating set of $\mathcal F$ over $\mathbb{KP}$. We call $A$  the {\em cluster}  and $A_1,\dots,A_n$ the {\em cluster variables}  of  $(B, X, A)$;
\end{itemize}
\end{definition}

\begin{definition}[$\mathscr X$-seed mutation and $\mathscr X$-exchange pair]
Let $(B,X)$ be an   $\mathscr X$-seed over $\mathbb P$.  Define the {\em  $\mathscr X$-seed mutation}  of  $(B,X)$ at  $k\in\{1,\ldots,n\}$ as a new pair $\mu_k(B,X)=(B^\prime, X^{\prime})$, where $B^\prime=\mu_k(B)$ and $X^\prime=(X_1^\prime,\ldots,X_n^\prime)$ is given by
\begin{eqnarray}
 X_i^{\prime}=\begin{cases} X_k^{-1}~,&\text{if } i=k;\\ X_iX_k^{{\rm max}(b_{ki},0)}(1\bigoplus X_k)^{-b_{ki}}~,&\text{if }i\neq k.
\end{cases}\nonumber
\end{eqnarray}
In this case, $(X_k,X_k^\prime)$ is called an {\em  $\mathscr X$-exchange pair}.
\end{definition}

\begin{definition}[$\mathscr A$-seed mutation and $\mathscr A$-exchange pair]
Let  $(B,X, A)$ be an $\mathscr A$-seed over $\mathbb P$. Define the {\em mutation}  of   $(B,X, A)$ at $k\in\{1,\ldots,n\}$ as a new triple $\mu_k(B,X, A)=(B^\prime,X^\prime, A^\prime)$, where $(B^\prime,X^\prime)=\mu_k(B,X)$ and $A^\prime=(A_1^\prime,\ldots,A_n^\prime)$ is given by

$$A_i^{\prime}=\begin{cases}A_i,&\text{if }i\neq k;\\ A_k^{-1}(\frac{X_k}{1\oplus X_k}\prod\limits_{b_{jk}>0}A_j^{b_{jk}}+\frac{1}{1\oplus X_k}\prod\limits_{b_{jk}<0}A_j^{-b_{jk}}),&\text{if }i=k.\end{cases}$$
In this case, $(A_k,A_{k}^\prime)$ is called an {\em $\mathscr A$-exchange pair}. The binomial $$\frac{X_k}{1\oplus X_k}\prod\limits_{b_{jk}>0}A_j^{b_{jk}}+\frac{1}{1\oplus X_k}\prod\limits_{b_{jk}<0}A_j^{-b_{jk}}$$ is called the $k$-th {\em exchange binomial} of  $(B,X, A)$.
\end{definition}
It is not hard to check that each mutation $\mu_k$ maps a seed ($\mathscr X$-seed or cluster Poisson seed or $\mathscr A$-seed) to a new seed of the same type and that $\mu_k$ is an involution.

Let $\mathbb T_n$ be the  $n$-regular tree. Let us label the edges of $\mathbb T_n$ by  $1,\dots,n$ such that the $n$  different edges adjacent to the same vertex of $\mathbb T_n$ receive different labels.
\begin{definition}[Seed pattern]
A {\em seed pattern} $\mathcal S$ over $\mathbb P$ is an assignment of a seed  $\Sigma_t$ ($\mathscr X$-seed or cluster Poisson seed or $\mathscr A$-seed) to every vertex $t$ of the $n$-regular tree $\mathbb T_n$ such that $\Sigma_{t^\prime}=\mu_k(\Sigma_t)$ for any edge $t^{~\underline{\quad k \quad}}~ t^{\prime}$.
\end{definition}
We often fix a vertex $t_0\in\mathbb T_n$ as the rooted vertex of $\mathbb T_n$. For a seed pattern, the seed at the rooted vertex $t_0$ is called the {\em initial seed}.  It is easy to see that a seed pattern is completely determined by its initial seed. 

Now we give some symbols which are used in the sequel.  We always write $B_t=(b_{ij}^t)$, $X_t=(X_{1;t},\ldots,X_{n;t})$ and $A_t=(A_{1;t},\ldots,A_{n;t})$. For simplicity, we will also use $t$ to denote the seed at $t\in\mathbb T_n$.

Two (labeled) seeds are {\em equivalent} if they are the same up to relabeling.

\begin{definition}[Exchange graph]
Let $\mathcal S$ be a seed pattern. The {\em exchange graph} ${\bf EG}(\mathcal S)$ of $\mathcal S$ is a graph whose vertices are in bijection with the seeds (up to equivalence) of $\mathcal S$ and whose edges correspond to the seed mutations.
\end{definition}

\begin{proposition}{\rm (}\cite[Proposition 3.9]{FZ3}{\rm ).}
\label{proxp} Let $\mathcal S$ be an $\mathscr A$-seed pattern. For each $\mathscr A$-seed $t=(B_t,X_t,A_t)$, let $\widehat X_{t}=(\widehat X_{1;t},\ldots,\widehat X_{n;t})$ be the $n$-tuple of elements in $\mathcal F$ given by
 $$\widehat X_{j; t}:=X_{j;t}\prod\limits_{i=1}^nA_{i;t}^{b_{ij}^{t}}.$$ Then $\widehat {\mathcal S}=\{(B_{t},\widehat X_{t})\}_{t\in\mathbb T_n}$ forms an  $\mathscr X$-seed pattern.
\end{proposition}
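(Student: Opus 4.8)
The statement to prove is Proposition \ref{proxp} (attributed to Fomin--Zelevinsky, \cite[Proposition 3.9]{FZ3}): given an $\mathscr A$-seed pattern, the family of $\mathscr X$-seeds $(B_t,\hat X_t)$ with $\hat X_{j;t} = X_{j;t}\prod_{i=1}^n A_{i;t}^{b_{ij}^t}$ assembles into an $\mathscr X$-seed pattern.

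\medskip

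\noindent\textbf{Proof proposal.} The plan is to check that the family $\{(B_t,\hat X_t)\}_{t\in\mathbb T_n}$ is compatible with the mutation rules, i.e.\ that for every edge $t \,\underline{\;\;k\;\;}\, t'$ of $\mathbb T_n$ we have $(B_{t'}, \hat X_{t'}) = \mu_k(B_t, \hat X_t)$. Since the $B$-component already mutates correctly by definition of the $\mathscr A$-seed pattern, everything reduces to a single identity: $\hat X_{i;t'} = \hat X_{i;t}^{\,\prime}$ for all $i$, where $\hat X^{\,\prime}$ denotes the $\mathscr X$-mutation of $\hat X$ at $k$ as in the $\mathscr X$-seed mutation definition, namely $\hat X_{k;t}^{\,\prime} = \hat X_{k;t}^{-1}$ and, for $i\neq k$, $\hat X_{i;t}^{\,\prime} = \hat X_{i;t}\,\hat X_{k;t}^{\max(b_{ki}^t,0)}(1\oplus_{\mathcal F} \hat X_{k;t})^{-b_{ki}^t}$. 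Here the key point is that the auxiliary addition $\oplus$ used for the $\hat X$'s is the \emph{honest} addition in the ambient field $\mathcal F$ (the $\hat X_{j;t}$ are Laurent monomials in the cluster variables and coefficients, so $1 + \hat X_{k;t}$ makes sense), and one must verify this is consistent with the definition of $\hat{\mathcal S}$ as a seed pattern over the universal/ambient semifield generated by the $\hat X$'s.

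\medskip

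\noindent First I would treat the case $i = k$: from $\hat X_{k;t} = X_{k;t}\prod_i A_{i;t}^{b_{ik}^t}$ and $\hat X_{k;t'} = X_{k;t'}\prod_i A_{i;t'}^{b_{ik}^{t'}}$, use $X_{k;t'} = X_{k;t}^{-1}$, $b_{ik}^{t'} = -b_{ik}^t$, and $A_{i;t'} = A_{i;t}$ for $i\neq k$ to reduce to showing $A_{k;t'}^{b_{kk}^{t'}}\cdot(\text{stuff}) = \hat X_{k;t}^{-1}$; since $b_{kk}^t = 0$ the factor $A_{k;t'}^{b_{kk}^{t'}} = 1$, and the identity $\hat X_{k;t'} = \hat X_{k;t}^{-1}$ falls out immediately. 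The substantive case is $i\neq k$. Here I would write out $\hat X_{i;t'} = X_{i;t'}\prod_{j} A_{j;t'}^{b_{ji}^{t'}}$, substitute $X_{i;t'} = X_{i;t}X_{k;t}^{\max(b_{ki}^t,0)}(1\oplus X_{k;t})^{-b_{ki}^t}$ (the $\mathscr X$-part of the $\mathscr A$-mutation), substitute the mutation formula $b_{ji}^{t'} = b_{ji}^t + \operatorname{sgn}(b_{jk}^t)\max(b_{jk}^t b_{ki}^t,0)$ for $j\neq k$ and $b_{ki}^{t'} = -b_{ki}^t$, and substitute the exchange relation $A_{k;t'} = A_{k;t}^{-1}\big(\frac{X_{k;t}}{1\oplus X_{k;t}}\prod_{b_{jk}^t>0}A_{j;t}^{b_{jk}^t} + \frac{1}{1\oplus X_{k;t}}\prod_{b_{jk}^t<0}A_{j;t}^{-b_{jk}^t}\big)$ into the factor $A_{k;t'}^{b_{ki}^{t'}}$. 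The goal is to massage this into $\hat X_{i;t}\,\hat X_{k;t}^{\max(b_{ki}^t,0)}(1+\hat X_{k;t})^{-b_{ki}^t}$.

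\medskip

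\noindent The main obstacle — and the one genuinely delicate bookkeeping step — is handling the factor $A_{k;t'}^{b_{ki}^{t'}} = A_{k;t'}^{-b_{ki}^t}$: one must recognize that the exchange binomial, after pulling out the monomial $\prod_{b_{jk}^t<0}A_{j;t}^{-b_{jk}^t}$, equals $\frac{1}{1\oplus X_{k;t}}\prod_{b_{jk}^t<0}A_{j;t}^{-b_{jk}^t}\cdot(1 + \hat X_{k;t})$, using precisely the definition $\hat X_{k;t} = X_{k;t}\prod_i A_{i;t}^{b_{ik}^t}$ to collect $\frac{X_{k;t}}{1\oplus X_{k;t}}\prod_{b_{jk}^t>0}A_{j;t}^{b_{jk}^t}$ into $\frac{1}{1\oplus X_{k;t}}\prod_{b_{jk}^t<0}A_{j;t}^{-b_{jk}^t}\cdot\hat X_{k;t}$. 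Thus $A_{k;t'} = A_{k;t}^{-1}(1+\hat X_{k;t})(1\oplus X_{k;t})^{-1}\prod_{b_{jk}^t<0}A_{j;t}^{-b_{jk}^t}$, which cleanly produces the desired $(1+\hat X_{k;t})^{-b_{ki}^t}$ factor plus an extra monomial in the $A_{j;t}$ and a power of $(1\oplus X_{k;t})$. After that, the remaining verification is a matching of exponents: the powers of $X_{k;t}$, of $(1\oplus X_{k;t})$, and of each $A_{j;t}$ on both sides must agree. For the $A_{j;t}$-exponents with $j\neq k$ this uses the identity $b_{ji}^{t'} - b_{ji}^t = \operatorname{sgn}(b_{jk}^t)\max(b_{jk}^t b_{ki}^t,0)$ together with the elementary case analysis on the signs of $b_{jk}^t$ and $b_{ki}^t$ (this is exactly where the asymmetry in the mutation rule for $B$ conspires with the $\max(b_{ki}^t,0)$ appearing in the $\mathscr X$-mutation); for the $(1\oplus X_{k;t})$-exponent and $X_{k;t}$-exponent one collects the contributions from $X_{i;t'}$ and from the expansion of $A_{k;t'}^{-b_{ki}^t}$. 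All of this is a finite sign-by-sign check with no conceptual content once the key rewriting of $A_{k;t'}$ above is in place; I would organize it by splitting into the subcases $b_{ki}^t \geq 0$ and $b_{ki}^t < 0$. Finally, I would note that the identity holds over the ambient field $\mathcal F$ and hence in particular over the subsemifield generated by the $\hat X_{j;t}$, which is what is required for $\hat{\mathcal S}$ to be a bona fide $\mathscr X$-seed pattern.
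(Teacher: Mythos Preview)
The paper does not supply its own proof of this proposition: it is quoted verbatim as \cite[Proposition 3.9]{FZ3} and used as input. So there is no ``paper's approach'' to compare against beyond the original Fomin--Zelevinsky argument, which is exactly the direct mutation-by-mutation verification you outline.

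Your plan is correct and is the standard proof. The decisive step you identify --- rewriting the exchange relation as
\[
A_{k;t'} \;=\; A_{k;t}^{-1}\cdot \frac{1}{1\oplus X_{k;t}}\Big(\prod_{b_{jk}^t<0}A_{j;t}^{-b_{jk}^t}\Big)\,(1+\hat X_{k;t})
\]
--- is precisely the pivot that makes the $(1+\hat X_{k;t})^{-b_{ki}^t}$ factor appear, and after that the exponent bookkeeping (split on the sign of $b_{ki}^t$) goes through. Your treatment of the $i=k$ case is fine. The one point you flag but leave slightly informal --- over which semifield $\hat{\mathcal S}$ is a seed pattern --- is resolved by taking the ambient semifield to be $\mathbb Q_{\mathrm{sf}}$ in the $A_{i;t_0}$ and the generators of $\mathbb P$ (equivalently, the subtraction-free expressions inside $\mathcal F$), with $\oplus$ the ordinary field addition; this is how Fomin--Zelevinsky set it up and is consistent with how the present paper uses the result.
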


Now we recall some seed patterns ($\mathscr X$-seed patterns or $\mathscr A$-seed patterns) over particular semifields.
\begin{itemize}
\item A seed pattern  $\mathcal S$  over  $\mathbb P$ is said to be of {\em geometric type}, if $\mathbb P$ is a tropical semifield, say $\mathbb P={\rm Trop}(Z_1,\ldots,Z_m)$.

\item A seed pattern  $\mathcal S$  over  $\mathbb P$ is said to be with {\em principal coefficients} at $t_0$, if $\mathbb P={\rm Trop}(Z_1,\ldots,Z_n)$ and $X_{i;t_0}=Z_i$ for $i=1,\ldots,n$.

    \item A seed pattern  $\mathcal S$ over  $\mathbb P$ is said to be with {\em universal coefficient semifield}, if $\mathbb P=\mathbb {Q}_{\rm {sf}}(X_{1;t_0},\ldots,X_{n;t_0})$ for some seed $t_0$ of $\mathcal S$.
\end{itemize}

\begin{definition}[Coefficient matrices and extended exchange matrices] \label{def:extended_exchange_mat}
Let $\mathcal S$ be a seed pattern of geometric type with coefficient semifield $\mathbb P={\rm Trop}(Z_1,\ldots,Z_m)$. We know that each  $\mathscr X$-variable $X_{k;t}$ has the form $$X_{k;t}=Z_1^{c_{1k}^t}\cdots Z_m^{c_{mk}^t}.$$
The matrix $C_t:=(c_{ij}^t)_{m\times n}$ is called the {\em coefficient matrix} at $t$ and the $(n+m)\times n$ matrix $\widetilde B_t=\begin{pmatrix}B_t\\C_t\end{pmatrix}$ is called the {\em extended exchange matrix} at $t$.
\end{definition}

\begin{proposition}
{\rm (}\cite[Proposition 5.8]{FZ}{\rm ).}
Let $\mathcal S$ be a seed pattern  of geometric type. Then for any edge $t^{~\underline{\quad k \quad}}~ t^{\prime}$ in $\mathbb T_n$, we have $\widetilde B_{t^\prime}=\mu_k(\widetilde B_t)$, where $\widetilde B_t$ and $\widetilde B_{t^\prime}$ are the extended exchange matrices at $t$ and $t^\prime$.
\end{proposition}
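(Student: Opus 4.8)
The plan is to prove the identity $\tilde B_{t'} = \mu_k(\tilde B_t)$ by comparing the two sides row by row, distinguishing the exchange-matrix part $B$ and the coefficient part $C$. For the $B$-part there is nothing to do: the $\mathscr A$-seed mutation already includes $B_{t'} = \mu_k(B_t)$ by definition of $\mathscr A$-seed mutation, and matrix mutation of $\tilde B_t$ at $k$ acts on the top $n$ rows exactly by $\mu_k$. So the whole content is in checking that the coefficient matrix $C_{t'}$ equals the bottom $m\times n$ block of $\mu_k(\tilde B_t)$, i.e. that the tropical $\mathscr X$-mutation formula for the $X$-variables translates, under the exponent bookkeeping $X_{j;t} = \prod_i Z_i^{c_{ij}^t}$, into the matrix-mutation rule for the bottom rows.

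First I would write out the $\mathscr X$-seed mutation formula over the tropical semifield $\mathbb P = \mathrm{Trop}(Z_1,\dots,Z_m)$. Recall $X_i' = X_k^{-1}$ for $i=k$, and $X_i' = X_i X_k^{\max(b_{ki},0)}(1\oplus X_k)^{-b_{ki}}$ for $i\neq k$. Writing $X_{i;t} = \prod_\ell Z_\ell^{c_{\ell i}^t}$, the factor $1 \oplus X_{k;t}$ is, by definition of the auxiliary addition in $\mathrm{Trop}$, equal to $\prod_\ell Z_\ell^{\min(c_{\ell k}^t, 0)} = \prod_\ell Z_\ell^{-\max(-c_{\ell k}^t,0)}$. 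Substituting and reading off exponents of $Z_\ell$, the case $i = k$ gives $c_{\ell k}^{t'} = -c_{\ell k}^t$, and the case $i\neq k$ gives
\begin{equation*}
c_{\ell i}^{t'} = c_{\ell i}^t + \max(b_{ki}^t,0)\, c_{\ell k}^t + (-b_{ki}^t)\bigl(-\max(-c_{\ell k}^t,0)\bigr) = c_{\ell i}^t + \max(b_{ki}^t,0)\, c_{\ell k}^t + b_{ki}^t \max(-c_{\ell k}^t,0).
\end{equation*}
The key step is then to check that this last expression coincides with the matrix-mutation formula $c_{\ell i}^t + \mathrm{sgn}(c_{\ell k}^t)\max(c_{\ell k}^t b_{ki}^t, 0)$ applied with the row index $\ell$ playing the role of the ``$i$'' in the general mutation formula (so $b_{ik}$ there is $c_{\ell k}^t$, and $b_{kj}$ there is $b_{ki}^t$ — note the mutation formula has $b_{ik}b_{kj}$ and here the middle index $k$ matches). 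This is the elementary identity
$\mathrm{sgn}(c)\max(cb,0) = \max(b,0)\,c + b\max(-c,0)$,
which one verifies by splitting into the sign cases $c>0$, $c<0$, $c=0$; and it also correctly reproduces the $i=k$ column since matrix mutation sends $b_{\ell k}\mapsto -b_{\ell k}$ for entries in column $k$. I would also double-check the degenerate possibility that $1\oplus X_{k;t} = 1$ (all $c_{\ell k}^t \geq 0$), which is consistent since then $\max(-c_{\ell k}^t,0) = 0$ on both sides.

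The main obstacle — really the only place to be careful — is bookkeeping of signs and of which index is being summed/matched when comparing the $\mathscr X$-mutation rule (which involves $b_{ki}$, a row of $B$ read ``backwards'') with the matrix-mutation rule (stated with a specific index pattern $i,j,k$). A clean way to avoid error is to observe that $\hat B_t := \begin{pmatrix} B_t \\ C_t\end{pmatrix}$ and its transpose-type conventions are fixed once and for all, and to simply verify the scalar identity above in each sign case rather than trying to match formulas symbolically. Once the coefficient block is checked, the statement follows by concatenating with the already-known $B_{t'} = \mu_k(B_t)$, and since mutation of $\tilde B_t$ at $k$ is defined block-compatibly (the rule for $b_{ij}'$ only ever uses $b_{ik}$, $b_{kj}$, $b_{kk}=0$ and never mixes a lower row with another lower row), no cross terms between $C$ and $C$ arise, completing the proof.
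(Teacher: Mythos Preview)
Your proof is correct. The paper does not actually give its own proof of this proposition: it is stated as a citation of \cite[Proposition 5.8]{FZ} and used without argument. Your approach---reading off exponents from the tropical $\mathscr X$-mutation rule and matching them against the matrix-mutation formula via the scalar identity $\mathrm{sgn}(c)\max(cb,0) = \max(b,0)\,c + b\max(-c,0)$---is exactly the standard verification one finds in the original Fomin--Zelevinsky paper, so there is nothing further to compare.
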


\begin{definition}[Cluster Poisson algebra] \label{defcpa}
Let $\mathcal S_{\rm uc}$ be an $\mathscr X$-seed pattern with universal coefficient semifield.  The {\em cluster Poisson algebra} $\mathcal X=\mathcal X(\mathcal S_{\rm uc})$ associated with $\mathcal S_{\rm uc}$ is the intersection
$$\mathcal X=\bigcap\limits_{t\in\mathbb T_n} {\mathcal L}(t),$$
where ${\mathcal L}(t)=\mathbb {K}[X_{1;t}^{\pm1},\ldots,X_{n;t}^{\pm1}]$.
\end{definition}

\begin{definition}[Cluster algebra and upper cluster algebra]
Let $\mathcal S$ be an $\mathscr A$-seed pattern over a semifield $\mathbb P$.

(i) The {\em cluster algebra} $\mathcal A=\mathcal A(\mathcal S)$ associated with $\mathcal S$ is the $\mathbb {KP}$-subalgebra of $\mathcal F$ generated by the cluster variables of $\mathcal S$, namely, $\mathcal A=\mathbb{KP}[A_{1;t},\ldots,A_{n;t}\mid t\in\mathbb T_n]$.

(ii) The {\em upper cluster algebra} $\mathcal U=\mathcal U(\mathcal S)$ associated with $\mathcal S$ is the intersection
$$\mathcal U=\bigcap\limits_{t\in\mathbb T_n} {\mathcal L}(t),$$
where ${\mathcal L}(t)=\mathbb {KP}[A_{1;t}^{\pm1},\ldots,A_{n;t}^{\pm1}]$.
\end{definition}

Since cluster Poisson algebras and (upper) cluster algebras are defined from seed patterns, we can talk about the exchange graphs of these algebras. We can also talk about the (upper) cluster algebras of geometric type, with principal coefficients and with universal coefficient semifield. 

\begin{theorem}[Laurent phenomenon and  positivity] Let $\mathcal A$ be a cluster algebra with coefficient semifield $\mathbb P$ and initial seed $t_0$. The following statements hold.
\begin{itemize}
\item[(i)] {\rm (}\cite{FZ}{\rm ).} Each  cluster variable $A_{k;t}$ of $\mathcal A$ can be written as a  Laurent polynomial in $\mathbb {ZP}[A_{1;t_0}^{\pm1},\ldots,A_{n;t_0}^{\pm1}]$.

    \item[(ii)] {\rm (}\cite{GHKK}{\rm ).} The coefficients of the above Laurent polynomial are in $\mathbb{ NP}$.

    \item[(iii)] {\rm (}\cite{FZ1}{\rm ).} If  $\mathbb P={\rm Trop}(Z_1,\ldots,Z_m)$, then each cluster  variable $A_{k;t}$ of $\mathcal A$ can be written as a  Laurent polynomial in
$\mathbb Z[Z_1,\ldots,Z_m][A_{1;t_0}^{\pm1},\ldots,A_{n;t_0}^{\pm1}]$.
\end{itemize}
\end{theorem}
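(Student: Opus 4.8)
The plan is to reproduce the classical arguments: (i) is the Laurent phenomenon of \cite{FZ}, (iii) its refinement in \cite{FZ1}, and (ii) the positivity theorem of \cite{GHKK}. For (i), fix $t$ and set $R:=\mathbb{ZP}[A_{1;t_0}^{\pm1},\ldots,A_{n;t_0}^{\pm1}]$; one must show $A_{k;t}\in R$. I would first reduce to the case of a universal coefficient semifield at $t_0$: the multiplicative group of $\mathbb{Q}_{\rm sf}(X_{1;t_0},\ldots,X_{n;t_0})$ embeds in $\mathbb Q(X_{1;t_0},\ldots,X_{n;t_0})^\times$ and avoids $-1$, hence is free abelian, so $R$ is then a Laurent polynomial ring over $\mathbb Z$, in particular a UFD; the general case follows by specialising along the semifield homomorphism extending $X_{i;t_0}\mapsto X_{i;t_0}$, which carries the universal $\mathscr A$-seed pattern onto the given one. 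One then argues by induction on the distance between $t_0$ and $t$ in $\mathbb T_n$, the cases of distance $\le 1$ being immediate from the exchange relation. The inductive step is the content of the Caterpillar Lemma of \cite{FZ}: it suffices to verify the assertion along suitably shaped ``caterpillar'' subtrees of $\mathbb T_n$, where it reduces to the algebraic fact that in a UFD $R$, if $p_1,\ldots,p_r\in R$ are pairwise coprime and $y\in\mathrm{Frac}(R)$ lies in every localisation $R[p_i^{-1}]$, then $y\in R$. The combinatorial heart is to arrange that the several Laurent expansions of $A_{k;t}$ obtained from the neighbouring vertices have, as denominators, powers of pairwise coprime exchange binomials; their coprimality is the crux and is established by a short direct analysis of how two exchange binomials $q\prod_{b_{jk}>0}A_j^{b_{jk}}+q'\prod_{b_{jk}<0}A_j^{-b_{jk}}$, with $q,q'$ units in $\mathbb{ZP}$, are related.

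For (iii), note that when $\mathbb P={\rm Trop}(Z_1,\ldots,Z_m)$ the coefficients $\frac{X_k}{1\oplus X_k}$ and $\frac{1}{1\oplus X_k}$ of the exchange binomial are the monomials $\prod_i Z_i^{\max(c_{ik}^t,0)}$ and $\prod_i Z_i^{\max(-c_{ik}^t,0)}$, involving no negative powers of the $Z_i$. Hence every step of the Caterpillar argument stays inside the subring $S:=\mathbb Z[Z_1,\ldots,Z_m][A_{1;t_0}^{\pm1},\ldots,A_{n;t_0}^{\pm1}]$ of $R$, which is again a UFD in which each $A_{i;t_0}$ is a unit, and the same induction gives $A_{k;t}\in S$; that is, the coefficients of the Laurent expansion are genuine polynomials in $Z_1,\ldots,Z_m$.

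For (ii), I would invoke the scattering-diagram construction of \cite{GHKK}: one attaches to $A_{k;t}$ its $g$-vector relative to $t_0$ and the associated theta function on the cluster scattering diagram of $t_0$; this theta function is by construction a sum over broken lines with nonnegative integer coefficients, and \cite{GHKK} identify it with the Laurent expansion of $A_{k;t}$ in $A_{t_0}$, the coefficients being reinterpreted in $\mathbb{ZP}$ through the chosen coefficient data. In the skew-symmetric case one could instead use the combinatorial Laurent formulas of \cite{LeeSchiffler15} or additive categorification, but \cite{GHKK} already covers the skew-symmetrizable generality needed here. The main obstacle is the Caterpillar Lemma of (i): setting up the right induction scheme and proving that the competing exchange binomials are pairwise coprime is the only substantive point, everything else being bookkeeping; for (ii) all the genuine work is internal to \cite{GHKK}, which we merely cite.
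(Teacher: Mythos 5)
The paper does not prove this theorem at all: it is stated as a recollection of known facts and each part is simply attributed to its source (\cite{FZ}, \cite{GHKK}, \cite{FZ1}). Your proposal instead tries to unpack those citations, which is a legitimate thing to do but not what the paper does, so the two are incomparable as proofs; what can be assessed is whether your sketch is a faithful summary of the cited arguments.

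Your account of (i) via the Caterpillar Lemma and pairwise coprimality of exchange binomials, and of (ii) via theta functions and broken lines in \cite{GHKK}, is the standard picture. Two remarks. First, your reduction of (i) to the universal-coefficient case and then specialisation is a valid route, but note that Fomin--Zelevinsky's own proof works directly over an arbitrary semifield $\mathbb P$ without this reduction; your version adds a step that is not in the original and needs the observation that the group ring of a free abelian group is a domain (fine) and that the specialisation is well defined on seed patterns (also fine, but worth saying). Second, for (iii) the key assertion ``every step of the Caterpillar argument stays inside $S=\mathbb Z[Z_1,\dots,Z_m][A^{\pm1}]$'' is stated but not really justified: the Caterpillar Lemma involves a division step in a UFD, and one must check that (a) the relevant exchange binomials, viewed in $S$, are still pairwise coprime there (this uses that for each $i$, at most one of the two monomial coefficients $\prod Z_\ell^{\max(c_{\ell k},0)}$, $\prod Z_\ell^{\max(-c_{\ell k},0)}$ involves $Z_i$, so no $Z_i$ divides an exchange binomial), and (b) the intermediate Laurent expansions lie in the localisations $S[p_i^{-1}]$ rather than merely in $\mathbb Z[Z^{\pm1}][A^{\pm1}][p_i^{-1}]$. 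These are the actual content of \cite{FZ1}'s refinement and should not be waved through as mere bookkeeping. With those two points made explicit, the sketch matches the literature that the paper cites.
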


A geometric upper cluster algebra with initial seed $t_0$ is called a {\em full rank upper cluster algebra} if its initial extended exchange matrix $\widetilde B_{t_0}$ has full rank.

\begin{remark} For a full rank upper cluster algebra  $\mathcal U$, we know that every extended exchange matrix $\widetilde B_t$ of $\mathcal U$ has full rank, by  Proposition \ref{prorank}.
\end{remark}

\begin{theorem}{\rm (}Starfish Theorem, \cite[Corollary 1.9]{BFZ}{\rm ).} \label{proup}
Let $\mathcal U$  be a full rank upper cluster algebra and $t_0$ a seed of $\mathcal U$. Then we have $$\mathcal U=\bigcap\limits_{i=0}^n{\mathcal L}(t_i),$$ where $t_j=\mu_j(t_0)$ for $j=1,\ldots,n$ and ${\mathcal L}(t_i)=\mathbb{KP}[A_{1;t_i}^{\pm1},\ldots,A_{n;t_i}^{\pm1}]$ for $i=0,1,\ldots,n$.
\end{theorem}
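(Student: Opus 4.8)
The plan is to follow the strategy of Berenstein--Fomin--Zelevinsky \cite{BFZ}. The inclusion $\mathcal U\subseteq\bigcap_{i=0}^n\mathcal L(t_i)$ is immediate, since $\mathcal U=\bigcap_{t\in\mathbb T_n}\mathcal L(t)$ and $t_0,t_1,\dots,t_n$ are among the vertices of $\mathbb T_n$; the content is the reverse inclusion. For each vertex $t\in\mathbb T_n$ write $\mathcal U(t):=\bigcap_{i=0}^n\mathcal L(\mu_i(t))$ for the corresponding \emph{upper bound} (with $\mu_0=\mathrm{id}$), so that we must prove $\mathcal U(t_0)\subseteq\mathcal L(t)$ for every $t$. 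I would prove the stronger statement $\mathcal U(t_0)\subseteq\mathcal U(t)$ for all $t$ by induction on the distance $d(t_0,t)$ in $\mathbb T_n$; since $\mathcal L(t)$ is one of the factors defining $\mathcal U(t)$, this yields $\mathcal U(t_0)\subseteq\bigcap_{t}\mathcal L(t)=\mathcal U$. The case $d=0$ is trivial. For the inductive step write $t=\mu_k(t')$ with $d(t_0,t')=d(t_0,t)-1$; by induction $\mathcal U(t_0)\subseteq\mathcal U(t')$, and one checks that $\mathcal U(t')$ already contains $\mathcal L(t')$, $\mathcal L(t)=\mathcal L(\mu_k(t'))$ and every $\mathcal L(\mu_j(t'))$ with $j\neq k$, whereas $\mathcal U(t)=\mathcal U(\mu_k(t'))$ requires in addition the rings $\mathcal L(\mu_j\mu_k(t'))$ for $j\neq k$. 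Thus the whole theorem reduces to the local statement
\[
\mathcal L(t')\cap\mathcal L(\mu_k(t'))\cap\mathcal L(\mu_j(t'))\ \subseteq\ \mathcal L(\mu_j\mu_k(t'))\qquad(j\neq k),
\]
for an arbitrary seed $t'$ whose extended exchange matrix has full rank (full rank propagates from $t_0$ by Proposition~\ref{prorank}).

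To handle such intersections I would first record an elementary description. Fix $k$, write $A_i:=A_{i;t'}$, $A_k':=A_{k;\mu_k(t')}$, let $p_k:=A_kA_k'\in\mathbb{KP}[A_i:i\neq k]$ be the $k$-th exchange binomial, and set $R:=\mathbb{KP}[A_i^{\pm1}:i\neq k]$. Since any monomial $A_k^aA_k'^b$ equals $p_k^{\min(a,b)}$ times a pure power of $A_k$ or of $A_k'$, one gets $\mathcal L(t')\cap\mathcal L(\mu_k(t'))=R[A_k]+R[A_k']$; moreover a Laurent polynomial $x=\sum_m c_mA_k^m$ with $c_m\in R$ lies in this subring if and only if $p_k^{-m}$ divides $c_m$ in $R$ for every $m<0$. (Here one uses that $R$ is a unique factorization domain and, by the Convention in Subsection~\ref{rmkexbi}, that $p_k$ is not a unit.) Applying the same description to the pair $(\mu_k(t'),\mu_j\mu_k(t'))$, membership of $x$ in $\mathcal L(\mu_j\mu_k(t'))$ — given $x\in\mathcal L(\mu_k(t'))$ — becomes a divisibility condition by powers of the $j$-th exchange binomial $p_j'$ of the seed $\mu_k(t')$, read off from the expansion of $x$ in the $\mu_k(t')$-cluster.

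It then remains to combine the divisibility information coming from $x\in\mathcal L(t')$, $x\in\mathcal L(\mu_k(t'))$ and $x\in\mathcal L(\mu_j(t'))$ — involving $p_k$ and the $j$-th exchange binomial $q_j$ of $t'$ — and to deduce divisibility by the powers of $p_j'$ required by the last paragraph. This is the main obstacle, and it is exactly where the full rank hypothesis on $\tilde B_{t'}$ enters: it forces $p_j'$ and $p_k$ (together with the monomial shifts of the exchange binomials that appear when one rewrites clusters) to have no common irreducible factor in the relevant Laurent polynomial ring, so that the three given divisibilities combine to give the desired one. I would also observe that the coprimality needed for these manipulations is automatic in geometric type: by the tropical sign rule the two monomials of each exchange binomial involve the disjoint $A$-supports $\{j:b_{jk}>0\}$ and $\{j:b_{jk}<0\}$, while the exponents $\max(c_{ik},0)$ and $\max(-c_{ik},0)$ of the frozen variables are never simultaneously positive, so the two monomials share no variable and are coprime. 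Carrying out the bookkeeping of this last paragraph — in essence a rank-two computation in the two directions $j$ and $k$ — is the only genuinely technical part of the argument; everything else is formal.
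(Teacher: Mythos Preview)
Your proposal is correct and takes essentially the same approach as the paper: the paper's own proof is simply a citation to \cite{BFZ} together with the remark that the argument there carries over verbatim when $\mathbb K$ is a field of characteristic~$0$, and you have accurately sketched the BFZ strategy (upper bounds, induction on distance in $\mathbb T_n$, reduction to the local rank-two lemma, and the use of full rank to secure coprimality of the relevant exchange binomials). Your outline is in fact more detailed than what the paper records.
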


\begin{theorem}\label{thmacon}
{\rm (}\cite[Theorem 10]{CL1}{\rm ).}
Let $\mathcal A$ be a cluster algebra with coefficient semifield $\mathbb P$. Then the seeds of  $\mathcal A$ whose clusters contain particular cluster variables form a connected subgraph of the exchange graph  of $\mathcal A$.
\end{theorem}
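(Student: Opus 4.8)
The plan is to reduce the statement to a finite amount of combinatorial bookkeeping on the tree $\mathbb{T}_n$, exactly as in the approach of \cite{CL1}. Fix a finite collection $\{A_{i_1;s},\dots,A_{i_r;s}\}$ of cluster variables (here I allow them to sit in several clusters, but the key case is when they all lie in a single cluster, which is what the statement literally requires), and let $G$ be the full subgraph of ${\bf EG}(\mathcal A)$ spanned by those seeds whose clusters contain all of these variables. First I would reformulate the goal: two such seeds $t$ and $t'$ are joined inside $G$ by a path all of whose seeds still contain the prescribed variables. The standard strategy is an induction on the distance $d(t,t')$ in the exchange graph, so the core content is the case $d(t,t')=2$: if $t$ and $t'$ both contain the fixed cluster variables and are connected by a length-two path through some seed $\bar t$ which a priori might \emph{not} contain all of them, one must produce an alternative length-two (or longer, but bounded) path lying entirely inside $G$.

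The main tool for that local step is the structure of rank-$2$ subpatterns: mutating at two indices $j\neq k$, the subgraph of seeds reachable by sequences of $\mu_j,\mu_k$ from a fixed seed is controlled by the $2\times 2$ submatrix of the exchange matrix on rows/columns $\{j,k\}$, and in particular when this submatrix is of finite type (Cartan type $A_1\times A_1$, $A_2$, $B_2$, $G_2$) the corresponding piece of the exchange graph is a finite cycle. The key point is that \emph{if a cluster variable $A$ belongs both to the cluster at $t$ and to the cluster at $t'$, and $t,t'$ are adjacent or at distance two, then the index of $A$ is not among the mutated directions}, or else one is in a rank-$\leq 2$ situation where one can go around the polygon keeping $A$ in every intermediate cluster. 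I would invoke here that distinct cluster variables are $\neq$, i.e. the separation/no-coincidence results available for cluster algebras, so that a variable staying fixed really does force the mutation directions to avoid its slot. Iterating over the fixed variables $A_{i_1},\dots,A_{i_r}$ one by one, and using that the remaining mutations act in the "quotient" pattern obtained by freezing those slots, reduces the connectivity of $G$ to the connectivity of the exchange graph of a cluster algebra of smaller rank — which one knows is connected.

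The hard part, as in \cite{CL1}, is making the local ($d=2$) step fully rigorous: one must show that whenever a geodesic between two seeds in $G$ leaves $G$, it can be locally rerouted, and the rerouting must not increase the distance in a way that breaks the induction. Concretely the obstacle is controlling what happens at a seed $\bar t$ on the path where one of the prescribed variables, say $A_{i_1}$, is mutated \emph{out}: one needs to argue that it must be mutated \emph{back in} at the very next step (since it reappears at $t'$), hence the offending detour has length exactly two in the $i_1$-direction and can be bypassed inside the rank-$2$ polygon determined by $i_1$ and the other mutated index, all of whose seeds contain $A_{i_1}$. Once this is established for a single prescribed variable, an outer induction on $r$, peeling off one fixed variable at a time (freezing it and passing to the induced seed pattern on the remaining $n-1$ directions, whose exchange graph is connected), finishes the argument. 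I would also remark that the statement, and its proof, are purely about the $\mathscr A$-seed pattern combinatorics, so the coefficient semifield $\mathbb P$ plays no role beyond being fixed throughout.
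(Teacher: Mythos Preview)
The paper does not prove this theorem; it is quoted from \cite{CL1} without argument, so there is no ``paper's own proof'' to compare against here. That said, your sketch does not match the actual proof in \cite{CL1}, and more importantly it contains genuine gaps.

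Your local step rests on two assertions that are not justified and are in fact false in general. First, you assume that the rank-$2$ subpattern determined by two mutation directions is a finite polygon; this holds only when the $2\times 2$ submatrix is of finite Cartan type, whereas for $\begin{pmatrix}0&-a\\b&0\end{pmatrix}$ with $ab\ge 4$ the $\{j,k\}$-orbit is infinite and there is no ``going around'' available. Second, the claim that a prescribed variable mutated out at $\bar t$ ``must be mutated back in at the very next step'' is exactly the hard content of the theorem, not a lemma one can invoke: a priori a geodesic from $t$ to $t'$ may mutate $A_{i_1}$ out early and not restore it until much later, and nothing in your setup forces the restoration to be immediate. Your outer induction (freeze one variable, pass to a rank-$(n-1)$ pattern) is also circular: to even \emph{define} a single coherent rank-$(n-1)$ subpattern across all seeds containing $A_{i_1}$, one already needs to know those seeds are connected.

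For orientation: the argument in \cite{CL1} is not an elementary rerouting on $\mathbb{T}_n$. It goes through the machinery of $g$-vectors and $c$-vectors, using sign-coherence (as established in \cite{GHKK}) and the ``enough $g$-pairs'' property developed there; in particular it relies on the fact that a cluster variable $A_{j;t_0}$ lying in $A_t$ forces a standard basis vector to appear as a column of $G_t^{B_{t_0};t_0}$, and then uses duality between $C$- and $G$-matrices (cf.\ Theorem~\ref{thmcg}) to produce a mutation path that never touches the index carrying $A_{j;t_0}$. Any correct proof of Theorem~\ref{thmacon} at present needs input of this depth; a purely local exchange-graph argument of the kind you outline is not known to work.
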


 The following result is a direct corollary.
\begin{corollary}{\rm (}\cite[Corollary 3]{CL1}{\rm ).}
\label{corn-1} 
Let $\mathcal A$ be a cluster algebra with coefficient semifield $\mathbb P$ and $t_1,t_2$ two seeds of $\mathcal A$. If $t_1$ and $t_2$ have at least $n-1$ common cluster variables, then in the exchange graph of $\mathcal A$ either $t_1$ and $t_2$ represent the same vertex or there is an edge between $t_1$ and $t_2$.
\end{corollary}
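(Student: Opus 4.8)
The statement to prove is Corollary \ref{corn-1}: if two seeds $t_1, t_2$ of a cluster algebra $\mathcal A$ share at least $n-1$ common cluster variables, then either they coincide in the exchange graph or they are joined by an edge. The plan is to derive this as an immediate consequence of Theorem \ref{thmacon}, which says that the seeds whose clusters contain a fixed set of cluster variables span a connected subgraph of ${\bf EG}(\mathcal A)$.

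\begin{proof}
Let $x_1,\ldots,x_{n-1}$ be $n-1$ common cluster variables of $t_1$ and $t_2$, and let $\mathcal G$ be the full subgraph of the exchange graph ${\bf EG}(\mathcal A)$ spanned by those seeds whose cluster contains $\{x_1,\ldots,x_{n-1}\}$. By Theorem \ref{thmacon}, $\mathcal G$ is connected, and both $t_1$ and $t_2$ are vertices of $\mathcal G$. Suppose $t_1$ and $t_2$ represent distinct vertices. Choose a shortest path $t_1 = s_0 - s_1 - \cdots - s_\ell = t_2$ inside $\mathcal G$, with $\ell \geq 1$. Along this path every seed contains $x_1,\ldots,x_{n-1}$, so each mutation step $s_{i-1}\to s_i$ must mutate precisely at the unique index whose cluster variable is not among $\{x_1,\ldots,x_{n-1}\}$; in particular each $s_i$ differs from $s_{i-1}$ only in that single cluster variable. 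If $\ell \geq 2$, then the mutation $s_0 \to s_1$ and the mutation $s_1 \to s_2$ are performed at the same direction $k$ of the exchange graph relative to $s_1$ (the one changing the $k$-th cluster variable), so by the involutivity of mutation $s_0$ and $s_2$ represent the same vertex of ${\bf EG}(\mathcal A)$, contradicting minimality of the path. Hence $\ell = 1$, i.e.\ there is an edge between $t_1$ and $t_2$ in the exchange graph.
\end{proof}

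The only genuine content is Theorem \ref{thmacon}, which is quoted from \cite{CL1}; the rest is a short graph-theoretic argument about walks in the connected subgraph $\mathcal G$. The one point that requires a little care—and is the main thing to get right—is the claim that two consecutive edges of a geodesic in $\mathcal G$ cannot ``undo'' each other: this uses that a seed with $n-1$ prescribed cluster variables has only one free direction for mutation within $\mathcal G$, so a length-$2$ walk in $\mathcal G$ is either a genuine path to a new seed or a back-and-forth that collapses. If desired, one can phrase this even more compactly by noting that $\mathcal G$ connected together with ``each seed in $\mathcal G$ has at most one neighbour in $\mathcal G$ obtained by a nontrivial mutation'' forces $\mathcal G$ to be a single edge or a single vertex, but the geodesic argument above avoids having to rule out that $\mathcal G$ could in principle be larger.
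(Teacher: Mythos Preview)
Your argument is correct and is precisely the kind of short derivation the paper has in mind: the paper gives no proof at all, merely stating that the result is ``a direct corollary'' of Theorem~\ref{thmacon} (and citing \cite{CL1}), and your geodesic argument is the natural way to spell this out. The only point worth tightening is the justification that mutating $s_1$ at the position of some $x_j$ genuinely removes $x_j$ from the cluster (so that the edge must leave $\mathcal G$): this follows because the exchange relation $x_j x_j' = P_j$ with $P_j$ not involving $x_j$ forces $x_j' \neq x_j$, and algebraic independence of the cluster prevents $x_j'$ from coinciding with any other $x_i$ or with $y_1$; you use this implicitly and it is standard, but it is the one place a skeptical reader might pause.
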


Recall that a cluster algebra is {\em acyclic} if it has an acyclic seed: a seed with an exchange matrix $B=(b_{ij})$, such
that there is no sequence of indices $i_1,\ldots,i_{\ell+1}\in\{1,\ldots,n\}$ with  $i_{\ell+1}=i_1$ and $b_{i_j,i_{j+1}}>0$  for $j=1,\ldots,\ell$.

\begin{proposition}{\rm (}\cite{BFZ}, \cite[Theorem 2]{M}{\rm ).}
\label{proacyclic}
Let $\mathcal A$ be an acyclic cluster algebra and $\mathcal U$ the corresponding upper cluster algebra. Then $\mathcal A=\mathcal U$.
\end{proposition}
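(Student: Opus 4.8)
The plan is to exploit the starfish-type description of the upper cluster algebra together with the special structure of an acyclic seed. Fix an acyclic seed $t_0$ of $\mathcal A$ with exchange matrix $B_{t_0}=(b_{ij})$. Since the inclusion $\mathcal A\subseteq\mathcal U$ is automatic (the cluster variables lie in every Laurent polynomial ring ${\mathcal L}(t)$ by the Laurent phenomenon), the task is to prove $\mathcal U\subseteq\mathcal A$. The key point is that for an acyclic seed one has the explicit presentation of $\mathcal A$ as the ``upper bound'' attached to the single seed $t_0$, namely $\mathcal A = {\mathcal L}(t_0)\cap\bigcap_{k=1}^n {\mathcal L}(t_k)$ where $t_k=\mu_k(t_0)$; this is precisely the content of the Berenstein--Fomin--Zelevinsky result that, for an acyclic seed, the ``lower bound'' $\mathcal A(t_0)$ generated by the cluster variables of $t_0$ and their one-step mutations already coincides with both the cluster algebra and this intersection. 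I would therefore first invoke, or reprove in the geometric-type setting we are using, the equality $\mathcal A = \mathcal{L}(t_0)\cap\mathcal{L}(t_1)\cap\cdots\cap\mathcal{L}(t_n)$ for an acyclic seed $t_0$.

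The second ingredient is the Starfish Theorem (Theorem \ref{proup}), which requires the upper cluster algebra to be of full rank. So the second step is to reduce to the full rank case, or to handle the rank issue directly: an acyclic exchange matrix $B_{t_0}$ is sign-skew-symmetric with an acyclic quiver-like structure, and one checks that after possibly passing to principal coefficients (or otherwise enlarging the coefficients) the extended exchange matrix has full rank; alternatively, one argues that the BFZ proof of the lower-bound-equals-upper-bound statement for acyclic seeds does not actually need the full rank hypothesis. I would state this carefully, because the cleanest route is: for an acyclic seed, $\mathcal{U} \subseteq \mathcal{L}(t_0)\cap\bigcap_{k}\mathcal{L}(t_k)$ holds unconditionally (it is an intersection over a larger index set), and then $\mathcal{L}(t_0)\cap\bigcap_k\mathcal{L}(t_k) = \mathcal{A}$ by the acyclic presentation, giving $\mathcal{U}\subseteq\mathcal{A}$ and hence equality.

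Concretely, the steps in order are: (1) record $\mathcal A\subseteq\mathcal U$ from the Laurent phenomenon; (2) for the acyclic seed $t_0$, establish the presentation $\mathcal A = {\mathcal L}(t_0)\cap{\mathcal L}(t_1)\cap\cdots\cap{\mathcal L}(t_n)$ by citing \cite{BFZ} (Theorem 1.18 / Corollary 1.19 there, on acyclic seeds) and noting that the argument, phrased over $\mathbb{ZP}$, extends to our coefficient ring $\mathbb{KP}$ exactly as in the proof of Theorem \ref{proup}; (3) observe $\mathcal U=\bigcap_{t\in\mathbb T_n}{\mathcal L}(t)\subseteq {\mathcal L}(t_0)\cap{\mathcal L}(t_1)\cap\cdots\cap{\mathcal L}(t_n)$ since the right-hand intersection is over a subset of the vertices of $\mathbb T_n$; (4) combine (2) and (3) to get $\mathcal U\subseteq\mathcal A$, and with (1) conclude $\mathcal A=\mathcal U$.

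The main obstacle I expect is step (2): verifying that the acyclic ``lower bound equals upper bound'' theorem of \cite{BFZ} holds in the coefficient generality used in this paper ($\mathbb{KP}$ with $\mathbb K$ a field of characteristic $0$ or $\mathbb K=\mathbb Z$, and $\mathbb P$ an arbitrary semifield, with the standing assumption that exchange binomials are non-invertible). The delicate parts of the BFZ argument are the coprimality of the exchange binomials in the polynomial ring $\mathbb{KP}[A_{1;t_0},\ldots,A_{n;t_0}]$ and the fact that this polynomial ring is a UFD over $\mathbb{KP}$ (or that one can localize to reduce to a UFD). Both go through because $\mathbb{KP}$ is a domain and $\mathbb{KP}[A_{1;t_0},\ldots,A_{n;t_0}]$ is then a polynomial ring over a domain; the acyclicity of $B_{t_0}$ is exactly what guarantees the pairwise coprimality of the $n$ exchange binomials. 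I would spell this coprimality check out, as it is the real content, and otherwise the proof is a short assembly of citations.
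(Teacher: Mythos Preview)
The paper does not give its own proof of this proposition; it is stated as a known result and attributed to \cite{BFZ,M}. Your sketch is correct and is essentially the argument of \cite{BFZ}: for an acyclic seed $t_0$ the lower bound generated by $A_{t_0}$ and the one-step mutated variables coincides with the upper bound $\mathcal L(t_0)\cap\bigcap_k\mathcal L(t_k)$, and since $\mathcal U$ is trivially contained in this finite intersection while $\mathcal A$ trivially contains the lower bound, equality follows. Your instinct to bypass the full-rank hypothesis via the trivial containment $\mathcal U\subseteq\mathcal L(t_0)\cap\bigcap_k\mathcal L(t_k)$ is exactly right; the Starfish Theorem is not needed here. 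The only genuine work, as you identify, is the coprimality of the exchange binomials in the acyclic case and the extension from $\mathbb{ZP}$ to $\mathbb{KP}$, both of which are routine; the citation of \cite{M} (Muller) covers the extension to more general coefficient settings and to the locally acyclic case.
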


\subsection{\texorpdfstring{$d$}{Lg}-vectors, \texorpdfstring{$f$}{Lg}-vectors, \texorpdfstring{$g$}{Lg}-vectors, \texorpdfstring{$c$}{Lg}-vectors and \texorpdfstring{$F$}{Lg}-polynomials}
Let $M$ be a non-zero element in ${\mathcal L}(t_0)=\mathbb{KP}[A_{1;t_0}^{\pm1},\ldots,A_{n;t_0}^{\pm1}]$. Then $M$ has the form
\begin{eqnarray}
M=\frac{P_M(A_{1;t_0},\ldots,A_{n;t_0})}{A_{1;t_0}^{d_1}\cdots A_{n;t_0}^{d_n}},\nonumber
\end{eqnarray}
where $P_M\in\mathbb {KP}[A_{1;t_0},\ldots,A_{n;t_0}]$ with  $A_{j;t_0}\nmid P_M$ for $j=1,\ldots,n$.
The vector $${\bf d}^{t_0}(M) = (d_1,\ldots, d_n)^{\rm T}$$ is called the  {\em $d$-vector} of  $M$ with respect to $A_{t_0}=(A_{1;t_0},\ldots,A_{n;t_0})$ and the polynomial $P_M$ is called the {\em numerator polynomial} of $M$ with respect to $A_{t_0}$.

\begin{remark}\label{rmkdadd}
Notice that for any $0\neq M,N\in\mathcal L(t_0)$, we have $${\bf d}^{t_0}(M\cdot N)={\bf d}^{t_0}(M)+{\bf d}^{t_0}(N).$$
\end{remark}

\begin{definition}[$F$-polynomial]\label{deffp}
Let $B$ be an $n\times n$ skew-symmetrizable integer matrix and $\mathcal A$  a principal coefficient cluster algebra at $t_0$ with $B_{t_0}=B$.
 Let $A_{k;t}$ be a cluster variable of $\mathcal A$, which can be written as  a Laurent polynomial in $$\mathbb Z[Z_1,\ldots,Z_n][A_{1;t_0}^{\pm1},\ldots,A_{n;t_0}^{\pm1}],$$ by the Laurent phenomenon. The polynomial
$$F_{k;t}^{B;t_0}(Z_1,\ldots,Z_n):=A_{k;t}\mid _{A_{1;t_0}=\ldots=A_{n;t_0}=1}\in\mathbb Z[Z_1,\ldots,Z_n],$$
which only depends on $(B,t_0;k,t)$, is called an {\em $F$-polynomial} of $B$.
{\em Sometimes we view $F_{k;t}^{B;t_0}$ as a polynomial in new variables $Y_1,\ldots,Y_n$ due to the convention in  \cite{FZ3}. }
\end{definition}
\begin{proposition}{\rm (}\cite[Proposition 5.2]{FZ3}{\rm ).} \label{profy}The $F$-polynomial
$F_{k;t}^{B;t_0}(Y_1,\ldots,Y_n)$ is not divisible by any $Y_j$.
\end{proposition}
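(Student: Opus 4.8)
The plan is to realize $F_{k;t}^{B;t_0}$ inside the principal coefficient cluster algebra $\mathcal A$ at $t_0$ with $B_{t_0}=B$ (as in Definition~\ref{deffp}) and to track the substitution $Z_j=0$ through the exchange relations of $\mathcal A$. After renaming the variables $Y_j$ back to $Z_j$, the assertion $Y_j\nmid F_{k;t}^{B;t_0}$ is equivalent, since $\mathbb Z[Z_1,\ldots,Z_n]$ is a unique factorization domain, to $F_{k;t}^{B;t_0}|_{Z_j=0}\neq 0$; and by Definition~\ref{deffp} and the fact that evaluations commute, $F_{k;t}^{B;t_0}|_{Z_j=0}=(A_{k;t}|_{Z_j=0})\big|_{A_{1;t_0}=\cdots=A_{n;t_0}=1}$. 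By the Laurent phenomenon in geometric type \cite{FZ1}, the cluster variable $A_{k;t}$ lies in $\mathbb Z[Z_1,\ldots,Z_n][A_{1;t_0}^{\pm1},\ldots,A_{n;t_0}^{\pm1}]$, so the substitution $Z_j=0$ is a well-defined ring homomorphism; by positivity \cite{GHKK}, the coefficients of $A_{k;t}$ as a Laurent polynomial in the $A_{i;t_0}$ lie in $\mathbb Z_{\ge 0}[Z_1,\ldots,Z_n]$, and hence those of $A_{k;t}|_{Z_j=0}$ lie in $\mathbb Z_{\ge 0}[Z_i\mid i\neq j]$. Consequently the further specialization $A_{i;t_0}=1$ can create no cancellation, so it sends a nonzero such Laurent polynomial to a nonzero element. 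It therefore suffices to establish the following.

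\emph{Claim: $A_{k;t}|_{Z_j=0}\neq 0$ for every cluster variable $A_{k;t}$ of $\mathcal A$ and every $j\in\{1,\ldots,n\}$.}

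I would prove the Claim by induction on the minimal distance in $\mathbb T_n$ from $t_0$ to a seed having $A_{k;t}$ among its cluster variables. The base case is trivial, as $A_{i;t_0}|_{Z_j=0}=A_{i;t_0}\neq 0$. For the inductive step it suffices to treat the unique new cluster variable $A_{k;t'}$ produced by a mutation $t'=\mu_k(t)$ with $t$ strictly closer to $t_0$, using the exchange relation
$$A_{k;t}\,A_{k;t'}=p^{+}\prod_{b_{ik}^{t}>0}A_{i;t}^{\,b_{ik}^{t}}+p^{-}\prod_{b_{ik}^{t}<0}A_{i;t}^{\,-b_{ik}^{t}},$$
in which (since $X_{k;t}$ is a monomial in $Z_1,\ldots,Z_n$) the coefficients $p^{+}=\tfrac{X_{k;t}}{1\oplus X_{k;t}}$ and $p^{-}=\tfrac{1}{1\oplus X_{k;t}}$ are coprime monomials in $Z_1,\ldots,Z_n$, coprimality coming from $p^{+}\oplus p^{-}=1$ in $\mathrm{Trop}(Z_1,\ldots,Z_n)$. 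Apply the homomorphism $Z_j=0$. By the inductive hypothesis each $A_{i;t}|_{Z_j=0}$ is a nonzero element, with nonnegative coefficients, of the domain $\mathbb Z[Z_i\mid i\neq j][A_{1;t_0}^{\pm1},\ldots,A_{n;t_0}^{\pm1}]$, so each of the two products on the right specializes to a nonzero element; the first summand therefore specializes to $0$ exactly when $Z_j\mid p^{+}$, and the second exactly when $Z_j\mid p^{-}$. Since $p^{+}$ and $p^{-}$ are coprime, $Z_j$ divides at most one of them, so at least one summand survives, and since both summands have nonnegative coefficients there is no cancellation between them. Hence the right-hand side specializes to a nonzero element, forcing $A_{k;t'}|_{Z_j=0}\neq 0$ and completing the induction.

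The only delicate point — and the place where the hypotheses genuinely enter — is ruling out cancellation after the substitution $Z_j=0$: positivity of cluster variables is what keeps each $A_{i;t}|_{Z_j=0}$ nonzero and prevents the two terms of the exchange relation from cancelling, while the normalization $p^{+}\oplus p^{-}=1$ is precisely what guarantees that at least one coefficient monomial survives the substitution. Everything else is formal bookkeeping. I would also note that essentially the same induction can instead be run directly on the $F$-polynomials via their mutation recursion; there the sign-coherence of $c$-vectors takes over the role of $p^{+}\oplus p^{-}=1$, and one obtains the sharper statement that $F_{k;t}^{B;t_0}$ has constant term $1$, of which the present proposition is an immediate consequence.
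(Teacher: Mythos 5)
The paper does not prove this statement: it imports it verbatim as \cite[Proposition~5.2]{FZ3}, so there is no ``paper's own proof'' to compare against. Your argument is a correct, self-contained proof, and it is worth noting that it is not the original Fomin--Zelevinsky argument. You reduce $Y_j\nmid F_{k;t}^{B;t_0}$ to $A_{k;t}|_{Z_j=0}\neq 0$ via positivity (so that the further specialization $A_{i;t_0}=1$ cannot create cancellation), and then run an induction on the distance from $t_0$, using that $p^+\oplus p^-=1$ in ${\rm Trop}(Z_1,\ldots,Z_n)$ forces $p^+$ and $p^-$ to be coprime monomials, so $Z_j$ kills at most one summand of the exchange relation and positivity again rules out cancellation between the surviving terms. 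All the steps check out: the reduction using unique factorization (really just the substitution map) is fine, the base case is trivial, the inductive step correctly identifies the ``new'' variable $A_{k;t'}$ with $t$ strictly closer to $t_0$, and the domain argument shows the products $\prod A_{i;t}^{\pm b_{ik}^t}$ specialize nonzero. The one thing to flag is that your route leans on the GHKK positivity theorem, which appeared a decade after \cite{FZ3}; Fomin and Zelevinsky necessarily proved Proposition~5.2 without it, via a tropical evaluation argument on the $F$-polynomial recursion. So your proof is a valid but more machinery-heavy alternative to the one actually cited. Your closing observation—that running the same induction directly on the $F$-polynomials, with sign-coherence of $c$-vectors replacing the role of $p^+\oplus p^-=1$, yields the stronger statement that $F_{k;t}^{B;t_0}$ has constant term $1$ (FZ3's Conjecture~5.4, proved by Derksen--Weyman--Zelevinsky and GHKK)—is also correct and in fact cleaner, since it sidesteps the detour through $A_{k;t}|_{Z_j=0}$ and the specialization $A_{i;t_0}=1$.
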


Let $B$ be an $n\times n$ skew-symmetrizable matrix and $t_0$  a vertex of $\mathbb T_n$. Now we define two families of integer matrices $\{D_t^{B;t_0}\}_{t\in\mathbb T_n}$ and $\{F_t^{B;t_0}\}_{t\in\mathbb T_n}$.
\begin{definition}[$D$-matrices and $F$-matrices] \label{defdf}
Let $B$ be an $n\times n$ skew-symmetrizable matrix and $t_0$  a vertex of $\mathbb T_n$.

(i) {\rm (}\cite[(7.6), (7.7)]{FZ3}{\rm ).} The matrix $D_t^{B;t_0}=(d_{ij;t}^{B;t_0})$ is uniquely determined by the initial condition $B_{t_0}=B$ and $D_{t_0}^{B;t_0}=-I_n$, together with the following recurrence relations:
\begin{equation}
d_{ij;t^\prime}^{B;t_0}=\begin{cases}d_{ij;t}^{B;t_0},  & \text{if } j\neq k;\\
-d_{ik;t}^{B;t_0}+{\rm max}\{\sum\limits_{b_{lk}^t>0}d_{il;t}^{B;t_0} b_{lk}^t, \sum\limits_{b_{lk}^t<0} -d_{il;t}^{B;t_0}b_{lk}^t\},  &\text{if } j=k,\end{cases}\nonumber
 \end{equation}
 for any edge $t^{~\underline{\quad k \quad}}~ t^{\prime}$ in $\mathbb T_n$. The matrices $\{D_t^{B;t_0}\}_{t\in\mathbb T_n}$ are called the {\em $D$-matrices} of $B$.

(ii) {\rm (}\cite[Definition 2.6]{FG1}{\rm ).} Let $F_{k;t}^{B;t_0}(Z_1,\ldots,Z_n)$ be the $F$-polynomial given in Definition \ref{deffp}, and $f_{ik;t}^{B;t_0}$ be the maximal exponent of $Z_i$ appearing in $F_{k;t}^{B;t_0}$. The vector $${\bf f}_{k;t}^{B;t_0}=(f_{1k;t}^{B;t_0},\ldots,f_{nk;t}^{B;t_0})^{\rm T}$$ is called an {\em $f$-vector} of $B$ and the matrix $F_t^{B;t_0}=(f_{ij;t}^{B;t_0})_{n\times n}$ is called an {\em $F$-matrix} of $B$.
\end{definition}

\begin{remark}
Let $\mathcal A$ be a cluster algebra with initial exchange matrix $B$ at $t_0$. Then the $k$-th column vector ${\bf d}_{k;t}^{B;t_0}$ of $D_{t}^{B;t_0}$ is exactly the $d$-vector ${\bf d}^{t_0}(A_{k;t})$ of  $A_{k;t}$ with respect to $A_{t_0}$, by \cite[(7.7)]{FZ3}.
\end{remark}

\begin{proposition}\label{prodvectors}
Let $\mathcal U$ be an  upper cluster algebra and $M$ a non-zero element in $\mathcal U$. Let $t_0, t_1$ be two seeds of $\mathcal U$ with $t_1=\mu_k(t_0)$. Let ${\bf d}=(d_1,\ldots,d_n)^{\rm T}$ and ${\bf d}^\prime=(d_1^\prime,\ldots,d_n^\prime)^{\rm T}$ be the $d$-vectors of $M$ with respect to $t_0$ and $t_1$ respectively.  Then $d_i=d_i^\prime$ for any $i\neq k$.
\end{proposition}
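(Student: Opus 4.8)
The plan is to recognize each component $d_i$ with $i\neq k$ as (minus) an $A_i$-adic valuation of $M$, and then to show that, although this valuation is computed in two different polynomial presentations of $\mathcal F$, its value on $M$ does not change.

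Fix $i\neq k$ and set $A_i:=A_{i;t_0}=A_{i;t_1}$ (recall $\mu_k$ fixes every cluster variable but the $k$-th). For a seed $t$, let $\nu_i^{t}$ be the valuation of $\mathcal F$ that, on $\mathbb{KP}[A_{1;t},\dots,A_{n;t}]$ regarded as a polynomial ring in $A_{i;t}$ over the domain $\mathbb{KP}[A_{j;t}:j\neq i]$, returns the order in $A_{i;t}$, extended to $\mathcal F$ by $\nu_i^t(f/g)=\nu_i^t(f)-\nu_i^t(g)$. From $M=P_M\cdot\prod_j A_{j;t_0}^{-d_j}$ with $A_{j;t_0}\nmid P_M$ one reads off $\nu_i^{t_0}(P_M)=0$ and $\nu_i^{t_0}(A_{j;t_0})=\delta_{ij}$, hence $d_i=-\nu_i^{t_0}(M)$; the same computation at $t_1$ gives $d_i^\prime=-\nu_i^{t_1}(M)$. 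So the statement reduces to the identity $\nu_i^{t_0}(M)=\nu_i^{t_1}(M)$. This is not automatic: although $A_{i;t_0}=A_{i;t_1}$, the remaining cluster variables at $t_0$ and $t_1$ differ, so $\nu_i^{t_0}$ and $\nu_i^{t_1}$ are different valuations in general.

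To prove the identity I substitute the exchange relation. Let $b$ be the $k$-th exchange binomial of $t_0$, so $A_{k;t_1}=b\,A_{k;t_0}^{-1}$, where $b\in\mathbb{KP}[A_{j;t_0}:j\neq k]$ is a sum of two monomials of disjoint support, not both trivial (the standing assumption that exchange binomials are not invertible). Write $P_M=\sum_{m=0}^{\delta}p_m A_{k;t_0}^{m}$ with $p_m\in\mathbb{KP}[A_{j;t_0}:j\neq k]$ and $p_0\neq 0$. Substituting $A_{k;t_0}\mapsto b\,A_{k;t_1}^{-1}$ and $A_{j;t_0}\mapsto A_{j;t_1}$ ($j\neq k$) sends $P_M$ to $A_{k;t_1}^{-\delta}Q$, where
$$Q=\sum_{m=0}^{\delta}p_m\,b^{m}\,A_{k;t_1}^{\delta-m}\in\mathbb{KP}[A_{1;t_1},\dots,A_{n;t_1}],$$
and hence $M=Q\cdot b^{-d_k}A_{k;t_1}^{-(\delta-d_k)}\prod_{j\neq k}A_{j;t_1}^{-d_j}$ in $\mathcal F$. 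Apply $\nu_i^{t_1}$: since $k\neq i$ we have $\nu_i^{t_1}(A_{k;t_1})=0$ and $\nu_i^{t_1}(A_{j;t_1})=\delta_{ij}$ for $j\neq k$, while $\nu_i^{t_1}(b)=0$ because the two monomials of $b$ have $A_{i;t_1}$-orders whose minimum is $0$ and cannot cancel (disjoint supports in the $A_{j;t_1}$, $j\neq k$). Therefore $\nu_i^{t_1}(M)=\nu_i^{t_1}(Q)-d_i$, and comparing with $\nu_i^{t_0}(M)=-d_i$ the proof is finished once we know $\nu_i^{t_1}(Q)=0$, i.e.\ $A_{i;t_1}\nmid Q$.

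The last step is the only delicate one. Reduce $Q$ modulo $A_{i;t_1}$ into $\mathbb{KP}[A_{j;t_1}:j\neq i]$; writing $\overline{g}$ for the image of $g$ under $A_{i;t_1}\mapsto 0$ and collecting powers of the surviving variable $A_{k;t_1}$, the coefficient of $A_{k;t_1}^{\delta-m}$ in $\overline{Q}$ equals $\overline{p_m}\,\overline{b}^{\,m}$. Here $\overline{b}\neq 0$ (same reason as for $\nu_i^{t_1}(b)=0$), the ring $\mathbb{KP}[A_{j;t_1}:j\neq i,k]$ is a domain, and $\overline{p_m}\neq 0$ for at least one $m$; otherwise $A_{i;t_0}$ would divide $\sum_m p_m A_{k;t_0}^{m}=P_M$, contradicting $A_{i;t_0}\nmid P_M$. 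Hence some coefficient $\overline{p_m}\,\overline{b}^{\,m}$ of $\overline{Q}$ is nonzero, so $\overline{Q}\neq 0$ and $\nu_i^{t_1}(Q)=0$, whence $d_i^\prime=d_i$. The subtlety is exactly here: one cannot check $A_{i;t_1}\nmid Q$ before substituting, since the substitution couples the now-inverted $A_{k;t_0}$ with $b$ (which may itself involve $A_{i;t_0}$), and $A_{i;t_0}\nmid P_M$ only says that \emph{some} $p_m$ (not necessarily $p_0$) avoids $A_{i;t_0}$; rewriting $Q$ by powers of $A_{k;t_1}$ after reduction is what turns this into a usable statement.
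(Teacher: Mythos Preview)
Your argument is correct and is essentially the standard one (the paper defers to \cite[Proposition~2.5]{RS}, whose proof is exactly this substitution-and-divisibility check). Two small remarks on exposition:

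\medskip

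\textbf{(1)} The parenthetical ``not both trivial (the standing assumption that exchange binomials are not invertible)'' is not quite right: the standing assumption does not exclude the case where the $k$-th column of $B$ is zero and both monomials equal $1$. Fortunately you do not need it. What you use is only $A_i\nmid b$ (equivalently $\overline b\neq 0$), and this holds in every case: if the two monomials are distinct they are linearly independent over $\mathbb{KP}$, so at least one survives when $A_i\mapsto 0$; if both equal $1$ then $b=c_1+c_2$ with $c_1,c_2\in\mathbb P$, and this is nonzero in $\mathbb{KP}$ either because $c_1\neq c_2$ in $\mathbb P$ (hence linearly independent) or because $c_1=c_2$ and $\operatorname{char}\mathbb K=0$.

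\medskip

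\textbf{(2)} Your side remark that ``$\nu_i^{t_0}$ and $\nu_i^{t_1}$ are different valuations in general'' is in fact false: they coincide on all of $\mathcal F$. Indeed, your own computation shows $\nu_i^{t_1}(A_{k;t_0})=\nu_i^{t_1}(b)-\nu_i^{t_1}(A_{k;t_1})=0$, so $\nu_i^{t_1}$ agrees with $\nu_i^{t_0}$ on every $A_{j;t_0}$ and is trivial on $\operatorname{Frac}(\mathbb{KP})$; since both are the discrete valuation associated with the height-one prime $(A_i)$ in the respective polynomial rings, and these localizations coincide inside $\mathcal F$, the valuations are equal. This observation would slightly streamline the write-up (no need to redo the $Q$-analysis for general $M$), but your explicit verification is of course fine.
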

\begin{proof}
The proof is the same as that of \cite[Proposition 2.5]{RS}.
\end{proof}

\begin{theorem} [Canonical expressions]\label{thmsf}
Let $B$ be an $n\times n$ skew-symmetrizable matrix and  $$\{F_{1;t}^{B;t_0},\ldots,F_{n;t}^{B;t_0}\}_{t\in\mathbb T_n}$$  the $F$-polynomials given in Definition \ref{deffp}.
\begin{itemize}
 \item[(i)] {\rm (}\cite[Corollary 6.3]{FZ3}{\rm ).} Let $\mathcal A$  be a cluster algebra  with principal coefficients at $t_0$ and with $B_{t_0}=B$. Then for any cluster variable $A_{k;t}$ of $\mathcal A$, there exists a unique vector ${\bf g}_{k;t}^{B;t_0}=(g_{1k;t}^{B;t_0},\ldots,g_{nk;t}^{B;t_0})^{\rm T}\in \mathbb Z^n$ such that
\begin{eqnarray}\label{eqnsf}
A_{k;t}=A_{t_0}^{{\bf g}_{k;t}^{B;t_0}}\cdot F_{k;t}^{B;t_0}(\widehat X_{1;t_0},\ldots,\widehat X_{n;t_0}),\nonumber
\end{eqnarray}
where $A_{t_0}^{{\bf g}_{k;t}^{B;t_0}}=\prod\limits_{i=1}^nA_{i;t}^{g_{ik;t}^{B;t_0}}$, $\widehat X_{j;t_0}=X_{j;t_0}\prod\limits_{i=1}^nA_{i;t_0}^{b_{ij}^{t_0}}=Z_j\prod\limits_{i=1}^nA_{i;t_0}^{b_{ij}^{t_0}}$.
\item[(ii)] {\rm (}\cite[Proposition 1.1]{NZ}{\rm ).} Let $\mathcal X$  be the cluster Poisson algebra with  $B_{t_0}=B$. Then for any cluster Poisson variable $X_{k;t}$ of $\mathcal X$, there exists a unique vector ${\bf c}_{k;t}^{B;t_0}=(c_{1k;t}^{B;t_0},\ldots,c_{nk;t}^{B;t_0})^{\rm T}\in \mathbb Z^n$ such that
$$X_{k;t}=X_{t_0}^{{\bf c}_{k;t}^{B;t_0}}\cdot \prod\limits_{i=1}^n(F_{i;t}^{B;t_0}(X_{1;t_0},\ldots,X_{n;t_0}))^{b_{ik}^t}.$$
\end{itemize}
\end{theorem}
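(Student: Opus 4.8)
The plan is to treat the two parts in parallel, since part (ii) is the cluster Poisson shadow of part (i). For part (i), I would argue by induction on the distance from $t_0$ to $t$ in $\mathbb{T}_n$. The base case $t = t_0$ is immediate: $A_{k;t_0}$ is itself an initial cluster variable, the $F$-polynomial $F_{k;t_0}^{B;t_0}$ is $1$, and the $g$-vector is the $k$-th standard basis vector $e_k$; uniqueness of $\mathbf{g}$ follows because the monomials $A_{t_0}^{\mathbf{g}}$ in the Laurent polynomial ring $\mathbb{ZP}[A_{1;t_0}^{\pm1},\dots,A_{n;t_0}^{\pm1}]$ are linearly independent, so any expression of $A_{k;t}$ in the stated form pins down $\mathbf{g}$ uniquely once we know $F_{k;t}^{B;t_0}$ has constant term $1$ (which holds by the recursive construction of $F$-polynomials). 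For the inductive step, suppose $t' = \mu_j(t)$ and the claim holds at $t$. I would write down the exchange relation expressing $A_{k;t'}$ in terms of the cluster at $t$, substitute the inductive expression $A_{i;t} = A_{t_0}^{\mathbf{g}_{i;t}}\cdot F_{i;t}(\hat X_{t_0})$ for each $A_{i;t}$, and then reorganize. The key mechanism is that $\hat X_{j;t} = X_{j;t}\prod_i A_{i;t}^{b_{ij}^t}$ (Proposition \ref{proxp}) is again a monomial in the $\hat X_{i;t_0}$ (this is the separation-of-additions phenomenon of \cite{FZ3}), so after substitution the "coefficient" part of the exchange relation becomes a polynomial expression purely in the $\hat X_{i;t_0}$; by the defining recursion for $F$-polynomials this polynomial is exactly $F_{k;t'}^{B;t_0}(\hat X_{1;t_0},\dots,\hat X_{n;t_0})$, while the leftover monomial factor in the $A_{i;t_0}$ defines $\mathbf{g}_{k;t'}$. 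This is, in essence, the proof of \cite[Corollary 6.3]{FZ3}, and I would simply cite that the recursions match.

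Since the cluster algebra here is over a general coefficient semifield, I should be slightly careful: the canonical expression as stated uses the principal-coefficient $F$-polynomials but asserts an identity valid in the principal-coefficient cluster algebra itself, so no passage to general $\mathbb{P}$ is actually needed for the statement of (i) — it is a statement purely inside $\mathbb{Z}[Z_1,\dots,Z_n][A_{1;t_0}^{\pm1},\dots,A_{n;t_0}^{\pm1}]$. That keeps the bookkeeping clean.

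For part (ii), I would use Proposition \ref{proxp} again, which says that the $\hat X_{t}$ built from an $\mathscr{A}$-seed pattern themselves form an $\mathscr{X}$-seed pattern with the same exchange matrices. Specializing the $\mathscr{A}$-seed pattern to principal coefficients and then setting $A_{i;t_0} = 1$ sends $\hat X_{j;t_0} = Z_j\prod_i A_{i;t_0}^{b_{ij}^{t_0}}$ to $Z_j$, i.e. it degenerates the $\hat X$-pattern to the cluster Poisson pattern with initial cluster $(Z_1,\dots,Z_n) = X_{t_0}$. Now apply $\hat X_{k;t} := X_{k;t}\prod_i A_{i;t}^{b_{ik}^t}$ to the identity of part (i): expanding $A_{i;t}$ via (i) and collecting the $A$-exponents expresses $X_{k;t}$ as $X_{t_0}^{\mathbf{c}_{k;t}}$ times $\prod_i F_{i;t}^{B;t_0}(X_{t_0})^{b_{ik}^t}$, where $\mathbf{c}_{k;t}$ is read off from the $g$-vectors and the matrix entries — concretely $\mathbf{c}_{k;t}$ is the $k$-th column of $C_t$, i.e. the $c$-vector, by the tropical sign-coherence / duality relations of \cite{NZ} between $G$-matrices and $C$-matrices; uniqueness again follows from linear independence of Laurent monomials in $X_{t_0}$ together with $F$-polynomials having constant term $1$. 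I would cite \cite{NZ} for the precise matching of $\mathbf{c}_{k;t}$ with the $c$-vector and note that this is where the "separation formula" for cluster Poisson variables originates.

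The main obstacle is not conceptual but notational: one must keep straight which pattern ($\mathscr{A}$ with principal coefficients, the associated $\hat X$-pattern, the cluster Poisson pattern) one is computing in at each step, and verify that the monomial exponents produced by collecting the $A_{i;t_0}$-powers after substitution genuinely satisfy the $g$-vector and $c$-vector recursions rather than merely "some" recursion. Since all of this is established in \cite{FZ3} and \cite{NZ}, I would present the argument as an application of those results — recalling the recursions, checking the base case, and invoking the inductive matching of recursions — rather than re-deriving the mutation rules for $g$- and $c$-vectors from scratch.
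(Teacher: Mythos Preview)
The paper does not prove this theorem at all: it is stated in the preliminaries section with citations to \cite{FZ3} for part~(i) and \cite{NZ} for part~(ii), and no argument is given. Your proposal is a reasonable sketch of the proofs found in those references --- the induction on distance in $\mathbb{T}_n$, the separation-of-additions mechanism, and the passage from the $\hat X$-pattern to the Poisson pattern are indeed the ingredients used there --- but for the purposes of this paper no proof is expected; a citation suffices.
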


\begin{definition}[$g$-vectors and $c$-vectors]
 Keep the notations of Theorem \ref{thmsf}.

 (i) The vector ${\bf g}_{k;t}^{B;t_0}=(g_{1k;t}^{B;t_0},\ldots,g_{nk;t}^{B;t_0})^{\rm T}\in \mathbb Z^n,$
 which only depends on $(B,t_0;k,t)$, is called a {\em $g$-vector} of $B$. The matrix $G_t^{B;t_0}=({\bf g}_{ij;t}^{B;t_0})_{n\times n}$ is called a {\em $G$-matrix} of $B$ at $t$.

 (ii) The vector ${\bf c}_{k;t}^{B;t_0}=(c_{1k;t}^{B;t_0},\ldots,c_{nk;t}^{B;t_0})^{\rm T}\in \mathbb Z^n,$
 which only depends on $(B,t_0;k,t)$, is called a {\em $c$-vector} of $B$. The matrix $C_t^{B;t_0}=({\bf c}_{ij;t}^{B;t_0})_{n\times n}$ is called a {\em $C$-matrix} of $B$ at $t$.
\end{definition}

\begin{theorem}{\rm (}\cite[(3.11)]{NZ}, \cite[Corollary 4.11]{CL}{\rm ).}\label{thmcg} Let $B$ be an $n\times n$ skew-symmetrizable matrix and $S$ a skew-symmetrizer of $B$. Then for any vertices $t_0, t\in\mathbb T_n$, we have
$$SC_t^{B;t_0}S^{-1}(G_t^{B;t_0})^{\rm T}=I_n.$$
\end{theorem}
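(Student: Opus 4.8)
The plan is to prove the duality $S C_t^{B;t_0} S^{-1} (G_t^{B;t_0})^{\mathrm T} = I_n$ by induction on the distance between $t_0$ and $t$ in $\mathbb T_n$. The base case $t = t_0$ is immediate since $C_{t_0}^{B;t_0} = G_{t_0}^{B;t_0} = I_n$, so the identity reads $S I_n S^{-1} I_n = I_n$. For the inductive step, I would fix an edge $t \,\underline{\ k\ }\, t'$ and express both $C_{t'}^{B;t_0}$ and $G_{t'}^{B;t_0}$ in terms of $C_t^{B;t_0}$, $G_t^{B;t_0}$ and $B_t$ via their mutation rules, then verify that the claimed product is preserved.

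The key ingredients are the recursions. The $C$-matrices mutate by
$$C_{t'}^{B;t_0} = C_t^{B;t_0}\bigl(E_k^{t,+}\bigr) \quad\text{or, symmetrically,}\quad C_{t'} = (E_k^{t,\varepsilon}) C_t\cdots$$
— more precisely, writing the standard sign-coherence convention, one has $C_{t'} = C_t E_\varepsilon$ where $E_\varepsilon$ is a unipotent-type matrix built from the $k$-th column of $C_t$ and the $k$-th row of $B_t$, and likewise $G_{t'} = G_t F_\varepsilon$ for a matching matrix $F_\varepsilon$; here $\varepsilon \in \{+,-\}$ is the common sign of the $k$-th $c$-vector $\mathbf c_{k;t}^{B;t_0}$ (sign-coherence, a known theorem one may invoke, or absorb into \cite{NZ}). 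The crucial algebraic fact — which is exactly Equation (2.9)–(2.11)-type bookkeeping in \cite{NZ} — is the relation $S E_\varepsilon S^{-1} = (F_\varepsilon^{\mathrm T})^{-1}$ at each seed $t$ along the path, where $S$ plays the role of rescaling that turns the $B_t$-twisted mutation matrices into transposes of one another. Granting this, the inductive step is a one-line computation:
$$S C_{t'} S^{-1} G_{t'}^{\mathrm T} = S C_t E_\varepsilon S^{-1} (G_t F_\varepsilon)^{\mathrm T} = (S C_t S^{-1})(S E_\varepsilon S^{-1}) F_\varepsilon^{\mathrm T} G_t^{\mathrm T} = (S C_t S^{-1}) (F_\varepsilon^{\mathrm T})^{-1} F_\varepsilon^{\mathrm T} G_t^{\mathrm T} = S C_t S^{-1} G_t^{\mathrm T},$$
which equals $I_n$ by the induction hypothesis.

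The main obstacle is bookkeeping rather than conceptual: one must pin down the precise form of the mutation matrices $E_\varepsilon$, $F_\varepsilon$ (they depend on $\varepsilon$, on the $k$-th column of $C_t$, and on $B_t$ or $B_{t'}$ respectively), correctly handle the two sign cases $\varepsilon = \pm$, and check the compatibility identity $S E_\varepsilon S^{-1} = (F_\varepsilon^{\mathrm T})^{-1}$ using the skew-symmetrizability $S B_t = -(S B_t)^{\mathrm T}$ that holds at every seed. Since all of this is carried out in \cite{NZ} (for the skew-symmetric case) and extended in \cite{CL}, the cleanest route is to cite those references for the tropical duality between $C$- and $G$-matrices and simply record the $S$-twisted form here; alternatively, one reproduces the short induction above with the mutation matrices written out explicitly. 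Either way, no further input is needed beyond sign-coherence and the definitions of $C_t^{B;t_0}$, $G_t^{B;t_0}$ already recalled.
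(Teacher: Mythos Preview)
The paper does not give its own proof of this theorem; it merely cites \cite{NZ,CL} and uses the result as a known tool. Your proposal correctly outlines the standard inductive argument from those references (mutation matrices $E_\varepsilon$, $F_\varepsilon$, sign-coherence, and the compatibility $S E_\varepsilon S^{-1} = (F_\varepsilon^{\mathrm T})^{-1}$), so it is consistent with what the citations contain and there is nothing further to compare.
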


\subsection{Compatibility degrees on the set of cluster variables}
In this subsection, we recall that the $d$-compatibility degree given in \cite{CL1} and $f$-compatibility degree given in \cite{FG}.

\begin{definition}[Compatibility degrees]
Let $\mathcal A$ be a cluster algebra and $\mathbb A$  the set of cluster variables of $\mathcal A$.

(i) {\rm (}\cite[Definition 8]{CL1}{\rm ).} The $d$-compatibility degree $(-\mid\mid  -)_d: \mathbb A\times \mathbb A\rightarrow\mathbb Z_{\geq-1}$ is defined by
$$(A_{i;t_0}\mid\mid  A_{j;t})_d:=d_{ij;t}^{B_{t_0};t_0},$$
where  $d_{ij;t}^{B_{t_0};t_0}$ is the $(i,j)$-entry of the $D$-matrix $D_t^{B_{t_0};t_0}$.

(ii) {\rm (}\cite[Definition 4.9]{FG}{\rm ).} The $f$-compatibility degree $(-\mid\mid  -)_f: \mathbb A\times \mathbb A\rightarrow\mathbb N$ is defined by
$$(A_{i;t_0}\mid\mid  A_{j;t})_f:=f_{ij;t}^{B_{t_0};t_0},$$
where  $f_{ij;t}^{B_{t_0};t_0}$ is the $(i,j)$-entry of the $F$-matrix $F_t^{B_{t_0};t_0}$.
\end{definition}
Now we summarize some properties of the $d$-compatibility degree $(-\mid\mid  -)_d$ and the $f$-compatibility degree $(-\mid\mid  -)_f$. 
\begin{proposition}
\label{rmkdf}
{\rm (}\cite[Theorem 11, Remark 2]{CL1}, \cite[Theorems 3.3, 4.18]{FG}{\rm ).} The $d$-compatibility degree $(-\mid\mid  -)_d$ and the $f$-compatibility degree $(-\mid\mid  -)_f$ have the following properties.
\begin{itemize}
\item[(1)]  They are well-defined.

\item[(2)] The following statements are equivalent.
\begin{itemize}
\item[(i)] There exists a cluster $A_{t^\prime}$ containing both $A_{i;t_0}$ and $A_{j;t}$;

\item[(ii)] $(A_{i;t_0}\mid\mid  A_{j;t})_d\leq 0$;

\item[(iii)] $(A_{j;t}\mid\mid  A_{i;t_0})_d\leq 0$;

\item[(iv)]  $(A_{i;t_0}\mid\mid  A_{j;t})_f= 0$;

\item[(v)] $(A_{j;t}\mid\mid  A_{i;t_0})_f=0$.
\end{itemize}

\item[(3)] The following statements are equivalent.

\begin{itemize}
\item[(i)] There exists no cluster $A_{t^\prime}$ containing both $A_{i;t_0}$ and $A_{j;t}$;

\item[(ii)] $(A_{i;t_0}\mid\mid  A_{j;t})_d>0$;

\item[(iii)] $(A_{j;t}\mid\mid  A_{i;t_0})_d>0$;

\item[(iv)]  $(A_{i;t_0}\mid\mid  A_{j;t})_f>0$;

\item[(v)]$(A_{j;t}\mid\mid  A_{i;t_0})_f>0$.
\end{itemize}
\item[(4)] The following statements are equivalent.
\begin{itemize}
\item[(i)] $A_{j;t}=A_{i;t_0}$;

\item[(ii)] $(A_{i;t_0}\mid\mid  A_{j;t})_d<0$;

\item[(iii)]$(A_{j;t}\mid\mid  A_{i;t_0})_d<0$;
\item[(iv)] $(A_{i;t_0}\mid\mid  A_{j;t})_d=-1$;

\item[(v)] $(A_{j;t}\mid\mid  A_{i;t_0})_d=-1$.
\end{itemize}

\item[(5)] The following statements are equivalent.
\begin{itemize}
\item[(i)] $A_{i;t_0}\neq A_{j;t}$ and there exists a cluster $A_{t^\prime}$ containing both $A_{i;t_0}$ and $A_{j;t}$;

\item[(ii)] $(A_{i;t_0}\mid\mid  A_{j;t})_d=0$;
\item[(iii)] $(A_{j;t}\mid\mid  A_{i;t_0})_d=0$;
\end{itemize}
\end{itemize}
\end{proposition}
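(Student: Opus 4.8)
The plan is to reduce the five groups of equivalences to a single pair of facts about one pair of cluster variables $(A_{i;t_0},A_{j;t})$: an \emph{easy direction}, that if $A_{i;t_0}$ and $A_{j;t}$ lie in a common cluster then $(A_{i;t_0}||A_{j;t})_d\in\{-1,0\}$, with value $-1$ precisely when $A_{i;t_0}=A_{j;t}$; and a \emph{hard direction}, its converse, that $(A_{i;t_0}||A_{j;t})_d\leq 0$ already forces $A_{i;t_0}$ and $A_{j;t}$ to lie in a common cluster. Granting these (for every choice of initial seed), items (2)--(5) come out formally: "lying in a common cluster" and "being equal" are symmetric relations on $\mathbb A$; the trichotomy $(A_{i;t_0}||A_{j;t})_d<0$, $=0$, $>0$ is exhaustive; and the hard direction shows a negative value must in fact be $-1$ (common cluster plus the easy direction). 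The $f$-statements (iv),(v) are brought in through the parallel facts for $F$-polynomials, namely that $f_{ij;t}^{B_{t_0};t_0}=0$ iff $A_{i;t_0}$ and $A_{j;t}$ are compatible and $f_{ij;t}^{B_{t_0};t_0}>0$ otherwise, which is the content of \cite{FG}; the dichotomy there is automatic from the common-cluster characterization since $F$-polynomials have non-negative exponents.

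For the easy direction I would argue directly. Suppose $A_{i;t_0}$ and $A_{j;t}$ both occur in the cluster of a seed $t'$, say $A_{i;t_0}=A_{p;t'}$ and $A_{j;t}=A_{q;t'}$. By the connectivity theorem \ref{thmacon} the seeds whose cluster contains this copy of $A_{i;t_0}$ form a connected subgraph of ${\bf EG}(\mathcal A)$, so there is a path from $t_0$ to $t'$ along which $A_{i;t_0}$ is never mutated; keeping track of labels so that $A_{i;t_0}$ stays in a fixed position, Proposition \ref{prodvectors} applied edge by edge shows that the exponent of $A_{i;t_0}$ in the denominator of the Laurent expansion of $A_{j;t}$ is the same computed with respect to $t_0$ or with respect to $t'$. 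Computed with respect to $t'$ it is the relevant component of ${\bf d}^{t'}(A_{q;t'})=-e_q$, hence $-1$ if $p=q$ (i.e.\ $A_{i;t_0}=A_{j;t}$) and $0$ if $p\neq q$.

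The hard direction is where the real work lies, and is the step I expect to be the main obstacle. I would prove it by induction on the mutation distance between $t_0$ and $t$, analyzing the recursion for the $D$-matrices of Definition \ref{defdf}(i) to control when a non-positive entry of $D_t^{B_{t_0};t_0}$ can become positive, and invoking the positivity and sign-coherence underlying the canonical expressions of Theorem \ref{thmsf} (and \cite{GHKK}); this is precisely the argument of \cite{CL1}, and \cite{FG} runs the parallel argument for the $F$-polynomial recursion. With the hard direction available, (4) follows: if $(A_{i;t_0}||A_{j;t})_d<0$ then the two variables share a cluster, so by the easy direction the degree equals $-1$ and $A_{i;t_0}=A_{j;t}$; (2), (3), (5) are then the three cases of the trichotomy, the symmetry between $(A_{i;t_0}||A_{j;t})_d$ and $(A_{j;t}||A_{i;t_0})_d$ being visible from the fact that each of these characterizations (equality, distinctness with a common cluster, no common cluster) is symmetric in the two variables.

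It remains to see that $(-||-)_d$ and $(-||-)_f$ are well defined (item (1)). The value $(A_{i;t_0}||A_{j;t})_d=d_{ij;t}^{B_{t_0};t_0}$ is the $i$-th component of ${\bf d}^{t_0}(A_{j;t})$, which depends only on the element $A_{j;t}\in\mathcal L(t_0)$ and the cluster $A_{t_0}$, so it is independent of the presentation of the second argument; if moreover $A_{i;t_0}=A_{i';t_0'}$ for another seed $t_0'$, then another use of Theorem \ref{thmacon} and Proposition \ref{prodvectors} along a path from $t_0$ to $t_0'$ never mutating $A_{i;t_0}$ gives $d_{ij;t}^{B_{t_0};t_0}=d_{i'j';t'}^{B_{t_0'};t_0'}$. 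For $(-||-)_f$ one uses in addition that equal cluster variables have equal $g$-vectors (they are the degrees for the $\mathbb Z^n$-grading underlying Theorem \ref{thmsf}), hence equal $F$-polynomials, together with an $F$-matrix analogue of Proposition \ref{prodvectors}.
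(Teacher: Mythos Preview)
The paper does not prove this proposition at all: it is stated as a citation of \cite{CL1,FG}, and the only additional comment is Remark~\ref{rmkdadd2}(i), which explains well-definedness of $(-\|-)_d$ on $\mathbb A\times(\mathcal U\setminus\{0\})$ precisely via Proposition~\ref{prodvectors} and Theorem~\ref{thmacon}, exactly as you do. So your treatment of item~(1) and of the easy direction is in line with the paper's own viewpoint.

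Your overall architecture is sound: the reduction of (2)--(5) to the easy/hard dichotomy together with symmetry of the relations ``equal'' and ``share a cluster'' is correct, and you are right that once the hard direction holds, a negative value is forced to be $-1$ by feeding it back through the easy direction. You are also honest about the fact that the hard direction ($(A_{i;t_0}\|A_{j;t})_d\le 0\Rightarrow$ common cluster) is the substantive content of \cite{CL1}, and the $f$-analogue that of \cite{FG}; the paper makes no attempt to reproduce those arguments either.

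One caution worth flagging: both your easy-direction argument and your well-definedness argument invoke Theorem~\ref{thmacon}, which in \cite{CL1} is itself proved \emph{using} the compatibility-degree machinery. Within the present paper this is harmless, since both Proposition~\ref{rmkdf} and Theorem~\ref{thmacon} are imported as black boxes from \cite{CL1}; but if you intend your write-up as a self-contained reconstruction of the \cite{CL1} results, you should be aware of the original logical order there (well-definedness is handled more directly, and connectivity comes afterwards). Your sketch of the $f$-side well-definedness is also a bit thin; the needed input is that equal cluster variables have equal $F$-polynomials, which in \cite{FG} is argued via the $g$-vector/$F$-polynomial recursions rather than via an ``$F$-matrix analogue of Proposition~\ref{prodvectors}''.
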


\begin{remark}\label{rmkdadd2}
(i) Roughly speaking, the integer $(A_{i;t_0}\mid\mid  A_{j;t})_d$ is defined to be the $i$-th component of the $d$-vector of $A_{j;t}$ with respect to $A_{t_0}$. By Proposition \ref{prodvectors} and Theorem \ref{thmacon}, the $d$-compatibility degree $(-\mid\mid  -)_d$ is actually a well-defined function on $\mathbb A\times (\mathcal U\backslash\{0\})$, where $\mathcal U$ is the corresponding upper cluster algebra.

(ii) By Remark \ref{rmkdadd}, for any $0\neq M,N\in\mathcal U$, we have $$(A_{i;t_0}\mid\mid  M\cdot N)_d=(A_{i;t_0}\mid\mid  M)_d+(A_{i;t_0}\mid\mid  N)_d.$$
\end{remark}

Recall that a {\em cluster monomial} in a seed $t$  is a monomial in the cluster variables from $t$.
\begin{corollary}\label{cordmonomial}
Let $\mathcal A$ be a cluster algebra with coefficient semifield $\mathbb P$ and $A_{k;t_0}$ a cluster variable of $\mathcal A$. Let $t$ be a seed of $\mathcal A$ and $M=A_{1;t}^{c_1}\cdots A_{n;t}^{c_n}$ a cluster monomial in $t$. Denote $I=\{i\mid c_i>0\}$. The following statements hold.
\begin{itemize}
\item[(i)]If $(A_{k;t_0}\mid\mid  M)_d<0$, then there exists $j_0\in I$ such that $A_{k;t_0}=A_{j_0;t}$. Namely, $A_{k;t_0}$ appears in $M$.

\item[(ii)] If $(A_{k;t_0}\mid\mid  M)_d\leq0$, then $(A_{k;t_0}\mid\mid  A_{i;t})_d\leq0$ for any $i\in I$.
\end{itemize}
\end{corollary}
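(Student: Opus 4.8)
\textbf{Proof plan for Corollary \ref{cordmonomial}.}

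The plan is to reduce everything to the additivity of the $d$-compatibility degree (Remark \ref{rmkdadd2}(ii)) combined with the characterizations in Proposition \ref{rmkdf}, parts (2)--(5). First I would write
\[
(A_{k;t_0}||M)_d=\sum_{i=1}^n c_i\,(A_{k;t_0}||A_{i;t})_d=\sum_{i\in I}c_i\,(A_{k;t_0}||A_{i;t})_d,
\]
where the last equality holds because $c_i=0$ for $i\notin I$ and $c_i>0$ exactly for $i\in I$; note that $M$ being a genuine cluster monomial means all exponents are $\geq 0$, so $I$ records precisely the cluster variables actually occurring in $M$. By Proposition \ref{rmkdf}(4), each term $(A_{k;t_0}||A_{i;t})_d$ is negative if and only if $A_{i;t}=A_{k;t_0}$, in which case it equals $-1$; otherwise it is $\geq 0$, and by Proposition \ref{rmkdf}(5) it is $=0$ precisely when $A_{i;t}\neq A_{k;t_0}$ but $A_{i;t}$ and $A_{k;t_0}$ lie in a common cluster, and $>0$ otherwise.

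For part (i), suppose $(A_{k;t_0}||M)_d<0$. Since each $c_i>0$ for $i\in I$, the sum $\sum_{i\in I}c_i(A_{k;t_0}||A_{i;t})_d$ can be negative only if at least one summand $(A_{k;t_0}||A_{i;t})_d$ is negative; by Proposition \ref{rmkdf}(4) this forces $A_{k;t_0}=A_{j_0;t}$ for some $j_0\in I$, which is the claim. For part (ii), suppose instead that some $i_1\in I$ had $(A_{k;t_0}||A_{i_1;t})_d>0$. By part (3) of Proposition \ref{rmkdf}, all the entries $(A_{k;t_0}||A_{i;t})_d$ with $i\in I$ are $\geq -1$, so I need to rule out the scenario where one positive contribution $c_{i_1}(A_{k;t_0}||A_{i_1;t})_d\geq 1$ is cancelled by several $-1$'s; but by Proposition \ref{rmkdf}(4) a value $-1$ occurs only when $A_{i;t}=A_{k;t_0}$, and since the $A_{i;t}$ for distinct $i\in I$ are distinct cluster variables (they all belong to the single cluster $A_t$), at most one index $i\in I$ can contribute $-1$, and it contributes exactly $-c_i\cdot 1 = -c_i$. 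I would then argue that if $(A_{k;t_0}||A_{i_1;t})_d>0$ for some $i_1$, then either there is no $-1$ term, giving $(A_{k;t_0}||M)_d\geq c_{i_1}>0$, or there is exactly one index $i_0$ with $A_{i_0;t}=A_{k;t_0}$; but then $A_{k;t_0}=A_{i_0;t}$ lies in the cluster $A_t$, and by Proposition \ref{rmkdf}(2)(ii)$\Leftrightarrow$(iii) the inequality $(A_{k;t_0}||A_{i;t})_d\leq 0$ holds for every $i$ (all $A_{i;t}$ being in the common cluster $A_t$), contradicting $(A_{k;t_0}||A_{i_1;t})_d>0$.

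The only genuinely delicate point is the combinatorial bookkeeping in part (ii): one must use that the cluster variables appearing in a \emph{single} cluster $A_t$ are pairwise distinct, so that the value $-1$ is attained by at most one of the degrees $(A_{k;t_0}||A_{i;t})_d$, $i\in I$, and simultaneously that if $A_{k;t_0}$ equals one of them then $A_{k;t_0}\in A_t$ forces \emph{all} the others to be $\leq 0$ via Proposition \ref{rmkdf}(2). Everything else is a direct application of additivity together with the trichotomy ``negative iff equal, zero iff distinct but compatible, positive iff incompatible'' recorded in Proposition \ref{rmkdf}. I expect the cleanest writeup to treat the two cases ``$A_{k;t_0}$ appears in $M$'' and ``$A_{k;t_0}$ does not appear in $M$'' separately from the outset, since in the first case $A_{k;t_0}\in A_t$ trivializes part (ii), and in the second case every $(A_{k;t_0}||A_{i;t})_d\geq 0$ so the hypothesis $(A_{k;t_0}||M)_d\leq 0$ forces each term to vanish.
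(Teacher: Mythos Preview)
Your proposal is correct and follows essentially the same approach as the paper's proof: additivity of the $d$-degree (Remark \ref{rmkdadd2}(ii)) together with the trichotomy in Proposition \ref{rmkdf}. Part (i) is identical. For part (ii) the paper argues a touch more directly: from a hypothetical positive term and the sum being $\leq 0$ it immediately infers the existence of \emph{some} negative term, hence $A_{k;t_0}\in A_t$, hence a contradiction via Proposition \ref{rmkdf}(2); your discussion of why at most one term can equal $-1$ is correct but unnecessary, since the mere existence of one negative term already places $A_{k;t_0}$ in the cluster $A_t$.
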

\begin{proof}
(i) By $c_1,\ldots,c_n\geq0$ and $(A_{k;t_0}\mid\mid  M)_d<0$, we have
$$(A_{k;t_0}\mid\mid  M)_d=\sum_{i=1}^nc_i(A_{k;t_0}\mid\mid  A_{i;t})_d=\sum_{i\in I}c_i(A_{k;t_0}\mid\mid  A_{i;t})_d<0.$$
So there must exist some $j_0\in I$ such that $(A_{k;t_0}\mid\mid  A_{j_0;t})_d<0$. Then by Proposition \ref{rmkdf} (4), we get $A_{k;t_0}=A_{j_0;t}$.

(ii) Assume by contradiction there exists some $i_0\in I$ such that
$$(A_{k;t_0}\mid\mid  A_{i_0;t})_d>0.$$ Then by $(A_{k;t_0}\mid\mid  M)_d=\sum\limits_{i\in I}c_i(A_{k;t_0}\mid\mid  A_{i;t})_d\leq0$, there must exist some $i_1\in I$ such that $(A_{k;t_0}\mid\mid  A_{i_1;t})_d<0$. So we get $A_{k;t_0}=A_{i_1;t}$, by Proposition \ref{rmkdf} (4). Thus $A_{i_0;t}$ and $A_{k;t_0}=A_{i_1;t}$ are in the same cluster. Then by  Proposition \ref{rmkdf} (2), we get $(A_{k;t_0}\mid\mid  A_{i_0;t})_d\leq 0$. This contradicts  $(A_{k;t_0}\mid\mid  A_{i_0;t})_d>0$. So $(A_{k;t_0}\mid\mid  A_{i;t})_d\leq0$ for any $i\in I$.
\end{proof}

Let $\mathcal A$ be a cluster algebra and $\mathbb A$  the set of cluster variables of $\mathcal A$.
We say that two cluster variables $A_{i;t_0}$ and $A_{j;t}$ of $\mathcal A$ are {\em compatible}  if there exists a cluster $A_{t^\prime}$ of $\mathcal A$ containing both $A_{i;t_0}$ and $A_{j;t}$, which is equivalent to $(A_{i;t_0}\mid\mid  A_{j;t})_d\leq 0$, by Proposition \ref{rmkdf} (2). A subset $U$ of $\mathbb A$  is called a  {\em compatible set} if any two cluster variables in $U$ are compatible.
\begin{theorem}{\rm (}\cite[Theorem 13]{CL1}{\rm ).}  \label{thmcomp}
Let $\mathcal A$ be a cluster algebra and $\mathbb A$  the set of cluster variables of  $\mathcal A$. Then
\begin{itemize}
\item[(i)] a subset $U$ of $\mathbb A$ is a compatible set if and only if it is a subset of some  cluster of $\mathcal A$;
\item[(ii)] a subset $U$ of $\mathbb A$ is a maximal compatible set if and only it is a cluster of $\mathcal A$.
\end{itemize}
\end{theorem}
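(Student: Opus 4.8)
The plan is to prove part (i) first and then deduce part (ii) from it. The backward implication of (i) is trivial: any subset of a cluster of $\mathcal A$ consists of pairwise compatible cluster variables by Proposition \ref{rmkdf}~(2), since the ``some cluster $A_{t'}$'' witnessing compatibility can be taken to be the given cluster. So the content is the forward implication: a compatible set $U \subseteq \mathbb A$ lies inside a single cluster. First I would reduce to the finite case: a compatible set is in particular a set of pairwise compatible variables, and I claim it suffices to show every finite compatible set lies in a cluster. Indeed, if every finite subset of $U$ lies in a cluster, then since clusters have cardinality $n$, the subsets of $U$ that lie in a cluster have bounded size, so $U$ itself is finite. (Alternatively one argues $U$ is finite directly because $|U|\le n$ will come out of the finite argument.) Thus we may assume $U = \{A_{j_1;t_1},\ldots,A_{j_r;t_r}\}$ is finite.

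The core of the argument is an induction on $r$, the size of the compatible set, using Theorem \ref{thmacon} (connectedness of the subgraph of seeds whose clusters contain a fixed cluster variable) together with Corollary \ref{corn-1}. For $r \le 1$ there is nothing to prove. For the inductive step, suppose $U' = U \setminus \{A_{j_r;t_r}\}$ is contained in a cluster $A_s$; we must find a cluster containing all of $U$. Consider the subgraph $\Gamma$ of the exchange graph consisting of all seeds whose clusters contain $A_{j_r;t_r}$; by Theorem \ref{thmacon}, $\Gamma$ is connected, and $t_r \in \Gamma$. The strategy is to walk from $t_r$ through $\Gamma$ towards $s$, maintaining the invariant that the cluster stays compatible with $U'$ — more precisely, that at each step at most the variables of $U'$ not yet captured get "hit", and using Corollary \ref{cordmonomial} and the $d$-compatibility degree bookkeeping (Remark \ref{rmkdadd2}) to control which cluster variables can be exchanged out. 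The key point is that exchanging at an index $k$ only destroys the variable in position $k$, and a variable $A \in U'$ compatible with the current cluster which lies in the cluster cannot be the one exchanged out without... — here one needs the combinatorial fact that one can always choose the mutation direction so as not to lose an already-collected compatible variable, which follows because the set of collected variables is itself compatible, hence (by induction / by the structure of $\Gamma$) extends to a cluster, giving room to mutate.

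I expect the main obstacle to be exactly this last point: showing that as one moves within the connected subgraph $\Gamma$ (seeds containing $A_{j_r;t_r}$) from $t_r$ toward a seed whose cluster also contains $U'$, one can avoid ever mutating away a variable of $U'$ that has already been accumulated into the current cluster. The clean way to handle it is to instead do a double induction or to invoke Corollary \ref{corn-1}: take the cluster $A_s \supseteq U'$, and among all seeds in $\Gamma$ choose one, say $A_{s'}$, whose cluster shares as many variables as possible with $A_s$; if $A_{s'} \not\supseteq U'$, use connectedness of $\Gamma$ to find a neighbour in $\Gamma$ even closer to $A_s$, contradicting maximality — the neighbour relation and the "$\ge n-1$ common variables" criterion of Corollary \ref{corn-1} let one verify that the intermediate seeds on a path in $\Gamma$ realizing this closeness genuinely increase the overlap with $A_s$. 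Finally, part (ii) is immediate from (i): a maximal compatible set is contained in some cluster by (i), and a cluster is itself a compatible set of size $n$, so maximality forces $U$ to equal that cluster.
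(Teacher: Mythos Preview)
The paper does not give its own proof of Theorem~\ref{thmcomp}: the result is quoted from \cite{CL1} and used as a black box. So there is nothing in the paper to compare your argument against, and you should be aware that in \cite{CL1} the results you invoke (Theorem~\ref{thmacon}, Corollary~\ref{corn-1}, Proposition~\ref{rmkdf}) may be logically intertwined with Theorem~\ref{thmcomp} itself; you would need to check that source to be sure your reductions are non-circular.

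That said, your sketch has a genuine gap at the key step, independent of circularity issues. In the inductive step you have $U'\subseteq A_s$ and want a cluster containing $U=U'\cup\{A_{j_r;t_r}\}$. You let $\Gamma$ be the (connected) subgraph of seeds containing $A_{j_r;t_r}$ and pick $s'\in\Gamma$ maximizing $|A_{s'}\cap A_s|$. You then say: if $A_{s'}\not\supseteq U'$, ``use connectedness of $\Gamma$ to find a neighbour in $\Gamma$ even closer to $A_s$, contradicting maximality.'' But connectedness of $\Gamma$ does not give you a neighbour with strictly larger overlap with $A_s$: connectedness only lets you walk between vertices of $\Gamma$, and you have no target in $\Gamma$ known to contain $U'$ (that is precisely what you are trying to prove). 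Nor does Corollary~\ref{corn-1} help here, since it characterizes when two seeds are adjacent, not how overlap with a third fixed cluster changes along a mutation. Your earlier ``walk from $t_r$ toward $s$'' idea has the same defect: you would need to show that along such a walk one can always avoid mutating away an element of $U'$ already in the current cluster, and you have not produced a mechanism for that. A workable approach typically requires a more careful analysis of the $d$-vector of $A_{j_r;t_r}$ with respect to $A_s$ (using Proposition~\ref{rmkdf}) to locate an index $k$ with $A_{k;s}\notin U'$ at which mutating brings $A_{j_r;t_r}$ strictly ``closer,'' together with an induction on a suitable complexity measure; the bare maximal-overlap argument as you have written it does not close.
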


\subsection{Unique factorization domains}
 Recall that an integral domain is a non-zero commutative
ring in which the product of any two nonzero elements is nonzero. From now on, we always assume that $R$ is an integral domain and we denote by $R^\times$  the set of invertible elements in $R$.

Two non-zero elements $r_1,r_2\in R$
are {\em associate} if there exists some invertible element $s\in R^\times$ such that $r_2=sr_1$. A non-zero, non-invertible element $r$ in  $R$ is {\em irreducible}, if
any factorization $r=r_1r_2$ with $r_1,r_2\in R$ implies that either $r_1$ or $r_2$ belongs to $R^\times$. A non-zero, non-invertible element $r$ in $R$ is {\em prime},  if
whenever $r\mid r_1r_2$ for some  $r_1,r_2\in R$, then $r\mid r_1$ or $r\mid r_2$. Every prime element is
irreducible, but the converse is not true in general.

\begin{definition}
 An integral domain $R$
is  {\em factorial}  if the following hold:
\begin{itemize}
\item[(i)] Every non-zero, non-invertible element $r\in R$ can be written as a product
$r=a_1\cdots a_s$ of irreducible elements $a_i\in R$.
\item[(ii)] If $a_1\cdots a_s=b_1\cdots b_t$ with $a_i,b_j\in R$ irreducible for all $i$ and $j$, then
$s=t$ and there is a bijection $\sigma:\{1,\ldots,s\}\rightarrow \{1,\ldots,s\}$ such that $a_i$
and $b_{\sigma(i)}$ are associate for all $1\leq i\leq s$.
\end{itemize}

Factorial domains are also known as {\em unique factorization domains}.
\end{definition}
It is easy to see that in a factorial domain, all irreducible elements are prime.

\begin{theorem} \label{thmgls}
Let $\mathcal A$ be a geometric cluster algebra  with coefficient semifield $\mathbb P={\rm Trop}(Z_1,\ldots,Z_m)$ and
$\mathcal U$ the corresponding upper cluster algebra.
Then the following statements hold.
\begin{itemize}
\item[(i)] 
\begin{itemize}
\item[(a)] $\mathcal A^\times=\{\lambda Z_1^{c_1}\cdots Z_m^{c_m}\mid \lambda\in\mathbb K^\times,\; c_1,\ldots,c_m\in \mathbb Z\}$.
\item[(b)] Any cluster variable  is irreducible in $\mathcal A$.
\end{itemize}
\item[(ii)]
\begin{itemize}
\item[(a)] $\mathcal U^\times=\{\lambda Z_1^{c_1}\cdots Z_m^{c_m}\mid \lambda\in\mathbb K^\times,\; c_1,\ldots,c_m\in \mathbb Z\}$.
\item[(b)] Any cluster variable is irreducible  in $\mathcal U$.
\end{itemize}
\end{itemize}
\end{theorem}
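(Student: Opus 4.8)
The plan is to deduce part (ii) from part (i), which is the Geiss--Leclerc--Schröer result \cite[Theorem 1.3]{GLS13}. The key observation is the starfish-type description of $\mathcal U$ together with the inclusion $\mathcal A \subseteq \mathcal U$, which hold for arbitrary geometric cluster algebras (not only full rank ones), and the fact that both algebras live in the same ambient field $\mathcal F$.

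For part (ii)(a): the inclusion $\supseteq$ is clear since $\lambda Z_1^{c_1}\cdots Z_m^{c_m}$ is invertible already in $\mathbb{KP}\subseteq \mathcal A\subseteq\mathcal U$, with inverse $\lambda^{-1}Z_1^{-c_1}\cdots Z_m^{-c_m}$. For the inclusion $\subseteq$, suppose $u\in\mathcal U^\times$, so $u^{-1}\in\mathcal U\subseteq\mathcal L(t_0)=\mathbb{KP}[A_{1;t_0}^{\pm1},\ldots,A_{n;t_0}^{\pm1}]$. Then $u$ and $u^{-1}$ are mutually inverse elements of the Laurent polynomial ring $\mathcal L(t_0)=\mathbb K[Z_1^{\pm1},\ldots,Z_m^{\pm1},A_{1;t_0}^{\pm1},\ldots,A_{n;t_0}^{\pm1}]$, which is itself a Laurent polynomial ring over the domain $\mathbb K$; hence $u$ is a unit of $\mathcal L(t_0)$, i.e. $u=\lambda\, Z_1^{c_1}\cdots Z_m^{c_m}A_{1;t_0}^{e_1}\cdots A_{n;t_0}^{e_n}$ for some $\lambda\in\mathbb K^\times$ and integer exponents. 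Now I would use the same argument at a second seed, say $t_1=\mu_1(t_0)$: since $u\in\mathcal L(t_1)$ as well and is a unit there, writing $u$ as a Laurent monomial in $Z_\bullet$ and $A_{\bullet;t_1}$ and comparing with the expansion of $A_{j;t_1}$ as a genuine (non-monomial, by the exchange relation being non-invertible — here we use the standing assumption of Section \ref{rmkexbi}) Laurent polynomial in the $A_{\bullet;t_0}$, one forces $e_1=\cdots=e_n=0$. Thus $u=\lambda Z_1^{c_1}\cdots Z_m^{c_m}$, proving (ii)(a). (Alternatively, and perhaps more cleanly: $u\in\mathcal A$ would suffice by (i)(a); but a priori $u\in\mathcal U$ only, so one really does need the Laurent-ring unit argument; the monomial shape in one chart plus membership in a second chart is what pins it down.)

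For part (ii)(b): let $A_{k;t}$ be a cluster variable and suppose $A_{k;t}=fg$ with $f,g\in\mathcal U$. Apply the $d$-vector (numerator) formalism with respect to the seed $t$ itself: since $A_{k;t}$ is a cluster variable of the seed $t$, it is irreducible already in $\mathcal L(t)\supseteq\mathcal U$ up to the $\mathcal L(t)$-units, because in the Laurent polynomial ring $\mathcal L(t)=\mathbb{KP}[A_{1;t}^{\pm1},\ldots,A_{n;t}^{\pm1}]$ the element $A_{k;t}$ is a variable, hence prime. Concretely, from $A_{k;t}=fg$ in $\mathcal L(t)$ and primality of $A_{k;t}$ in $\mathcal L(t)$, one of $f,g$ — say $g$ — satisfies $A_{k;t}\mid g$ in $\mathcal L(t)$, so $f$ is a unit of $\mathcal L(t)$; then $f\in\mathcal U^\times$ by applying the already-proved unit description (a unit of $\mathcal U$ is a fortiori a unit of $\mathcal L(t)$, and conversely an element of $\mathcal U$ that is a unit in $\mathcal L(t)$ is a unit in $\mathcal U$ since its inverse, being $f^{-1}=A_{k;t}^{-1}g\in\mathcal L(t)$ but also $=g/(A_{k;t}) \cdot 1$... — more carefully: $f$ a unit of $\mathcal L(t)$ means $f=\lambda Z^{\mathbf c}A_t^{\mathbf e}$; but $f\in\mathcal U\subseteq\mathcal L(t_0)$ and dividing $A_{k;t}$ which lies in $\mathcal U$, one gets $g=A_{k;t}/f\in\mathcal U$, hence $f^{-1}=g/A_{k;t}\cdot 1$... better: $f\cdot g = A_{k;t}$ with $f$ a Laurent monomial and $g\in\mathcal U$, so $f^{-1}=g\cdot A_{k;t}^{-1}$, and since $g\in\mathcal U$ and $A_{k;t}^{-1}\in\mathcal L(t)$ we must also check $f^{-1}\in\mathcal U$; but $f^{-1}=\lambda^{-1}Z^{-\mathbf c}A_t^{-\mathbf e}$ is a Laurent monomial, which lies in $\mathcal U$ only if $\mathbf e=0$, and that is forced by the second-chart argument exactly as in (ii)(a)). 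This shows $f\in\mathcal U^\times$, so $A_{k;t}$ is irreducible in $\mathcal U$.

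The main obstacle is the passage from "$u$ is a Laurent monomial in one cluster chart" to "$u$ involves no cluster variables at all", i.e. ruling out exponents $e_i\neq 0$: this is exactly where one exploits that cluster variables from a mutated seed are honest (non-invertible, non-monomial) Laurent polynomials — which in turn relies on the convention of Section \ref{rmkexbi} that exchange binomials are not invertible in $\mathbb{KP}$. Everything else is bookkeeping with units of Laurent polynomial rings over a domain. I would present the argument cleanly by first establishing a small lemma: \emph{if $v\in\mathcal U$ is a Laurent monomial in $Z_1,\ldots,Z_m,A_{1;t_0},\ldots,A_{n;t_0}$, then its exponents of the $A_{i;t_0}$ all vanish} — proved by evaluating in $\mathcal L(\mu_1(t_0))$ and comparing supports — and then deriving both (ii)(a) and (ii)(b) formally from it together with part (i).
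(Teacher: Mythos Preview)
The paper's own proof of (ii) is a single sentence: ``the proof of (ii) is the same as that of (i)'', i.e.\ one reruns the argument of \cite{GLS13} with $\mathcal U$ in place of $\mathcal A$. Your plan for (ii)(a) is essentially that argument and is fine, modulo the minor point that a single adjacent chart $\mu_1(t_0)$ only kills $e_1$; you need all $n$ adjacent charts $\mu_i(t_0)$ to force every $e_i=0$.

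Your argument for (ii)(b), however, contains a genuine error. You assert that $A_{k;t}$ is ``a variable, hence prime'' in $\mathcal L(t)=\mathbb{KP}[A_{1;t}^{\pm1},\ldots,A_{n;t}^{\pm1}]$. This is false: $A_{k;t}$ is a \emph{unit} in $\mathcal L(t)$, since $A_{k;t}^{-1}\in\mathcal L(t)$. (You may be thinking of the polynomial ring, not the Laurent polynomial ring.) Your intermediate conclusion that $f$ is a unit of $\mathcal L(t)$ happens to be correct, but for the simpler reason that $fg=A_{k;t}\in\mathcal L(t)^\times$ forces both $f$ and $g$ to be units there. From this point on your argument becomes circular: to show $f\in\mathcal U^\times$ you need $f^{-1}\in\mathcal U$, and you try to get this ``exactly as in (ii)(a)''; but the second-chart argument in (ii)(a) used that \emph{both} $u$ and $u^{-1}$ lie in $\mathcal U$, which is precisely what you are trying to establish for $f$.

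Relatedly, your proposed closing lemma is false as stated: any cluster monomial $A_{1;t_0}^{e_1}\cdots A_{n;t_0}^{e_n}$ with $e_i\geq 0$ lies in $\mathcal U$ and has nonzero $A$-exponents. The correct statement (this is Lemma~\ref{lemmonomial} later in the paper, and is what the GLS13 argument actually uses) is: if $A_{1;t}^{b_1}\cdots A_{n;t}^{b_n}\in\mathcal U$ then $b_i\geq 0$ for all $i$. With this in hand (ii)(b) is immediate: from $A_{k;t}=fg$ with $f,g\in\mathcal U$, both factors are units of $\mathcal L(t)$, hence $f=\lambda Z^{\mathbf c}A_t^{\mathbf e}$ and $g=\lambda^{-1}Z^{-\mathbf c}A_t^{\mathbf e'}$ with $\mathbf e+\mathbf e'$ the $k$-th standard basis vector; the corrected lemma gives $\mathbf e,\mathbf e'\geq 0$, so one of $\mathbf e,\mathbf e'$ is zero and the corresponding factor lies in $\mathcal U^\times$ by (ii)(a).
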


\begin{proof}
(i) This is a result in \cite[Theorem 1.3]{GLS13}.

(ii) The proof is same as that of (i) given in  \cite{GLS13}.
\end{proof}

\section{The valuation pairing on an upper cluster algebra}
\label{sec3}
In this section, we  introduce the valuation pairing $(-\mid\mid  -)_v$ on an upper cluster algebra and prove the local unique factorization property for full rank  upper cluster algebras.

\begin{definition}[Valuation pairing] \label{defvalu} Let $\mathcal U$ be an upper cluster algebra and $\mathbb A$ the set of cluster variables of $\mathcal U$. The pairing
\begin{eqnarray}
(-\mid\mid  -)_v:\mathbb A\times\mathcal U&\rightarrow&\mathbb N\cup\{\infty\}\nonumber\\
(A_{k;t},M)&\mapsto&{\rm max}\{s\in\mathbb N\mid M/A_{k;t}^s\in\mathcal U\}\nonumber
\end{eqnarray}
is called the {\em valuation pairing} on $\mathcal U$.
\end{definition}
It is easy to see that $(A_{k;t}\mid\mid  M)_v=0$ if and only if $M/A_{k;t}\notin \mathcal U$.

\begin{example}
Let $\mathcal U$ be an upper cluster algebra with a seed $t$ and  $M$ a cluster variable in $t$. Then $$(A_{k;t}\mid\mid  M)_v=\begin{cases}1,&\text{if}\;M=A_{k;t};\\0,&\text{if}\;M=A_{j;t}\;\text{and}\;j\neq k.\end{cases}$$
\end{example}

Now we summarize some useful and easy facts on the valuation pairing in the following proposition.

\begin{proposition}\label{proind}
  Let $\mathcal U$ be an upper cluster algebra and $t_0$ a seed of $\mathcal U$.  Let $M$ and $L$ be two elements in $\mathcal U$. The following statements hold.
  \begin{itemize}
  \item[(i)] If $M/A_{k;t_0}^m\notin \mathcal U$ for some $m\in\mathbb Z$, then we also have $M/A_{k;t_0}^{m+1}\notin \mathcal U$.
  \item[(ii)] If $M\neq 0$, then there exists $m>0$ such that $$M/A_{k;t_0}^m\notin \mathcal U.$$ In particular,
  $(A_{k;t_0}\mid\mid  M)_v<\infty$.
  \item[(iii)] $(A_{k;t_0}\mid\mid  M)_v=\infty$ if and only if $M=0$.
  \item[(iv)] $(A_{k;t_0}\mid\mid  M+L)_v\geq{\rm min}\{(A_{k;t_0}\mid\mid  M)_v,(A_{k;t_0}\mid\mid  L)_v\}$.

\item[(v)] For any $s\geq 0$, we have $(A_{k;t_0}\mid\mid   A_{k;t_0}^s\cdot M)_v=s+(A_{k;t_0}\mid\mid  M)_v$.
  \end{itemize}
\end{proposition}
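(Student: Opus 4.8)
The plan is to prove each of the five statements using only the definition of the valuation pairing and the fact that $\mathcal{U}$ is a subring of the ambient field $\mathcal{F}$ in which each cluster variable $A_{k;t_0}$ is a non-zero element. The underlying observation, used repeatedly, is that for a fixed $A_{k;t_0}$ the condition ``$M/A_{k;t_0}^s \in \mathcal{U}$'' behaves well under the ring operations because $\mathcal{U}$ is closed under addition and multiplication and because divisibility by the (non-zero) element $A_{k;t_0}$ in the field $\mathcal{F}$ is unambiguous.

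For (i), I would argue contrapositively: if $M/A_{k;t_0}^{m+1}\in\mathcal{U}$, then multiplying by the cluster variable $A_{k;t_0}\in\mathcal{U}$ gives $M/A_{k;t_0}^m = A_{k;t_0}\cdot(M/A_{k;t_0}^{m+1})\in\mathcal{U}$, since $\mathcal{U}$ is closed under multiplication. For (ii), I would invoke the Laurent phenomenon (or simply the fact that $M\in\mathcal{U}\subseteq\mathcal{L}(t_0)=\mathbb{KP}[A_{1;t_0}^{\pm1},\ldots,A_{n;t_0}^{\pm1}]$): writing $M$ as a Laurent polynomial in the initial cluster, the exponent of $A_{k;t_0}$ appearing in $M$ is bounded below, so for $m$ large enough $M/A_{k;t_0}^m$ has a genuinely negative Laurent exponent in $A_{k;t_0}$ and hence cannot even lie in $\mathcal{L}(t_0)$, let alone in $\mathcal{U}$; finiteness of $(A_{k;t_0}||M)_v$ is then immediate from the definition as a maximum over a bounded, non-empty (it contains $s=0$) set of integers. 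Statement (iii) follows by combining (ii) with the trivial observation that $0/A_{k;t_0}^s = 0\in\mathcal{U}$ for all $s$, so the set $\{s : M/A_{k;t_0}^s\in\mathcal{U}\}$ is unbounded exactly when $M=0$.

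For (iv), set $s=\min\{(A_{k;t_0}||M)_v,(A_{k;t_0}||L)_v\}$; by definition both $M/A_{k;t_0}^s$ and $L/A_{k;t_0}^s$ lie in $\mathcal{U}$, hence so does their sum $(M+L)/A_{k;t_0}^s$, and therefore $(A_{k;t_0}||M+L)_v\geq s$. For (v), one inclusion is clear: if $M/A_{k;t_0}^r\in\mathcal{U}$ then $(A_{k;t_0}^s M)/A_{k;t_0}^{s+r}\in\mathcal{U}$, giving $(A_{k;t_0}||A_{k;t_0}^s M)_v\geq s+(A_{k;t_0}||M)_v$. Conversely, if $(A_{k;t_0}^s M)/A_{k;t_0}^{\ell}\in\mathcal{U}$ with $\ell\geq s$ (we may assume $\ell\geq s$, else replace $\ell$ by $s$ using (i)), then it equals $M/A_{k;t_0}^{\ell-s}$, forcing $\ell-s\leq(A_{k;t_0}||M)_v$, i.e. $\ell\leq s+(A_{k;t_0}||M)_v$; taking the maximum over admissible $\ell$ gives the reverse inequality. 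I do not anticipate a real obstacle here: the only point requiring slight care is part (ii), where one must actually use that elements of $\mathcal{U}$ are honest Laurent polynomials (not arbitrary elements of $\mathcal{F}$) so that the $A_{k;t_0}$-adic ``order'' is bounded below; everything else is formal manipulation inside the ring $\mathcal{U}$.
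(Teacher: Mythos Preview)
Your arguments for (i), (iii), (iv), and (v) are fine and essentially match the paper. The issue is part (ii): your proposed argument does not work. You write $M$ as a Laurent polynomial in the initial cluster and then claim that for $m$ large enough $M/A_{k;t_0}^m$ ``has a genuinely negative Laurent exponent in $A_{k;t_0}$ and hence cannot even lie in $\mathcal L(t_0)$''. But $\mathcal L(t_0)=\mathbb{KP}[A_{1;t_0}^{\pm1},\ldots,A_{n;t_0}^{\pm1}]$ is a \emph{Laurent} polynomial ring, so negative exponents are perfectly allowed: $A_{k;t_0}$ is a unit in $\mathcal L(t_0)$, and $M/A_{k;t_0}^m\in\mathcal L(t_0)$ for every $m$. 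You cannot detect the failure of membership in $\mathcal U$ by looking only at the seed $t_0$.

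The paper's proof of (ii) supplies exactly the missing idea. One passes to the mutated seed $t_1=\mu_k(t_0)$ and uses the exchange relation $A_{k;t_0}=A_{k;t_1}^{-1}P$, where $P$ is the $k$-th exchange binomial, a polynomial in the variables $A_{i;t_1}$ with $i\neq k$. The standing assumption of the paper (Subsection~\ref{rmkexbi}) guarantees that $P$ is not invertible in $\mathbb{KP}$, hence not a unit in $\mathbb{KP}[A_{1;t_1}^{\pm1},\ldots,A_{n-1;t_1}^{\pm1}]$. Writing $M=\sum_s A_{k;t_0}^s L_s$ with $L_s$ free of $A_{k;t_0}^{\pm1}$ and picking $s_0$ with $L_{s_0}\neq 0$, factoriality of that Laurent polynomial ring forces $L_{s_0}/P^{m_0}$ to fall out of it for $m_0$ large; rewriting $M/A_{k;t_0}^m$ in the cluster $A_{t_1}$ then shows it is not in $\mathcal L(t_1)$, hence not in $\mathcal U$. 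In short, the obstruction to infinite divisibility by $A_{k;t_0}$ is visible only from a neighbouring chart, and it relies on the non-triviality of the exchange binomial.
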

\begin{proof}
(i) Assume by contradiction that $M/A_{k;t_0}^{m+1}\in \mathcal U$, then $$M/A_{k;t_0}^m=A_{k;t_0}\cdot (M/A_{k;t_0}^{m+1})\in\mathcal U.$$ This is a contradiction. So $M/A_{k;t_0}^{m+1}\notin \mathcal U$.

(ii) Without loss of generality, we can assume that $k=n$.
As an element in $${\mathcal L}(t_0)=\mathbb {KP}[A_{1;t_0}^{\pm 1},\ldots,A_{n;t_0}^{\pm 1}]=\mathbb {KP}[A_{1;t_0}^{\pm 1},\ldots, A_{n-1;t_0}^{\pm 1}][A_{n;t_0}^{\pm 1}],$$
 $M$ can be written as
$$M=\sum\limits_{s\in\mathbb Z}A_{n;t_0}^sL_s,$$
where $L_s$ is a Laurent polynomial in $\mathbb {KP}[A_{1;t_0}^{\pm 1},\ldots, A_{n-1;t_0}^{\pm 1}]$, i.e., $L_s$ does not contain any $A_{n;t_0}^{\pm 1}$.
By $M\neq 0$, we know that there exists some $s_0$ such that $L_{s_0}\neq 0$.  Let $t_1=\mu_{n}(t_0)$ and thus $t_0=\mu_n(t_1)$. Applying the exchange relation, we have
 $$A_{i;t_0}=A_{i;t_1}\;\;\text{and}\;\;\; A_{n;t_0}=A_{n;t_1}^{-1}P$$
where $i=1,\ldots,n-1$ and $P\in\mathbb {KP}[A_{1;t_1}^{\pm 1},\ldots, A_{n-1;t_1}^{\pm 1}]$ is the $n$-th exchange  binomial of the seed $t_1$. Now each $L_s$ can be also viewed as a Laurent polynomial in  $\mathbb {KP}[A_{1;t_1}^{\pm 1},\ldots, A_{n-1;t_1}^{\pm 1}]$. By our assumption in Subsection \ref{rmkexbi}, we know that $P$ is not invertible in $\mathbb {KP}[A_{1;t_1}^{\pm 1},\ldots, A_{n-1;t_1}^{\pm 1}]$.
Then by the fact that $\mathbb {KP}[A_{1;t_1}^{\pm 1},\ldots, A_{n-1;t_1}^{\pm 1}]$ is factorial and $L_{s_0}$ belongs to  $\mathbb {KP}[A_{1;t_1}^{\pm 1},\ldots, A_{n-1;t_1}^{\pm 1}]$, we know that there must exist some $m_0$ large enough such that $$L_{s_0}/P^{m_0}\notin \mathbb {KP}[A_{1;t_1}^{\pm 1},\ldots, A_{n-1;t_1}^{\pm 1}].$$ Set $m=m_0+s_0$. Then we know that
$$
M/A_{n;t_0}^m=\sum\limits_{s\in\mathbb Z}A_{n;t_0}^{s-m}L_s=\sum\limits_{s\in\mathbb Z}A_{n;t_1}^{m-s}(L_s/P^{m-s})\nonumber
$$
cannot be a Laurent polynomial in $\mathbb {KP}[A_{1;t_1}^{\pm1},\ldots,A_{n;t_1}^{\pm1}]$. So $M/A_{n;t_0}^m$ is not in $\mathcal U$. Then by (i), we know that  $(A_{k;t_0}\mid\mid  M)_v<m<\infty$.

(iii) This follows from (ii) and the fact $(A_{k;t_0}\mid\mid  0)_v=\infty$.

The results in (iv) and (v) follow from the definition of the valuation pairing.
\end{proof}

\begin{definition}[Local factorization] \label{deflocal}
Let $\mathcal U$ be an upper cluster algebra and $t$ a seed of $\mathcal U$. Let $M=N\cdot L$ be a factorization in $\mathcal U$. If $N$ is a cluster monomial in $t$ and $L\in\mathcal U$ satisfies $(A_{k;t}\mid\mid  L)_v=0$, that is, $L/A_{k;t}\notin \mathcal U$ for $k=1,\ldots,n$, we call $M=N\cdot L$ a {\em local factorization} of $M$ with respect to $t$.
\end{definition}

\begin{proposition}[Existence of local factorization] \label{proexist} Let $\mathcal U$ be an upper cluster algebra with coefficient semifield $\mathbb P$ and $t_0$ a seed of $\mathcal U$. Then any $0\neq M\in\mathcal U$ admits a local factorization with respect to $t_0$.
\end{proposition}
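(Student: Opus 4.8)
The plan is to produce the local factorization by extracting, one cluster variable at a time, the maximal power of each $A_{k;t_0}$ that can be divided out while staying inside $\mathcal U$. Concretely, set $s_k = (A_{k;t_0}||M)_v$ for $k=1,\ldots,n$. Each $s_k$ is a well-defined natural number: finiteness is exactly Proposition \ref{proind}(ii), and by definition of the valuation pairing the element $M/A_{k;t_0}^{s_k}$ lies in $\mathcal U$. The first task is to check that these extractions can be performed simultaneously, i.e. that
$$N := A_{1;t_0}^{s_1}\cdots A_{n;t_0}^{s_n}, \qquad L := M/N$$
satisfies $L\in\mathcal U$. This requires an inductive argument: having divided out $A_{1;t_0}^{s_1}$, one must argue that the valuation of the quotient with respect to $A_{2;t_0}$ is still $s_2$ (not smaller), and so on. The key point is that the $A_{j;t_0}$ with $j\neq k$ are units in the Laurent polynomial ring $\mathcal L(t_k) = \mathbb{KP}[A_{1;t_k}^{\pm 1},\ldots,A_{n;t_k}^{\pm1}]$ where $t_k=\mu_k(t_0)$ — since $A_{j;t_0}=A_{j;t_k}$ for $j\neq k$ and these remain cluster variables there, hence invertible in $\mathcal L(t_k)$ — so multiplying or dividing by $A_{j;t_0}^{\pm 1}$ does not change membership in $\mathcal L(t_k)$, and therefore does not change the $A_{k;t_0}$-valuation. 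Thus the $s_k$ can be peeled off independently and $L\in\mathcal U$.

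Next I would verify that $N$ is genuinely a cluster monomial in $t_0$, which is immediate since $s_k\geq 0$ for all $k$, and that $L$ has $(A_{k;t_0}||L)_v = 0$ for every $k$. For the latter: by Proposition \ref{proind}(v), $s_k = (A_{k;t_0}||M)_v = (A_{k;t_0}||A_{k;t_0}^{s_k}\cdot L)_v = s_k + (A_{k;t_0}||L)_v$, using that $L$ differs from $M/A_{k;t_0}^{s_k}$ only by a unit of $\mathcal L(t_k)$ (the factor $A_{j;t_0}^{s_j}$, $j\neq k$) and hence has the same $A_{k;t_0}$-valuation. Cancelling $s_k$ gives $(A_{k;t_0}||L)_v = 0$, i.e. $L/A_{k;t_0}\notin\mathcal U$. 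This is precisely the condition in Definition \ref{deflocal}, so $M = N\cdot L$ is a local factorization with respect to $t_0$.

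The main obstacle is the independence-of-extractions step in the first paragraph: one must be careful that peeling off $A_{1;t_0}^{s_1}$ does not inadvertently lower the achievable power of $A_{2;t_0}$. This is where the observation that each $A_{j;t_0}$ ($j\neq k$) is a unit in $\mathcal L(t_k)$ — combined with the starfish description $\mathcal U = \bigcap_{i=0}^n \mathcal L(t_i)$ of Theorem \ref{proup} in the full rank case, or simply the defining intersection $\mathcal U=\bigcap_t\mathcal L(t)$ in general — does the work: membership of an element $E$ in $\mathcal U$ is tested ring by ring, and in the ring $\mathcal L(t_k)$ relevant to the $A_{k;t_0}$-valuation the other $A_{j;t_0}$ act invertibly, so $E\in\mathcal U \iff A_{j;t_0}^{\pm 1}E\in\mathcal U$. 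Everything else is a routine bookkeeping of the valuation identities in Proposition \ref{proind}. Note that no full-rank hypothesis is needed here; the statement holds for an arbitrary upper cluster algebra.
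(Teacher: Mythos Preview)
Your proof has a genuine gap in the non-full-rank case. You set $s_k=(A_{k;t_0}\|M)_v$ for each $k$ independently and then claim that $L=M/\prod_k A_{k;t_0}^{s_k}$ lies in $\mathcal U$. The justification you offer is that the $A_{j;t_0}$ with $j\neq k$ are units in $\mathcal L(t_k)$, ``so $E\in\mathcal U \iff A_{j;t_0}^{\pm 1}E\in\mathcal U$''. But this equivalence is false: membership in $\mathcal U$ is tested against \emph{every} $\mathcal L(t)$, not just $\mathcal L(t_k)$, and in a distant seed $t$ none of the $A_{j;t_0}$ need be units. What you are really invoking is the Reduction Lemma~\ref{lemadd}(ii)(iii), which identifies the $A_{k;t_0}$-valuation with a condition in $\mathcal L(t_k)$ alone---and that lemma requires the full-rank hypothesis via the starfish theorem. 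Without it, extracting the valuations simultaneously can take you outside $\mathcal U$.

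The paper's own Example after Theorem~\ref{thmunique} already witnesses the failure. There $M=A_{2;t_0}+1$ satisfies $(A_{1;t_0}\|M)_v=(A_{3;t_0}\|M)_v=1$ and $(A_{2;t_0}\|M)_v=0$, so your recipe produces $N=A_{1;t_0}A_{3;t_0}$ and $L=(A_{2;t_0}+1)/(A_{1;t_0}A_{3;t_0})=A_{1;t_1}/A_{3;t_0}$. But the paper records that $A_{1;t_1}$ is \emph{not} divisible by $A_{3;t_0}$ in $\mathcal U$, so this $L$ is not in $\mathcal U$ and your factorization does not exist. The two genuine local factorizations there have exponent vectors $(1,0,0)$ and $(0,0,1)$; your $(1,0,1)$ is their coordinatewise maximum, which is too greedy.

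The fix is exactly what the paper does: define the exponents \emph{sequentially}, setting $m_1=(A_{1;t_0}\|M)_v$ and then $m_{k+1}=(A_{k+1;t_0}\|M/(A_{1;t_0}^{m_1}\cdots A_{k;t_0}^{m_k}))_v$. This guarantees at each stage that the running quotient stays in $\mathcal U$, and the verification that $(A_{k;t_0}\|L)_v=0$ for all $k$ is then a short contradiction argument (if $L/A_{k;t_0}\in\mathcal U$, multiply back the later factors to contradict the maximality of $m_k$). No starfish-type input is needed. Your direct approach is salvageable only under the full-rank assumption, where it coincides with the paper's via Lemma~\ref{lemadd}(iii).
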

\begin{proof}
We first define a cluster monomial $N=A_{1;t_0}^{m_1}\cdots A_{n;t_0}^{m_n}$ in $t_0$.  Let $m_1:=(A_{1;t_0}\mid\mid  M)_v$ and  $m_2,\ldots,m_n$ are defined by induction. If $m_{k}$ has been defined, $m_{k+1}$ is defined as follows:
$$m_{k+1}:=(A_{k+1;t_0}\mid\mid  \frac{M}{A_{1;t_0}^{m_1}\cdots A_{k;t_0}^{m_k}})_v.$$
Thus the non-negative integers $m_1,\ldots,m_n$ are defined. So we get a cluster monomial $N=A_{1;t_0}^{m_1}\cdots A_{n;t_0}^{m_n}$ in $t_0$ and we know that $L:=M/N\in\mathcal U$.

Now we show that $L=M/N$ satisfies $(A_{k;t_0}\mid\mid  L)_v=0$, that is, $L/A_{k;t_0}\notin\mathcal U$ for $k=1,\ldots,n$. Assume by contradiction that there exists some $k\in\{1,\ldots,n\}$ such that
$$L/A_{k;t_0}=\frac{M}{A_{1;t_0}^{m_1}\cdots A_{k-1;t_0}^{m_{k-1}}A_{k;t_0}^{m_k+1}A_{k+1;t_0}^{m_{k+1}}\cdots A_{n;t_0}^{m_n}}\in\mathcal U.$$
Thus we get $$\frac{M}{A_{1;t_0}^{m_1}\cdots A_{k-1;t_0}^{m_{k-1}}A_{k;t_0}^{m_k+1}}=A_{k+1;t_0}^{m_{k+1}}\cdots A_{n;t_0}^{m_n}(L/A_{k;t_0})\in\mathcal U,$$
which contradicts the choice of $m_k$. So $L/A_{k;t_0}\notin\mathcal U$ for $k=1,\ldots,n$. Thus $M=N\cdot L$ is a local factorization of $M$ with respect to $t_0$.
This completes the proof.
\end{proof}

\begin{lemma}[Reduction Lemma]\label{lemadd}
Let $\mathcal U$ be a full rank upper cluster algebra and $t_0$ a seed of $\mathcal U$. Let $L$ be a non-zero element in $\mathcal U$ and $N=A_{1;t_0}^{m_1}\cdots A_{n;t_0}^{m_n}$ a cluster monomial in $t_0$. Then the following statements hold.
\begin{itemize}
\item[(i)] $L/A_{k;t_0}^s\in\mathcal U$ if and only if $L/A_{k;t_0}^s\in\mathcal L(t_k)$, where $s\in\mathbb N$, $t_k=\mu_k(t_0)$ and $$\mathcal L(t_k)=\mathbb{KP}[A_{1;t_k}^{\pm1},\ldots,A_{n;t_k}^{\pm1}].$$
\item[(ii)]  $(A_{k;t_0}\mid\mid  L)_v={\rm max}\{s\in\mathbb N\mid L/A_{k;t_0}^s\in\mathcal L(t_k)\}$.
\item[(iii)] $(A_{k;t_0}\mid\mid  N\cdot L)_v=m_k+(A_{k;t_0}\mid\mid  L)_v$.
\end{itemize}
\end{lemma}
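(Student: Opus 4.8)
The plan is to derive all three statements from the Starfish theorem (Theorem \ref{proup}), which expresses $\mathcal U$ as the intersection $\bigcap_{i=0}^{n}\mathcal L(t_i)$ with $t_j=\mu_j(t_0)$. The key observation is that among these $n+1$ Laurent polynomial rings, the only one in which $A_{k;t_0}$ fails to be invertible is $\mathcal L(t_k)$: in every other chart $\mathcal L(t_i)$, $i\neq k$, dividing by a power of $A_{k;t_0}$ changes nothing, so the membership of $L/A_{k;t_0}^s$ in $\mathcal U$ is governed entirely by the single ring $\mathcal L(t_k)$.

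To prove (i), the forward implication is immediate from $\mathcal U\subseteq\mathcal L(t_k)$. For the converse, assume $L/A_{k;t_0}^s\in\mathcal L(t_k)$. Since $L\in\mathcal U\subseteq\mathcal L(t_i)$ for all $i$, it remains to check $L/A_{k;t_0}^s\in\mathcal L(t_i)$ for each $i\neq k$. In $\mathcal L(t_0)$ the variable $A_{k;t_0}$ is a unit by definition; and for $i\in\{1,\dots,n\}$ with $i\neq k$, the mutation $\mu_i$ fixes every cluster variable except the $i$-th, so $A_{k;t_0}=A_{k;t_i}$ is a cluster variable of $t_i$ and hence a unit of $\mathcal L(t_i)$. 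Thus $A_{k;t_0}^{-s}\in\mathcal L(t_i)$ for every $i\neq k$, which gives $L/A_{k;t_0}^s\in\bigcap_{i=0}^n\mathcal L(t_i)=\mathcal U$. Statement (ii) is then just the definition of the valuation pairing rewritten through (i): $(A_{k;t_0}||L)_v=\max\{s\in\mathbb N\mid L/A_{k;t_0}^s\in\mathcal U\}=\max\{s\in\mathbb N\mid L/A_{k;t_0}^s\in\mathcal L(t_k)\}$, the right-hand side being finite because $L\neq 0$ (Proposition \ref{proind}(ii)).

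For (iii), write $N=A_{k;t_0}^{m_k}\cdot N'$ where $N'=\prod_{j\neq k}A_{j;t_0}^{m_j}$. By Proposition \ref{proind}(v) applied to $A_{k;t_0}^{m_k}\cdot(N'L)=NL$, it suffices to prove $(A_{k;t_0}||N'L)_v=(A_{k;t_0}||L)_v$. Using (ii), both valuations are computed inside $\mathcal L(t_k)$; and since $A_{j;t_0}=A_{j;t_k}$ for $j\neq k$, the monomial $N'$ is a product of cluster variables of $t_k$ with non-negative exponents, hence a unit in the Laurent polynomial ring $\mathcal L(t_k)$. Consequently $N'L/A_{k;t_0}^s\in\mathcal L(t_k)$ if and only if $L/A_{k;t_0}^s\in\mathcal L(t_k)$, so the two maxima in (ii) agree and (iii) follows.

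I do not expect a serious obstacle; the argument is essentially careful bookkeeping of which cluster variables survive the one-step mutations $\mu_i(t_0)$, combined with the Starfish theorem. The one point to keep in mind is the standing assumption of Subsection \ref{rmkexbi} that exchange binomials are non-invertible: this is what makes $(A_{k;t_0}||L)_v$ finite (via Proposition \ref{proind}(ii)) and the maxima in (ii) attained, so that the identity in (iii) is a genuine equality of integers rather than merely of symbols in $\mathbb N\cup\{\infty\}$.
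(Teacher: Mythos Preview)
Your proof is correct and follows essentially the same approach as the paper: both use the Starfish theorem to reduce (i) to the observation that $A_{k;t_0}$ is a unit in every $\mathcal L(t_i)$ with $i\neq k$, derive (ii) directly from (i) and the definition, and prove (iii) by stripping off the unit $\prod_{j\neq k}A_{j;t_0}^{m_j}$ in $\mathcal L(t_k)$ via (ii) and then invoking Proposition~\ref{proind}(v). The only cosmetic difference is the order in which you factor $N$ and apply Proposition~\ref{proind}(v), which is immaterial.
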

\begin{proof}
(i)
By the Starfish Theorem \ref{proup}, we know that  $\mathcal U=\bigcap\limits_{i=0}^n\mathcal L(t_i)$. Since $A_{k;t_0}$ is invertible in $\mathcal L(t_i)$ for any $i\in\{0,1,\ldots,n\}\backslash\{k\}$, we have $L/A_{k;t_0}^s\in\mathcal L(t_i)$ for any $i\in\{0,1,\ldots,n\}\backslash\{k\}$. Thus we get that $L/A_{k;t_0}^s\in\mathcal U$ if and only if $L/A_{k;t_0}^s\in\mathcal L(t_k)$.

(ii) This follows from (i) and the definition of the valuation pairing.

(iii) Without loss of generality, we assume $k=1$. Since $A_{2;t_0}^{m_2}\cdots A_{n;t_0}^{m_n}$ is invertible in $\mathcal L(t_1)$ and by (ii), we have  $$(A_{1;t_0}\mid\mid  N\cdot L)_v=(A_{1;t_0}\mid\mid  \frac{N\cdot L}{A_{2;t_0}^{m_2}\cdots A_{n;t_0}^{m_n}})_v=(A_{1;t_0}\mid\mid  A_{1;t_0}^{m_1}\cdot L)_v.$$ Then by Proposition \ref{proind} (v), we get $$(A_{1;t_0}\mid\mid   A_{1;t_0}^{m_1}\cdot L)_v=m_1+(A_{1;t_0}\mid\mid  L)_v.$$
This completes the proof.
\end{proof}

\begin{theorem}[Local unique factorization property]\label{thmunique}
Let $\mathcal U$ be a full rank upper cluster algebra and $t_0$ a seed of $\mathcal U$. Then
  any $0\neq M\in\mathcal U$ admits a unique local factorization with respect to $t_0$.
\end{theorem}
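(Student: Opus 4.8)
The plan is to prove existence and uniqueness separately, with existence already handled by Proposition~\ref{proexist}; the work is in the uniqueness part, and the Reduction Lemma~\ref{lemadd} is the tool that makes it go through. So suppose $M = N\cdot L = N'\cdot L'$ are two local factorizations with respect to $t_0$, where $N = A_{1;t_0}^{m_1}\cdots A_{n;t_0}^{m_n}$ and $N' = A_{1;t_0}^{m_1'}\cdots A_{n;t_0}^{m_n'}$ are cluster monomials in $t_0$ and $(A_{k;t_0}\|L)_v = (A_{k;t_0}\|L')_v = 0$ for all $k$. The strategy is to compute $(A_{k;t_0}\|M)_v$ in two ways using Lemma~\ref{lemadd}(iii): on the one hand $(A_{k;t_0}\|M)_v = (A_{k;t_0}\|N\cdot L)_v = m_k + (A_{k;t_0}\|L)_v = m_k$, and on the other hand $(A_{k;t_0}\|M)_v = m_k' + (A_{k;t_0}\|L')_v = m_k'$. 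Hence $m_k = m_k'$ for every $k$, so $N = N'$, and then $L = M/N = M/N' = L'$ (the cancellation is legitimate because $N$ is a nonzero element of the integral domain $\mathcal{U}$, being a monomial in the cluster variables of $t_0$, each of which is nonzero).

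The one point that needs care is that Lemma~\ref{lemadd}(iii) requires $L$ (respectively $L'$) to be a \emph{nonzero} element of $\mathcal{U}$; this is fine since $M\neq 0$ forces both factors to be nonzero. I would also note explicitly that $(A_{k;t_0}\|M)_v < \infty$ by Proposition~\ref{proind}(ii), so the equalities above are between genuine natural numbers and the argument does not degenerate. It is worth emphasizing that the full rank hypothesis enters precisely through the starfish Theorem~\ref{proup}, which underlies the Reduction Lemma; without it the valuation pairing would not interact so cleanly with the local structure at $t_0$.

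The main obstacle — really the only substantive one — is making sure the identity $(A_{k;t_0}\|N\cdot L)_v = m_k + (A_{k;t_0}\|L)_v$ is applied correctly, and this is exactly what Lemma~\ref{lemadd}(iii) provides, so the proof is short. I would therefore write: by Proposition~\ref{proexist} such a local factorization exists. For uniqueness, given two local factorizations $M = N\cdot L = N'\cdot L'$ as above, Lemma~\ref{lemadd}(iii) gives $m_k = (A_{k;t_0}\|N\cdot L)_v - (A_{k;t_0}\|L)_v = (A_{k;t_0}\|M)_v = (A_{k;t_0}\|N'\cdot L')_v - (A_{k;t_0}\|L')_v = m_k'$ for each $k \in \{1,\dots,n\}$, using that $(A_{k;t_0}\|L)_v = (A_{k;t_0}\|L')_v = 0$ by the definition of a local factorization. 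Thus $N = N'$, and cancelling the nonzero element $N$ in the integral domain $\mathcal{U}$ yields $L = L'$, which completes the proof.
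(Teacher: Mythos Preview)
Your proof is correct and follows essentially the same approach as the paper: existence via Proposition~\ref{proexist}, and uniqueness by applying the Reduction Lemma~\ref{lemadd}(iii) to show that the exponents $m_k$ in any local factorization are forced to equal $(A_{k;t_0}\|M)_v$. The paper phrases uniqueness slightly more economically (showing directly that $m_k$ is determined by $M$ rather than comparing two factorizations), but the argument is the same.
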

\begin{proof}
The existence of local factorization of $M$ with respect to $t_0$ is known  from Proposition \ref{proexist}.
Now we show the uniqueness.
Let $M=N\cdot L$ be a  local factorization of $M$ with respect to $t_0$, where $N=A_{1;t_0}^{m_1}\cdots A_{n;t_0}^{m_n}$ is a cluster monomial in $t_0$ and $L\in\mathcal U$ satisfies $(A_{k;t_0}\mid\mid  L)_v=0$ for $k=1,\ldots,n$. By the Reduction Lemma \ref{lemadd} (iii), we have $$(A_{k;t_0}\mid\mid  M)_v=(A_{k;t_0}\mid\mid  N\cdot L)_v=m_k+(A_{k;t_0}\mid\mid  L)_v=m_k.$$
So $m_k=(A_{k;t_0}\mid\mid  M)_v$ is uniquely determined by $M$. Namely, $N$ is uniquely determined by $M$. This completes the proof.
\end{proof}
Now we give a counter-example to the local unique factorization property in the case where $\widetilde B_{t_0}$ is {\em not full rank}.

\begin{example}
Let $\mathcal U$ be a geometric upper cluster algebra with initial extended exchange matrix $\widetilde B_{t_0}$ given by $$\widetilde B_{t_0}= B_{t_0}=\begin{pmatrix}0&-1&0\\1&0&-1\\0&1&0\end{pmatrix}.$$
Clearly, $\widetilde B_{t_0}$ is not full rank.
Denote $t_1=\mu_1(t_0)$ and $t_3=\mu_3(t_0)$. Applying the exchange relations, we have the following equality.
\begin{eqnarray}\label{eqnexample}
A_{1;t_0}A_{1;t_1}=A_{2;t_0}+1=A_{3;t_0}A_{3;t_3}.
\end{eqnarray}
It is easy to check that both $A_{1;t_1}$ and $A_{3;t_3}$ are not divisible by any cluster variable in $t_0$ (alternatively, one can also refer to Theorem \ref{thmup} (ii)). So the equality (\ref{eqnexample}) gives two different local factorizations of $A_{2;t_0}+1$ with respect to the initial seed $t_0$.
\end{example}

\section{Application to factoriality of upper cluster algebras}\label{sec4}

In this section we give several equivalent characterizations for the factoriality of upper cluster algebras.  As an application, we show that full rank, primitive upper cluster algebras are factorial.

\subsection{Factoriality of partially compactified upper cluster algebras}\label{sec:40}

In this subsection, we reduce the factoriality of partially compactified upper cluster algebras to the factoriality of upper cluster algebras. 

Given a geometric upper cluster algebra $\mathcal U$,  let $\mathcal A$ be the corresponding cluster algebra. By definition, all the frozen variables are invertible in  $\mathcal U$ and $\mathcal A$. However, when studying the cluster structure on the coordinate rings of various algebraic varieties, it is important to allow that some of the frozen variables are not inverted.

We use ${\bf inv}$ to denote a subset of the set $\{Z_1,\cdots,Z_m\}$ of frozen variables. For convenience, we assume ${\bf inv}=\{Z_{p+1},\cdots,Z_m\}$. Denote by 
$${\mathcal L}(t,{\bf inv})=\mathbb K[Z_1,\cdots,Z_p,Z_{p+1}^{\pm1},\cdots,Z_m^{\pm1}][A_{1;t}^{\pm1},\cdots,A_{n;t}^{\pm1}]$$
the {\em partially compactified Laurent
polynomial ring}.

The {\em partially compactified upper cluster algebra} $\mathcal U({\bf inv})$ is defined to be
 $$\mathcal U({\bf inv})=\bigcap\limits_{t\in\mathbb T_n} {\mathcal L}(t,{\bf inv}).$$
 The {\em partially compactified cluster algebra} $\mathcal A({\bf inv})$ is defined to be
 $$\mathcal A({\bf inv})=\mathbb K[Z_1,\cdots,Z_p,Z_{p+1}^{\pm1},\cdots,Z_m^{\pm1}][A_{1;t},\cdots,A_{n;t}\mid t\in\mathbb T_n].$$
 Clearly, we have the inclusions $\mathcal U({\bf inv})\subseteq \mathcal U$ and $\mathcal A({\bf inv})\subseteq \mathcal A$. Moreover, the equality $\mathcal U({\bf inv})=\mathcal U$ (or $\mathcal A({\bf inv})=\mathcal A$) holds if and only if ${\bf inv}=\{Z_1,\cdots,Z_m\}$.

A domain $R$ is said to be {\em atomic} if every non-zero, non-invertible element $r\in R$   can be decomposed as a product of irreducible elements.

\begin{lemma}{\rm (}\cite[Corollary 1.20]{GLS19}{\rm ).} \label{lempp2}
Let $R$ be an atomic domain and $S\subseteq R\setminus\{0\}$ a multiplicative set generated by prime elements. Then $R$ is factorial if and only if the localization $S^{-1}R$ is.
\end{lemma}

\begin{remark}\label{rmk:atomic}
In the above lemma, the assumption that $R$ is atomic is missing in the original statement of \cite[Corollary 1.20]{GLS19}. We thank 
the first and third authors of \cite{GLS19} for pointing this out in an email message \cite{GS_2022}. In the same email message, they also included a proof that any geometric (upper) 
cluster algebra is an atomic domain.
Their proof actually also works for partially compactified (upper) cluster algebras.
\end{remark}

\begin{proposition}[\cite{GS_2022}]
   \label{pro:atomic}
  $\mathcal U({\bf inv})$ and $\mathcal A({\bf inv})$ are atomic.
\end{proposition}
The proof of the above proposition is put in the appendix. We remark that the main idea for the proof follows that of \cite{GS_2022} with some modifications and simplifications.

\begin{lemma}\label{lem:l-inv}
 Let $M$ be an element in $\mathcal U$. Then $M$ belongs to $\mathcal U({\bf inv})$ if and only if $M$ belongs to $\mathcal L(t,{\bf inv})$ for some seed $t$.
\end{lemma}
 \begin{proof}
 ``$\Longrightarrow$": This is clear.

 ``$\Longleftarrow$": Assume that $M\in\mathcal U$ belongs to $\mathcal L(t,{\bf inv})$ for some seed $t$.
Using similar arguments as in the proof of Proposition \ref{prodvectors}, we get $M\in \mathcal L(\mu_k(t),{\bf inv})$, where $k=1,\ldots,n$. Then by induction, we have $M\in \mathcal L(t',{\bf inv})$ for any seed $t'$ of $\mathcal U({\bf inv})$. Hence, we have $M\in\mathcal U({\bf inv})$.
 \end{proof}

 \begin{lemma}{\rm (}\cite[Lemma 7.2]{GLS19}, \cite[Proposition 4.3(i)]{MNTY-2022}{\rm ).}\label{lempp3} Keep ${\bf inv}=\{Z_{p+1},\cdots,Z_m\}$.
The non-invertible frozen variables $Z_1,\ldots,Z_p$ are prime elements in $\mathcal U({\bf inv})$.
 \end{lemma}

\begin{proposition}\label{prop:compactified_U_factoriality}
 $\mathcal U({\bf inv})$ is factorial if and only if $\mathcal U$ is factorial.
\end{proposition}
\begin{proof}
By Lemma \ref{lempp3}, we know that the non-invertible frozen variables $Z_1,\ldots,Z_p$ are prime elements in $\mathcal U({\bf inv})$.
    Let $S$ be the multiplicative set in  $\mathcal U({\bf inv})$ generated by the prime elements $Z_1,\cdots,Z_p$.
    Clearly, we have the inclusion $S^{-1}(\mathcal U({\bf inv}))\subseteq \mathcal U$. Using  Lemma \ref{lem:l-inv}, one can easily show the converse inclusion. Hence, we have  $S^{-1}(\mathcal U({\bf inv}))=\mathcal U$. Then the result follows from Proposition \ref{pro:atomic} and Lemma \ref{lempp2}. 
\end{proof}

Thanks to the above proposition, we will mainly focus on the study the factoriality of $\mathcal U$.

\subsection{Characterizations for the factoriality of upper cluster algebras}\label{sec41}
We first give an observation which shows why the valuation pairing can be used to study the  factoriality of upper cluster algebras.

\begin{proposition}[Observation]\label{observation}
Let $\mathcal U$ be a geometric upper cluster algebra  and $A_{k;t}$ a cluster variable of $\mathcal U$. Then the following two statements are equivalent.
\begin{itemize}
\item[(i)] $A_{k;t}$ is prime in $\mathcal U$.
\item[(ii)] For any non-zero elements $M$ and $L$ in $\mathcal U$, we have the following equality:
\begin{eqnarray}\label{eqnmain}
(A_{k;t}\mid\mid M\cdot L)_v=(A_{k;t}\mid\mid M)_v+(A_{k;t}\mid\mid L)_v.
\end{eqnarray}
\end{itemize}
\end{proposition}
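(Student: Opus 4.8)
The plan is to prove the two implications separately, using the valuation pairing as a substitute for the $A_{k;t}$-adic order function.

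\textbf{Proof that (i) implies (ii).} Assume $A_{k;t}$ is prime in $\mathcal U$. Fix non-zero $M,L\in\mathcal U$ and write $a=(A_{k;t}||M)_v$, $b=(A_{k;t}||L)_v$; both are finite by Proposition \ref{proind}(ii). Set $M=A_{k;t}^a M'$ and $L=A_{k;t}^b L'$ with $M',L'\in\mathcal U$ and $M'/A_{k;t},L'/A_{k;t}\notin\mathcal U$, i.e. $(A_{k;t}||M')_v=(A_{k;t}||L')_v=0$. By Proposition \ref{proind}(v) we have $(A_{k;t}||ML)_v=a+b+(A_{k;t}||M'L')_v$, so it suffices to show $(A_{k;t}||M'L')_v=0$, that is, $M'L'/A_{k;t}\notin\mathcal U$. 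Suppose instead $M'L'/A_{k;t}\in\mathcal U$; then $A_{k;t}$ divides $M'L'$ in $\mathcal U$, and since $A_{k;t}$ is prime it divides $M'$ or $L'$ in $\mathcal U$ — contradicting $(A_{k;t}||M')_v=(A_{k;t}||L')_v=0$. Hence (\ref{eqnmain}) holds.

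\textbf{Proof that (ii) implies (i).} Assume (\ref{eqnmain}) holds for all non-zero $M,L$. First, $A_{k;t}$ is non-invertible in $\mathcal U$ by Theorem \ref{thmgls}(ii)(b) (a cluster variable is irreducible, hence in particular non-invertible). Now suppose $A_{k;t}\mid M L$ in $\mathcal U$ for non-zero $M,L\in\mathcal U$, so $ML=A_{k;t}\cdot R$ for some $R\in\mathcal U$; then $(A_{k;t}||ML)_v\geq 1$ by Proposition \ref{proind}(v). By (\ref{eqnmain}), $(A_{k;t}||M)_v+(A_{k;t}||L)_v\geq 1$, so one of the two summands, say $(A_{k;t}||M)_v$, is $\geq 1$. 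By definition of the valuation pairing this means $M/A_{k;t}\in\mathcal U$, i.e. $A_{k;t}\mid M$ in $\mathcal U$. (The case $ML=0$ forces $M=0$ or $L=0$ since $\mathcal U$ is a domain, which is excluded.) Thus $A_{k;t}$ is prime.

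I do not anticipate a serious obstacle here: both directions are short once one unwinds the definitions of "prime" and of the valuation pairing, and the only external inputs are the finiteness statement Proposition \ref{proind}(ii), the multiplicativity-by-powers fact Proposition \ref{proind}(v), and the non-invertibility of cluster variables from Theorem \ref{thmgls}(ii). The one point requiring a little care is that "$A_{k;t}\mid M$ in $\mathcal U$" is exactly the statement "$M/A_{k;t}\in\mathcal U$", so that divisibility in $\mathcal U$ matches what the valuation pairing records; with the standing assumption that exchange binomials are non-invertible (Subsection \ref{rmkexbi}), which underlies Proposition \ref{proind}, this identification is unambiguous. Note also that the equivalence is stated for a cluster variable $A_{k;t}$ in an arbitrary seed $t$ of $\mathcal U$, not necessarily the initial one, but nothing in the argument uses that $t$ is initial.
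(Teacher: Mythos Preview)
Your proof is correct and follows essentially the same approach as the paper. The paper dismisses (i)$\Rightarrow$(ii) as ``clear from the definition of the valuation pairing'' whereas you spell out the reduction via $M=A_{k;t}^aM'$, $L=A_{k;t}^bL'$ and Proposition~\ref{proind}(v); for (ii)$\Rightarrow$(i) the paper argues by contradiction (if neither $M$ nor $L$ is divisible then both valuations vanish, forcing $(A_{k;t}\|ML)_v=0$) while you argue directly, but this is the same idea, and both invoke Theorem~\ref{thmgls}(ii) for non-invertibility.
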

\begin{proof}
(i)$\Rightarrow$ (ii): This is clear from the definition of the valuation pairing.

(ii)$\Rightarrow$ (i):
For any $0\neq M,L\in\mathcal U$ with $A_{k;t}\mid M\cdot L$ in $\mathcal U$, we claim  that either $A_{k;t}\mid  M$ in $\mathcal U$ or $A_{k;t}\mid L$ in $\mathcal U$. Otherwise, we have that
both $M$ and $L$ are not divisible by $A_{k;t}$ in $\mathcal U$. Then we have $$(A_{k;t}\mid\mid M)_v=0=(A_{k;t}\mid\mid L)_v.$$
 Now by our assumption, we have $$(A_{k;t}\mid\mid M\cdot L)_v=(A_{k;t}\mid\mid M)_v+(A_{k;t}\mid\mid L)_v=0.$$
This contradicts $A_{k;t}\mid M\cdot L$ in $\mathcal U$. So either $A_{k;t}\mid  M$ or $A_{k;t}\mid L$ in $\mathcal U$.

By Theorem \ref{thmgls} (ii), we know that $A_{k;t}$ is not invertible in $\mathcal U$. Hence,  $A_{k;t}$ is prime in $\mathcal U$.
\end{proof}

It is natural to ask under which conditions the equality (\ref{eqnmain}) always holds. Thanks to Reduction Lemma \ref{lemadd}, we can answer this question for {\em full rank upper cluster algebras}.

\begin{proposition}\label{prozp}
Let $\mathcal U$ be a full rank upper cluster algebra and $t$ a seed of $\mathcal U$. Put $t_k=\mu_k(t)$ and let $P_{k;t}$ be the $k$-th exchange binomial of $t$. Then the following statements are equivalent.
 \begin{itemize}
\item[(i)]  $(A_{k;t}\mid\mid M\cdot L)_v=(A_{k;t}\mid\mid M)_v+(A_{k;t}\mid\mid L)_v$ holds for any non-zero elements $M$ and $L$ in $\mathcal U$.
\item[(ii)] $A_{k;t}$ is  prime in $\mathcal L(t_k)=\mathbb {K}[Z_1^{\pm1},\ldots,Z_m^{\pm1}][A_{1;t_k}^{\pm1},\ldots,A_{n;t_k}^{\pm1}]$.
\item[(iii)] $P_{k;t}$ is prime in $\mathcal L(t_k)$.
\item[(iv)] $P_{k;t}$ is prime  in $\mathbb K[Z_1,\ldots,Z_m][A_{1;t},\ldots,\widehat A_{k;t},\ldots,A_{n;t}]$, which is actually equal to $\mathbb K[Z_1,\ldots,Z_m][A_{1;t_k},\ldots,\widehat A_{k;t_k},\ldots,A_{n;t_k}]$.
    \item [(v)] $P_{k;t}$ is irreducible  in  $\mathbb K[Z_1,\ldots,Z_m][A_{1;t},\ldots,\widehat A_{k;t},\ldots,A_{n;t}]$.
\end{itemize}
\end{proposition}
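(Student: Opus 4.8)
The plan is to prove the circle of equivalences in the order (v) $\Leftrightarrow$ (iv), (iv) $\Leftrightarrow$ (iii), (iii) $\Leftrightarrow$ (ii), and (ii) $\Leftrightarrow$ (i), using the Reduction Lemma \ref{lemadd} as the main engine for the last equivalence. The equivalence (iv) $\Leftrightarrow$ (v) is the easiest: in a polynomial ring over a factorial domain (here $\mathbb K[Z_1,\ldots,Z_m][A_{1;t},\ldots,\hat A_{k;t},\ldots,A_{n;t}]$, which is factorial since $\mathbb K$ is a field or $\mathbb Z$), irreducible and prime coincide, so this is immediate once we note the ring is factorial. The identification of $\mathbb K[Z_1,\ldots,Z_m][A_{1;t},\ldots,\hat A_{k;t},\ldots,A_{n;t}]$ with $\mathbb K[Z_1,\ldots,Z_m][A_{1;t_k},\ldots,\hat A_{k;t_k},\ldots,A_{n;t_k}]$ is a direct consequence of the mutation rule $A_{i;t}=A_{i;t_k}$ for $i\neq k$.

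For (iii) $\Leftrightarrow$ (iv): the ring $\mathcal L(t_k)=\mathbb K[Z_1^{\pm1},\ldots,Z_m^{\pm1}][A_{1;t_k}^{\pm1},\ldots,A_{n;t_k}^{\pm1}]$ is a localization of the polynomial ring $R:=\mathbb K[Z_1,\ldots,Z_m][A_{1;t_k},\ldots,\hat A_{k;t_k},\ldots,A_{n;t_k}][A_{k;t_k}]$ at the multiplicative set generated by the $Z_i$ and the $A_{j;t_k}$. Now $P_{k;t}$ is a polynomial in $R$ not involving $A_{k;t_k}$, and by Convention \ref{rmkexbi} it is not invertible, and it is not divisible by any $Z_i$ or $A_{j;t_k}$ (its two monomials are coprime since $P_{k;t}$ is a genuine binomial with coprime exponent vectors in the frozen/unfrozen variables — this uses that the matrix is full rank, which forces the two monomials of the exchange binomial to be distinct and coprime). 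Primeness is preserved under localization for elements that remain non-units, so $P_{k;t}$ is prime in $\mathcal L(t_k)$ iff it is prime in $R$ iff (since $P_{k;t}$ doesn't involve $A_{k;t_k}$, and $R$ is a polynomial ring over $\mathbb K[Z_i][A_{j;t_k}]_{j\neq k}$ in the variable $A_{k;t_k}$) it is prime in $\mathbb K[Z_1,\ldots,Z_m][A_{1;t_k},\ldots,\hat A_{k;t_k},\ldots,A_{n;t_k}]$; the last step is the standard fact that a prime of a factorial domain $D$ stays prime in $D[x]$ and conversely. This gives (iii) $\Leftrightarrow$ (iv).

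For (ii) $\Leftrightarrow$ (iii): in $\mathcal L(t_k)$ we have the exchange relation $A_{k;t}=A_{k;t_k}^{-1}P_{k;t}$, so $A_{k;t}$ and $P_{k;t}$ are associate in $\mathcal L(t_k)$ (they differ by the unit $A_{k;t_k}^{-1}$). Associate elements are simultaneously prime, so (ii) $\Leftrightarrow$ (iii) is immediate. Finally, for (i) $\Leftrightarrow$ (ii): the key point is Reduction Lemma \ref{lemadd}(ii), which says $(A_{k;t}||M)_v = \max\{s : M/A_{k;t}^s \in \mathcal L(t_k)\}$ for every non-zero $M\in\mathcal U$; in other words the valuation pairing on $\mathcal U$ computed against $A_{k;t}$ agrees with the $A_{k;t}$-adic valuation in the factorial ring $\mathcal L(t_k)$. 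Since in a factorial domain (or more generally whenever the relevant element is prime) the $p$-adic valuation is additive on products, additivity of $(A_{k;t}||-)_v$ on $\mathcal U$ is equivalent to $A_{k;t}$ being prime in $\mathcal L(t_k)$: if $A_{k;t}$ is prime in $\mathcal L(t_k)$, additivity there restricts to give (i); conversely if $A_{k;t}$ is not prime in $\mathcal L(t_k)$, then since $\mathcal L(t_k)$ is factorial $A_{k;t}$ is reducible there, say $A_{k;t}=fg$ with $f,g\notin\mathcal L(t_k)^\times$, and one checks $f,g$ can be taken in $\mathcal U$ (multiply by suitable Laurent monomials in $\mathcal L(t_j)$, $j\neq k$, which are units there — here full rank and the starfish theorem \ref{proup} are used to stay inside $\mathcal U$) with $(A_{k;t}||f)_v=(A_{k;t}||g)_v=0$ but $(A_{k;t}||fg)_v\geq 1$, violating (i). This is the step I expect to require the most care: making precise why a factorization of $A_{k;t}$ inside $\mathcal L(t_k)$ can be promoted to a factorization inside $\mathcal U$ with vanishing valuations, and conversely extracting additivity in $\mathcal U$ from additivity in $\mathcal L(t_k)$ via Lemma \ref{lemadd}(ii)–(iii). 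An alternative, cleaner route for (i) $\Leftrightarrow$ (ii) is to invoke Proposition \ref{observation}: (i) is equivalent to $A_{k;t}$ being prime in $\mathcal U$, and then show $A_{k;t}$ is prime in $\mathcal U$ iff it is prime in $\mathcal L(t_k)$ directly using $\mathcal U = \bigcap_i \mathcal L(t_i)$ together with the fact that $A_{k;t}$ is a unit in $\mathcal L(t_i)$ for all $i\neq k$, so divisibility by $A_{k;t}$ in $\mathcal U$ is detected entirely in $\mathcal L(t_k)$; I would present this version since it isolates the one genuinely new input (Lemma \ref{lemadd}) and otherwise reduces everything to bookkeeping about localizations of factorial domains.
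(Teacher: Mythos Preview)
Your approach is essentially the same as the paper's, and correct, with one localized confusion. In your first route for (i)$\Rightarrow$(ii), the parenthetical ``multiply by suitable Laurent monomials in $\mathcal L(t_j)$, $j\neq k$'' is misdirected: what you actually need (and what the paper does) is to multiply the factors $f,g\in\mathcal L(t_k)$ by a cluster monomial $A_{t_k}^{\mathbf v}$ in the seed $t_k$ itself, chosen to clear denominators so that the products land in $\mathbb K[Z_i][A_{j;t_k}]\subseteq\mathcal U$. This monomial is a unit in $\mathcal L(t_k)$, so divisibility by $A_{k;t}$ is unchanged there, and the Reduction Lemma~\ref{lemadd}(ii) then gives $(A_{k;t}\|A_{t_k}^{\mathbf v}f)_v=(A_{k;t}\|A_{t_k}^{\mathbf v}g)_v=0$ while $(A_{k;t}\|A_{t_k}^{2\mathbf v}fg)_v\geq 1$. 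The paper in fact works with an arbitrary pair $M',L'\in\mathcal L(t_k)$ witnessing non-primeness rather than factorizing $A_{k;t}$ itself, but since $\mathcal L(t_k)$ is factorial the two formulations are equivalent. Your alternative route through Proposition~\ref{observation} is also valid and amounts to the same reduction. One small aside: the coprimeness of the two monomials in $P_{k;t}$ does not require full rank; it is automatic from the shape of the exchange binomial (the positive and negative parts of a column of $\tilde B_t$ have disjoint support), and it is this that gives non-divisibility by any $Z_i$ or $A_{j;t_k}$.
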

\begin{proof}
(i)$\Longrightarrow$ (ii): By our assumption in Subsection \ref{rmkexbi}, we know that $A_{k;t}$ is not invertible in $\mathcal L(t_k)$. Now assume by contradiction that $A_{k;t}$ is not prime in $\mathcal L(t_k)$. Then there exists $M^\prime,L^\prime\in \mathcal L(t_k)$ such that both $M^\prime$ and $L^\prime$ are not divisible by $A_{k;t}$ in $\mathcal L(t_k)$, but the product $M^\prime\cdot L^\prime$ is divisible by  $A_{k;t}$ in $\mathcal L(t_k)$.  Since $M^\prime$ and $L^\prime$ are in $$\mathcal L(t_k)=\mathbb {K}[Z_1^{\pm1},\ldots,Z_m^{\pm1}][A_{1;t_k}^{\pm1},\ldots,A_{n;t_k}^{\pm1}],$$ there exists a cluster monomial
$A_{t_k}^{\bf v}$ in $t_k$ such that $M:=A_{t_k}^{\bf v}M^\prime$ and $L:=A_{t_k}^{\bf v} L^\prime$ belong to $\mathcal U$.

Because $A_{t_k}^{\bf v}$ is invertible in $\mathcal L(t_k)$, we know that both  $M$ and $L$ are not divisible by $A_{k;t}$ in $\mathcal L(t_k)$, but the product $M\cdot L$ is divisible by  $A_{k;t}$ in $\mathcal L(t_k)$. By the Reduction Lemma \ref{lemadd} (ii), we have $$(A_{k;t}\mid\mid  M)_v=0=(A_{k;t}\mid\mid L)_v\;\; \text{but}\;\;(A_{k;t}\mid\mid  M\cdot L)_v>0.$$ This contradicts that $(A_{k;t}\mid\mid M\cdot L)_v=(A_{k;t}\mid\mid M)_v+(A_{k;t}\mid\mid L)_v$ holds for any non-zero elements $M$ and $L$ in $\mathcal U$.
Hence,  $A_{k;t}$ is  prime in $\mathcal L(t_k)$.

(ii)$\Longrightarrow$ (i): This follows from the Reduction Lemma \ref{lemadd} (ii).

(ii)$\Longleftrightarrow$ (iii): Applying the exchange relation, we have $A_{k;t}A_{k;t_k}=P_{k;t}$. Since $A_{k;t_k}$ is invertible in $\mathcal L(t_k)$, we know that $A_{k;t}$ is prime in $\mathcal L(t_k)$ if and only if $P_{k;t}$ is prime in $\mathcal L(t_k)$.

(iii)$\Longleftrightarrow$ (iv): Notice that $\mathcal L(t_k)$ is a localization of $\mathbb K[Z_1,\ldots,Z_m][A_{1;t_k},\ldots,A_{n;t_k}]$ at $Z_1,\ldots,Z_m, A_{1;t_k},\ldots,A_{n;t_k}$. Since $P_{k;t}=P_{k;t_k}$ is a binomial in $$\mathbb K[Z_1,\ldots,Z_m][A_{1;t_k},\ldots,A_{n;t_k}]$$ and it is not divisible by any $Z_i$ and $A_{j;t_k}$, we know that $P_{k;t}$ is prime in  $\mathcal L(t_k)$ if and only if it is prime in $\mathbb K[Z_1,\ldots,Z_m][A_{1;t_k},\ldots,A_{n;t_k}]$. Because $P_{k;t}$ is actually a binomial in variables from
$$\{Z_1,\ldots,Z_m, A_{1;t_k},\ldots,\widehat A_{k;t_k},\ldots,A_{n;t_k}\}=\{Z_1,\ldots,Z_m, A_{1;t},\ldots,\widehat A_{k;t},\ldots,A_{n;t}\},$$
 we know that $P_{k;t}$ is prime in $\mathbb K[Z_1,\ldots,Z_m][A_{1;t_k},\ldots,A_{n;t_k}]$ if and only if it is prime in $$\mathbb K[Z_1,\ldots,Z_m][A_{1;t},\ldots,\widehat A_{k;t},\ldots,A_{n;t}].$$
Hence,  $P_{k;t}$ is prime in  $\mathcal L(t_k)$ if and only if it is prime in $$\mathbb K[Z_1,\ldots,Z_m][A_{1;t},\ldots,\widehat A_{k;t},\ldots,A_{n;t}].$$

(iv)$\Longleftrightarrow$ (v): This follows from the fact that the polynomial ring  $$\mathbb K[Z_1,\ldots,Z_m][A_{1;t},\ldots,\widehat A_{k;t},\ldots,A_{n;t}]$$ is factorial.
\end{proof}

\begin{theorem}\label{thmcufd}
Let $\mathcal U$ be a full rank upper cluster algebra with initial seed $t_0$. Then the following statements are equivalent.
\begin{itemize}
\item[(i)] $\mathcal U$ is factorial.
 \item[(ii)] Any initial  cluster variable  $A_{k;t_0}$ is prime in $\mathcal U$.
\item[(iii)] Any initial cluster variable $A_{k;t_0}$  satisfies
$$(A_{k;t_0}\mid\mid M\cdot L)_v=(A_{k;t_0}\mid\mid M)_v+(A_{k;t_0}\mid\mid L)_v,$$
for any non-zero elements $M$ and $L$ in $\mathcal U$.
\item[(iv)]  Any exchange binomial $P_{k;t_0}$ of $t_0$ is irreducible in $$\mathbb K[Z_1,\ldots,Z_m][A_{1;t_0},\ldots,\widehat A_{k;t_0},\ldots,A_{n;t_0}].$$
\end{itemize}
\end{theorem}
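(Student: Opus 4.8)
The plan is to prove the chain of equivalences in a cycle, using the machinery already assembled. The implications (ii) $\Leftrightarrow$ (iii) and (iii) $\Leftrightarrow$ (iv) are essentially free: by Proposition~\ref{observation}, a cluster variable $A_{k;t_0}$ is prime in $\mathcal U$ if and only if the valuation pairing $(A_{k;t_0}||-)_v$ is additive on products, which is (iii); and by Proposition~\ref{prozp} (applied with $t=t_0$), statement (iii) for the fixed index $k$ is equivalent to the irreducibility of the exchange binomial $P_{k;t_0}$ in the polynomial ring $\mathbb K[Z_1,\ldots,Z_m][A_{1;t_0},\ldots,\hat A_{k;t_0},\ldots,A_{n;t_0}]$, which is (iv). Quantifying over all $k$ then gives (ii) $\Leftrightarrow$ (iii) $\Leftrightarrow$ (iv) directly. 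The implication (i) $\Rightarrow$ (ii) is also immediate: in a factorial domain every irreducible element is prime, and by Theorem~\ref{thmgls}(ii)(b) each $A_{k;t_0}$ is irreducible in $\mathcal U$, hence prime.

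The substantive direction is (ii) $\Rightarrow$ (i), and this is where I expect the main obstacle. The idea is to reduce factoriality of $\mathcal U$ to factoriality of a localization via Lemma~\ref{lempp2}. Assuming (ii), the set $S$ generated multiplicatively by the initial cluster variables $A_{1;t_0},\ldots,A_{n;t_0}$ (together with the units coming from $\mathbb P$) consists of products of prime elements of $\mathcal U$. By Lemma~\ref{lempp2}, $\mathcal U$ is factorial if and only if the localization $S^{-1}\mathcal U$ is factorial. But localizing $\mathcal U$ at all the initial cluster variables produces exactly the Laurent polynomial ring $\mathcal L(t_0)=\mathbb{KP}[A_{1;t_0}^{\pm1},\ldots,A_{n;t_0}^{\pm1}]$: indeed $S^{-1}\mathcal U\subseteq \mathcal L(t_0)$ since $\mathcal U\subseteq\mathcal L(t_0)$ and the $A_{i;t_0}$ become invertible, while conversely $\mathcal L(t_0)$ is generated over $S^{-1}\mathcal U$ by the inverses of the $A_{i;t_0}$, which already lie in $S^{-1}\mathcal U$, and the coefficient ring $\mathbb{KP}$ is already inside $\mathcal U$. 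A Laurent polynomial ring over a field (or over $\mathbb Z$) is factorial, so $S^{-1}\mathcal U=\mathcal L(t_0)$ is factorial, and Lemma~\ref{lempp2} gives that $\mathcal U$ is factorial.

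The care point in this last step is verifying that $S^{-1}\mathcal U$ really equals $\mathcal L(t_0)$ rather than some intermediate ring — this is where the starfish Theorem~\ref{proup} (full rank hypothesis) and the Reduction Lemma~\ref{lemadd} do the work: one needs that inverting the initial cluster variables kills precisely the obstruction to an element of $\mathcal L(t_0)$ lying in $\mathcal U$, which is the content of Lemma~\ref{lemadd}(i) saying membership in $\mathcal U$ after dividing by powers of $A_{k;t_0}$ is detected in the single adjacent ring $\mathcal L(t_k)$, and the $A_{k;t_0}$ is a unit there. A minor additional check, when $\mathbb K$ is a field, is that the trivial-exchange-relation convention of Subsection~\ref{rmkexbi} has been imposed so that each $A_{k;t_0}$ is genuinely non-invertible in $\mathcal U$ (otherwise $S$ would contain no proper primes and the reduction degenerates); this is exactly why that convention was made. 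I would also remark that in the case $\mathbb K=\mathbb Z$ one should use that $\mathbb Z[A_1^{\pm1},\ldots,A_n^{\pm1}]$ is still factorial, which is standard.
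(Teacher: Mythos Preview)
Your proof is correct and follows essentially the same route as the paper's: (ii)$\Leftrightarrow$(iii) via Proposition~\ref{observation}, (iii)$\Leftrightarrow$(iv) via Proposition~\ref{prozp}, (i)$\Rightarrow$(ii) from irreducibility of cluster variables in a factorial domain, and (ii)$\Rightarrow$(i) by localizing at the initial cluster variables and invoking Lemma~\ref{lempp2}. One small simplification: your ``care point'' paragraph overcomplicates the identification $S^{-1}\mathcal U=\mathcal L(t_0)$---this equality needs only the Laurent phenomenon (giving $\mathcal U\subseteq\mathcal L(t_0)$) together with $\mathbb{KP}[A_{1;t_0},\ldots,A_{n;t_0}]\subseteq\mathcal U$, and does \emph{not} require the starfish theorem or the Reduction Lemma; the full rank hypothesis enters only through Proposition~\ref{prozp}.
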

\begin{proof}

(i)$\Longrightarrow$ (ii): By Theorem \ref{thmgls} (ii), we know that any cluster variable is irreducible in $\mathcal U$. Because $\mathcal U$ is factorial, we get that any irreducible element is prime in $\mathcal U$. In particular, any initial  cluster variable  $A_{k;t_0}$ is prime in $\mathcal U$.

(ii)$\Longrightarrow$ (i): Let $S$ be the  multiplicative set generated by initial cluster variables. By the Laurent phenomenon, we have $$S^{-1}\mathcal U=\mathcal L(t_0)=\mathbb K[Z_1^{\pm1},\ldots,Z_m^{\pm1}][A_{1;t_0}^{\pm1},\ldots,A_{n;t_0}^{\pm1}],$$
which is factorial.
Since the initial cluster variables are prime in $\mathcal U$ and by Lemma \ref{lempp2} and Proposition \ref{pro:atomic}, we get that $\mathcal U$ is factorial.

The equivalence of (ii), (iii) and (iv) follows from Proposition \ref{prozp}.
\end{proof}

\begin{remark}
Garcia Elsener {\em et al.} in \cite[Theorem 5.1]{GLS19} prove the equivalence of (i) and (iv) in Theorem \ref{thmcufd} for  geometric cluster algebra $\mathcal A$ (possibly not full rank) with an {\em acyclic initial seed}.  Note that in acyclic case, $\mathcal A=\mathcal U$, by Proposition \ref{proacyclic}. Our method here is very different with that in \cite{GLS19}.
\end{remark}

\subsection{Full rank, primitive upper cluster algebras are factorial}\label{sec42}
In this subsection, we show that  full rank, primitive upper cluster algebras are factorial.

Recall that a vector $(b_1,\ldots,b_{n+m})^{\rm T}\in\mathbb Z^{n+m}$ is primitive if the greatest common divisor $d\in\mathbb N$ of $b_1,\ldots,b_{n+m}$ is $1$. An extended exchange matrix $\widetilde B_t$ is {\em primitive} if each column vector of  $\widetilde B_t$ is primitive.

\begin{definition}[Primitive upper cluster algebra] Let $\mathcal U$ be a geometric upper cluster algebra with initial seed $t_0$. We say that $\mathcal U$ is a {\em primitive upper cluster algebra}, if the initial extended exchange matrix $\widetilde B_{t_0}$ is primitive.
\end{definition}

\begin{proposition}\label{propriex}
Let $\mathcal U$ be a primitive upper cluster algebra. Then for any seed $t$, the following statements hold.
\begin{itemize}
\item[(i)]{\rm (}\cite[Lemma 4.3]{LP16}{\rm ).}  $\widetilde B_t$ is primitive.
\item[(ii)] {\rm (}\cite[Lemma 4.1]{LP16}{\rm ).} The $k$-th exchange binomial of $t$ is irreducible in $$\mathbb K[Z_1,\ldots,Z_m][A_{1;t},\ldots,\widehat A_{k;t},\ldots,A_{n;t}],$$ where $k=1,\ldots,n$.
\end{itemize}
\end{proposition}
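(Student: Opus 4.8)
This is essentially \cite[Lemmas 4.1 and 4.3]{LP16}, and the plan is to give short self-contained arguments, handling (i) first since it feeds into (ii).

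For (i), I would use that $\mathbb T_n$ is connected and induct on the distance from $t_0$, so that it suffices to show a single mutation $\mu_k$ sends a primitive extended exchange matrix $\tilde B=(b_{ij})$ to a primitive one. Write $\tilde B'=\mu_k(\tilde B)$. By the matrix mutation formula (with the row index running over $1,\dots,n+m$), the $k$-th column of $\tilde B'$ is the negative of the $k$-th column of $\tilde B$, hence primitive. For $j\neq k$, the $k$-th entry of the $j$-th column of $\tilde B'$ is $-b_{kj}$, while for $i\neq k$ the entry $b_{ij}'=b_{ij}+{\rm sgn}(b_{ik}){\rm max}(b_{ik}b_{kj},0)$ has the form $b_{ij}+c_ib_{kj}$ with $c_i=0$ or $|b_{ik}|$ (a nonnegative integer). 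Hence any common divisor $d$ of the entries of the $j$-th column of $\tilde B'$ divides $b_{kj}$, therefore divides each $c_ib_{kj}$, therefore divides each $b_{ij}$; so $d$ divides the gcd of the $j$-th column of $\tilde B$, which is $1$. This proves (i).

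For (ii), I would first write the $k$-th exchange binomial explicitly in the geometric case. If $\mathbb P={\rm Trop}(Z_1,\dots,Z_m)$ and $X_{k;t}=\prod_iZ_i^{c_{ik}^t}$, then $\frac{X_{k;t}}{1\oplus X_{k;t}}=\prod_iZ_i^{{\rm max}(c_{ik}^t,0)}$ and $\frac{1}{1\oplus X_{k;t}}=\prod_iZ_i^{{\rm max}(-c_{ik}^t,0)}$, so $P_{k;t}=M_1+M_2$ with
$$M_1=\prod_iZ_i^{{\rm max}(c_{ik}^t,0)}\prod_{b_{jk}^t>0}A_{j;t}^{b_{jk}^t},\qquad M_2=\prod_iZ_i^{{\rm max}(-c_{ik}^t,0)}\prod_{b_{jk}^t<0}A_{j;t}^{-b_{jk}^t}.$$
The monomials $M_1$ and $M_2$ share no variable (for each $i$ at most one of ${\rm max}(c_{ik}^t,0),{\rm max}(-c_{ik}^t,0)$ is nonzero; for each $j$ at most one of $b_{jk}^t>0,b_{jk}^t<0$ holds, and $j=k$ is excluded since $b_{kk}^t=0$), so $P_{k;t}$ is divisible by no variable, and hence it is irreducible in the polynomial ring if and only if it is irreducible in the Laurent polynomial ring $R=\mathbb K[Z_1^{\pm1},\dots,Z_m^{\pm1},A_{1;t}^{\pm1},\dots,\hat A_{k;t}^{\pm1},\dots,A_{n;t}^{\pm1}]$. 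In $R$ one has $P_{k;t}=M_2\cdot(\hat X_{k;t}+1)$ with $M_2$ a unit, where $\hat X_{k;t}=M_1/M_2=\prod_iZ_i^{c_{ik}^t}\prod_{j\neq k}A_{j;t}^{b_{jk}^t}$ has exponent vector equal to the $k$-th column of $\tilde B_t$ with the zero entry $b_{kk}^t$ deleted, which is primitive by (i). Since a primitive integer vector extends to a $\mathbb Z$-basis, some $g\in{\rm GL}_{m+n-1}(\mathbb Z)$ sends it to a coordinate vector, so the induced monomial automorphism of $R$ carries $\hat X_{k;t}+1$ to $Z_1+1$; the latter is irreducible in $R$ because $R/(Z_1+1)$ is an integral domain. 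Hence $P_{k;t}$ is irreducible.

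Neither part hides a serious obstacle; the one point to watch is that every step works uniformly for $\mathbb K$ a field of characteristic $0$ and for $\mathbb K=\mathbb Z$. This holds because the ingredients used — the combinatorial mutation formula, the factoriality of polynomial rings and of their localizations at subsets of variables, the action of ${\rm GL}_N(\mathbb Z)$ on Laurent polynomial rings, and the primality of a linear binomial — are all insensitive to that distinction.
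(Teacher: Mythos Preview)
Your argument is correct in both parts. The paper itself does not supply a proof of this proposition; it simply records the two statements with citations to \cite[Lemmas 4.1 and 4.3]{LP16}. So there is no ``paper's own proof'' to compare against beyond the reference.

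Your route is the standard one and matches what one finds in the cited source: for (i) you exploit that the mutated $j$-th column differs from the original one by integer multiples of $b_{kj}$, and for (ii) you reduce to the Laurent polynomial ring, use primitivity of the exponent vector of $\hat X_{k;t}$ to pass via a ${\rm GL}_{m+n-1}(\mathbb Z)$ monomial change of variables to $Z_1+1$, and check primality there. Two small points are worth making explicit in a final write-up: first, primitivity of the $k$-th column of $\tilde B_t$ forces that column to be nonzero, so $P_{k;t}$ is genuinely a non-unit (this is where the paper's standing assumption in Subsection~\ref{rmkexbi} is automatically satisfied); second, removing the $b_{kk}^t=0$ entry from the $k$-th column does not affect the gcd, so the exponent vector you feed into the ${\rm GL}$ argument is indeed primitive in $\mathbb Z^{m+n-1}$. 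Both are implicit in what you wrote, but spelling them out would make the proof airtight.
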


\begin{theorem}\label{thmufd}
Let $\mathcal U$ be a full rank, primitive upper cluster algebra with initial seed $t_0$, and let $$M=\frac{P_M(A_{1;t_0},\ldots,A_{n;t_0})}{A_{1;t_0}^{d_1}\cdots A_{n;t_0}^{d_n}}$$ be a non-initial cluster variable of $\mathcal U$,  where ${\bf d}:=(d_1,\ldots,d_n)^{\rm T}$ is the $d$-vector of $M$ with respect to $t_0$. Then
\begin{itemize}
\item[(i)] $\mathcal U$ is factorial.
\item[(ii)] the numerator polynomial $P_M$ of $M$ is irreducible in $$\mathbb K[Z_1,\ldots,Z_m][A_{1;t_0},\ldots,A_{n;t_0}].$$
\end{itemize}

\end{theorem}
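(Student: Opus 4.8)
The plan is to prove part (i) first and then deduce part (ii). For (i), I would invoke Theorem \ref{thmcufd}: since $\mathcal U$ is full rank, it suffices to check that each exchange binomial $P_{k;t_0}$ of the initial seed is irreducible in the polynomial ring $\mathbb K[Z_1,\ldots,Z_m][A_{1;t_0},\ldots,\hat A_{k;t_0},\ldots,A_{n;t_0}]$. But this is exactly Proposition \ref{propriex}(ii), applied at the seed $t=t_0$, using that $\mathcal U$ is primitive. So (i) is essentially immediate from the accumulated machinery.

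For (ii), the strategy is to write $P_M$ as a product of irreducibles in the factorial ring $\mathbb K[Z_1,\ldots,Z_m][A_{1;t_0},\ldots,A_{n;t_0}]$ and show there is only one factor (up to units). First I would observe that, by definition of the numerator polynomial, $A_{j;t_0}\nmid P_M$ for every $j$, and by Theorem \ref{thmgls}(i)(a) the units of the cluster algebra are the Laurent monomials $\lambda Z_1^{c_1}\cdots Z_m^{c_m}$, so none of the $A_{j;t_0}$ can appear as an irreducible factor of $P_M$ and the unit part of any factorization is a Laurent monomial in the $Z_i$'s only. Next, since $M = P_M/(A_{1;t_0}^{d_1}\cdots A_{n;t_0}^{d_n})$ is a cluster variable, it is irreducible in $\mathcal U$ by Theorem \ref{thmgls}(ii)(b); and since $\mathcal U$ is factorial by part (i), $M$ is in fact prime in $\mathcal U$. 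The key point is then to transfer a hypothetical nontrivial factorization $P_M = Q_1 Q_2$ in the polynomial ring into a factorization of $M$ in $\mathcal U$: after clearing denominators, $M = (Q_1/\prod A_{j;t_0}^{e_j})\cdot(Q_2/\prod A_{j;t_0}^{f_j})$ for suitable non-negative exponents obtained by extracting the maximal power of each $A_{j;t_0}$ dividing $Q_1$ and $Q_2$ respectively (here I use that $\mathbb K[Z,A]$ is factorial and each $A_{j;t_0}$ is prime in it); both factors lie in $\mathcal L(t_0)\subseteq$ ... — more precisely both lie in $\mathcal U$ once one checks they are genuine elements (not just Laurent), which follows because $M\in\mathcal U$ and the two factors are honest fractions of polynomials by non-negativity of the extracted exponents, combined with the starfish description and the fact that $M$ itself is regular. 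Irreducibility of $M$ in $\mathcal U$ forces one factor, say $Q_2/\prod A_{j;t_0}^{f_j}$, to be a unit of $\mathcal U$, hence a Laurent monomial in the $Z_i$; since $A_{j;t_0}\nmid P_M$ we must have all $f_j=0$, so $Q_2$ is a monomial in the $Z_i$, i.e.\ a unit of the polynomial ring. Therefore the factorization $P_M = Q_1 Q_2$ was trivial, and $P_M$ is irreducible.

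The main obstacle I anticipate is the bookkeeping in the descent step: showing that the two candidate factors of $M$ genuinely belong to $\mathcal U$ (and not merely to $\mathcal L(t_0)$), and carefully tracking that the $A_{j;t_0}$-adic valuations match up so that no spurious denominators survive. This is where one needs the valuation pairing $(A_{j;t_0}||-)_v$ together with the local unique factorization property of Theorem \ref{thmunique}: writing $M = N\cdot L$ with $N$ the cluster monomial recording the $(A_{j;t_0}||M)_v$ and $L$ not divisible by any $A_{j;t_0}$, one checks $N = \prod A_{j;t_0}^{-d_j}$ restricted to the negative $d_j$ (equivalently, uses Theorem \ref{thmup}-type statements referenced earlier) and $L$ corresponds to $P_M$ divided off by the remaining $A_{j;t_0}$'s; a factorization of $P_M$ then refines the factorization of $L$, and since $L$ (being associate in $\mathcal U$ to the cluster variable $M$) is prime in $\mathcal U$, the refinement must be trivial. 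Once this dictionary between polynomial-ring factorizations of $P_M$ and $\mathcal U$-factorizations of $M$ (up to Laurent-monomial units) is set up cleanly, both parts follow quickly.

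One should also record the companion bound: by Remark \ref{rmkdadd} and the construction of the local factorization, the exponents $d_j$ that are positive are precisely $(A_{j;t_0}||M)_v$ computed for $M$ viewed in $\mathcal U$, which ties this result to Theorem \ref{thmind} and will be reused in Proposition \ref{prodvec}; but for the present statement this is a remark rather than a needed step.
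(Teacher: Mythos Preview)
Your argument for (i) is correct and matches the paper exactly: Proposition~\ref{propriex}(ii) plus Theorem~\ref{thmcufd}.

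For (ii), your overall strategy---use factoriality of $\mathcal U$ and primality of the cluster variable $M$ to kill a hypothetical nontrivial factorization of $P_M$---is the right one, and it is essentially the paper's strategy as well. But there are two genuine gaps in your execution.

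\textbf{First gap: the factorization of $M$ does not exist as you describe it.} You write $M = (Q_1/\prod A_{j;t_0}^{e_j})\cdot(Q_2/\prod A_{j;t_0}^{f_j})$ where $e_j,f_j$ are the maximal $A_{j;t_0}$-exponents in $Q_1,Q_2$. But since $A_{j;t_0}\nmid P_M=Q_1Q_2$, all these exponents are zero, and the right-hand side is $P_M$, not $M=P_M/A_{t_0}^{\bf d}$. There is no way to split the denominator $A_{t_0}^{\bf d}$ between $Q_1$ and $Q_2$ so that both pieces land in $\mathcal U$; this is precisely the obstacle you flag, and the valuation-pairing fix you sketch does not apply because $Q_1,Q_2$ are arbitrary polynomials, not cluster monomials. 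The paper sidesteps this entirely: it never tries to factor $M$. Instead it multiplies \emph{up}: from $P_M=L_1L_2$ in $\mathcal L(t_0)$ it picks cluster monomials $A_{t_0}^{{\bf v}_i}$ so that $U_i:=A_{t_0}^{{\bf v}_i}L_i\in\mathcal U$, obtaining $M\cdot A_{t_0}^{{\bf d}+{\bf v}_1+{\bf v}_2}=U_1U_2$ in $\mathcal U$. Primality of $M$ gives $M\mid U_1$ (say), whence $(U_1/M)\cdot U_2=A_{t_0}^{{\bf d}+{\bf v}_1+{\bf v}_2}$, and now both factors are units in $\mathcal L(t_0)$; hence $L_2$ is a unit there. (In your setup, with $Q_i$ already polynomials, you can take ${\bf v}_i=0$ and run the same argument.)

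\textbf{Second gap: you never show $Z_i\nmid P_M$.} Your conclusion ``$Q_2$ is a monomial in the $Z_i$, i.e.\ a unit of the polynomial ring'' is wrong as stated: monomials in the $Z_i$ are units in $\mathcal U$ but not in $\mathbb K[Z_1,\ldots,Z_m][A_{1;t_0},\ldots,A_{n;t_0}]$, whose unit group is $\mathbb K^\times$. To force $Q_2\in\mathbb K^\times$ you need $Z_i\nmid P_M$ for every $i$. The paper establishes this separately (e.g.\ by treating $Z_i$ as a frozen cluster variable and using $(Z_i\|M)_d=0$ from Proposition~\ref{rmkdf}(5), or by induction on mutation distance), and then passes from irreducibility in $\mathcal L(t_0)$ down to irreducibility in the polynomial ring.
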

\begin{proof}
(i)  By Proposition \ref{propriex} (ii), we know that every exchange binomial of $t_0$ is irreducible in the corresponding polynomial ring. Then the result follows from Theorem \ref{thmcufd} (i)(iv).

(ii)  We first show that  $P_M$ is not invertible in $\mathbb K[Z_1,\ldots,Z_m][A_{1;t_0},\ldots,A_{n;t_0}]$.
Since $M$ is a non-initial cluster variable and by Proposition \ref{rmkdf} (4), we have ${\bf d}\in\mathbb N^n$ and thus $A_{t_0}^{\bf d}\in\mathcal U$. By $M=P_M/A_{t_0}^{\bf d}$, we get $MA_{t_0}^{\bf d}=P_M$. By Theorem \ref{thmgls} (2), we know that both $M$ and $A_{t_0}^{\bf d}$ are not invertible in $\mathcal U$. So $P_M=MA_{t_0}^{\bf d}$ is not invertible in $\mathcal U$. Namely, we have
$$P_M\notin\mathcal U^\times=\{\lambda Z_1^{c_1}\cdots Z_m^{c_m}\mid \lambda\in\mathbb K^\times,\; c_1,\ldots,c_m\in \mathbb Z\}.$$
In particular, $P_M\notin\mathbb K^\times$. So $P_M$ is not invertible in $\mathbb K[Z_1,\ldots,Z_m][A_{1;t_0},\ldots,A_{n;t_0}]$.

We claim that $P_M$ is not divisible by any $Z_i$ and $A_{j;t_0}$ in $$\mathbb K[Z_1,\ldots,Z_m][A_{1;t_0},\ldots,A_{n;t_0}].$$ Since $P_M$ is a numerator polynomial, it is not divisible by any $A_{j;t_0}$. There are two approaches to prove $Z_i\nmid P_M$. The first one is that we prove the result by induction on the minimal length of a sequence of  mutations from the initial seed to the final seed. The second one is that we just view $Z_i$ as an exchangeable cluster variable. Then by the fact that $M$ and $Z_i$ are in the same cluster and  they are not equal, we get $(Z_i\mid\mid M)_d=0$, by Proposition \ref{rmkdf} (5). This implies that $P_M$ is not divisible by $Z_i$.

We claim that  $P_M$ is not invertible in
$$\mathcal L(t_0)=\mathbb K[Z_1^{\pm1},\ldots,Z_m^{\pm1}][A_{1;t_0}^{\pm1},\ldots,A_{n;t_0}^{\pm1}].$$ This follows from the two facts that $P_M$ is not invertible in $$\mathbb K[Z_1,\ldots,Z_m][A_{1;t_0},\ldots,A_{n;t_0}]$$ and $P_M$ is not divisible by any $Z_i$ and $A_{j;t_0}$ in $\mathbb K[Z_1,\ldots,Z_m][A_{1;t_0},\ldots,A_{n;t_0}]$.

Now we show that $P_M$ is irreducible in $\mathcal L(t_0)$. Assume that $P_M=L_1L_2$ for some $L_1,L_2\in\mathcal L(t_0)$. We know that there exist vectors ${\bf v}_1,{\bf v}_2\in\mathbb N^n$ such that
$$U_1:=A_{t_0}^{{\bf v}_1}L_1\in\mathcal U\;\;\text{and}\;\;\; U_2:=A_{t_0}^{{\bf v}_2}L_2\in\mathcal U.$$ By $M=\frac{P_M}{A_{t_0}^{\bf d}}=\frac{L_1L_2}{A_{t_0}^{\bf d}}=\frac{U_1U_2}{A_{t_0}^{{\bf d}+{\bf v}_1+{\bf v}_2}}$, we get
\begin{eqnarray}\label{eqnmau}
MA_{t_0}^{{\bf d}+{\bf v}_1+{\bf v}_2}=U_1U_2,
\end{eqnarray}
which is an equality in $\mathcal U$, since ${\bf d},{\bf v}_1,{\bf v}_2\in\mathbb N^n$. Because $\mathcal U$ is factorial and $M$ is irreducible in $\mathcal U$, we get that either $U_1$ or $U_2$ is divisible by $M$ in $\mathcal U$. Without loss of generality, we assume that $U_1^\prime:=U_1/M\in\mathcal U$. Then by the equality (\ref{eqnmau}), we get $U_1^\prime U_2=A_{t_0}^{{\bf d}+{\bf v}_1+{\bf v}_2},$ which implies that both $U_1^\prime$ and $U_2$ are invertible in $\mathcal L(t_0)$. So $L_2=U_2/A_{t_0}^{{\bf v}_2}$ is invertible in $\mathcal L(t_0)$. Hence,  $P_M$ is irreducible in $\mathcal L(t_0)$.

Since $P_M$ is irreducible in $\mathcal L(t_0)$ and by
the fact that $P_M$ is not divisible by any $Z_i$ and $A_{j;t_0}$ in $\mathbb K[Z_1,\ldots,Z_m][A_{1;t_0},\ldots,A_{n;t_0}],$ we get that $P_M$ is irreducible in $\mathbb K[Z_1,\ldots,Z_m][A_{1;t_0},\ldots,A_{n;t_0}]$. This completes the proof.
\end{proof}

\begin{remark}(i) Note that any principal coefficient upper cluster algebra is primitive and has full rank. So principal coefficient upper cluster algebras are always factorial.

(ii) A different proof of Theorem \ref{thmufd} (i) via the factoriality of Cox rings can be found in \cite[Corollary 4.7]{GHK15}.

 (iii) Garcia Elsener {\em et al.} in \cite[Corollary 5.3]{GLS19} prove that any principal coefficient cluster algebra $\mathcal A$ with an {\em acyclic initial exchange matrix} is factorial.
 Note that in the acyclic case, we have $\mathcal A=\mathcal U$, by Proposition \ref{proacyclic}.
\end{remark}

\begin{corollary}\label{corzp}
Let $\mathcal U$ be a full rank, primitive upper cluster algebra and $A_{k;t_0}$ a cluster variable of $\mathcal U$. Then for any non-zero elements $M$ and $L$ in $\mathcal U$, we have
$$(A_{k;t_0}\mid\mid M\cdot L)_v=(A_{k;t_0}\mid\mid M)_v+(A_{k;t_0}\mid\mid L)_v.$$
\end{corollary}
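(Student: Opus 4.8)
The plan is to reduce the statement to the valuation additivity already established for initial cluster variables (Theorem \ref{thmcufd}(iii)) by exploiting the fact that full rank and primitivity are seed-independent properties. The key observation is that the equality
$$(A_{k;t_0}\|M\cdot L)_v=(A_{k;t_0}\|M)_v+(A_{k;t_0}\|L)_v$$
concerns a \emph{single} cluster variable $A_{k;t_0}$, where $t_0$ is an \emph{arbitrary} seed of $\mathcal U$; so there is no loss of generality in regarding $t_0$ itself as the initial seed of the seed pattern defining $\mathcal U$. Indeed, a seed pattern is determined by any one of its seeds, and the resulting upper cluster algebra $\mathcal U$ does not change; moreover, $\mathcal U$ remains full rank by Proposition \ref{prorank} and remains primitive by Proposition \ref{propriex}(i). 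Hence, after relabelling $t_0$ as the initial seed, the hypotheses of Theorem \ref{thmcufd} are in force.

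Concretely, the steps are as follows. First, I would invoke Proposition \ref{propriex}(ii) with the seed $t_0$ playing the role of $t$: every exchange binomial $P_{k;t_0}$ of $t_0$ is irreducible in $\mathbb K[Z_1,\ldots,Z_m][A_{1;t_0},\ldots,\widehat{A}_{k;t_0},\ldots,A_{n;t_0}]$, which is exactly condition (iv) of Theorem \ref{thmcufd} (with $t_0$ taken as the initial seed). Second, I would apply the implication (iv)$\Rightarrow$(iii) of Theorem \ref{thmcufd} to conclude that every cluster variable $A_{k;t_0}$ in the seed $t_0$ satisfies the desired additivity
$$(A_{k;t_0}\|M\cdot L)_v=(A_{k;t_0}\|M)_v+(A_{k;t_0}\|L)_v$$
for all non-zero $M,L\in\mathcal U$. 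Since $t_0$ and $k$ were arbitrary, this is precisely the assertion of the corollary.

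There is essentially no obstacle here: the entire content has already been packaged into Theorem \ref{thmcufd} and Proposition \ref{propriex}, and the corollary is just the combination of these with the remark that one may treat an arbitrary seed as the initial one. The only point requiring a sentence of care is the seed-independence of the full rank and primitive conditions, which is exactly what Propositions \ref{prorank} and \ref{propriex}(i) supply; once that is noted, the proof is a two-line citation. (One could alternatively phrase the whole argument directly through the equivalence (ii)$\Leftrightarrow$(iii) of Proposition \ref{prozp} together with the irreducibility of the exchange binomials, bypassing Theorem \ref{thmcufd} entirely, but routing through Theorem \ref{thmcufd} is cleaner.)
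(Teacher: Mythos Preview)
Your proposal is correct and essentially matches the paper's one-line proof. The paper cites Theorem~\ref{thmufd}(i) (factoriality of $\mathcal U$) and then the equivalence (i)$\Leftrightarrow$(iii) of Theorem~\ref{thmcufd}, whereas you route through (iv)$\Rightarrow$(iii) of the same theorem via Proposition~\ref{propriex}(ii); since Theorem~\ref{thmufd}(i) is itself proved exactly by that route, the two arguments are the same up to which node of the equivalence cycle in Theorem~\ref{thmcufd} one passes through.
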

\begin{proof}
This follows from Theorem \ref{thmcufd} (i)(iii) and Theorem \ref{thmufd} (i).
\end{proof}

\subsection{Examples of non-factorial upper cluster algebras}\label{sec43}

\begin{proposition}{\rm (}\cite[Corollary 6.5]{GLS13}{\rm ).}
\label{corgls}
Let $\mathcal A$ be a geometric cluster algebra with initial extended exchange matrix $$\widetilde B_{t_0}= B_{t_0}=\begin{pmatrix}0&-c\\d&0\end{pmatrix},$$
where $c\geq1$ and $d\geq3$ an odd number. Then $\mathcal A$ is not factorial.
\end{proposition}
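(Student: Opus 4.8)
The plan is to prove the contrapositive of a factoriality criterion: I will exhibit a non-initial cluster variable whose numerator polynomial is \emph{not} irreducible, and then invoke Theorem~\ref{thmcufd}. Concretely, with $\tilde B_{t_0}=B_{t_0}=\begin{pmatrix}0&-c\\d&0\end{pmatrix}$, the two initial cluster variables $A_{1;t_0}=x_1$, $A_{2;t_0}=x_2$ have exchange relations $x_1 x_1' = x_2^d+1$ and $x_2 x_2' = x_1^c+1$. Here the matrix has full rank (its determinant is $cd\neq 0$), so Theorem~\ref{thmcufd} applies: $\mathcal A=\mathcal U$ in this acyclic rank-$2$ case (Proposition~\ref{proacyclic}), and $\mathcal U$ is factorial iff each exchange binomial $P_{1;t_0}=x_2^d+1$ and $P_{2;t_0}=x_1^c+1$ is irreducible in the relevant polynomial ring. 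Since $d\geq 3$ is odd, the one-variable polynomial $x_2^d+1$ factors over $\mathbb Q$ (indeed over $\mathbb Z$) as $(x_2+1)(x_2^{d-1}-x_2^{d-2}+\cdots+1)$, and both factors are non-units in $\mathbb K[x_2]$ (here there are no frozen variables $Z_i$, so the ambient ring is just $\mathbb K[x_2]$). Hence $P_{1;t_0}$ is reducible, and by Theorem~\ref{thmcufd}(i)$\Leftrightarrow$(iv) the algebra $\mathcal A$ is not factorial.

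First I would record that $B_{t_0}$ has full rank and that there are no coefficients, so this is a full rank (coefficient-free, $m=0$) cluster algebra, and that acyclicity of $B_{t_0}$ gives $\mathcal A=\mathcal U$ via Proposition~\ref{proacyclic}; this lets me apply Theorem~\ref{thmcufd} directly to $\mathcal A$. Second, I would write down the first exchange binomial $P_{1;t_0}=A_{2;t_0}^{d}+1$ explicitly from the definition of the exchange relation in the coefficient-free setting. Third, I would give the explicit factorization $t^d+1=(t+1)\bigl(\sum_{j=0}^{d-1}(-1)^j t^{\,d-1-j}\bigr)$, valid precisely because $d$ is odd (so $t=-1$ is a root), and note that the second factor has degree $d-1\geq 2>0$ so neither factor is a unit in $\mathbb K[t]$. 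Fourth, I would conclude by Theorem~\ref{thmcufd}: since not every exchange binomial of $t_0$ is irreducible, $\mathcal U=\mathcal A$ is not factorial.

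There is essentially no hard analytic content here; the only point requiring a little care is matching conventions. I would double-check that the "exchange binomial" in the sense of the paper, with trivial coefficients and $B_{t_0}$ as given, really is $A_{2;t_0}^d + 1$ and not, say, $1 + A_{2;t_0}^{d}$ with some $\mathbb P$-coefficient that could change invertibility — but under the standing assumption in Subsection~\ref{rmkexbi} the exchange binomials are non-invertible, and in the coefficient-free case the binomial is literally $x_2^d+1\in\mathbb K[x_2]$. I would also note that the hypothesis $c\geq 1$ only serves to make the seed genuinely mutable in both directions / the algebra non-degenerate; the non-factoriality is driven entirely by the odd exponent $d\geq 3$ on the other side. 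The main (very mild) obstacle is thus purely bookkeeping: ensuring the polynomial ring in Theorem~\ref{thmcufd}(iv), namely $\mathbb K[Z_1,\dots,Z_m][A_{1;t_0},\dots,\widehat{A_{k;t_0}},\dots,A_{n;t_0}]$, specializes correctly to $\mathbb K[A_{2;t_0}]$ when $k=1$, $m=0$, $n=2$, so that reducibility of $x_2^d+1$ there is exactly what Theorem~\ref{thmcufd} needs.
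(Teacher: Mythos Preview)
Your proposal is correct. The paper does not give its own proof of this proposition (it simply cites \cite[Corollary 6.5]{GLS13}), but in the paragraph immediately following it the authors explain that, from the viewpoint of Theorem~\ref{thmcufd}, the non-factoriality is precisely due to the reducibility of the exchange binomial $A_{2;t_0}^d+1=(A_{2;t_0}+1)(A_{2;t_0}^{d-1}-A_{2;t_0}^{d-2}+\cdots+1)$; your argument is exactly this explanation made into a full proof, with the needed observations ($\det B_{t_0}=cd\neq 0$ for full rank, and $\mathcal A=\mathcal U$ by acyclicity via Proposition~\ref{proacyclic}) spelled out.
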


\begin{remark}\label{rmknonfactorial}
Note that in Proposition \ref{corgls}, $\mathcal A$ is an acyclic cluster algebra. In this case, $\mathcal A$ coincides with its upper cluster algebra $\mathcal U$, by Proposition \ref{proacyclic}. So Proposition \ref{corgls} provides many upper cluster algebras which have full rank but are not factorial.
\end{remark}

Let $\mathcal U$ be the  geometric upper cluster algebra with initial extended exchange matrix $$\widetilde B_{t_0}= B_{t_0}=\begin{pmatrix}0&-1\\3&0\end{pmatrix}.$$
We know that $\mathcal U$ is a full rank cluster algebra.  It is non-factorial, by Remark \ref{rmknonfactorial}. From the viewpoint of Theorem \ref{thmcufd}, the non-factoriality of $\mathcal U$ is due to the fact that the first exchange binomial $A_{2;t_0}^3+1=(A_{2;t_0}+1)(A_{2;t_0}^2-A_{2;t_0}+1)$ of $t_0$ is not irreducible.

The following is a concrete example where $\mathcal U$ is primitive but not factorial.

\begin{example}
Let $\mathcal A$ be a geometric cluster algebra with the following initial extended exchange matrix $$\widetilde B_{t_0}= B_{t_0}=\begin{pmatrix}0&-1&0\\1&0&-1\\0&1&0\end{pmatrix}.$$
It is easy to see that $\widetilde B_{t_0}$ is primitive and acyclic but not full rank.
It is known from \cite[Proposition 6.1]{GLS13} that $\mathcal A$ is not factorial. Since $\mathcal A$ is acyclic, $\mathcal A$ coincides with its upper cluster algebra $\mathcal U$, by Proposition \ref{proacyclic}. So $\mathcal U=\mathcal A$ is not factorial.
\end{example}

\subsection{Ray Fish Theorem}\label{sec44}
Recall that the Starfish Theorem states that any full rank upper cluster algebra $\mathcal U$ can be written as the intersection of $n+1$ Laurent polynomial rings. In this subsection, we show that any full rank, primitive upper cluster algebra can be written as the intersection of {\em two} Laurent polynomial rings. We call this the Ray Fish Theorem.

\begin{proposition} {\rm (}\cite[Corollary 4.2]{GLS13}{\rm ).} \label{proglsau} Let $\mathcal A$ be a geometric cluster algebra with coefficient semifield  $\mathbb P={\rm Trop}(Z_1,\ldots,Z_m)$  and $\mathcal U$ its upper cluster algebra. Let $t, t_0$ be two seeds of $\mathcal A$ with no common cluster variables. If $\mathcal A$ is factorial, then $$\mathcal A=\mathcal U=\mathcal L(t)\cap\mathcal L(t_0),$$ where $\mathcal L(t)=\mathbb {KP}[A_{1;t}^{\pm1},\ldots,A_{n;t}^{\pm1}]$ and $\mathcal L(t_0)=\mathbb {KP}[A_{1;t_0}^{\pm1},\ldots,A_{n;t_0}^{\pm1}]$.
\end{proposition}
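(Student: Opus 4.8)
The plan is to establish the chain
$\mathcal A\subseteq\mathcal U\subseteq\mathcal L(t)\cap\mathcal L(t_0)\subseteq\mathcal A$,
whence all four algebras coincide. The first inclusion is automatic: every cluster variable lies in every $\mathcal L(s)$ by the Laurent phenomenon, hence in $\mathcal U=\bigcap_{s\in\mathbb T_n}\mathcal L(s)$, and $\mathcal A$ is generated over $\mathbb{KP}$ by the cluster variables. The second holds because $t$ and $t_0$ are among the seeds appearing in that intersection. So the whole content is the third inclusion $\mathcal L(t)\cap\mathcal L(t_0)\subseteq\mathcal A$, and this is where the factoriality of $\mathcal A$ is used; note that full rank is not needed here.

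Given $x\in\mathcal L(t)\cap\mathcal L(t_0)$, I would first clear denominators in $\mathcal L(t_0)=\mathbb{KP}[A_{1;t_0}^{\pm1},\ldots,A_{n;t_0}^{\pm1}]$: since $\mathbb{KP}[A_{1;t_0},\ldots,A_{n;t_0}]\subseteq\mathcal A$, the element $x$ lies in $S_0^{-1}\mathcal A$, where $S_0$ is the multiplicative set generated by the cluster variables of $t_0$; likewise $x\in S_1^{-1}\mathcal A$ with $S_1$ generated by the cluster variables of $t$. By Theorem \ref{thmgls}(i) the cluster variables are irreducible, hence prime since $\mathcal A$ is factorial, and $\mathcal A^\times=\mathbb{KP}^\times$. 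Using unique factorization I would then write $x$ as reduced fractions $x=a/s_0=b/s_1$ with $a,b\in\mathcal A$, where $s_0$ is a monomial in the cluster variables of $t_0$, $s_1$ a monomial in the cluster variables of $t$, and no prime factor of $s_0$ (resp. $s_1$) divides $a$ (resp. $b$); here $\mathcal A^\times=\mathbb{KP}^\times$ is used to absorb unit factors so that $s_0,s_1$ become honest monomials in cluster variables. This yields the identity $a\,s_1=b\,s_0$ in $\mathcal A$.

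Now suppose, for contradiction, that $s_0$ is not a unit, so that some cluster variable $A_{i;t_0}$ divides $s_0$. Then $A_{i;t_0}$ divides $b\,s_0=a\,s_1$; as $A_{i;t_0}$ is prime and does not divide $a$, it divides $s_1$, hence by uniqueness of prime factorization in $\mathcal A$ the element $A_{i;t_0}$ is associate to one of the cluster variables $A_{j;t}$ occurring in $s_1$. The point is then that associate cluster variables must be equal: if $A_{i;t_0}=u\,A_{j;t}$ with $u\in\mathcal A^\times=\mathbb{KP}^\times$, then, since such a unit involves only the coefficients $Z_k$, multiplying by it leaves the $d$-vector with respect to $t_0$ unchanged, so ${\bf d}^{t_0}(A_{j;t})={\bf d}^{t_0}(A_{i;t_0})$; the latter has $-1$ in its $i$-th entry, i.e. $(A_{i;t_0}||A_{j;t})_d=-1<0$, which forces $A_{j;t}=A_{i;t_0}$ by Proposition \ref{rmkdf}(4) (alternatively one invokes the additivity of $(-||-)_d$ from Remark \ref{rmkdadd2}). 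This contradicts the hypothesis that $t$ and $t_0$ have no common cluster variable. Hence $s_0$ is a unit and $x=a\in\mathcal A$, completing the third inclusion.

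The step I expect to require the most care is the last one: ruling out a nontrivial unit twist relating two distinct cluster variables, and, hand in hand with it, making the reduced-fraction bookkeeping precise (absorbing $\mathbb{KP}^\times$ into the numerators so that $s_0,s_1$ are genuine monomials in cluster variables). Everything else is formal once Theorem \ref{thmgls}(i) (irreducibility of cluster variables and the description of $\mathcal A^\times$) and the properties of $(-||-)_d$ collected in Proposition \ref{rmkdf} and Remark \ref{rmkdadd2} are in hand.
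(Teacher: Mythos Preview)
Your proof is correct and follows essentially the same route as the paper (which does not prove Proposition~\ref{proglsau} directly but reproduces the argument from \cite{GLS13} in the proof of Theorem~\ref{thmstrong}): write an element of $\mathcal L(t)\cap\mathcal L(t_0)$ as a fraction over each cluster, cross-multiply, and use factoriality together with the fact that associate cluster variables are equal (your $d$-vector argument is exactly Lemma~\ref{lemassociate}) to conclude that the denominator is trivial. The only cosmetic difference is that the paper does not pass to reduced fractions: it simply writes $M=P/A_t^{\bf v}=Q/A_{t_0}^{\bf s}$ with $P,Q$ polynomials, obtains $A_{t_0}^{\bf s}P=Q\,A_t^{\bf v}$, and deduces directly that $A_t^{\bf v}\mid P$ in $\mathcal A$ since the primes $A_{i;t_0}$ and $A_{j;t}$ are pairwise non-associate; this avoids the bookkeeping you flag as delicate, but your version is equally valid.
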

\begin{remark}
By Proposition \ref{proglsau}, a factorial geometric cluster algebra must coincide with its upper cluster algebra. This provides a reason why we focus on the factoriality of upper cluster algebras rather than cluster algebras.
\end{remark}
\begin{corollary}
Let $\mathcal U$ be a full rank, primitive upper cluster algebra and $\mathcal A$ the corresponding cluster algebra. Then $\mathcal A=\mathcal U$ if and only if $\mathcal A$ is factorial.
\end{corollary}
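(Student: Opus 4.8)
The plan is to prove the two implications separately. The implication ``$\mathcal A=\mathcal U\Rightarrow\mathcal A$ is factorial'' is immediate: since $\mathcal U$ is full rank and primitive, Theorem \ref{thmufd}(i) shows $\mathcal U$ is factorial, and $\mathcal A=\mathcal U$ then gives the claim with no further work.

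For the converse ``$\mathcal A$ factorial $\Rightarrow\mathcal A=\mathcal U$'', I would invoke Proposition \ref{proglsau}: whenever $t,t'$ are seeds of $\mathcal A$ having no common cluster variable, factoriality of $\mathcal A$ forces $\mathcal A=\mathcal U=\mathcal L(t)\cap\mathcal L(t')$. So the whole point is to exhibit such a pair of seeds. If the rank $n$ equals $0$ this is trivial ($\mathcal A=\mathcal U=\mathbb{KP}$), so suppose $n\ge1$. Starting from the initial seed $t_0$, set $t_i=\mu_i(t_{i-1})$ for $i=1,\dots,n$ and let $a_i$ be the cluster variable created by the mutation at step $i$; since vertex $i$ is never mutated again, the cluster of $t_n=\mu_n\cdots\mu_1(t_0)$ is $(a_1,\dots,a_n)$. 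I claim $t_0$ and $t_n$ share no cluster variable. Since $a_i$ is obtained from $t_{i-1}$ by mutating at the slot then occupied by $A_{i;t_0}$, and the exchange binomial is not divisible by any cluster variable of $t_{i-1}$, the $d$-vector of $a_i$ relative to $t_{i-1}$ is the $i$-th standard basis vector $e_i\in\mathbb Z^n$. Comparing with $A_{j;t_0}$: for $j\ge i$ the variable $A_{j;t_0}$ is still the $j$-th cluster variable of $t_{i-1}$, so its $d$-vector relative to $t_{i-1}$ is $-e_j\ne e_i$; for $j<i$ the pair $(a_j,A_{j;t_0})$ is an exchange pair, hence not contained in any common cluster, so by Proposition \ref{rmkdf}(3) the $d$-vector of $A_{j;t_0}$ relative to $t_{i-1}$ has strictly positive $j$-th entry, whereas $e_i$ has $j$-th entry $0$. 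In every case $a_i\ne A_{j;t_0}$, which proves the claim, and Proposition \ref{proglsau} applied to $t_0$ and $t_n$ then yields $\mathcal A=\mathcal U$. (One could equally feed into Proposition \ref{proglsau} the disjoint pair of seeds already produced by the ray fish Theorem \ref{thmstrong}.)

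The substance of the argument is entirely Proposition \ref{proglsau} (from \cite{GLS13}); the one thing requiring care is the construction of a disjoint pair of seeds, and the main pitfall would be to assume this is automatic. What makes the bookkeeping go through is that a freshly mutated cluster variable has a standard basis vector as its $d$-vector with respect to the seed it was mutated from, combined with the positivity of the $d$-compatibility degree on exchange pairs (Proposition \ref{rmkdf}(3)); no input beyond the preliminaries recalled above should be needed.
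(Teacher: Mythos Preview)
Your proof is correct and follows the paper's approach: the forward direction invokes Theorem \ref{thmufd}(i), and the converse invokes Proposition \ref{proglsau}. The paper's proof of the converse is the single line ``This follows from Proposition \ref{proglsau}'', tacitly assuming the existence of two seeds with no common cluster variables; you correctly recognize this is not automatic and supply it via the sequential mutation $t_n=\mu_n\cdots\mu_1(t_0)$ together with a clean $d$-vector argument. One small correction to your parenthetical: Theorem \ref{thmstrong} does not \emph{produce} a disjoint pair of seeds---like Proposition \ref{proglsau}, it takes such a pair as a hypothesis---so it cannot be used as a shortcut here.
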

\begin{proof}
``$\Longrightarrow$": Since $\mathcal U$ is full rank and primitive and by Theorem \ref{thmufd}, we know that $\mathcal U$ is factorial. Then the factoriality of $\mathcal A$ follows from  $\mathcal A=\mathcal U$.

``$\Longleftarrow$": This follows from Proposition \ref{proglsau}.
\end{proof}
\begin{lemma}\label{lemassociate}
Let $\mathcal U$ be a geometric upper cluster algebra with coefficient semifield $\mathbb P={\rm Trop}(Z_1,\ldots,Z_m)$ and $A_{i;t_0}, A_{j;t}$ two cluster variables of $\mathcal U$. Then $A_{i;t_0}$ and $A_{j;t}$ are associate in $\mathcal U$ if and only if $A_{i;t_0}=A_{j;t}$.
\end{lemma}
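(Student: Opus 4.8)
The forward implication is immediate, since equal elements are trivially associate; the content is the converse: if $A_{i;t_0}$ and $A_{j;t}$ are associate, then $A_{i;t_0}=A_{j;t}$. The plan is to pass to $d$-vectors with respect to the seed $t_0$ and exploit the fact that the units of a geometric upper cluster algebra are ``trivial''.

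First I would write $A_{j;t}=s\,A_{i;t_0}$ with $s\in\mathcal U^\times$ and invoke Theorem \ref{thmgls} (ii)(a) to get $s=\lambda Z_1^{c_1}\cdots Z_m^{c_m}$ with $\lambda\in\mathbb K^\times$ and $c_1,\dots,c_m\in\mathbb Z$. In particular $s$ lies in $(\mathbb{KP})^\times$, so when $s$ is regarded as an element of $\mathcal L(t_0)=\mathbb{KP}[A_{1;t_0}^{\pm1},\dots,A_{n;t_0}^{\pm1}]$ its numerator polynomial is $s$ itself and its $d$-vector is ${\bf d}^{t_0}(s)=0$. Applying the additivity in Remark \ref{rmkdadd} to the factorization $A_{j;t}=s\cdot A_{i;t_0}$ in $\mathcal L(t_0)$ then gives ${\bf d}^{t_0}(A_{j;t})={\bf d}^{t_0}(s)+{\bf d}^{t_0}(A_{i;t_0})={\bf d}^{t_0}(A_{i;t_0})$.

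Next I would note that, since $D_{t_0}^{B_{t_0};t_0}=-I_n$ (Definition \ref{defdf}), the $d$-vector of the initial cluster variable $A_{i;t_0}$ is the $i$-th column of $-I_n$; hence the $i$-th entry of ${\bf d}^{t_0}(A_{j;t})$ equals $-1$, i.e. $(A_{i;t_0}||A_{j;t})_d=-1<0$. By Proposition \ref{rmkdf} (4) this forces $A_{j;t}=A_{i;t_0}$, which is what we want.

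The main, and essentially only, obstacle is the appeal to Theorem \ref{thmgls} (ii)(a): it is precisely the fact that the units of a \emph{geometric} upper cluster algebra involve no cluster variables that makes ${\bf d}^{t_0}(s)=0$, and this is where the geometric-type hypothesis enters in an essential way (without it a unit could carry a nonzero $A$-exponent and the comparison of $d$-vectors would fail). The remaining steps are routine applications of the additivity of $d$-vectors and of the characterization of negative $d$-vector entries in terms of equality of cluster variables.
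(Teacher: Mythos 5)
Your proof is correct and takes essentially the same route as the paper: both reduce to Theorem \ref{thmgls}~(ii)(a) to express the unit as a monomial in the frozen $Z_i$'s, deduce $(A_{i;t_0}\|A_{j;t})_d=-1$ from the resulting Laurent expansion with respect to $t_0$, and conclude via Proposition \ref{rmkdf}~(4). The only difference is that you make explicit (via additivity of $d$-vectors and $D_{t_0}^{B_{t_0};t_0}=-I_n$) what the paper compresses into ``it is easy to see that $(A_{i;t_0}\|A_{j;t})_d=-1$.''
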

\begin{proof}
Assume that  $A_{i;t_0}$ and $A_{j;t}$ are associate in $\mathcal U$. Then
\begin{eqnarray}\label{eqnassociate}
A_{j;t}=\lambda Z_1^{c_1}\cdots Z_m^{c_m}A_{i;t_0}
 \end{eqnarray}
 for some $\lambda\in\mathbb K^\times$ and  $c_1,\ldots,c_m\in \mathbb Z$, by Theorem \ref{thmgls} (ii). Notice that the equality (\ref{eqnassociate}) can be viewed as the Laurent expansion of $A_{j;t}$ with respect to $t_0$. It is easy to see that $(A_{i;t_0}\mid\mid A_{j;t})_d=-1$. Then by Proposition \ref{rmkdf} (4), we get $A_{j;t}=A_{i;t_0}$.

Conversely, assume $A_{i;t_0}=A_{j;t}$. Clearly, we have that   $A_{i;t_0}$ and $A_{j;t}$ are associate in $\mathcal U$. This completes the proof.
\end{proof}

Inspired by Proposition \ref{proglsau}, we give the following result.
\begin{theorem}[Ray Fish Theorem]\label{thmstrong}
Let $\mathcal U$ be a full rank, primitive upper cluster algebra with coefficient semifield $\mathbb P={\rm Trop}(Z_1,\ldots,Z_m)$ and $t, t_0$ two seeds of $\mathcal U$ with no common cluster variables. Then $\mathcal U=\mathcal L(t)\cap\mathcal L(t_0)$, where $\mathcal L(t)=\mathbb {KP}[A_{1;t}^{\pm1},\ldots,A_{n;t}^{\pm1}]$ and $\mathcal L(t_0)=\mathbb {KP}[A_{1;t_0}^{\pm1},\ldots,A_{n;t_0}^{\pm1}]$.
\end{theorem}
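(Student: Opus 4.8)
The plan is to combine the factoriality result from Theorem~\ref{thmufd}~(i) with Proposition~\ref{proglsau}. Since $\mathcal U$ is full rank and primitive, Theorem~\ref{thmufd}~(i) tells us that $\mathcal U$ is factorial. The only gap between this and the desired conclusion is that Proposition~\ref{proglsau} is stated for a \emph{cluster algebra} $\mathcal A$ that is factorial and for two seeds with no common cluster variables, whereas here we start directly from the upper cluster algebra $\mathcal U$. So the first step is to let $\mathcal A$ be the cluster algebra associated with the same seed pattern as $\mathcal U$, and to argue that $\mathcal A=\mathcal U$. This follows from the Corollary just proven (the one stating $\mathcal A=\mathcal U$ iff $\mathcal A$ is factorial, in the full rank primitive case): since $\mathcal U$ is factorial, $\mathcal A=\mathcal U$ and in particular $\mathcal A$ is factorial as well.

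Once we know $\mathcal A=\mathcal U$ is factorial, we simply invoke Proposition~\ref{proglsau} with the two given seeds $t$ and $t_0$, which by hypothesis have no common cluster variables. Proposition~\ref{proglsau} then yields
$$\mathcal U=\mathcal A=\mathcal L(t)\cap\mathcal L(t_0),$$
which is exactly the assertion of the theorem.

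One point that deserves a word of care is the hypothesis ``no common cluster variables'' in Proposition~\ref{proglsau}: in the statement of the present theorem the same phrase appears, so there is nothing to verify, but it is worth recalling (via Lemma~\ref{lemassociate}) that for cluster variables of a geometric upper cluster algebra, being associate is the same as being equal, so ``no common cluster variables'' is genuinely the right notion of disjointness here and matches the hypothesis needed in Proposition~\ref{proglsau}. I do not anticipate a serious obstacle: the entire content has already been assembled in the preceding results, and this theorem is essentially a repackaging of Theorem~\ref{thmufd}~(i) and Proposition~\ref{proglsau}. If anything, the only thing to be slightly attentive to is making sure the reduction $\mathcal A=\mathcal U$ is invoked correctly before applying Proposition~\ref{proglsau}, since the latter is phrased in terms of $\mathcal A$.
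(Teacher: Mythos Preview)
Your argument contains a genuine gap at the step where you invoke the Corollary to conclude $\mathcal A=\mathcal U$. That Corollary asserts the equivalence $\mathcal A=\mathcal U\iff\mathcal A$ is factorial, and to use the backward implication you would need to know that $\mathcal A$ is factorial. What Theorem~\ref{thmufd}~(i) gives you is that $\mathcal U$ is factorial; factoriality of a ring does not pass to subrings, so you cannot deduce that $\mathcal A$ is factorial, and hence you cannot conclude $\mathcal A=\mathcal U$. In fact the statement of Theorem~\ref{thmstrong} itself asserts only $\mathcal U=\mathcal L(t)\cap\mathcal L(t_0)$ and says nothing about $\mathcal A$; the equality $\mathcal A=\mathcal U$ is \emph{not} claimed for arbitrary full rank primitive upper cluster algebras, and reducing to Proposition~\ref{proglsau} via that equality is circular.

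The paper sidesteps this entirely: rather than passing through $\mathcal A$, it repeats the argument of \cite[Corollary~4.2]{GLS13} directly inside $\mathcal U$. One takes $M\in\mathcal L(t)\cap\mathcal L(t_0)$, writes
\[
M=\frac{P(A_{1;t},\ldots,A_{n;t})}{A_{1;t}^{v_1}\cdots A_{n;t}^{v_n}}=\frac{Q(A_{1;t_0},\ldots,A_{n;t_0})}{A_{1;t_0}^{s_1}\cdots A_{n;t_0}^{s_n}},
\]
and obtains $A_{1;t_0}^{s_1}\cdots A_{n;t_0}^{s_n}P=QA_{1;t}^{v_1}\cdots A_{n;t}^{v_n}$ in $\mathcal U$. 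Now one uses that $\mathcal U$ is factorial (Theorem~\ref{thmufd}~(i)), that each $A_{i;t_0}$ and $A_{j;t}$ is irreducible in $\mathcal U$ (Theorem~\ref{thmgls}~(ii)), and that they are pairwise non-associate by Lemma~\ref{lemassociate} together with the hypothesis that $t$ and $t_0$ share no cluster variables. Unique factorization in $\mathcal U$ then forces $A_{1;t}^{v_1}\cdots A_{n;t}^{v_n}\mid P$ in $\mathcal U$, whence $M\in\mathcal U$. The point is that this argument only ever uses the factoriality of $\mathcal U$, which is what has actually been established.
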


\begin{proof}The proof is the same as that of \cite[Corollary 4.2]{GLS13}. For convenience of the reader, we repeat it here.

Since the inclusion $\mathcal U\subseteq \mathcal L(t)\cap\mathcal L(t_0)$ is clear, it suffices to show the converse inclusion.
We know that
any $M\in \mathcal L(t)\cap\mathcal L(t_0)$ has the following form:
$$M=\frac{P(A_{1;t},\ldots,A_{n;t})}{A_{1;t}^{v_1}\cdots A_{n;t}^{v_n}}=\frac{Q(A_{1;t_0},\ldots,A_{n;t_0})}{A_{1;t_0}^{s_1}\cdots A_{n;t_0}^{s_n}},$$
where $P\in\mathbb{KP}[A_{1;t},\ldots,A_{n;t}],\; Q\in\mathbb{KP}[A_{1;t_0},\ldots,A_{n;t_0}]$ and $v_i,s_i\geq0$ for any $i=1,\ldots,n$. Thus we have $A_{1;t_0}^{s_1}\cdots A_{n;t_0}^{s_n}P=QA_{1;t}^{v_1}\cdots A_{n;t}^{v_n}.$

By Theorem \ref{thmgls} (ii), we know that $A_{i;t_0}$ and $A_{j;t}$ are irreducible in $\mathcal U$ for any $i$ and $j$. By Lemma \ref{lemassociate} and the fact that $t$ and $t_0$ have no common cluster variables, we know that $A_{i;t_0}$ and $A_{j;t}$ are non-associate for all $1\leq i,j\leq n$. Since $\mathcal U$ is full rank and primitive and by Theorem \ref{thmufd}, we know that $\mathcal U$ is factorial. So the equality $A_{1;t_0}^{s_1}\cdots A_{n;t_0}^{s_n}P=QA_{1;t}^{v_1}\cdots A_{n;t}^{v_n}$ implies that $P$ is divisible by $A_{1;t}^{v_1}\cdots A_{n;t}^{v_n}$ in $\mathcal U$. Namely, there exists $P^\prime\in\mathcal U$ such that $P= A_{1;t}^{v_1}\cdots A_{n;t}^{v_n}P^\prime$. Thus $$M=\frac{P}{A_{1;t}^{v_1}\cdots A_{n;t}^{v_n}}=P^\prime\in\mathcal U.$$
So we have $\mathcal L(t)\cap\mathcal L(t_0)\subseteq\mathcal U$ and thus $\mathcal U=\mathcal L(t)\cap\mathcal L(t_0)$.
\end{proof}

\section{Applications to \texorpdfstring{$d$}{Lg}-vectors}\label{sec5}

\subsection{\texorpdfstring{$d$}{Lg}-compatibility degree and \texorpdfstring{$d$}{Lg}-vectors via the valuation pairing}\label{sec51}
In this subsection, we show how to express the $d$-compatibility and the $d$-vectors using the valuation pairing for full rank upper cluster algebras.

\begin{theorem}\label{thmind}
 Let $\mathcal U$ be a full rank upper cluster algebra with initial seed $t_0$. Let
 $$M=\frac{P_M(A_{1;t_0},\ldots,A_{n;t_0})}{A_{1;t_0}^{d_1}\cdots A_{n;t_0}^{d_n}}$$ be a non-zero element in $\mathcal U$,  where ${\bf d}_M:=(d_1,\ldots,d_n)^{\rm T}$ is the $d$-vector of $M$ with respect to $A_{t_0}$. Then the following statements hold.
 \begin{itemize}
 \item[(i)] For any $k=1,\ldots,n$, we have $$d_k=(A_{k;t_0}\mid\mid M)_d=(A_{k;t_0}\mid\mid P_M)_v-(A_{k;t_0}\mid\mid M)_v.$$
 \item[(ii)] If $(A_{k;t_0}\mid\mid M)_v=0$ for some $k$, that is, $M/A_{k;t_0}\notin\mathcal U$, then $$d_k=(A_{k;t_0}\mid\mid M)_d=(A_{k;t_0}\mid\mid P_M)_v\geq 0.$$
 \item[(iii)] If $(A_{k;t_0}\mid\mid M)_v=0$ for $k=1,\ldots,n$, then ${\bf d}_M$ is uniquely determined by $P_M$.
 \end{itemize}

\end{theorem}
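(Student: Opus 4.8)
The plan is to establish (i) first; parts (ii) and (iii) then follow immediately. Recall that by Remark \ref{rmkdadd2}(i) the integer $(A_{k;t_0}||M)_d$ is \emph{defined} to be the $k$-th component $d_k$ of the $d$-vector of $M$ with respect to $A_{t_0}$, so the first equality in (i) requires no argument; the content is the identity
\[
d_k=(A_{k;t_0}||P_M)_v-(A_{k;t_0}||M)_v ,
\]
and the main tool is the Reduction Lemma \ref{lemadd}(iii), which says that the valuation pairing becomes additive when one of the factors is a cluster monomial in the \emph{same} seed $t_0$.

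First I would clear denominators by splitting the $d$-vector into its positive and negative parts. Put $d_i^+={\rm max}(d_i,0)$ and $d_i^-={\rm max}(-d_i,0)$, so that $d_i=d_i^+-d_i^-$ and $d_i+d_i^-=d_i^+$. Then the defining relation $M=P_M/(A_{1;t_0}^{d_1}\cdots A_{n;t_0}^{d_n})$ rewrites as the genuine equality in $\mathcal U$
\[
\Big(\prod_{i=1}^n A_{i;t_0}^{d_i^-}\Big)\cdot P_M=\Big(\prod_{i=1}^n A_{i;t_0}^{d_i^+}\Big)\cdot M ,
\]
in which $\prod_i A_{i;t_0}^{d_i^-}$ and $\prod_i A_{i;t_0}^{d_i^+}$ are cluster monomials in $t_0$, while $P_M\in\mathbb{KP}[A_{1;t_0},\dots,A_{n;t_0}]\subseteq\mathcal U$ and $M$ are nonzero elements of $\mathcal U$. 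Applying $(A_{k;t_0}||-)_v$ to both sides and invoking Lemma \ref{lemadd}(iii) on each side gives $d_k^-+(A_{k;t_0}||P_M)_v=d_k^++(A_{k;t_0}||M)_v$, and rearranging yields $(A_{k;t_0}||P_M)_v-(A_{k;t_0}||M)_v=d_k^+-d_k^-=d_k$, which is (i).

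For (ii), substituting the hypothesis $(A_{k;t_0}||M)_v=0$ into (i) gives $d_k=(A_{k;t_0}||P_M)_v$, and this is $\geq 0$ since the valuation pairing takes values in $\mathbb N$. For (iii), if $(A_{k;t_0}||M)_v=0$ for every $k$, then by (ii) we have ${\bf d}_M=\big((A_{1;t_0}||P_M)_v,\dots,(A_{n;t_0}||P_M)_v\big)^{\rm T}$; the right-hand side depends only on $P_M$ (the cluster variables of $t_0$ being fixed), so ${\bf d}_M$ is determined by $P_M$.

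I do not anticipate a serious obstruction: the argument is short once the Reduction Lemma \ref{lemadd} is in hand, and it is precisely through that lemma (hence through the starfish Theorem \ref{proup}) that the full rank hypothesis enters. The only point that needs care is that the valuation pairing is \emph{not} additive in general --- this is exactly the theme of Section \ref{sec41} --- so one must use additivity only in the restricted form of Lemma \ref{lemadd}(iii) (multiplication by a cluster monomial in $t_0$), and must check that after the positive/negative splitting all four factors appearing in the displayed equality genuinely lie in $\mathcal U$.
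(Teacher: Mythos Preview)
Your proof is correct and essentially identical to the paper's: both split ${\bf d}_M$ into nonnegative parts ${\bf d}^+,{\bf d}^-$, clear denominators to obtain $A_{t_0}^{{\bf d}^+}M=A_{t_0}^{{\bf d}^-}P_M$ in $\mathcal U$, apply the Reduction Lemma~\ref{lemadd}(iii) to each side, and rearrange; parts (ii) and (iii) are then immediate. Your commentary on why only the restricted additivity of Lemma~\ref{lemadd}(iii) is needed (and hence where the full rank hypothesis enters) is accurate and matches the paper's reasoning.
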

\begin{proof}(i) We know that there exist two vectors $${\bf d}^+=(d_1^+,\ldots,d_n^+)^{\rm T},\;{\bf d}^-=(d_1^-,\ldots,d_n^-)^{\rm T}\in\mathbb N^n$$ such that $${\bf d}_M={\bf d}^+-{\bf d}^-.$$
By $M=P_M/A_{t_0}^{{\bf d}_M}=P_M/A_{t_0}^{{\bf d}^+-{\bf d}^-}$, we have $A_{t_0}^{{\bf d}^+}M=A_{t_0}^{{\bf d}^-}P_M$.
By the Reduction Lemma \ref{lemadd} (iii), we get

$$(A_{k;t_0}\mid\mid M)_v+d_k^+=(A_{k;t_0}\mid\mid A_{t_0}^{{\bf d}_+}M)_v=(A_{k;t_0}\mid\mid A_{t_0}^{{\bf d}_-}P_M)_v=(A_{k;t_0}\mid\mid P_M)_v+d_k^-.$$
So we have $$d_k=d_k^+-d_k^-=(A_{k;t_0}\mid\mid P_M)_v-(A_{k;t_0}\mid\mid M)_v.$$

The results in (ii) and (iii) follow from (i).
\end{proof}

\subsection{Local factorizations of cluster monomials}\label{sec52}
Let $\mathcal U$ be an upper cluster algebra with coefficient semifield  $\mathbb P$ and   initial seed $t_0$. In this subsection, we prove that if $M$ is a cluster monomial in non-initial cluster variables, then $M/A_{k;t_0}\notin\mathcal U$, that is, $(A_{k;t_0}\mid\mid M)_v=0$ for $k=1,\ldots,n$. In particular, we can give a local factorization for any cluster monomial.

\begin{lemma}\label{lemmonomial}
Let $\mathcal U$ be an upper cluster algebra with coefficient semifield $\mathbb P$  and $t$ a seed of $\mathcal U$. If $N=A_{1;t}^{b_1}A_{2;t}^{b_2}\cdots A_{n;t}^{b_n}\in\mathcal U$ for some $b_1, b_2,\ldots,b_n\in\mathbb Z$, then $b_1, b_2,\ldots,b_n\geq 0$.
\end{lemma}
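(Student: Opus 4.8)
The plan is to argue by contradiction: assume $b_k<0$ for some index $k$ and contradict the membership $N\in\mathcal U$. The only property of $\mathcal U$ that will be used is the inclusion $\mathcal U\subseteq\mathcal L(t_k)$, where $t_k=\mu_k(t)$, which is immediate from the definition $\mathcal U=\bigcap_{t'\in\mathbb T_n}\mathcal L(t')$; in particular no full rank hypothesis is needed.

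First I would re-express $N$ in the cluster at $t_k$. Using $A_{i;t}=A_{i;t_k}$ for $i\neq k$ together with the exchange relation $A_{k;t}A_{k;t_k}=P_{k;t}$, where $P_{k;t}$ denotes the $k$-th exchange binomial of $t$, we obtain
$$N=\Big(\prod_{i\neq k}A_{i;t_k}^{\,b_i}\Big)A_{k;t_k}^{-b_k}\cdot P_{k;t}^{\,b_k}.$$
The first factor is a Laurent monomial in the cluster at $t_k$, hence a unit of $\mathcal L(t_k)$, so the membership $N\in\mathcal L(t_k)$ forces $P_{k;t}^{\,b_k}\in\mathcal L(t_k)$. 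Since $b_k<0$ and $\mathcal L(t_k)=\mathbb{KP}[A_{1;t_k}^{\pm1},\ldots,A_{n;t_k}^{\pm1}]$ is a domain, it follows that $P_{k;t}^{\,|b_k|}$, and hence $P_{k;t}$ itself, is invertible in $\mathcal L(t_k)$.

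It remains to rule this out, i.e.\ to observe that an exchange binomial is never invertible in $\mathcal L(t_k)$. Since $b_{kk}^{t}=0$, the binomial $P_{k;t}$ lies in $\mathbb{KP}[A_{j;t_k}:j\neq k]$, is a sum of two monomials with disjoint supports (one recording the positive entries of the $k$-th column of $B_t$, the other the negative ones), and is therefore divisible by no variable $A_{j;t_k}$. An invertible element of the Laurent polynomial ring $\mathcal L(t_k)$ over the domain $\mathbb{KP}$ has the form $u\prod_i A_{i;t_k}^{a_i}$ with $u\in(\mathbb{KP})^\times$ and $a_i\in\mathbb Z$; comparing this with $P_{k;t}$, which has only non-negative exponents in the $A_{i;t_k}$ and is divisible by no variable, forces every $a_i=0$, that is, $P_{k;t}\in(\mathbb{KP})^\times$. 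This contradicts the standing convention of Subsection~\ref{rmkexbi} that exchange binomials are not invertible in $\mathbb{KP}$. Hence $b_k\geq0$, and since $k$ was arbitrary, all of $b_1,\ldots,b_n$ are non-negative.

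I do not expect a genuine obstacle here: the argument is a one-line change-of-cluster computation followed by the standard description of the units of a Laurent polynomial ring over a domain. The only points needing care are the signs of the exponents in the passage to the cluster at $t_k$ and invoking the convention on exchange binomials at precisely the right spot.
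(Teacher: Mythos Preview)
Your proof is correct and follows essentially the same approach as the paper's: assume some $b_k<0$, pass to the adjacent seed $t_k=\mu_k(t)$, rewrite $N$ there to force the exchange binomial to be a unit of $\mathcal L(t_k)$, and then use that units of a Laurent polynomial ring over a domain are scalar units times Laurent monomials together with the convention of Subsection~\ref{rmkexbi} to reach a contradiction. The only cosmetic difference is that the paper phrases the exchange binomial as that of $t_k$ rather than of $t$, but these coincide.
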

\begin{proof}
Assume by contradiction that there exists some $b_{i_0}<0$. Without loss of generality, we just assume $b_1<0$.
Let $t^\prime=\mu_{1}(t)$ and thus $t=\mu_1(t^\prime)$. Applying the exchange relation, we know that
$$A_{1;t}A_{1;t^\prime}=P\in\mathbb{KP}[A_{2;t^\prime},\ldots,A_{n;t^\prime}],$$ where $P$ is the first exchange binomial of $t^\prime$. Then we know that
\begin{eqnarray}
N=A_{1;t}^{b_1}A_{2;t}^{b_2}\cdots A_{n;t}^{b_n}
=\frac{A_{1;t^\prime}^{-b_1}A_{2;t^\prime}^{b_2}\cdots A_{n;t^\prime}^{b_n}}{P^{-b_1}},\nonumber
\end{eqnarray}
which is the expansion of $N$ with respect to $A_{t^\prime}$. By $N\in\mathcal U$, we get that $$N=\frac{A_{1;t^\prime}^{-b_1}A_{2;t^\prime}^{b_2}\cdots A_{n;t^\prime}^{b_n}}{P^{-b_1}}\in\mathcal L(t^\prime)=\mathbb {KP}[A_{1;t^\prime}^{\pm1},\ldots,A_{n;t^\prime}^{\pm1}].$$ Thus we have
$1/P^{-b_1}\in\mathcal L(t^\prime).$
Because both $P^{-b_1}$ and $1/P^{-b_1}$ are in $\mathcal L(t^\prime)$, we must have $P^{-b_1}=\lambda A_{1;t^\prime}^{c_1}\cdots A_{n;t^\prime}^{c_n}$ for some invertible element $\lambda$ in $\mathbb{KP}$ and $c_1,\ldots,c_n\in\mathbb Z$. Since $P$ is a polynomial in $\mathbb{KP}[A_{2;t^\prime},\ldots,A_{n;t^\prime}]$ with $A_{i;t^\prime}\nmid P$ for any $i$, we get that $c_1=\ldots=c_n=0$ and $P^{-b_1}=\lambda$ is invertible in $\mathbb{KP}$.  By our assumption in Subsection \ref{rmkexbi}, we know that  $P^{-b_1}=\lambda$ is not invertible in $\mathbb{KP}$. This concludes a contradiction. So we must have $b_1,\ldots,b_n\geq 0$.
\end{proof}

\begin{proposition}\label{promono}
Let $\mathcal U$ be an upper cluster algebra with coefficient semifield $\mathbb P$  and $t_0, t$ two seeds of $\mathcal U$.
 Let $M=A_{1;t_0}^{c_1}\cdots A_{n;t_0}^{c_n}$ be a cluster monomial in $A_{t_0}$ and $N=A_{1;t}^{b_1}\cdots A_{n;t}^{b_n}$ a Laurent monomial in $A_{t}$.
 Let $$I=\{i\mid c_i>0\}\;\;\text{and}\;\;\;J=\{j\mid b_j>0\}.$$ If there exists an invertible element $\lambda$ in $\mathbb {KP}$ such that $M=\lambda N$, then
 \begin{itemize}
 \item[(i)] $N$ is a cluster monomial in $A_{t}$, i.e., we have $b_1,\ldots,b_n\geq0$;

 \item[(ii)] there exists a bijection $\sigma:I\rightarrow J$ such that $A_{i;t_0}=A_{\sigma(i);t}$ and $c_{i}=b_{\sigma(i)}$. In particular, we have $M=N$.
 \end{itemize}
\end{proposition}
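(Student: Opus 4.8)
The plan is to obtain (i) directly from Lemma~\ref{lemmonomial}, and then to prove (ii) via the $d$-compatibility degree rather than unique factorization, since $\mathcal U$ is not assumed to be factorial in this generality. For (i): the cluster monomial $M$ lies in the cluster algebra $\mathcal A$, hence in $\mathcal U$ by the Laurent phenomenon, and since $\lambda$ is invertible in $\mathbb{KP}\subseteq\mathcal U$ the element $N=\lambda^{-1}M$ also lies in $\mathcal U$. But $N=A_{1;t}^{b_1}\cdots A_{n;t}^{b_n}$ is a Laurent monomial in the cluster $A_t$ belonging to $\mathcal U$, so Lemma~\ref{lemmonomial} forces $b_1,\ldots,b_n\geq 0$; that is, $N$ is a cluster monomial in $t$.

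For the first half of (ii) I would produce an injection $\sigma\colon I\to J$. First note that $(A_{i;t_0}||-)_d$ is the $i$-th component of the $d$-vector with respect to $A_{t_0}$, which is unchanged under multiplication by a unit of $\mathbb{KP}$ (this only multiplies the numerator polynomial by that unit, leaving its divisibility by each $A_{j;t_0}$ untouched); hence $(A_{i;t_0}||N)_d=(A_{i;t_0}||M)_d$ for every $i$. Using the additivity of $(-||-)_d$ (Remark~\ref{rmkdadd2}) together with Proposition~\ref{rmkdf}(4) and (5), one computes $(A_{i;t_0}||M)_d=\sum_{l}c_l\,(A_{i;t_0}||A_{l;t_0})_d=-c_i$. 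Thus for each $i\in I$ we have $(A_{i;t_0}||N)_d=-c_i<0$, so applying Corollary~\ref{cordmonomial}(i) to the cluster monomial $N$ of the seed $t$ yields an index $\sigma(i)\in J$ with $A_{i;t_0}=A_{\sigma(i);t}$. Since the cluster variables of a fixed cluster are pairwise distinct, $\sigma(i)$ is unique, so $\sigma\colon I\to J$ is well defined and injective.

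For the second half of (ii): substituting $A_{i;t_0}=A_{\sigma(i);t}$ into $M$ gives $M=\prod_{i\in I}A_{\sigma(i);t}^{c_i}$, so the identity $M=\lambda N$ reads $A_t^{\bf u}=\lambda\,A_t^{\bf w}$ in $\mathcal L(t)=\mathbb{KP}[A_{1;t}^{\pm1},\ldots,A_{n;t}^{\pm1}]$, where ${\bf u}$ has entry $c_i$ in position $\sigma(i)$ for $i\in I$ and ${\bf w}$ has entry $b_j$ in position $j$ for $j\in J$, all remaining entries being zero. Since $A_{1;t},\ldots,A_{n;t}$ form a free generating set of $\mathcal F$ over $\mathbb{KP}$, distinct Laurent monomials are $\mathbb{KP}$-linearly independent, so comparing the two sides forces ${\bf u}={\bf w}$ and $\lambda=1$. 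Reading off supports gives $\sigma(I)=J$, hence $\sigma$ is a bijection $I\to J$; reading off entries gives $c_i=b_{\sigma(i)}$; and $M=A_t^{\bf u}=A_t^{\bf w}=N$.

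The one delicate point is the logical order: the $d$-compatibility computation in (ii) uses that $N$ is a genuine cluster monomial of $t$ (nonnegative exponents), which is exactly part (i); and one must observe that $(-||-)_d$ does not see units of $\mathbb{KP}$, so that the relation $M=\lambda N$ can be fed into Corollary~\ref{cordmonomial}. Everything else is bookkeeping with the properties of $(-||-)_d$ recalled in Proposition~\ref{rmkdf}.
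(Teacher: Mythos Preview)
Your proof is correct and follows essentially the same approach as the paper: part (i) via Lemma~\ref{lemmonomial}, and part (ii) via the $d$-compatibility degree together with Proposition~\ref{rmkdf} and Corollary~\ref{cordmonomial}. The only organizational difference is that the paper argues symmetrically in both directions (for each $j\in J$ it finds a matching $j'\in I$ with $A_{j;t}=A_{j';t_0}$ and $c_{j'}=b_j$, and then likewise from $I$ to $J$), whereas you establish the injection $\sigma\colon I\to J$ via $d$-compatibility and then finish by comparing Laurent monomials in $\mathcal L(t)$ using the algebraic independence of $A_{1;t},\ldots,A_{n;t}$ over $\mathbb{KP}$; both routes are equally short and yield $M=N$ (hence $\lambda=1$) directly.
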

\begin{proof}(i)
By $M=\lambda N$, we know that $N=\lambda^{-1}M\in\mathcal U$. Then by Lemma \ref{lemmonomial}, we know that $N$ is a cluster monomial in $A_{t}$.

(ii) Let $A_{j;t}$ be a cluster variable appearing in $N$. Then by (i), we have $b_j>0$ and $j\in J$. Because 
\begin{eqnarray}
\sum\limits_{i\in I}^nc_i(A_{j;t}\mid\mid A_{i;t_0})_d=(A_{j;t}\mid\mid M)_d=(A_{j;t}\mid\mid \lambda N)_d=(A_{j;t}\mid\mid  N)_d=-b_j<0,\nonumber
\end{eqnarray}
 there must exist some $j^\prime\in I$  such that $$(A_{j;t}\mid\mid A_{j^\prime;t_0})_d<0.$$ Then by Proposition \ref{rmkdf} (4), we have $A_{j;t}=A_{j^\prime;t_0}$. So we have
 $$c_{j^\prime}=-(A_{j^\prime;t_0}\mid\mid M)_d=-(A_{j;t}\mid\mid M)_d=-(A_{j;t}\mid\mid N)_d=b_j.$$

Similarly, we can show that any cluster variable $A_{i;t_0}$ appearing in $M$ also appears in $N$, and the multiplicity of $A_{i;t_0}$ in $N$ is equal to the multiplicity of $A_{i;t_0}$ in $M$. Then the result follows.
\end{proof}

\begin{theorem}[Local factorization of cluster monomials]\label{thmup}
Let $\mathcal U$ be an upper cluster algebra with coefficient semifield $\mathbb P$  and  initial seed $t_0$.  Then the following statements hold.

\begin{itemize}

\item[(i)]  If $L$ is a cluster monomial of $\mathcal U$ which does not contain the initial cluster variable $A_{k;t_0}$ for some $k$, then $L/A_{k;t_0}\notin\mathcal U$, that is, $(A_{k;t_0}\mid\mid L)_v=0$.

\item[(ii)] If $L$ is a cluster monomial in non-initial cluster variables, then  $L/A_{k;t_0}\notin\mathcal U$, that is, $(A_{k;t_0}\mid\mid L)_v=0$ for $k=1,\ldots,n$.
    \item[(iii)] Let $M=\prod\limits_{i=1}^nA_{i;t}^{b_i}$ be a cluster monomial in a seed $t$ of $\mathcal U$ and $I=\{i\mid A_{i;t}\in A_{t_0}\}$, then $M=\prod\limits_{i\in I}A_{i;t}^{b_i}\cdot\prod\limits_{j\notin I}A_{j;t}^{b_j}$ is a local factorization of $M$ with respect to $t_0$.
\end{itemize}
\end{theorem}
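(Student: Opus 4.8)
The plan is to prove the three statements roughly in sequence, with (i) as the technical core and (ii), (iii) following quickly. For part (i), suppose $L$ is a cluster monomial in some seed $t$ that does not involve $A_{k;t_0}$, and suppose for contradiction that $L/A_{k;t_0}\in\mathcal U$. Write $L=\prod_{i=1}^n A_{i;t}^{b_i}$ with all $b_i\geq 0$. The first step is to rewrite $L/A_{k;t_0}$ as a Laurent monomial in the cluster $A_{t_0}$: since $L$ is a cluster monomial, its $d$-vector with respect to $t_0$ is controlled by the $d$-compatibility degrees, namely ${\bf d}^{t_0}(L) = \sum_i b_i\,{\bf d}^{t_0}(A_{i;t})$ (Remark \ref{rmkdadd}, Remark \ref{rmkdadd2}(ii)). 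Because $A_{k;t_0}$ does not appear in $L$, Corollary \ref{cordmonomial}(i) (in its contrapositive form) tells us that the $k$-th component of ${\bf d}^{t_0}(L)$ is $\geq 0$; equivalently $(A_{k;t_0}||L)_d\geq 0$. Now $L/A_{k;t_0}$ has $d$-vector with $k$-th component $(A_{k;t_0}||L)_d+1 \geq 1 > 0$, so by definition of the $d$-vector, $A_{k;t_0}$ divides the numerator polynomial of $L/A_{k;t_0}$ — but in fact we want to push further and reach a contradiction with the numerator polynomial being genuinely "reduced".

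The cleaner route for part (i) is to argue directly with Proposition \ref{promono}. If $L/A_{k;t_0}\in\mathcal U$, set $L' = L/A_{k;t_0}$. Then $L = A_{k;t_0}\cdot L'$, and I would like to play this against the known expansion of $L$ in the cluster $A_t$. The key point: $L$ is a \emph{cluster monomial}, so expanding $A_{k;t_0}$ and $L'$ in $A_t$ and using the Laurent phenomenon forces a factorization in $\mathcal L(t) = \mathbb{KP}[A_{1;t}^{\pm1},\dots,A_{n;t}^{\pm1}]$. Here I would invoke the factoriality of $\mathcal L(t)$ together with the fact (Theorem \ref{thmgls}) that $A_{k;t_0}$ is irreducible in $\mathcal U$, hence prime-up-to-units in the localization $\mathcal L(t)$ unless it becomes invertible there. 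The dichotomy is: either $A_{k;t_0}$ is invertible in $\mathcal L(t)$, i.e. $A_{k;t_0}=\lambda\prod A_{j;t}^{c_j}$, which by Proposition \ref{promono}(ii) forces $A_{k;t_0}$ to equal one of the $A_{j;t}$ and in particular to appear in $L$ whenever $b_j>0$ — contradicting our hypothesis once we check the exponents — or $A_{k;t_0}$ is a genuine non-unit of $\mathcal L(t)$, in which case it divides the monomial $\prod A_{i;t}^{b_i}$ in the factorial ring $\mathcal L(t)$, again forcing $A_{k;t_0}$ to be associate to some $A_{i;t}$ with $b_i>0$, hence $A_{k;t_0}=A_{i;t}$ by Lemma \ref{lemassociate}, contradicting that $L$ does not contain $A_{k;t_0}$. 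Either way we get a contradiction, so $(A_{k;t_0}||L)_v=0$.

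Part (ii) is then immediate: if $L$ is a cluster monomial in non-initial cluster variables, then for every $k$ the initial variable $A_{k;t_0}$ does not appear in $L$ (as $A_{k;t_0}$ is an \emph{initial} cluster variable and the factors of $L$ are non-initial — here one uses that distinct cluster variables in the relevant sense are distinguished, cf. Lemma \ref{lemassociate}), so (i) applies with this $k$ to give $(A_{k;t_0}||L)_v=0$. For part (iii), write $M=\prod_{i\in I}A_{i;t}^{b_i}\cdot\prod_{j\notin I}A_{j;t}^{b_j}=:N\cdot L$, where $N=\prod_{i\in I}A_{i;t}^{b_i}$ and each factor of $L$ is a cluster variable not lying in $A_{t_0}$. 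Then $N$ is, after renaming, a cluster monomial in $t_0$ (the $A_{i;t}$ with $i\in I$ are exactly the cluster variables of $t$ that coincide with initial ones, so $N$ is literally a monomial in the $A_{k;t_0}$'s — this is where one invokes that cluster variables shared between $t$ and $t_0$ occupy well-defined slots, again via Lemma \ref{lemassociate}). And $L$ is a cluster monomial in non-initial cluster variables, so by (ii) we have $(A_{k;t_0}||L)_v=0$ for all $k$. By Definition \ref{deflocal} this exhibits $M=N\cdot L$ as a local factorization of $M$ with respect to $t_0$.

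The main obstacle I anticipate is making the exponent-matching in part (i) airtight: the hypothesis "$L$ does not contain $A_{k;t_0}$" must be parsed carefully, since a cluster variable may reappear in several seeds, and one needs the $d$-compatibility machinery (Proposition \ref{rmkdf}(4)–(5) and Corollary \ref{cordmonomial}) to convert "appears in the monomial" into a statement about $d$-vectors and back. Beyond that, everything reduces to unique factorization in the Laurent polynomial ring $\mathcal L(t)$ together with the irreducibility of cluster variables from Theorem \ref{thmgls}, which are all available.
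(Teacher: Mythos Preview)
Your argument for (i) follows the same route as the paper's --- deduce that $A_{k;t_0}$ is a unit in $\mathcal L(t)$ and then invoke Proposition~\ref{promono} --- but there is a gap at the closing step. After concluding $A_{k;t_0}=A_{j;t}$ for some $j$, you write ``contradicting our hypothesis once we check the exponents''. But the hypothesis that $A_{k;t_0}$ does not appear in $L$ only says $b_j=0$; it is entirely possible that $A_{k;t_0}$ belongs to the cluster $A_t$ while having exponent $0$ in the monomial $L$, and that is not yet a contradiction. The paper closes this gap at the outset by first replacing $t$ with a seed in which $A_{k;t_0}$ is absent from the cluster: if $A_{k;t_0}=A_{j;t}\in A_t$ with $b_j=0$, mutate at $j$; then $L$ is still a cluster monomial in $\mu_j(t)$ and now $A_{k;t_0}\notin A_{\mu_j(t)}$. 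With that choice made, the conclusion $A_{k;t_0}\in A_t$ really is the desired contradiction. (An alternative patch: once you know $A_{k;t_0}=A_{j;t}$ with $b_j=0$, apply Lemma~\ref{lemmonomial} to $L/A_{k;t_0}=A_{j;t}^{-1}\prod_{i\neq j}A_{i;t}^{b_i}\in\mathcal U$ to get the contradiction $-1\geq 0$.)

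Two smaller points. Your ``dichotomy'' is illusory: $L=\prod A_{i;t}^{b_i}$ is already a \emph{unit} in $\mathcal L(t)$, so any element of $\mathcal L(t)$ dividing it is invertible there and the second branch never occurs. And your appeals to Theorem~\ref{thmgls} and Lemma~\ref{lemassociate} are out of scope, since both are stated only for geometric coefficients while Theorem~\ref{thmup} allows an arbitrary semifield $\mathbb P$; fortunately Proposition~\ref{promono}, which you also cite, is stated in that generality and is all that is actually needed. Parts (ii) and (iii) are fine and match the paper's derivations.
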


\begin{proof}
(i) Assume by contradiction that  $L/A_{k;t_0}\in\mathcal U$. In the following proof, we will deduce a contradiction.

Because the initial cluster variable $A_{k;t_0}$ does not appear in $L$, there exists a seed $t$ such that $L$ is a cluster monomial in $A_t$ and $A_{k;t_0}$ does not belong to $A_t$. Consider the Laurent expansion of $A_{k;t_0}$ with respect to $A_t$, say
$$A_{k;t_0}=\frac{P(A_{1;t},\ldots,A_{n;t})}{A_{1;t}^{d_1^\prime}\cdots A_{n;t}^{d_n^\prime}},$$
where $P\in\mathbb {KP}[A_{1;t},\ldots,A_{n;t}]$ with $A_{i;t}\nmid P$ for $i=1,\ldots,n$.

Now we have that
\begin{eqnarray}
L/A_{k;t_0}=\frac{L\cdot (A_{1;t}^{d_1^\prime}\cdots A_{n;t}^{d_n^\prime})}{P},\nonumber
\end{eqnarray}
which is the expansion of $L/A_{k;t_0}$ with respect to $A_t$.
By $L/A_{k;t_0}\in\mathcal U$, we have  $$\frac{L\cdot (A_{1;t}^{d_1^\prime}\cdots A_{n;t}^{d_n^\prime})}{P} \in{\mathcal L}(t):=\mathbb {KP}[A_{1;t}^{\pm 1},\ldots,A_{n;t}^{\pm1}].$$
 Because $L\cdot (A_{1;t}^{d_1^\prime}\cdots A_{n;t}^{d_n^\prime})$ is a monic Laurent monomial in  ${\mathcal L}(t)$, we get
 $1/P\in{\mathcal L}(t)$. Since both $P$ and $1/P$ are in ${\mathcal L}(t)$, we must have $P=\lambda A_{1;t}^{c_1}\cdots A_{n;t}^{c_n}$ for some invertible element $\lambda$ in $\mathbb{KP}$ and $c_1,\ldots,c_n\in\mathbb Z$. Because  $P$ is a polynomial in $\mathbb {KP}[A_{1;t},\ldots,A_{n;t}]$ with $A_{i;t}\nmid P$ for any $i$, we get $c_1=\ldots=c_n=0$ and thus $P=\lambda$ is invertible in $\mathbb{KP}$.
  Applying  Proposition \ref{promono} to
  $$A_{k;t_0}=\frac{P}{A_{1;t}^{d_1^\prime}\cdots A_{n;t}^{d_n^\prime}}=\lambda\cdot A_{1;t}^{-d_1^\prime}\cdots A_{n;t}^{-d_n^\prime},$$
  we get  $A_{k;t_0}\in A_t$, which contradicts $A_{k;t_0}\notin A_t$. So $L/A_{k;t_0}\notin\mathcal U$.

  (ii) This follows from (i).

  (iii)  This follows from (ii) and the definition of local factorization.
\end{proof}

\begin{corollary}\label{cordvec}
 Let $\mathcal U$ be a full rank upper cluster algebra with initial seed $t_0$. Let
 $$M=\frac{P_M(A_{1;t_0},\ldots,A_{n;t_0})}{A_{1;t_0}^{d_1}\cdots A_{n;t_0}^{d_n}}$$ be a cluster monomial of $\mathcal U$,  where ${\bf d}_M:=(d_1,\ldots,d_n)^{\rm T}$ is the $d$-vector of $M$ with respect to $A_{t_0}$. Then the following statements hold.
 \begin{itemize}
 \item[(i)]  If $M$ does not contain the initial cluster variable $A_{k;t_0}$ for some $k$, then we have
 $$d_k=(A_{k;t_0}\mid\mid M)_d=(A_{k;t_0}\mid\mid P_M)_v\geq 0.$$

 \item[(ii)] If $M$ is a cluster monomial in non-initial cluster variables, then ${\bf d}_M$ and $M$ are uniquely determined by the numerator polynomial $P_M$.
 \end{itemize}
\end{corollary}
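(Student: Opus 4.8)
The plan is to deduce this statement directly from Theorem~\ref{thmind} together with the local factorization results for cluster monomials in Theorem~\ref{thmup}; no new machinery should be needed, since a cluster monomial is a non-zero element of $\mathcal U$ and the hypothesis ``full rank'' is exactly what Theorem~\ref{thmind} requires.

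For part (i), I would first observe that since $M$ is a cluster monomial not containing the initial cluster variable $A_{k;t_0}$, Theorem~\ref{thmup}~(i) gives $(A_{k;t_0}||M)_v=0$, i.e. $M/A_{k;t_0}\notin\mathcal U$. Feeding this into Theorem~\ref{thmind}~(ii) yields at once
\[
d_k=(A_{k;t_0}||M)_d=(A_{k;t_0}||P_M)_v\geq 0,
\]
the last inequality because $P_M\neq 0$ forces $(A_{k;t_0}||P_M)_v\in\mathbb N$ by Proposition~\ref{proind}.

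For part (ii), I would apply Theorem~\ref{thmup}~(ii): since $M$ is a cluster monomial in non-initial cluster variables, we have $(A_{k;t_0}||M)_v=0$ for every $k=1,\dots,n$. Hence the hypotheses of Theorem~\ref{thmind}~(iii) are satisfied, so ${\bf d}_M=(d_1,\dots,d_n)^{\rm T}$ is recovered from $P_M$ via the formula $d_k=(A_{k;t_0}||P_M)_v$ of part (i). Once ${\bf d}_M$ is known, $M$ itself is recovered as the element $M=P_M/A_{t_0}^{{\bf d}_M}$ of $\mathcal U$, so it is determined by $P_M$. If one prefers to phrase the conclusion at the level of cluster monomials (that the cluster variables occurring in $M$ and their multiplicities are determined), this follows by comparing two such monomials with the same numerator polynomial: they coincide as elements of $\mathcal F$ by the above, hence as cluster monomials by Proposition~\ref{promono} applied with $\lambda=1$.

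I do not expect a genuine obstacle here, as all the substance has been front-loaded: into Theorem~\ref{thmup} (which rests on Lemma~\ref{lemmonomial} and the behaviour of the $d$-compatibility degree) and into Theorem~\ref{thmind} (which rests on the Reduction Lemma~\ref{lemadd}). The only point requiring a little care is keeping the interpretation of ``$M$ is determined by $P_M$'' consistent: as an element of $\mathcal U$ this is immediate once ${\bf d}_M$ is pinned down by part (i), and the refinement to the underlying cluster monomial is handled by Proposition~\ref{promono}.
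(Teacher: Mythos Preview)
Your proposal is correct and follows essentially the same approach as the paper: part~(i) is exactly Theorem~\ref{thmup}(i) followed by Theorem~\ref{thmind}(ii), and part~(ii) is the immediate consequence that all $d_k$ are recovered from $P_M$, hence so is $M=P_M/A_{t_0}^{{\bf d}_M}$. Your additional remarks invoking Proposition~\ref{proind} and Proposition~\ref{promono} are harmless elaborations, but the paper's proof simply says ``This follows from~(i)'' and does not spell them out.
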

\begin{proof}
(i) By Theorem \ref{thmup} (i), we know that $(A_{k;t_0}\mid\mid M)_v=0$. Then by Theorem \ref{thmind} (ii), we get $d_k=(A_{k;t_0}\mid\mid M)_d=(A_{k;t_0}\mid\mid P_M)_v\geq 0$.

(ii) This follows from (i).
\end{proof}

Actually, the above result can be extended from  cluster monomials to monomials in cluster variables if we further assume that $\mathcal U$ is primitive or factorial.

\begin{proposition}\label{prodvec}
 Let $\mathcal U$ be a full rank, primitive upper cluster algebra with initial seed $t_0$. Let
 $$M=\frac{P_M(A_{1;t_0},\ldots,A_{n;t_0})}{A_{1;t_0}^{d_1}\cdots A_{n;t_0}^{d_n}}$$ be a  monomial in cluster variables of $\mathcal U$ (not necessarily a cluster monomial),  where ${\bf d}_M:=(d_1,\ldots,d_n)^{\rm T}$ is the $d$-vector of $M$ with respect to $A_{t_0}$. Then the following statements hold.
 \begin{itemize}
 \item[(i)]  If $M$ does not contain the initial cluster variable $A_{k;t_0}$ for some $k$, then we have $(A_{k;t_0}\mid\mid M)_v=0$ and $d_k=(A_{k;t_0}\mid\mid M)_d=(A_{k;t_0}\mid\mid P_M)_v\geq 0.$

 \item[(ii)] If $M$ is a monomial in non-initial cluster variables, then ${\bf d}_M$ and $M$ are uniquely determined by the numerator polynomial $P_M$.
 \end{itemize}

\end{proposition}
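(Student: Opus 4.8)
The plan is to derive this as a modest extension of Corollary \ref{cordvec}, the new ingredient being that for a full rank, primitive upper cluster algebra the valuation pairing $(-||-)_v$ is \emph{additive} in its second argument. Recall that this additivity, namely $(A_{k;t_0}||M\cdot L)_v=(A_{k;t_0}||M)_v+(A_{k;t_0}||L)_v$ for all non-zero $M,L\in\mathcal U$, is exactly Corollary \ref{corzp}, and it rests on the factoriality of $\mathcal U$ established in Theorem \ref{thmufd} (i). This is precisely the step that can fail for a general upper cluster algebra, which is why Corollary \ref{cordvec} was stated only for cluster monomials.

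For (i), I would write $M=\prod_{i=1}^{s}M_i$ as a product of cluster variables $M_i$. To say that $M$ does not contain $A_{k;t_0}$ means $M_i\neq A_{k;t_0}$ for every $i$; each $M_i$ is in particular a cluster monomial not containing $A_{k;t_0}$, so Theorem \ref{thmup} (i) gives $(A_{k;t_0}||M_i)_v=0$. Applying Corollary \ref{corzp} inductively then yields
$$(A_{k;t_0}||M)_v=\sum_{i=1}^{s}(A_{k;t_0}||M_i)_v=0.$$
Now Theorem \ref{thmind} (ii) applies to this $M$ and gives $d_k=(A_{k;t_0}||M)_d=(A_{k;t_0}||P_M)_v\geq 0$, which is (i).

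For (ii), if $M$ is a monomial in non-initial cluster variables then it contains no $A_{k;t_0}$ for any $k$, so part (i) applies for every $k=1,\dots,n$. Hence $d_k=(A_{k;t_0}||P_M)_v$ for all $k$, so ${\bf d}_M$ is a function of $P_M$ alone, and then $M=P_M/A_{t_0}^{{\bf d}_M}$ is determined by $P_M$ as well: if $P_{M'}=P_M$ for another monomial $M'$ in non-initial cluster variables, then ${\bf d}_{M'}={\bf d}_M$ and therefore $M'=M$. I do not anticipate a genuine obstacle here: the argument is essentially bookkeeping, the only delicate point being the legitimacy of the additivity step, which is guaranteed by the full rank and primitivity hypotheses through Corollary \ref{corzp}.
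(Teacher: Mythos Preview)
Your proof is correct and essentially identical to the paper's: both write $M=\prod_i M_i$, use Theorem \ref{thmup} (i) to get $(A_{k;t_0}||M_i)_v=0$, invoke the additivity of the valuation pairing in the full rank primitive case to conclude $(A_{k;t_0}||M)_v=0$, and then apply Theorem \ref{thmind} (ii). The only cosmetic difference is that you cite Corollary \ref{corzp} for the additivity, whereas the paper routes through Theorem \ref{thmufd} and Theorem \ref{thmcufd} (i)(iii) directly---but Corollary \ref{corzp} is exactly that combination, so the arguments are the same.
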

 \begin{proof}
(i) Since $M$ is a monomial in cluster variables and it does not contain  $A_{k;t_0}$, we can assume that $M=\prod\limits_{i=1}^sM_i$, where each $M_i$ is a cluster variable of $\mathcal U$ different from $A_{k;t_0}$. By Theorem \ref{thmup} (i), we have $(A_{k;t_0}\mid\mid M_i)_v=0$, for $i=1,\ldots,s$.

Since $\mathcal U$ is a full rank, primitive upper cluster algebra, we know that $\mathcal U$ is factorial, by Theorem \ref{thmufd}. Then by Theorem \ref{thmcufd} (i)(iii),  we know that
$$(A_{k;t_0}\mid\mid M)_v=\sum\limits_{i=1}^s(A_{k;t_0}\mid\mid M_i)_v=0.$$
Then by Theorem \ref{thmind} (ii), we get $d_k=(A_{k;t_0}\mid\mid M)_d=(A_{k;t_0}\mid\mid P_M)_v\geq 0$.

(ii) This follows from (i).
 \end{proof}

\section{Application to \texorpdfstring{$F$}{Lg}-polynomials}\label{sec6}
In this section, we always assume that $\mathcal U$ is an upper cluster algebra with principal coefficients at $t_0$.
\subsection{From \texorpdfstring{$F$}{Lg}-polynomials to monomials in non-initial cluster variables}\label{sec61}

In this subsection, we prove that if $M$ is a monomial in non-initial cluster variables, then $M$ is uniquely determined by its $F$-polynomial.

Let $M$ be a monomial in cluster variables of $\mathcal U$, say $M=\prod\limits_{i=1}^sM_i$, where each $M_i$ is a cluster variable of $\mathcal U$. The  {\em $F$-polynomial} $F_M$ of $M$ is defined to be the polynomial
$F_M:=\prod\limits_{i=1}^sF_{M_i},$
where $F_{M_i}$ is the $F$-polynomial of the cluster variable $M_i$.
The {\em $g$-vector}  of $M$ is defined to be the vector ${\bf g}_M:=\sum\limits_{i=1}^s{\bf g}_{M_i}$, where ${\bf g}_{M_i}$ is the $g$-vector of $M_i$.

\begin{theorem}\label{thmfpoly}
 Let $\mathcal U$ be an upper cluster algebra with principal coefficients at $t_0$. Let $M, N$ be two  monomials in non-initial cluster variables of $\mathcal U$ and $F_M, F_N$ their $F$-polynomials.   If $F_M=F_N$,  then $M=N$. In particular, ${\bf g}_M={\bf g}_N$, where ${\bf g}_M$ and ${\bf g}_N$ are $g$-vectors of $M$ and $N$.
\end{theorem}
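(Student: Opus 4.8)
The plan is to show that, for a monomial $M$ in cluster variables, the $F$-polynomial $F_M$ together with the (fixed) initial exchange matrix $B_{t_0}$ determines the numerator polynomial $P_M$ of $M$ with respect to $t_0$, and then to invoke Proposition~\ref{prodvec}~(ii).

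First I would record the expansion of $M$ in the initial cluster. Multiplying the canonical expressions of Theorem~\ref{thmsf}~(i) over the cluster-variable factors of $M$, and using that $\mathcal U$ has principal coefficients (so $\hat X_{j;t_0}=Z_j\prod_i A_{i;t_0}^{b_{ij}^{t_0}}$), one gets
$$M=A_{t_0}^{{\bf g}_M}\,F_M(\hat X_{1;t_0},\ldots,\hat X_{n;t_0}).$$
Writing $F_M=\sum_{{\bf a}\in\mathbb N^n}c_{\bf a}Y^{\bf a}$ with $c_{\bf a}\in\mathbb Z_{\geq 0}$ (positivity of $F$-polynomials) and $c_{\bf 0}=1$ (constant term $1$), a direct substitution gives
$$M=\sum_{{\bf a}}c_{\bf a}\,Z^{\bf a}\,A_{t_0}^{{\bf g}_M+B_{t_0}{\bf a}}.$$
Since distinct ${\bf a}$ yield distinct monomials in the $Z_i$, there is no cancellation, so the $A_{t_0}$-exponents occurring in the Laurent expansion of $M$ (with coefficients collected in $\mathbb K[Z_1^{\pm1},\ldots,Z_n^{\pm1}]$) are exactly $\{{\bf g}_M+B_{t_0}{\bf a}:c_{\bf a}\neq 0\}$. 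Taking componentwise minima and using the definition of the $d$-vector, $({\bf d}_M+{\bf g}_M)_i=-\min\{(B_{t_0}{\bf a})_i:c_{\bf a}\neq 0\}$, a quantity depending only on $F_M$ and $B_{t_0}$. Hence
$$P_M=A_{t_0}^{{\bf d}_M}M=\sum_{{\bf a}}c_{\bf a}\,Z^{\bf a}\,A_{t_0}^{B_{t_0}{\bf a}-{\bf m}},\qquad {\bf m}_i:=\min\{(B_{t_0}{\bf a})_i:c_{\bf a}\neq 0\},$$
so $P_M$ is completely determined by $F_M$.

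To finish: if $F_M=F_N$ then $P_M=P_N$ by the formula above; since a principal coefficient upper cluster algebra is full rank and primitive (see the remark after Theorem~\ref{thmufd}), Proposition~\ref{prodvec}~(ii) applies and gives $M=N$. For the last assertion, observe that in the expansion displayed above the summand $A_{t_0}^{{\bf g}_M}$ (coming from ${\bf a}={\bf 0}$, $c_{\bf 0}=1$) is the unique term of $Z$-degree $0$, so ${\bf g}_M$ is recovered intrinsically from the element $M$; therefore $M=N$ forces ${\bf g}_M={\bf g}_N$. The only point requiring care is the no-cancellation claim that turns the sum over ${\bf a}$ into the genuine $A_{t_0}$-Laurent expansion of $M$, and this is immediate because the $Z^{\bf a}$ are pairwise distinct monomials over the coefficient ring and the $c_{\bf a}$ are nonzero. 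The substantive input is really Proposition~\ref{prodvec}~(ii) (hence Theorems~\ref{thmind} and~\ref{thmup}); no new difficulty arises in the present argument.
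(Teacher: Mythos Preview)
Your proof is correct and follows essentially the same route as the paper's: both derive from the canonical expression $M=A_{t_0}^{{\bf g}_M}F_M(\hat X_{1;t_0},\ldots,\hat X_{n;t_0})$ that $F_M$ determines the numerator polynomial $P_M$, and then invoke Proposition~\ref{prodvec}(ii). The only cosmetic difference is that the paper argues more abstractly---observing that ${\bf d}_M+{\bf g}_M$ is the $d$-vector of $F_M(\hat X_{1;t_0},\ldots,\hat X_{n;t_0})$ by uniqueness of the reduced Laurent form, hence $F_M=F_N$ forces $P_M=P_N$ and ${\bf d}_M+{\bf g}_M={\bf d}_N+{\bf g}_N$ simultaneously---whereas you expand $F_M$ explicitly and compute that $d$-vector as $-{\bf m}$; your appeal to positivity of the $c_{\bf a}$ is harmless but unnecessary, since distinct ${\bf a}$ already give distinct $Z^{\bf a}$.
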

\begin{proof}
Let ${\bf d}_M$ and ${\bf d}_N$ be the $d$-vectors of $M$ and $N$ with respect to $A_{t_0}$. We know that $M$ and $N$ have the form
\begin{eqnarray}\label{eqnpd}
M=P_M/A_{t_0}^{{\bf d}_M}\;\;\text{ and }\;\;\;N=P_N/A_{t_0}^{{\bf d}_N},
\end{eqnarray}
where $P_M,P_N\in\mathbb Z[Z_1,\ldots,Z_n;A_{1;t_0},\ldots,A_{n;t_0}]$ with $A_{i;t_0}\nmid P_M$ and $A_{i;t_0}\nmid P_N$ for any $i=1,\ldots,n$.

By Theorem \ref{thmsf} (i), we know that
\begin{eqnarray}
M&=&A_{t_0}^{{\bf g}_M}F_M(\widehat X_{1;t_0},\ldots,\widehat X_{n;t_0}),\nonumber\\
N&=&A_{t_0}^{{\bf g}_N}F_N(\widehat X_{1;t_0},\ldots,\widehat X_{n;t_0}),\nonumber
\end{eqnarray}
where $\widehat X_{j;t_0}=Z_j\prod\limits_{i=1}^nA_{i;t_0}^{b_{ij}^{t_0}}$.
So we can get the following equalities:
\begin{eqnarray}
F_M(\widehat X_{1;t_0},\ldots,\widehat X_{n;t_0})&=&P_M/A_{t_0}^{{\bf d}_M+{\bf g}_M},\nonumber\\
F_N(\widehat X_{1;t_0},\ldots,\widehat X_{n;t_0})&=&P_N/A_{t_0}^{{\bf d}_N+{\bf g}_N}.\nonumber
\end{eqnarray}
Because $P_M$ and $P_N$ are not divisible by any $A_{i;t_0}$, we know that ${\bf d}_M+{\bf g}_M$ and ${\bf d}_N+{\bf g}_N$ are the $d$-vectors of $F_M(\widehat X_{1;t_0},\ldots,\widehat X_{n;t_0})$ and $F_N(\widehat X_{1;t_0},\ldots,\widehat X_{n;t_0})$ with respect to $t_0$. By $F_M=F_N$, we have $$F_M(\widehat X_{1;t_0},\ldots,\widehat X_{n;t_0})=F_N(\widehat X_{1;t_0},\ldots,\widehat X_{n;t_0}),$$ which implies that ${\bf d}_M+{\bf g}_M={\bf d}_N+{\bf g}_N$ and $P_M=P_N$.

By $P_M=P_N$ and applying Proposition \ref{prodvec} to $M$ and $N$, we get ${\bf d}_M={\bf d}_N$. Then by the equality (\ref{eqnpd}), we know that $M=N$. In particular, we have ${\bf g}_M={\bf g}_N$. This completes the proof.
\end{proof}

\subsection{\texorpdfstring{$F$}{Lg}-polynomials of non-initial cluster variables are irreducible}\label{sec62}
 In this subsection, we prove that the $F$-polynomials of non-initial cluster variables are  irreducible in $\mathbb K[Z_1,\ldots,Z_n]$.

\begin{theorem}\label{thmffirr}
Let $\mathcal U$ be an upper cluster algebra with principal coefficients at $t_0$ and $A_{k;t}$ a non-initial cluster variable of $\mathcal U$.  Then the $F$-polynomial $F_{k;t}$ of $A_{k;t}$ is irreducible in $\mathbb K[Z_1,\ldots,Z_n]$.
\end{theorem}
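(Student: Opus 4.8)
The plan is to reduce the irreducibility of the $F$-polynomial $F_{k;t}$ to the irreducibility of the numerator polynomial $P_{A_{k;t}}$ of the cluster variable $A_{k;t}$, which we already control via Theorem \ref{thmufd} (ii). First I would recall that an upper cluster algebra $\mathcal U$ with principal coefficients at $t_0$ is full rank and primitive (see the remark after Theorem \ref{thmufd}), so by Theorem \ref{thmufd} (ii) the numerator polynomial $P_{A_{k;t}}$ of the non-initial cluster variable $A_{k;t}$ is irreducible in $\mathbb K[Z_1,\ldots,Z_n][A_{1;t_0},\ldots,A_{n;t_0}]$.

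Next I would use the canonical expression (Theorem \ref{thmsf} (i)): writing $\hat X_{j;t_0}=Z_j\prod_{i=1}^nA_{i;t_0}^{b^{t_0}_{ij}}$, we have $A_{k;t}=A_{t_0}^{{\bf g}_{k;t}}\cdot F_{k;t}(\hat X_{1;t_0},\ldots,\hat X_{n;t_0})$. Comparing with $A_{k;t}=P_{A_{k;t}}/A_{t_0}^{{\bf d}_{k;t}}$, one gets $F_{k;t}(\hat X_{1;t_0},\ldots,\hat X_{n;t_0})=P_{A_{k;t}}\cdot A_{t_0}^{-{\bf d}_{k;t}-{\bf g}_{k;t}}$. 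Since $P_{A_{k;t}}$ is not divisible by any $A_{i;t_0}$, this exhibits $P_{A_{k;t}}$, up to a Laurent monomial in the $A_{i;t_0}$, as the image of $F_{k;t}(Y_1,\ldots,Y_n)$ under the substitution $Y_j\mapsto \hat X_{j;t_0}$. The key algebraic point is then: the substitution $\varphi\colon \mathbb K[Y_1,\ldots,Y_n]\to \mathbb K[Z_1^{\pm1},\ldots,Z_n^{\pm1}][A_{1;t_0}^{\pm1},\ldots,A_{n;t_0}^{\pm1}]$ sending $Y_j$ to $\hat X_{j;t_0}=Z_j\prod_i A_{i;t_0}^{b^{t_0}_{ij}}$ is injective (the $\hat X_{j;t_0}$ are algebraically independent, because the $Z_j$ already are), and moreover, if a polynomial $F(Y_1,\ldots,Y_n)$ not divisible by any $Y_j$ factors nontrivially, then its image factors nontrivially up to a Laurent monomial in the ambient ring. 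Conversely, if the image of $F_{k;t}$ (up to a monomial) factors in $\mathbb K[Z_1,\ldots,Z_n][A_{1;t_0},\ldots,A_{n;t_0}]$, I would argue that, after clearing monomial factors and using the $\mathbb Z^n$-grading by the $A_{i;t_0}$-degrees together with algebraic independence, each factor is (up to a monomial) itself in the image of $\varphi$, giving a corresponding factorization of $F_{k;t}$ in $\mathbb K[Y_1,\ldots,Y_n]$. Here Proposition \ref{profy} ($F_{k;t}$ is not divisible by any $Y_j$) is exactly what lets us discard the ambiguity by Laurent monomials: since neither $F_{k;t}$ nor $P_{A_{k;t}}$ has a monomial factor, the exponents ${\bf d}_{k;t}+{\bf g}_{k;t}$ are forced and the correspondence between factorizations is clean.

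Putting these together: a nontrivial factorization of $F_{k;t}$ in $\mathbb K[Z_1,\ldots,Z_n]=\mathbb K[Y_1,\ldots,Y_n]$ would produce a nontrivial factorization of $P_{A_{k;t}}$ in $\mathbb K[Z_1,\ldots,Z_n][A_{1;t_0},\ldots,A_{n;t_0}]$, contradicting Theorem \ref{thmufd} (ii); one also checks $F_{k;t}$ is not a unit, since $A_{k;t}$ is non-initial so $F_{k;t}\neq 1$ (equivalently its $f$-vector is nonzero, or $P_{A_{k;t}}\notin\mathbb K^\times$). Hence $F_{k;t}$ is irreducible. I expect the main obstacle to be the bookkeeping in the "descent of factorizations through $\varphi$" step: one must check carefully that a factorization of $\varphi(F_{k;t})$ (a priori only well-defined up to a Laurent monomial in the $A_{i;t_0}$ and $Z_j$) can be normalized so that both factors lie in the image of the substitution map, which uses both the algebraic independence of the $\hat X_{j;t_0}$ and the constraint that $F_{k;t}$ has constant term $1$ and is not divisible by any $Y_j$. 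A clean way to organize this is to pass to the localization $\mathcal L(t_0)$ and use that $\varphi$ identifies $\mathbb K[Y_1^{\pm1},\ldots,Y_n^{\pm1}]$ with the sub-Laurent-polynomial-ring generated by the $\hat X_{j;t_0}$, together with a degree/grading argument to control which monomial prefactors can occur.
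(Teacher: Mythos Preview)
Your approach is correct and genuinely different from the paper's. You reduce to Theorem~\ref{thmufd}(ii) (irreducibility of the numerator polynomial $P_{A_{k;t}}$), while the paper argues directly inside $\mathcal U$ using the local unique factorization property (Theorem~\ref{thmunique}), Corollary~\ref{corzp}, and the irreducibility of $A_{k;t}$ as an element of $\mathcal U$ (Theorem~\ref{thmgls}). Concretely, the paper takes a hypothetical factorization $F_{k;t}=F_1F_2$, passes to $F_i(\hat X_{1;t_0},\ldots,\hat X_{n;t_0})$, normalizes each factor via local factorization to obtain $L_1,L_2\in\mathcal U$ with $(A_{j;t_0}\|L_i)_v=0$, and then shows $A_{k;t}=L_1L_2$ by the \emph{uniqueness} part of Theorem~\ref{thmunique}; irreducibility of $A_{k;t}$ in $\mathcal U$ finishes it. Your route is shorter once Theorem~\ref{thmufd}(ii) is in hand: writing $F_i(\hat X)=P_i\,A_{t_0}^{-{\bf v}_i}$ with $A_{j;t_0}\nmid P_i$, one gets $P_1P_2=P_{A_{k;t}}$, and specializing $A_{i;t_0}\mapsto 1$ immediately shows that $P_i\in\mathbb K^\times$ forces $F_i\in\mathbb K^\times$. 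The paper's approach buys independence from Theorem~\ref{thmufd}(ii) and showcases the valuation-pairing machinery; yours buys brevity.

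Two minor remarks on your write-up. First, the ``descent of factorizations through $\varphi$'' that you flag as the main obstacle is not actually needed: you only require the forward direction (a nontrivial factorization of $F_{k;t}$ yields one of $P_{A_{k;t}}$), and for that you never have to lift factors of $P_{A_{k;t}}$ back through $\varphi$; the specialization $A_{i;t_0}\mapsto 1$ already shows that $P_i$ being a unit forces $F_i$ to be one. Your second paragraph can therefore be trimmed substantially. Second, for the non-unit check, the cleanest justification is the one the paper gives: if $F_{k;t}$ were constant then all $f$-compatibility degrees $(A_{i;t_0}\|A_{k;t})_f$ would vanish, forcing $A_{k;t}\in A_{t_0}$ by Proposition~\ref{rmkdf} and Theorem~\ref{thmcomp}. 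Your alternative via $P_{A_{k;t}}\notin\mathbb K^\times$ also works, since irreducibility of $P_{A_{k;t}}$ already excludes units.
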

\begin{proof} We claim that $F_{k;t}$ is not a constant. Otherwise,  the $f$-compatibility degree $(A_{i;t_0}\mid\mid A_{k;t})_f=0$ for $i=1,\ldots,n$. Then by Proposition \ref{rmkdf} (2), we get that $A_{k;t}$ is compatible with any cluster variable in $A_{t_0}$. Thus $A_{k;t}\in A_{t_0}$, by Theorem \ref{thmcomp}. This contradicts that $A_{k;t}$ is a non-initial cluster variable of $\mathcal U$. So $F_{k;t}$ is not a constant.

Since $F_{k;t}$ is not a constant, it can not be  invertible in  $\mathbb K[Z_1,\ldots,Z_n]$. So we can discuss its irreducibility. Assume there exist two polynomials $F_1,F_2\in \mathbb K[Z_1,\ldots,Z_n]$ such that
\begin{eqnarray}\label{eqnf}
F_{k;t}(Z_1,\ldots,Z_n)=F_1(Z_1,\ldots,Z_n)F_2(Z_1,\ldots,Z_n).
\end{eqnarray}
We will show that either $F_1\in\mathbb K^\times$ or $F_2\in\mathbb K^\times$.

For $1\leq j\leq n$, put $\widehat X_j=Z_j\prod\limits_{i=1}^nA_{i;t_0}^{b_{ij}^{t_0}}$.
Consider the Laurent polynomials
$F_1(\widehat X_1,\ldots,\widehat X_n)$ and $F_2(\widehat X_1,\ldots,\widehat X_n)$
in $\mathbb K[Z_1,\ldots,Z_n][A_{1;t_0}^{\pm1},\ldots,A_{n;t_0}^{\pm1}]$.
We choose two vectors ${\bf d}_1, {\bf d}_2$ in $\mathbb N^n$ such that
\[
M_1:=A_{t_0}^{{\bf d}_1}F_1(\widehat X_1,\ldots,\widehat X_n)\;\;\;\; \text{and}\;\;\; \;
M_2:=A_{t_0}^{{\bf d}_2}F_2(\widehat X_1,\ldots,\widehat X_n)
\]
are polynomials in
$\mathbb K[Z_1,\ldots,Z_n][A_{1;t_0},\ldots,A_{n;t_0}]\subseteq\mathcal U$. By the local unique factorization property, there exist unique
cluster monomials $N_1=A_{t_0}^{{\bf v}_1}, N_2=A_{t_0}^{{\bf v}_2}$ such that
$$L_1:=M_1/N_1=A_{t_0}^{{\bf d}_1-{\bf v}_1}F_1(\widehat X_1,\ldots,\widehat X_n)$$ and
$$L_2:=M_2/N_2=A_{t_0}^{{\bf d}_2-{\bf v}_2}F_2(\widehat X_1,\ldots,\widehat X_n)$$
are in $\mathcal U$ and satisfy $$(A_{j;t_0}\mid\mid L_1)_v=0=(A_{j;t_0}\mid\mid L_2)_v$$ for $j=1,\ldots,n$. Then by Corollary \ref{corzp}, we have $$(A_{j;t_0}\mid\mid L_1L_2)_v=(A_{j;t_0}\mid\mid L_1)_v+(A_{j;t_0}\mid\mid L_2)_v=0$$ for $j=1,\ldots,n$.

By Theorem \ref{thmsf} (i), we know that the cluster variable $A_{k;t}$ has the following form:
 \begin{eqnarray}
A_{k;t}=A_{t_0}^{{\bf g}}\cdot F_{k;t}(\widehat X_1,\ldots,\widehat X_n),\nonumber
\end{eqnarray}
where ${\bf g}\in \mathbb Z^n$.
By the equality (\ref{eqnf}), we have that
$$F_{k;t}(\widehat X_1,\ldots,\widehat X_n)=F_1(\widehat X_1,\ldots,\widehat X_n)F_2(\widehat X_1,\ldots,\widehat X_n).$$
So we know that $$A_{k;t}/(L_1L_2)=A_{t_0}^{{\bf g}+{\bf v}_1+{\bf v}_2-{\bf d}_1-{\bf d}_2}.$$
Thus we have
$A_{k;t}=A_{t_0}^{\bf v}(L_1L_2)$, where ${\bf v}={\bf g}+{\bf v}_1+{\bf v}_2-{\bf d}_1-{\bf d}_2$.

Now we show that ${\bf v}=0$ and $A_{k;t}=L_1L_2$. We choose
two vectors ${\bf v}^+$ and ${\bf v}^-$ in $\mathbb N^n$ such that
${\bf v}={\bf v}^+-{\bf v}^-$. Then by
$$
A_{k;t}=A_{t_0}^{\bf v}(L_1L_2)=A_{t_0}^{{\bf v}^+-{\bf v}^-}(L_1L_2),
$$
we get that
\begin{eqnarray}\label{eqnfac}
M:=A_{t_0}^{{\bf v}^-}A_{k;t}=A_{t_0}^{{\bf v}^+}L_1L_2.
\end{eqnarray}
Since $F_{k;t}$ is not a constant, we know that $A_{k;t}$ is not an initial cluster variable. Then by  Theorem \ref{thmup},
 we have $(A_{j;t_0}\mid\mid A_{k;t})_v=0$ for $j=1,\ldots,n$. On the other hand, we know that $L_1L_2$ also satisfies   $(A_{j;t_0}\mid\mid L_1L_2)_v=0$ for $j=1,\ldots,n$. So the equality (\ref{eqnfac}) gives two local factorizations of $M$ with respect to $t_0$. Then by the uniqueness in Theorem \ref{thmunique}, we get that $${\bf v}^+={\bf v}^-\;\;\text{and}\;\;\;A_{k;t}=L_1L_2.$$

By Theorem \ref{thmgls} (ii), we know that $\mathcal U^\times=\{\lambda Z_1^{c_1}\cdots Z_n^{c_n}\mid \lambda\in\mathbb K^\times,\;c_1,\ldots,c_n\in \mathbb Z\}$ and that $A_{k;t}$ is irreducible in $\mathcal U$. So the factorization $A_{k;t}=L_1L_2$ implies either $L_1=\lambda Z_1^{c_1}\cdots Z_n^{c_n}$ or $L_2=\lambda Z_1^{c_1}\cdots Z_n^{c_n}$ for some $\lambda\in\mathbb K^\times$ and $(c_1,\ldots,c_n)\in\mathbb Z^n$.
So either $$F_1(Z_1,\ldots,Z_n)=L_1\mid _{A_{1;t_0}=\ldots=A_{n;t_0}=1}=\lambda Z_1^{c_1}\cdots Z_n^{c_n}$$ or $$F_2(Z_1,\ldots,Z_n)=L_2\mid _{A_{1;t_0}=\ldots=A_{n;t_0}=1}=\lambda Z_1^{c_1}\cdots Z_n^{c_n}$$ for some $\lambda\in\mathbb K^\times$ and $(c_1,\ldots,c_n)\in\mathbb Z^n$. Because both $F_1$ and $F_2$ are polynomials in $\mathbb K[Z_1,\ldots,Z_n]$, we must have $c_1,\ldots,c_n\geq 0$. Now by Proposition \ref{profy}, we know that $F_{k;t}=F_1F_2$ is not divisible by any $Z_j$. So we must have $c_1=\ldots=c_n=0$. Thus either $F_1=\lambda$ or $F_2=\lambda$ for some $\lambda\in\mathbb K^\times$. So $F_{k;t}$ is irreducible in $\mathbb K[Z_1,\ldots,Z_n]$.
\end{proof}
\begin{remark}
Garcia Elsener {\em et al.} in \cite[Theorem 3.9]{GLS19} prove that the $F$-polynomials of non-initial cluster variables are irreducible for {\em factorial principal coefficient cluster algebras}. In fact, their proof still works for factorial principal coefficient {\em upper} cluster algebras. Note that the factoriality of principal coefficient upper cluster algebras is no longer a problem, thanks to Theorem \ref{thmufd}. Thus one can also use the method in \cite{GLS19} to show the irreducibility of $F$-polynomials of non-initial cluster variables.
\end{remark}

\section{Application to combinatorics of cluster Poisson variables}\label{sec7}

\subsection{Parametrization of \texorpdfstring{$\mathscr A$}{Lg}-exchange pairs by cluster Poisson variables}\label{sec71}
 In this subsection, we give several equivalent characterizations of when two cluster Poisson variables are equal. As an application, we prove that the $\mathscr A$-exchange pairs are parameterized by the cluster Poisson variables.

\begin{proposition}\label{protwo}
Let $\mathcal U$ be an upper  cluster algebra with coefficient semifield $\mathbb P$ and  initial seed $t_0$.
Let $M_1, M_2$ be two cluster monomials of $\mathcal U$ and $N_1, N_2$  two different cluster monomials in $t_0$. If $$p_1M_1+p_2M_2=p_1^\prime N_1+p_2^\prime N_2$$ holds for some $p_1,p_2,p_1^\prime,p_2^\prime\in\mathbb{P}$, then either
$(M_1, M_2, p_1, p_2)=(N_1, N_2, p_1^\prime, p_2^\prime)$
or
$(M_1, M_2, p_1, p_2)=(N_2, N_1, p_2^\prime, p_1^\prime)$ holds.
\end{proposition}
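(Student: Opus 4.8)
The plan is to pass to the ambient Laurent polynomial ring $\mathcal{L}(t_0)=\mathbb{KP}[A_{1;t_0}^{\pm 1},\ldots,A_{n;t_0}^{\pm 1}]$ and to exploit the positivity of the Laurent expansions of cluster monomials together with Proposition~\ref{promono}. I would first write $N_i=A_{t_0}^{{\bf a}_i}$, so that $N_1\neq N_2$ means ${\bf a}_1\neq{\bf a}_2$, and view $\mathcal{L}(t_0)$ as a $\mathbb{K}$-module with basis the monomials $p\,A_{t_0}^{{\bf b}}$ for $p\in\mathbb{P}$ and ${\bf b}\in\mathbb{Z}^n$; for $f\in\mathcal{L}(t_0)$ let $\operatorname{supp}(f)$ be the set of such monomials occurring in $f$ with nonzero coefficient. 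Since ${\bf a}_1\neq{\bf a}_2$, the right-hand side $p_1'N_1+p_2'N_2$ has support the two-element set $T=\{p_1'A_{t_0}^{{\bf a}_1},\,p_2'A_{t_0}^{{\bf a}_2}\}$, each of these monomials occurring with coefficient $1$.

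On the left-hand side, each $M_i$ is a product of cluster variables in a single seed; by the positivity theorem each cluster variable has a Laurent expansion with respect to $t_0$ whose coefficients are nonnegative integer combinations of elements of $\mathbb{P}$, and this property is preserved under products and under multiplication by the element $p_i\in\mathbb{P}$. Hence $p_1M_1$ and $p_2M_2$ are ``positive'' elements of $\mathcal{L}(t_0)$, so in the identity $p_1M_1+p_2M_2=p_1'N_1+p_2'N_2$ no cancellation occurs and $\operatorname{supp}(p_1M_1)\cup\operatorname{supp}(p_2M_2)=T$. Both supports are nonempty (as $p_iM_i\neq 0$) and contained in the two-element set $T$, so I would run a short case analysis. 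If, say, $\operatorname{supp}(p_1M_1)=T$, then each monomial of $T$ already has coefficient $\geq 1$ in $p_1M_1$, forcing its coefficient in $p_2M_2$ to vanish, whence $\operatorname{supp}(p_2M_2)=\emptyset$, a contradiction. So $\operatorname{supp}(p_1M_1)$ and $\operatorname{supp}(p_2M_2)$ are each a single monomial, and together they exhaust $T$; comparing coefficients (all nonnegative, summing to $1$) then gives either $p_1M_1=p_1'N_1$ and $p_2M_2=p_2'N_2$, or the same with the indices $1$ and $2$ interchanged.

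In the first case, $M_1=(p_1'p_1^{-1})N_1$ with $p_1'p_1^{-1}$ invertible in $\mathbb{KP}$. Applying Proposition~\ref{promono} to the cluster monomial $N_1$ in $t_0$ and the (Laurent) monomial $M_1$ gives $M_1=N_1$; cancelling $N_1$ in the domain $\mathcal{U}$ then forces $p_1=p_1'$. The same argument applied to $M_2$ and $N_2$ yields $M_2=N_2$ and $p_2=p_2'$, so $(M_1,M_2,p_1,p_2)=(N_1,N_2,p_1',p_2')$; in the other case the identical reasoning gives $(M_1,M_2,p_1,p_2)=(N_2,N_1,p_2',p_1')$. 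The step I expect to need the most care is the no-cancellation claim: I must be sure that nonnegativity of coefficients in the $p\,A_{t_0}^{{\bf b}}$ basis is genuinely inherited by the cluster monomials $M_i$ and by $p_iM_i$, so that the supports really add up. Once that is in place, the rest is the bookkeeping above plus a single invocation of Proposition~\ref{promono}.
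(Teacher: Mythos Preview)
Your proof is correct and follows a genuinely different route from the paper's.

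The paper argues via $d$-vectors: using positivity it shows
\[
\max\{(A_{i;t_0}\Vert M_1)_d,(A_{i;t_0}\Vert M_2)_d\}
=(A_{i;t_0}\Vert p_1M_1+p_2M_2)_d
=(A_{i;t_0}\Vert p_1'N_1+p_2'N_2)_d\leq 0
\]
for every $i$, then invokes Corollary~\ref{cordmonomial} and Theorem~\ref{thmcomp} (the ``compatible sets are contained in clusters'' theorem from \cite{CL1}) to conclude that every cluster variable occurring in $M_1$ or $M_2$ already lies in $A_{t_0}$. Once $M_1,M_2$ are monomials in $A_{t_0}$, algebraic independence finishes the job. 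Your argument instead works directly in the $\mathbb{K}$-basis $\{p\,A_{t_0}^{\bf b}\}$ of $\mathcal{L}(t_0)$: positivity forces $\operatorname{supp}(p_1M_1)$ and $\operatorname{supp}(p_2M_2)$ to partition the two-element support of the right-hand side, so each $p_sM_s$ is already a single basis monomial, and then Proposition~\ref{promono} identifies $M_s$ with the appropriate $N_{\sigma(s)}$.

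The trade-off: you avoid the nontrivial Theorem~\ref{thmcomp}, replacing it with the more self-contained Proposition~\ref{promono} (whose proof only needs the basic fact from Proposition~\ref{rmkdf}(4) that $(A\Vert A')_d<0$ forces $A=A'$). Your support argument is sharper in that it yields $p_sM_s=p'_{\sigma(s)}N_{\sigma(s)}$ in one stroke, whereas the paper first places $M_1,M_2$ in $t_0$ and then still has to match terms. Your caveat about the ``no-cancellation'' step is well placed but unproblematic: positivity of cluster variables (Theorem~2.15(ii)) gives coefficients in $\mathbb{N}[\mathbb{P}]$, and this is preserved under products and under multiplication by $p_i\in\mathbb{P}$, so in the $\mathbb{K}$-basis all coefficients of $p_sM_s$ are nonnegative integers.
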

\begin{proof}
Since both $N_1$ and $N_2$ are cluster monomials in $A_{t_0}$, we have
 $$(A_{i;t_0}\mid\mid p_1^\prime N_1)_d=(A_{i;t_0}\mid\mid N_1)_d\leq 0$$
 and
  $$(A_{i;t_0}\mid\mid  p_2^\prime N_2)_d=(A_{i;t_0}\mid\mid  N_2)_d\leq 0$$
for $i=1,\ldots,n$.
By the positivity of the Laurent phenomenon and since the coefficients in $p_1M_1+p_2M_2$ and $p_1^\prime N_1+p_2^\prime N_2$ are positive, we have
\begin{eqnarray}
&&{\rm max}\{(A_{i;t_0}\mid\mid   M_1)_d,(A_{i;t_0}\mid\mid  M_2)_d\}\nonumber\\
&=&{\rm max}\{(A_{i;t_0}\mid\mid  p_1 M_1)_d,(A_{i;t_0}\mid\mid  p_2M_2)_d\}\nonumber\\
&=&(A_{i;t_0}\mid\mid   p_1M_1+p_2M_2)_d=(A_{i;t_0}\mid\mid   p_1^\prime N_1+p_2^\prime N_2)_d\nonumber\\
&=&{\rm max}\{(A_{i;t_0}\mid\mid  p_1^\prime N_1)_d,(A_{i;t_0}\mid\mid  p_2^\prime N_2)_d\}\leq 0,\nonumber
\end{eqnarray}
for  $i=1,\ldots,n$.

We claim that $M_1$ and $M_2$ are cluster monomials in $t_0$. Let $A_{k;t}$ be a cluster variable appearing in $M_1$ or $M_2$. By Corollary \ref{cordmonomial} (ii) and $${\rm max}\{(A_{i;t_0}\mid\mid   M_1)_d,(A_{i;t_0}\mid\mid  M_2)_d\}\leq0,$$ we know that
$A_{k;t}$ is compatible with $A_{i;t_0}$, where $i=1,\ldots,n$. Then by Theorem \ref{thmcomp}, we  get $A_{k;t}\in A_{t_0}$. So $M_1$ and $M_2$ are cluster monomials in $t_0$.

 Hence,  the polynomial  $p_1M_1+p_2M_2$ is actually a polynomial in variables from $A_{t_0}$.  Since $N_1$ and $N_2$ are two different cluster monomials in $t_0$, we must have $M_1\neq M_2$. Otherwise, $p_1^\prime N_1+p_2^\prime N_2$ can not be equal to $p_1M_1+p_2M_2$. Now the result follows from  the algebraic independence of $A_{1;t_0},\ldots,A_{n;t_0}$.
\end{proof}

\begin{proposition}\label{lemdex}
Let $\mathcal U$ be an upper cluster algebra with coefficient semifield $\mathbb P$ and initial seed $t_0$.
Let  $A_{k;t}$ be a cluster variable of $\mathcal U$. Then the following two statements are equivalent.
\begin{itemize}
\item[(i)] $(A_{j;t_0},A_{k;t})$ is an $\mathscr A$-exchange pair associated with the mutation $t_1:=\mu_j(t_0)$.

\item[(ii)] $(A_{j;t_0}\mid\mid  A_{k;t})_d>0$ and $(A_{i;t_0}\mid\mid  A_{k;t})_d=0$ for any $i\neq j$.
\end{itemize}
\end{proposition}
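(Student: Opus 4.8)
The plan is to prove the two implications separately. Observe first that the statement involves only the $d$-compatibility degree and the exchange graph, so the full rank hypothesis is not needed; I will use the properties of $(-||-)_d$ collected in Proposition~\ref{rmkdf}, together with Theorem~\ref{thmcomp} and Corollary~\ref{corn-1}.

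For $(i)\Rightarrow(ii)$, I would argue directly. If $A_{k;t}=A_{j;t_1}$ with $t_1=\mu_j(t_0)$, the exchange relation at $t_0$ gives $A_{j;t_0}\,A_{k;t}=P_j$, where $P_j$ is the $j$-th exchange binomial of $t_0$. Since $P_j$ is a binomial in the $A_{i;t_0}$ with $i\neq j$ whose two monomials have disjoint supports, it is divisible by no $A_{i;t_0}$, so $A_{k;t}=P_j/A_{j;t_0}$ is already the Laurent expansion of $A_{k;t}$ in lowest terms with respect to $A_{t_0}$. Hence the $d$-vector of $A_{k;t}$ is the standard basis vector $e_j$, which is exactly the assertion of $(ii)$. (Alternatively, one reads this off the $j$-th column of $D_{t_1}^{B_{t_0};t_0}$ from the recursion of Definition~\ref{defdf} and the initial condition $D_{t_0}^{B_{t_0};t_0}=-I_n$.) This direction is essentially immediate.

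For $(ii)\Rightarrow(i)$, the key steps, in order, are: (1) $A_{k;t}$ is non-initial, for $A_{k;t}=A_{\ell;t_0}$ would force $(A_{\ell;t_0}||A_{k;t})_d=-1$ by Proposition~\ref{rmkdf}(4), contradicting the hypothesis whether $\ell=j$ or $\ell\neq j$; (2) for each $i\neq j$, $(A_{i;t_0}||A_{k;t})_d=0\le 0$, so $A_{k;t}$ is compatible with $A_{i;t_0}$ by Proposition~\ref{rmkdf}(2), whence $\{A_{k;t}\}\cup\{A_{i;t_0}:i\neq j\}$ is a compatible set of $n$ pairwise distinct cluster variables; (3) by Theorem~\ref{thmcomp}(i) this set is contained in a cluster, and by cardinality it is a cluster $A_{t'}$; (4) $t'$ and $t_0$ share the $n-1$ cluster variables $A_{i;t_0}$, $i\neq j$, and $t'\neq t_0$ since $A_{k;t}\notin A_{t_0}$, so by Corollary~\ref{corn-1} there is an edge between $t'$ and $t_0$, i.e.\ $t'=\mu_\ell(t_0)$ for some $\ell$; (5) necessarily $\ell=j$, since otherwise $A_{j;t_0}$ would lie in the cluster of $\mu_\ell(t_0)=t'$, which it does not, so $t'=\mu_j(t_0)=t_1$, and comparing the two $n$-element clusters $A_{t'}$ and $A_{t_1}$ forces the unique non-initial variable of $t_1$, namely $A_{j;t_1}$, to equal $A_{k;t}$.

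The part that needs care is step (5): fixing the mutation direction and matching the exchanged variable. Both are pinned down by the fact that the cluster variables within a single seed are pairwise distinct, so that the count $|A_{t'}|=|A_{t_1}|=n$ leaves no freedom once the $n-1$ shared variables are removed. Everything else is bookkeeping with the exchange graph and the already-established properties of $(-||-)_d$.
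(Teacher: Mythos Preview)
Your proof is correct and follows essentially the same route as the paper's: for $(ii)\Rightarrow(i)$ you use Proposition~\ref{rmkdf} to see that $A_{k;t}\notin A_{t_0}$ and that $\{A_{k;t}\}\cup(A_{t_0}\setminus\{A_{j;t_0}\})$ is a compatible $n$-element set, then invoke Theorem~\ref{thmcomp} and Corollary~\ref{corn-1} to identify it with the cluster at $\mu_j(t_0)$. Your treatment is simply more explicit than the paper's, which dispatches $(i)\Rightarrow(ii)$ with ``This is clear'' and leaves the determination of the mutation direction in $(ii)\Rightarrow(i)$ to the reader.
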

\begin{proof}
(i)$\Longrightarrow$ (ii): This is clear.

(ii)$\Longrightarrow$ (i): By Proposition \ref{rmkdf} (4) and the assumption, we get that $A_{k;t}\notin A_{t_0}$ and  $\{A_{k;t}\}\cup(A_{t_0}\backslash\{A_{j;t_0}\})$ is a compatible set with $n$ elements.
Then by Theorem \ref{thmcomp}, we know that $\{A_{k;t}\}\cup(A_{t_0}\backslash\{A_{j;t_0}\})$ is a cluster of $\mathcal U$, say $A_{t_2}=\{A_{k;t}\}\cup(A _{t_0}\backslash\{A_{j;t_0}\})$. Since the seeds $t_2$ and $t_0$ have exactly $n-1$ common cluster variables,  there is an edge between $t_2$ and $t_0$ in the exchange graph of $\mathcal U$, by Corollary \ref{corn-1}. Then the result follows.
\end{proof}

\begin{proposition}\label{proexb}
Let  $\mathcal U$ and  $\mathcal U^\prime$ be any two upper cluster algebras with the same initial exchange matrix $B_{t_0}$ at $t_0$.  For any vertex $t$ of $\mathbb T_n$, denote by $A_{t}$ and $A_{t}^\prime$  the  cluster  of $\mathcal U$ and  $\mathcal U^\prime$ at the vertex $t$. Then we have
\begin{itemize}
\item[(i)] $A_{i;t_1}=A_{j;t_2}$ in $\mathcal U$ if and only if $A_{i;t_1}^\prime=A_{j;t_2}^\prime$ in $\mathcal U^\prime$;

\item[(ii)] The map $\rho: A_{k;t}\mapsto A_{k;t}^{\prime}$ gives a bijection from  cluster variables of $\mathcal U$ to cluster variables of $\mathcal U^\prime$;

\item[(iii)] $\rho$ induces a bijection from the  set of  $\mathscr A$-exchange pairs of $\mathcal U$ to that of $\mathcal U^\prime$.
\end{itemize}
\end{proposition}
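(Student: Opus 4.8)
The plan is to reduce everything to part~(i): once we know that, for all vertices $t_1,t_2$ of $\mathbb T_n$ and all indices $i,j$, the equality $A_{i;t_1}=A_{j;t_2}$ holds in $\mathcal U$ if and only if $A'_{i;t_1}=A'_{j;t_2}$ holds in $\mathcal U'$, parts~(ii) and~(iii) are formal. Indeed the set of cluster variables of $\mathcal U$ (which is the same object whether viewed inside $\mathcal A$ or $\mathcal U$, and to which Theorems~\ref{thmacon}, \ref{thmcomp} and Corollary~\ref{corn-1} apply) is the quotient of $\mathbb T_n\times\{1,\dots,n\}$ by the equivalence relation $(t_1,i)\sim(t_2,j)\Leftrightarrow A_{i;t_1}=A_{j;t_2}$, and (i) says this relation coincides with the corresponding one for $\mathcal U'$; hence $\rho\colon A_{k;t}\mapsto A'_{k;t}$ is well defined and bijective, which is~(ii). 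For~(iii), an $\mathscr A$-exchange pair of $\mathcal U$ is a pair $\{A_{k;t},A_{k;\mu_k(t)}\}$; since $\mu_k(t)$ depends only on $t$ and $k$, the induced map sends it to the $\mathscr A$-exchange pair $\{A'_{k;t},A'_{k;\mu_k(t)}\}$ of $\mathcal U'$, and well-definedness, injectivity and surjectivity follow from~(i) applied to $\mathcal U$ and to $\mathcal U'$.

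To prove~(i) I would argue by induction on $N:=d(t_0,t_1)+d(t_0,t_2)$, where $d$ denotes the graph distance in $\mathbb T_n$, proving the sharper assertion that the truth value of ``$A_{i;t_1}=A_{j;t_2}$'' is determined by the common matrix $B_{t_0}$. The mechanism is that the $d$-compatibility degree is intrinsic to exchange matrices: $(A_{m;s}\|A_{k;t})_d=(D_t^{B_s;s})_{mk}$, and the matrices $D_t^{B_s;s}$ are computed by a recursion involving only the matrices $B_{s'}$ along the path from $s$ to $t$ in $\mathbb T_n$, each of which is a mutation of $B_{t_0}$. If $t_2=t_0$ (the case $t_1=t_0$ is symmetric) the assertion is immediate from Proposition~\ref{rmkdf}(4): $A_{i;t_1}=A_{j;t_0}$ iff $(A_{j;t_0}\|A_{i;t_1})_d=-1$, i.e.\ iff $(D_{t_1}^{B_{t_0};t_0})_{ji}=-1$; this also settles $N=0$.

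For the inductive step, suppose $t_1\neq t_0\neq t_2$ and, using the symmetry in $t_1,t_2$, let $t_2'=\mu_l(t_2)$ be the neighbour of $t_2$ on the geodesic to $t_0$, so $d(t_0,t_2')=d(t_0,t_2)-1$. If $j\neq l$ then $A_{j;t_2}=A_{j;t_2'}$, so ``$A_{i;t_1}=A_{j;t_2}$'' is equivalent to ``$A_{i;t_1}=A_{j;t_2'}$'', which is handled by the induction hypothesis. If $j=l$, so that $A_{l;t_2}$ is the cluster variable freshly produced by $\mu_l$, I would prove the combinatorial criterion: $A_{i;t_1}=A_{l;t_2}$ if and only if (a) $A_{i;t_1}\neq A_{m;t_2'}$ for every $m$, and (b) $(A_{m;t_2'}\|A_{i;t_1})_d\le 0$ for every $m\neq l$. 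Condition~(b) says precisely that $\{A_{i;t_1}\}\cup(A_{t_2'}\setminus\{A_{l;t_2'}\})$ is a compatible set (Proposition~\ref{rmkdf}(2)), while~(a) ensures this set has $n$ elements. For the forward implication one uses that the entries of a cluster are pairwise distinct and that $A_{l;t_2}=P_{l;t_2'}/A_{l;t_2'}$ is a Laurent polynomial in $A_{t_2'}$ genuinely involving $A_{l;t_2'}^{-1}$, whereas no variable of $A_{t_2'}$ does, so $A_{l;t_2}\notin A_{t_2'}$. For the converse, by Theorem~\ref{thmcomp} the compatible $n$-set extends to, hence equals, a cluster $A_s$; since $A_s$ and $A_{t_2'}$ share the $n-1$ variables $A_{t_2'}\setminus\{A_{l;t_2'}\}$, Corollary~\ref{corn-1} forces $s=t_2'$ or $s$ adjacent to $t_2'$; condition~(a) excludes $s=t_2'$ and pins the edge down to the $\mu_l$-edge, so $s=t_2$ and $A_{i;t_1}$ is the unique element of $A_{t_2}$ not among $A_{t_2'}\setminus\{A_{l;t_2'}\}$, namely $A_{l;t_2}$. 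Finally, in this criterion the statements in~(a) involve pairs of total distance $N-1$, hence are governed by $B_{t_0}$ by the induction hypothesis, while~(b) is a condition on entries of $D_{t_1}^{B_{t_2'};t_2'}$, hence on $B_{t_0}$; this closes the induction and proves~(i).

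I expect the sub-case $j=l$ to be the main obstacle: one has to translate the algebraic statement ``$A_{i;t_1}$ equals a newly mutated cluster variable'' into a purely combinatorial statement about clusters and compatibility degrees, so that it can be recognized as depending only on $B_{t_0}$. The tools for this are the identification of clusters with maximal compatible sets (Theorem~\ref{thmcomp}), the local rigidity of the exchange graph (Corollary~\ref{corn-1}), and the fact that $D$-matrices are combinatorial invariants of the exchange matrix; everything else is bookkeeping with distances in $\mathbb T_n$.
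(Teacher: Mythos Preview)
Your argument is correct, but for part~(i) you are working far harder than necessary. The paper's proof of~(i) is a one-liner: by Proposition~\ref{rmkdf}(4), $A_{i;t_1}=A_{j;t_2}$ if and only if $(A_{i;t_1}\|A_{j;t_2})_d=-1$, and this $d$-compatibility degree is by definition the $(i,j)$-entry of $D_{t_2}^{B_{t_1};t_1}$, which depends only on the exchange matrices and hence is identical for $\mathcal U$ and $\mathcal U'$. You actually state this very mechanism (``the $d$-compatibility degree is intrinsic to exchange matrices''), yet you only invoke it in the base case $t_2=t_0$ and then embark on an induction that reproves, via Theorem~\ref{thmcomp} and Corollary~\ref{corn-1}, what Proposition~\ref{rmkdf}(4) already gives for arbitrary $t_1,t_2$. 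The induction is not wrong, but it is redundant: Proposition~\ref{rmkdf}(4) is formulated for any two cluster variables, not just ones involving the initial seed.

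For~(ii) your reduction to~(i) matches the paper. For~(iii) you argue directly that $\rho$ carries $(A_{k;t},A_{k;\mu_k(t)})$ to $(A'_{k;t},A'_{k;\mu_k(t)})$ and is well-defined by~(i); the paper instead invokes the characterization of $\mathscr A$-exchange pairs via $d$-vectors in Proposition~\ref{lemdex}. Both routes work and are essentially equivalent in length.
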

\begin{proof}
(i) Because $\mathcal U$ and $\mathcal U^\prime$ have the same initial exchange matrix $B_{t_0}$ at $t_0$, we know that they have the same exchange matrix at each vertex $t$ of $\mathbb T_n$.
 Notice that both $(A_{i;t_1}\mid\mid  A_{j;t_2})_d$ and $(A_{i;t_1}^\prime\mid\mid  A_{j;t_2}^\prime)_d$ are defined by the $(i,j)$-entry of the $D$-matrix $D_{t_2}^{B_{t_1}; t_1}$, which is independent of the choice of coefficient semifield. So
  we have $(A_{i;t_1}\mid\mid  A_{j;t_2})_d=(A_{i;t_1}^\prime\mid\mid  A_{j;t_2}^\prime)_d$.  By Proposition \ref{rmkdf} (4), we know that $A_{i;t_1}=A_{j;t_2}$ in $\mathcal U$  if and only if $(A_{i;t_1}\mid\mid  A_{j;t_2})_d=-1$ if and only if $(A_{i;t_1}^\prime\mid\mid  A_{j;t_2}^\prime)_d=-1$ if and only if $A_{i;t_1}^\prime=A_{j;t_2}^\prime$ in $\mathcal U^\prime$.

(ii) This follows from (i).

 (iii) This follows from (ii) and the characterization of  $\mathscr A$-exchange pairs in Proposition \ref{lemdex}.
\end{proof}

\begin{lemma} \label{lemb}

Let $\mathcal U_{\rm uc}$ be an upper cluster algebra with universal coefficient semifield and $t, t_0$ two seeds of  $\mathcal U_{\rm uc}$. Let $t^\prime=\mu_k(t)$ and $t_1=\mu_j(t_0)$. Let $\frac{X_{k;t}}{1+X_{k;t}}M_1+\frac{1}{1+X_{k;t}}M_2$ be the $k$-th exchange binomial of $t$ and $\frac{X_{j;t_0}}{1+X_{j;t_0}}N_1+\frac{1}{1+X_{j;t_0}}N_2$  the $j$-th exchange binomial of $t_0$.
If $$\frac{X_{k;t}}{1+X_{k;t}}M_1+\frac{1}{1+X_{k;t}}M_2=\frac{X_{j;t_0}}{1+X_{j;t_0}}N_1+\frac{1}{1+X_{j;t_0}}N_2,$$
then either $$(A_{k;t},A_{k;t^\prime})=(A_{j;t_0},A_{j;t_1})\;\;\text{ or }\;\;\;(A_{k;t},A_{k;t^\prime})=(A_{j;t_1},A_{j;t_0}).$$
\end{lemma}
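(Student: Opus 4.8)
The hypothesis is exactly the identity $A_{k;t}A_{k;t^\prime}=A_{j;t_0}A_{j;t_1}$ in $\mathcal U_{\rm uc}$, because by definition of the $\mathscr A$-seed mutation the $k$-th exchange binomial of $t$ equals $A_{k;t}A_{k;t^\prime}$ and the $j$-th exchange binomial of $t_0$ equals $A_{j;t_0}A_{j;t_1}$. Write $p_1=\frac{X_{k;t}}{1\oplus X_{k;t}}$, $p_2=\frac{1}{1\oplus X_{k;t}}$, $M_1=\prod_{b_{ik}^t>0}A_{i;t}^{b_{ik}^t}$, $M_2=\prod_{b_{ik}^t<0}A_{i;t}^{-b_{ik}^t}$, and define $p_1^\prime,p_2^\prime,N_1,N_2$ analogously from the $j$-th exchange binomial of $t_0$. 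Here $M_1,M_2$ are cluster monomials in $t$, and $N_1,N_2$ are cluster monomials in $t_0$ with $N_1\neq N_2$: otherwise $N_1=N_2=1$ and the $j$-th exchange binomial would equal $p_1^\prime+p_2^\prime=1$, contradicting the convention of Subsection \ref{rmkexbi}. The plan is to apply Proposition \ref{protwo}, with initial seed $t_0$, to the equality $p_1M_1+p_2M_2=p_1^\prime N_1+p_2^\prime N_2$; this gives either $(M_1,M_2,p_1,p_2)=(N_1,N_2,p_1^\prime,p_2^\prime)$ or $(M_1,M_2,p_1,p_2)=(N_2,N_1,p_2^\prime,p_1^\prime)$, and in either case $M_1,M_2$ are themselves cluster monomials in $t_0$.

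Since $p_1=X_{k;t}\,p_2$ and $p_1^\prime=X_{j;t_0}\,p_2^\prime$ in the semifield, the first alternative yields $X_{k;t}=X_{j;t_0}$ and the second yields $X_{k;t}=X_{j;t_0}^{-1}=X_{j;t_1}$. Now let $\mathcal V$ be the upper cluster algebra with principal coefficients at $t_0$ and $B_{t_0}=B$, and let $\rho$ be the bijection of Proposition \ref{proexb} from the cluster variables of $\mathcal U_{\rm uc}$ to those of $\mathcal V$. On one hand, the identities $M_1=N_1,M_2=N_2$ (resp. $M_1=N_2,M_2=N_1$) mean, by Proposition \ref{promono}, that the cluster variables occurring in $M_1$ and in $N_1$ (resp. $N_2$) agree with multiplicities, and likewise for $M_2$; applying $\rho$, the same monomial identities hold in $\mathcal V$. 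On the other hand, the coefficient specialization $X_{i;t_0}\mapsto Z_i$ is a semifield homomorphism intertwining the $\mathscr X$-mutations, so it sends $X_{k;t}$ to the $k$-th coefficient $\mathscr X$-variable of $\mathcal V$ at $t$, namely $\prod_l Z_l^{({\bf c}_{k;t}^{B;t_0})_l}$; hence ${\bf c}_{k;t}^{B;t_0}=e_j$ in the first alternative and $-e_j$ in the second (one can also extract this from Theorem \ref{thmsf}(ii) together with the sign-coherence of $c$-vectors). Substituting these coefficients into the exchange-binomial formula for $\mathcal V$ at $t$ and computing inside ${\rm Trop}(Z_1,\ldots,Z_n)$, in both alternatives the $k$-th exchange binomial of $t$ in $\mathcal V$ equals $Z_j\,\rho(N_1)+\rho(N_2)$, which is precisely the $j$-th exchange binomial of $t_0$ in $\mathcal V$. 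Therefore $\rho(A_{k;t})\rho(A_{k;t^\prime})=\rho(A_{j;t_0})\rho(A_{j;t_1})$ in $\mathcal V$.

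Finally, $\mathcal V$ is full rank and primitive, being a principal coefficient upper cluster algebra, hence factorial by Theorem \ref{thmufd}(i); and its cluster variables are irreducible by Theorem \ref{thmgls}(ii)(b), hence prime. So the identity above expresses a product of two primes as another product of two primes, and unique factorization together with Lemma \ref{lemassociate} (associate cluster variables are equal) forces $\{\rho(A_{k;t}),\rho(A_{k;t^\prime})\}=\{\rho(A_{j;t_0}),\rho(A_{j;t_1})\}$; that is, either $\rho(A_{k;t})=\rho(A_{j;t_0})$ and $\rho(A_{k;t^\prime})=\rho(A_{j;t_1})$, or $\rho(A_{k;t})=\rho(A_{j;t_1})$ and $\rho(A_{k;t^\prime})=\rho(A_{j;t_0})$. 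Transporting these equalities of cluster variables back to $\mathcal U_{\rm uc}$ via Proposition \ref{proexb}(i) gives exactly $(A_{k;t},A_{k;t^\prime})=(A_{j;t_0},A_{j;t_1})$ or $(A_{k;t},A_{k;t^\prime})=(A_{j;t_1},A_{j;t_0})$.

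The main obstacle, and the reason for the detour through $\mathcal V$, is that $\mathcal U_{\rm uc}$ need not be factorial, so the raw identity $A_{k;t}A_{k;t^\prime}=A_{j;t_0}A_{j;t_1}$ does not by itself allow the factors to be matched; Proposition \ref{protwo} is what lets us recover both the value of the cluster Poisson variable $X_{k;t}$ and the matching of the monomial parts, and hence reconstruct the very same identity inside the factorial algebra $\mathcal V$, where the matching of factors is automatic.
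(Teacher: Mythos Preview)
Your approach is genuinely different from the paper's, and apart from one flaw it works. The paper never leaves $\mathcal U_{\rm uc}$: it observes that the right-hand side is a polynomial in $A_{t_0}$, so all $d$-components of $A_{k;t}A_{k;t'}$ with respect to $t_0$ are $\leq 0$; a case split on whether $(A_{i;t_0}\|A_{k;t})_d\le 0$ for all $i$ then forces one of $A_{k;t},A_{k;t'}$ to lie in $A_{t_0}$, and Proposition~\ref{lemdex} pins down the other as the partner under a single mutation. Proposition~\ref{protwo} is used only at the end, to identify which index of $t_0$ is involved. Your route---apply Proposition~\ref{protwo} first to match both the monomials and the semifield coefficients, transport the resulting exchange-binomial identity to the principal coefficient algebra $\mathcal V$, and then invoke factoriality (Theorem~\ref{thmufd}) plus Lemma~\ref{lemassociate} to split the product---is a nice alternative. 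It trades the $d$-vector case analysis for heavier input (factoriality of $\mathcal V$), but the logical dependencies are fine: nothing in Section~\ref{sec4} uses Lemma~\ref{lemb}.

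The flaw is your justification of $N_1\neq N_2$. You claim that if $N_1=N_2=1$ then the exchange binomial equals $p_1'+p_2'=1$, contradicting Subsection~\ref{rmkexbi}. But the sum $p_1'+p_2'$ here is taken in the group ring $\mathbb{KP}$, not in the semifield; the relation $p_1'\oplus p_2'=1$ (which does hold in the universal semifield, since $\oplus$ is ordinary addition there) is irrelevant. In $\mathbb{KP}$ the elements $p_1'=\tfrac{X_{j;t_0}}{1+X_{j;t_0}}$ and $p_2'=\tfrac{1}{1+X_{j;t_0}}$ are two \emph{distinct} group elements, so $p_1'+p_2'$ is a genuine two-term element, and since $(\mathbb P,\cdot)$ is torsion-free abelian the units of $\mathbb{KP}$ are trivial---hence $p_1'+p_2'$ is indeed not invertible, but it is certainly not $1$. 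So the convention of Subsection~\ref{rmkexbi} does \emph{not} exclude the case where the $j$-th column of $B_{t_0}$ vanishes. You must treat that case separately (it is easy: a zero column is preserved under all mutations, and the equality $p_1M_1+p_2M_2=p_1'+p_2'$ in $\mathbb{KP}[A_{t_0}]$ forces $M_1=M_2=1$ as well, after which everything reduces to matching two-element subsets of $\mathbb P$). Once this gap is patched, your argument goes through.
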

\begin{proof}
By the assumption, we know that $$A_{k;t}A_{k;t^\prime}=\frac{X_{k;t}}{1+X_{k;t}}M_1+\frac{1}{1+X_{k;t}}M_2=\frac{X_{j;t_0}}{1+X_{j;t_0}}N_1+\frac{1}{1+X_{j;t_0}}N_2.$$
The above equality can be viewed as the Laurent expansion of $A_{k;t}A_{k;t^\prime}$ with respect to $A_{t_0}$. Clearly, this Laurent expansion is a polynomial in $A_{t_0}$.

In the following proof, we distinguish two cases. Case (a): $(A_{i;t_0}\mid\mid  A_{k;t})_d\leq 0$ for any $i=1,\ldots,n$. Case (b): there exists some $i_0$ such that $(A_{i_0;t_0}\mid\mid  A_{k;t})_d>0$.

Case (a): If $(A_{i;t_0}\mid\mid  A_{k;t})_d\leq 0$ for any $i=1,\ldots,n$, then $A_{k;t}$ is compatible with any cluster variable in $A_{t_0}$. By Theorem \ref{thmcomp}, we know that $A_{t_0}$ is a maximal compatible set. So $A_{k;t}\in A_{t_0}$, say $A_{k;t}=A_{i_0;t_0}$. We will show that $$i_0=j\;\;\text{and}\;\;\;A_{i_0;t_0}=A_{j;t_0}.$$

 We know that
 $$A_{k;t^\prime}=A_{k;t}^{-1}\cdot(\frac{X_{j;t_0}}{1+X_{j;t_0}}N_1+\frac{1}{1+X_{j;t_0}}N_2)
 =A_{i_0;t_0}^{-1}\cdot(\frac{X_{j;t_0}}{1+X_{j;t_0}}N_1+\frac{1}{1+X_{j;t_0}}N_2),$$
 which is the Laurent expansion of $A_{k;t^\prime}$ with respect to $A_{t_0}$. So we have $$(A_{i_0;t_0}\mid\mid  A_{k;t^\prime})_d=1>0\;\;\text{ and }\;\;\;(A_{i;t_0}\mid\mid  A_{k;t^\prime})_d=0$$ for any $i\neq i_0$.
 Then by Proposition \ref{lemdex}, we get that $$A_{k;t^\prime}=A_{i_0;t_2},$$ where $t_2=\mu_{i_0}(t_0)$.

 Let $\frac{X_{i_0;t_0}}{1+X_{i_0;t_0}}L_1+\frac{1}{1+X_{i_0;t_0}}L_2$ be the $i_0$-th exchange binomial of $t_0$. By Proposition \ref{protwo} and the following equality,
 $$\frac{X_{j;t_0}}{1+X_{j;t_0}}N_1+\frac{1}{1+X_{j;t_0}}N_2=A_{k;t}A_{k;t^\prime}=A_{i_0;t_0}A_{i_0;t_2}
=\frac{X_{i_0;t_0}}{1+X_{i_0;t_0}}L_1+\frac{1}{1+X_{i_0;t_0}}L_2,$$
we get that either $X_{j;t_0}=X_{i_0;t_0}$ or $X_{j;t_0}=X_{i_0;t_0}^{-1}$. Since $X_{1;t_0},\ldots,X_{n;t_0}$ are algebraically independent, we must have $X_{j;t_0}=X_{i_0;t_0}$ and $j=i_0$. So $$A_{k;t}=A_{i_0;t_0}=A_{j;t_0}.$$ Then by the following equality,
$$A_{k;t}A_{k;t^\prime}=\frac{X_{k;t}}{1+X_{k;t}}M_1+\frac{1}{1+X_{k;t}}M_2=\frac{X_{j;t_0}}{1+X_{j;t_0}}N_1+\frac{1}{1+X_{j;t_0}}N_2
=A_{j;t_0}A_{j;t_1},$$
we  get $A_{k;t^\prime}=A_{j;t_1}$ and thus $(A_{k;t},A_{k;t^\prime})=(A_{j;t_0},A_{j;t_1})$.

Case (b): There exists some $i_0$ such that $(A_{i_0;t_0}\mid\mid  A_{k;t})_d>0$. By the fact that the expansion of $A_{k;t}A_{k;t^\prime}$ with respect to $A_{t_0}$ is a polynomial, we know that $$0\geq(A_{i_0;t_0}\mid\mid  A_{k;t}A_{k;t^\prime})_d=(A_{i_0;t_0}\mid\mid  A_{k;t})_d+ (A_{i_0;t_0}\mid\mid  A_{k;t^\prime})_d.$$
So we must have $(A_{i_0;t_0}\mid\mid  A_{k;t^\prime})_d<0$. Then by Proposition \ref{rmkdf} (4), we get $$A_{k;t^\prime}=A_{i_0;t_0}.$$  Similarly to the proof in Case (a), we can prove that $i_0=j$ and thus $$A_{k;t^\prime}=A_{i_0;t_0}=A_{j;t_0}.$$ Then $A_{k;t}=A_{j;t_1}$ follows from the following equality
$$A_{k;t}A_{k;t^\prime}=\frac{X_{k;t}}{1+X_{k;t}}M_1+\frac{1}{1+X_{k;t}}M_2=\frac{X_{j;t_0}}{1+X_{j;t_0}}N_1+\frac{1}{1+X_{j;t_0}}N_2
=A_{j;t_0}A_{j;t_1}.$$ So we have $(A_{k;t},A_{k;t^\prime})=(A_{j;t_1},A_{j;t_0})$. This completes the proof.
\end{proof}

\begin{theorem}\label{mainthm}
Let $\mathcal U_{\rm uc}$ be an upper cluster algebra with universal coefficient semifield and $t, t_0$  two seeds of $\mathcal U_{\rm uc}$.
Let $\frac{X_{k;t}}{1+X_{k;t}}M_1+\frac{1}{1+X_{k;t}}M_2$ be the $k$-th exchange binomial of $t$ and $\frac{X_{j;t_0}}{1+X_{j;t_0}}N_1+\frac{1}{1+X_{j;t_0}}N_2$  the $j$-th exchange binomial of $t_0$. Then the following conditions are equivalent.

\begin{itemize}
\item[(i)] $X_{k;t}=X_{j;t_0}$;

\item[(ii)] $(A_{k;t}, M_1, M_2)=(A_{j;t_0}, N_1, N_2)$;

\item[(iii)] $(A_{k;t},\{A_{i;t}\mid b_{ik}^t\neq 0\})=(A_{j;t_0},\{A_{i;t_0}\mid b_{ij}^{t_0}\neq 0\})$;

\item[(iv)] $A_{k;t}=A_{j;t_0}$ and $\{A_{i;t}\mid b_{ik}^t\neq 0\}\subseteq A_{t_0}$;

\item[(v)] $(A_{k;t},A_{k;t^\prime})=(A_{j;t_0},A_{j;t_1})$, where $t^\prime=\mu_k(t)$ and $t_1=\mu_j(t_0)$;

\item[(vi)] $A_{k;t}=A_{j;t_0}$ and $\frac{X_{k;t}}{1+X_{k;t}}M_1+\frac{1}{1+X_{k;t}}M_2=\frac{X_{j;t_0}}{1+X_{j;t_0}}N_1+\frac{1}{1+X_{j;t_0}}N_2$.
\end{itemize}
\end{theorem}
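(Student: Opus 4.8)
The plan is to prove a cycle of implications tying all six statements together, feeding essentially every preparatory result of this subsection into the argument: Proposition~\ref{promono} (a monomial in one cluster equal to a monomial in another cluster is determined up to reordering), Proposition~\ref{lemdex} (the $d$-vector characterisation of $\mathscr A$-exchange pairs), Proposition~\ref{protwo} (rigidity of sums of two cluster monomials), Lemma~\ref{lemb}, and, for the delicate analysis of (i), the canonical expressions of Theorem~\ref{thmsf}, the $C$/$G$-matrix duality of Theorem~\ref{thmcg}, and the irreducibility and injectivity of $F$-polynomials of non-initial cluster variables (Theorems~\ref{thmffirr} and~\ref{thmfpoly}). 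Concretely I would establish (ii)$\Rightarrow$(iii)$\Rightarrow$(iv)$\Rightarrow$(v)$\Leftrightarrow$(vi) together with (v)$\Rightarrow$(iii), so that (ii)--(vi) become mutually equivalent, and then close the circle with (iv)$\Rightarrow$(ii), (iv)$\Rightarrow$(i) and (i)$\Rightarrow$(iv).

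The implications internal to (ii)--(vi) are all short. For (ii)$\Rightarrow$(iii) one uses that $M_1=N_1$, $M_2=N_2$ together with Proposition~\ref{promono} (applied with the invertible element $1$) identify the sets of cluster variables occurring in $M_1$ and in $N_1$, and likewise in $M_2$ and $N_2$; taking unions of supports yields $\{A_{i;t}\mid b_{ik}^t\neq 0\}=\{A_{i;t_0}\mid b_{ij}^{t_0}\neq 0\}$, which with $A_{k;t}=A_{j;t_0}$ is (iii); the step (iii)$\Rightarrow$(iv) is immediate. For (iv)$\Rightarrow$(v): once the matrix-neighbours of position $k$ at $t$ all lie in $A_{t_0}$, the monomials $M_1,M_2$ are coprime monomials in $A_{t_0}$ not involving $A_{k;t}=A_{j;t_0}$, so the expansion $A_{k;t'}=A_{j;t_0}^{-1}\big(\frac{X_{k;t}}{1+X_{k;t}}M_1+\frac{1}{1+X_{k;t}}M_2\big)$ is already reduced with respect to $t_0$; hence $\mathbf d^{t_0}(A_{k;t'})=e_j$, and Proposition~\ref{lemdex} gives $A_{k;t'}=A_{j;t_1}$, i.e.\ (v). For (v)$\Leftrightarrow$(vi): the product of an $\mathscr A$-exchange pair is by definition the corresponding exchange binomial, so (v)$\Rightarrow$(vi) is clear, and conversely (vi) plus Lemma~\ref{lemb} gives $\{A_{k;t},A_{k;t'}\}=\{A_{j;t_0},A_{j;t_1}\}$; since $(A_{j;t_0}\|A_{j;t_1})_d=1\neq -1$ forces $A_{j;t_0}\neq A_{j;t_1}$ (Proposition~\ref{rmkdf}(4)), the hypothesis $A_{k;t}=A_{j;t_0}$ fixes the ordering, giving (v). Finally (v)$\Rightarrow$(iii): the two exchange binomials coincide (they are the products $A_{k;t}A_{k;t'}$ and $A_{j;t_0}A_{j;t_1}$), so Proposition~\ref{protwo} forces $\{M_1,M_2\}=\{N_1,N_2\}$ as sets and the supports agree again.

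The heart of the proof is the interaction with (i). The basic mechanism: comparing $X_{k;t}=X_{j;t_0}^{\varepsilon}$ (for $\varepsilon=\pm 1$) with the canonical expression $X_{k;t}=X_{t_0}^{\mathbf c_{k;t}}\prod_i(F_{i;t}(X_{t_0}))^{b_{ik}^t}$ of Theorem~\ref{thmsf}(ii), and using that every $F$-polynomial has constant term $1$ and is divisible by no $X_i$ (Proposition~\ref{profy}), one deduces $\mathbf c_{k;t}=\varepsilon e_j$ and $\prod_i F_{i;t}^{b_{ik}^t}=1$ as rational functions. For (iv)$\Rightarrow$(ii) and (iv)$\Rightarrow$(i) I apply this in the reversed case of Proposition~\ref{protwo}: if the exchange binomials coincide but $(M_1,M_2)=(N_2,N_1)$ and $X_{k;t}=X_{j;t_0}^{-1}$, then $\mathbf c_{k;t}=-e_j$, so Theorem~\ref{thmcg} makes the $j$-th coordinate of $\mathbf g_{k;t}$ equal to $-s_k/s_j<0$; but $A_{k;t}=A_{j;t_0}$ (its $g$-vector, well defined and transported to the principal-coefficient model via Proposition~\ref{proexb}, equals $e_j$) forces this coordinate to be $1$, a contradiction. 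So only the non-reversed case survives, giving $M_1=N_1$, $M_2=N_2$ and $X_{k;t}=X_{j;t_0}$.

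The hardest step is (i)$\Rightarrow$(iv). From $X_{k;t}=X_{j;t_0}$ the same analysis yields $\mathbf c_{k;t}=e_j$ and $\prod_i F_{i;t}^{b_{ik}^t}=1$. The crucial observation is that this identity forces every $A_{i;t}$ with $b_{ik}^t\neq 0$ to be an initial cluster variable: a non-initial cluster variable has an irreducible non-constant $F$-polynomial (Theorem~\ref{thmffirr}) and distinct non-initial cluster variables have distinct $F$-polynomials (Theorem~\ref{thmfpoly}), so in the reduced fraction $\prod_{b_{ik}^t>0}F_{i;t}^{\,b_{ik}^t}\big/\prod_{b_{ik}^t<0}F_{i;t}^{\,-b_{ik}^t}$ no non-trivial factor can cancel, which is impossible unless there are no non-initial neighbours at all. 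This gives $\{A_{i;t}\mid b_{ik}^t\neq 0\}\subseteq A_{t_0}$; applying the separation-of-additions formula for $\mathscr X$-seed patterns (the analogue of Theorem~\ref{thmsf}(ii)) to the $\hat X$-seed pattern of Proposition~\ref{proxp} then upgrades $\prod_i F_{i;t}^{b_{ik}^t}=1$ to $\hat X_{k;t}=\hat X_{j;t_0}$, which combined with $X_{k;t}=X_{j;t_0}$ and the coprimality of the neighbour monomials forces $M_1=N_1$ and $M_2=N_2$; hence the exchange binomials at $(t,k)$ and $(t_0,j)$ coincide, and Lemma~\ref{lemb} puts $A_{k;t}\in\{A_{j;t_0},A_{j;t_1}\}$. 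Since $\mathbf g_{j;t_1}$ has $j$-th coordinate $-1$ while $\mathbf g_{k;t}$ has $j$-th coordinate $s_k/s_j>0$ (Theorem~\ref{thmcg} with $\mathbf c_{k;t}=e_j$), the case $A_{k;t}=A_{j;t_1}$ is excluded, so $A_{k;t}=A_{j;t_0}$ and (iv) holds. I expect the main obstacle to be exactly this chain---squeezing combinatorial rigidity out of a bare equality of Poisson variables and then separating $A_{j;t_0}$ from $A_{j;t_1}$ by the sign of a single coordinate of a $g$-vector; the bookkeeping between the $\mathscr A$-, $\mathscr X$- and $\hat X$-seed patterns is where the argument is easiest to get wrong.
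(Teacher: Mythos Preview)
Your proposal is correct and follows essentially the same route as the paper: the key step (i)$\Rightarrow$(rest) is handled identically---canonical expression of $X_{k;t}$ gives $\mathbf c_{k;t}=e_j$ and $\prod_i F_{i;t}^{b_{ik}^t}=1$, $F$-polynomial results force all matrix-neighbours of $k$ to be initial, the $\hat X$-pattern yields $M_1=N_1,\ M_2=N_2$, Lemma~\ref{lemb} pins down $A_{k;t}\in\{A_{j;t_0},A_{j;t_1}\}$, and a sign in the $C/G$-duality of Theorem~\ref{thmcg} eliminates the wrong alternative (the same duality trick reappears to exclude $X_{k;t}=X_{j;t_0}^{-1}$). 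The only cosmetic differences are that the paper runs a single clean cycle (i)$\Rightarrow$(ii)$\Rightarrow\cdots\Rightarrow$(vi)$\Rightarrow$(i) rather than your tangled implication graph, and that for ``neighbours are initial'' it applies Theorem~\ref{thmfpoly} directly to the two monomials $\overline M_1',\overline M_2'$ (after splitting off initial factors via Theorem~\ref{thmup}(iii)) instead of invoking irreducibility of individual $F$-polynomials; in particular Theorem~\ref{thmffirr} is not needed.
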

\begin{proof}
(i)$\Longrightarrow$ (ii):
By Theorem \ref{thmsf} (ii), we know that $$X_{k;t}=X_{t_0}^{{\bf c}_{k;t}^{B_{t_0};t_0}}\cdot \prod\limits_{i=1}^n(F_{i;t}^{B_{t_0};t_0}(X_{1;t_0},\ldots,X_{n;t_0}))^{b_{ik}^t}.$$
By $X_{k;t}=X_{j;t_0}$ and Proposition \ref{profy}, we get that ${\bf c}_{k;t}^{B_{t_0};t_0}$ is equal to the $j$-th column vector of $I_n$ and
$$\prod\limits_{i=1}^n(F_{i;t}^{B_{t_0};t_0}(X_{1;t_0},\ldots,X_{n;t_0}))^{b_{ik}^t}=1.$$
So we get that
\begin{eqnarray}\label{eqnfequal1}
\prod\limits_{b_{ik}^t>0}(F_{i;t}^{B_{t_0};t_0})^{b_{ik}^t}=
\prod\limits_{b_{ik}^t<0}(F_{i;t}^{B_{t_0};t_0})^{-b_{ik}^t}.
\end{eqnarray}

Let $\overline{\mathcal U}$ be the  upper cluster algebra with principal coefficients at $t_0$ and with initial exchange matrix $B_{t_0}$. We use $(\overline B_t,\overline X_t,\overline A_t)$ to denote the seed of $\overline{\mathcal U}$ at vertex $t$. By $\overline B_{t_0}=B_{t_0}$, we know that $\overline B_t=B_t$ for any vertex $t$. For any cluster monomial $L=A_{1;t}^{v_1}\cdots A_{n;t}^{v_n}$ of $\mathcal U_{\rm uc}$, we denote by $$\overline L=\overline A_{1;t}^{\;v_1}\cdots \overline A_{n;t}^{\;v_n}$$ the corresponding cluster monomial of $\overline{\mathcal U}$ and we denote by $F_{\overline L}^{B_{t_0};t_0}$ the $F$-polynomial of $\overline L$. By the assumption, we know that  $M_1=\prod\limits_{b_{ik}^t>0}A_{i;t}^{b_{ik}^t}$ and $M_2=\prod\limits_{b_{ik}^t<0}A_{i;t}^{-b_{ik}^t}$.
So we have $$\overline M_1=\prod\limits_{b_{ik}^t>0}\overline A_{i;t}^{\;b_{ik}^t}\;\;\text{and}\;\;\;\overline M_2=\prod\limits_{b_{ik}^t<0}\overline A_{i;t}^{\;-b_{ik}^t}.$$

We claim that  $M_1$ and $M_2$ are cluster monomials in $A_{t_0}$. In order to prove this claim, we first show that  $\overline M_1$ and $\overline M_2$ are cluster monomials in $\overline A_{t_0}$.

By Theorem \ref{thmup} (iii), we  can always write the cluster monomial $\overline M_s$ ($s=1,2$) as $$\overline M_s=\overline M_s^{\;\prime\prime}\cdot \overline M_s^{\;\prime},$$
where $\overline M_s^{\;\prime\prime}$ is a cluster monomial in initial cluster variables of $\overline{\mathcal U}$ and $\overline M_s^{\;\prime}$ is a cluster monomial in non-initial cluster variables of $\overline{\mathcal U}$. By the equality (\ref{eqnfequal1}), we know that $F_{\overline M_1}^{B_{t_0};t_0}=F_{\overline M_2}^{B_{t_0};t_0}$. Because $\overline M_1^{\;\prime\prime}$ and $\overline M_2^{\;\prime\prime}$ are cluster monomials in initial cluster variables, we have $F_{\overline M_1^{\;\prime\prime}}^{B_{t_0};t_0}=F_{\overline M_2^{\;\prime\prime}}^{B_{t_0};t_0}=1.$
So  $$F_{\overline M_1^{\;\prime}}^{B_{t_0};t_0}=F_{\overline M_1}^{B_{t_0};t_0}=F_{\overline M_2}^{B_{t_0};t_0}=F_{\overline M_2^{\;\prime}}^{B_{t_0};t_0}.$$
Then by Theorem \ref{thmfpoly} and the fact that $\overline M_1^{\;\prime}$ and  $\overline M_2^{\;\prime}$ are two cluster monomials in non-initial cluster variables, we get $\overline M_1^{\;\prime}=\overline M_2^{\;\prime}$. Because $\overline M_1$ and $\overline M_2$ have no common factor in $\mathbb Z[\overline A_{1;t},\ldots,\overline A_{n;t}]$, the same holds for $\overline M_1^{\;\prime}$ and $\overline M_2^{\;\prime}$. So $\overline M_1^{\;\prime}=\overline M_2^{\;\prime}$ implies that  $$\overline M_1^{\;\prime}=\overline M_2^{\;\prime}=1.$$
Thus $\overline M_1=\overline M_1^{\;\prime\prime}$ and $\overline M_2=\overline M_2^{\;\prime\prime}$ are cluster monomials in $\overline A_{t_0}$. Then by Proposition \ref{proexb} (i), we get that  $M_1$ and $M_2$ are cluster monomials in $A_{t_0}$.

By Proposition \ref{proxp}, we know that $\widehat {\mathcal S}=\{(B_{t},\widehat X_{t})\}_{t\in\mathbb T_n}$ forms an $\mathscr X$-seed pattern, where $\widehat X_t=(\widehat X_{1;t},\ldots,\widehat X_{n;t})$ is given by $\widehat X_{k; t}:=X_{k;t}\prod\limits_{i=1}^nA_{i;t}^{b_{ik}^{t}}$. Since $\mathcal U_{\rm uc}$ is an upper cluster algebra with universal coefficient semifield, $X_{k;t}=X_{j;t_0}$ implies that $\widehat X_{k;t}=\widehat X_{j;t_0}$. So we have $X_{k;t}\prod\limits_{i=1}^nA_{i;t}^{b_{ik}^{t}}=X_{j;t_0}\prod\limits_{i=1}^nA_{i;t_0}^{b_{ij}^{t_0}}$. Thus we get that
$$\frac{M_1}{M_2}=\prod\limits_{i=1}^nA_{i;t}^{b_{ik}^{t}}=\prod\limits_{i=1}^nA_{i;t_0}^{b_{ij}^{t_0}}=\frac{N_1}{N_2}.$$
Since $M_1,M_2,N_1,N_2$ are monomials in $\mathbb Z[A_{1;t_0},\ldots,A_{n;t_0}]$ and $(M_1,M_2)$ and $(N_1,N_2)$ are relatively prime in $\mathbb Z[A_{1;t_0},\ldots,A_{n;t_0}]$, we get $$M_1=N_1\;\;\text{ and }\;\;\;M_2=N_2.$$  Then by $X_{k;t}=X_{j;t_0}$, we have $$\frac{X_{k;t}}{1+X_{k;t}}M_1+\frac{1}{1+X_{k;t}}M_2=\frac{X_{j;t_0}}{1+X_{j;t_0}}N_1+\frac{1}{1+X_{j;t_0}}N_2.$$
By Lemma \ref{lemb}, we get that  $$A_{k;t}=A_{j;t_0}\;\;\text{ or }\;\;\;A_{k;t}=A_{j;t_1},$$
where $t_1=\mu_j(t_0)$.

Now we rule out the possibility of $A_{k;t}=A_{j;t_1}$ with the help of the condition  $X_{k;t}=X_{j;t_0}$.  Assume by contradiction that $A_{k;t}=A_{j;t_1}$. Then the $k$-th column of the $G$-matrix $G_t^{B_{t_0};t_0}$ equals the $j$-th column of $G_{t_1}^{B_{t_0};t_0}$. In particular, we have $g_{jk;t}^{B_{t_0;t_0}}=g_{jj;t_1}^{B_{t_0;t_0}}$. By $t_1=\mu_j(t_0)$, we get that
$$g_{jk;t}^{B_{t_0;t_0}}=g_{jj;t_1}^{B_{t_0;t_0}}=-1.$$
By Theorem \ref{thmcg}, we know that
 \begin{eqnarray}\label{eqncg1}
 S^{-1}(G_t^{B_{t_0};t_0})^{\rm T}S\cdot C_t^{B_{t_0};t_0}=I_n,
 \end{eqnarray}
 where  $S=\mathrm{diag}(s_1,\dots,s_n)$ is a skew-symmetrizer of $B_{t_0}$. By $X_{k;t}=X_{j;t_0}$, we know that the $k$-th column of the $C$-matrix $C_t^{B_{t_0};t_0}$ equals the $j$-th column of $I_n$. By comparing the $k$-th column of the two sides of the equality (\ref{eqncg1}), we get that the $j$-th column of $ S^{-1}(G_t^{B_{t_0};t_0})^{\rm T}S$ equals the $k$-th column of $I_n$. In particular, the $(k,j)$-entry $s_k^{-1}\cdot g_{jk;t}^{B_{t_0;t_0}}\cdot s_j$ of $S^{-1}(G_t^{B_{t_0};t_0})^{\rm T}S$ equals $1$, and thus $$g_{jk;t}^{B_{t_0;t_0}}=\frac{s_k}{s_j}>0.$$
This contradicts $g_{jk;t}^{B_{t_0;t_0}}=-1$.  So $A_{k;t}\neq A_{j;t_1}$ and thus $A_{k;t}=A_{j;t_0}$.

Hence, $X_{k;t}=X_{j;t_0}$ implies $(A_{k;t}, M_1, M_2)=(A_{j;t_0}, N_1, N_2)$.

 (ii)$\Longrightarrow$ (iii): It suffices to show that $$\{A_{i;t}\mid b_{ik}^t\neq 0\}=\{A_{i;t_0}\mid b_{ij}^{t_0}\neq 0\}.$$
 Let $\mathbb A$ be the set of cluster variables of $\mathcal U_{\rm uc}$ and $P\in \mathbb A$. We first claim that $(P\mid\mid  M_1\cdot M_2)_d<0$ if and only if $P$ is a cluster variable appearing in the cluster monomial $M_1\cdot M_2$.

 If $P$ is a cluster variable appearing in the cluster monomial $M_1\cdot M_2$, then it is easy to see that  $(P\mid\mid  M_1\cdot M_2)_d<0$. Conversely, if $(P\mid\mid  M_1\cdot M_2)_d<0$, then by Corollary \ref{cordmonomial} (i), we know that $P$ is a cluster variable appearing in the cluster monomial $M_1\cdot M_2$.

 By the discussion above, we get
 $$\{A_{i;t}\mid b_{ik}^t\neq 0\}=\{P\in \mathbb A\mid (P\mid\mid  M_1\cdot M_2)_d<0\}.$$
Similarly, we have  $$\{A_{i;t_0}\mid b_{ij}^{t_0}\neq 0\}=\{P\in \mathbb A\mid (P\mid\mid  N_1\cdot N_2)_d<0\}.$$
By $(A_{k;t}, M_1, M_2)=(A_{j;t_0}, N_1, N_2)$, we have $M_1\cdot M_2=N_1\cdot N_2$. Thus we get $$\{A_{i;t}\mid b_{ik}^t\neq 0\}=\{A_{i;t_0}\mid b_{ij}^{t_0}\neq 0\}.$$

(iii)$\Longrightarrow$ (iv): This is clear.

 (iv)$\Longrightarrow$ (v):
We know that
 \begin{eqnarray}
 A_{k;t^\prime}=A_{k;t}^{-1}(\frac{X_{k;t}}{1+X_{k;t}}M_1+\frac{1}{1+X_{k;t}}M_2).\nonumber
 \end{eqnarray}
By $A_{k;t}=A_{j;t_0}$ and the fact that $M_1, M_2$ are monomials in variables from $$\{A_{i;t}\mid b_{ik}^t\neq 0\}\subseteq A_{t_0},$$ we know that $$ A_{k;t^\prime}=A_{k;t}^{-1}(\frac{X_{k;t}}{1+X_{k;t}}M_1+\frac{1}{1+X_{k;t}}M_2)
=A_{j;t_0}^{-1}(\frac{X_{k;t}}{1+X_{k;t}}M_1+\frac{1}{1+X_{k;t}}M_2)$$
 can be viewed as the
 Laurent expansion of $A_{k;t^\prime}$ with respect to $A_{t_0}$.
Thus we know that $$(A_{j;t_0}\mid\mid  A_{k;t^\prime})_d=1>0\;\text{ and }(A_{i;t_0}\mid\mid  A_{k;t^\prime})_d=0,$$
for any $i\neq j$. Then by Proposition \ref{lemdex}, we get $A_{k;t^\prime}=A_{j;t_1}$. So $$(A_{k;t},A_{k;t^\prime})=(A_{j;t_0},A_{j;t_1}).$$

(v)$\Longrightarrow$ (vi): This is clear.

 (vi)$\Longrightarrow$ (i): If the $j$-th column  of $B_{t_0}$ is zero, then the same holds for any exchange matrix of $\mathcal U_{\rm uc}$. In this case,
 the action of $\mu_j$ on any seed is very clear. We can see that
 $A_{k;t}=A_{j;t_0}$ implies that $k=j$ and $X_{k;t}=X_{j;t_0}$.

 If the $j$-th column of $B_{t_0}$ is non-zero, then $N_1$ and $N_2$ are two different cluster monomials in $A_{t_0}$. Then by Proposition \ref{protwo} and the following equality,
  $$\frac{X_{j;t_0}}{1+X_{j;t_0}}N_1+\frac{1}{1+X_{j;t_0}}N_2=\frac{X_{k;t}}{1+X_{k;t}}M_1+\frac{1}{1+X_{k;t}}M_2,$$
  we  get $\frac{X_{k;t}}{1+X_{k;t}}=\frac{X_{j;t_0}}{1+X_{j;t_0}}$ or $\frac{X_{k;t}}{1+X_{k;t}}=\frac{1}{1+X_{j;t_0}}$. Namely, we get that $$X_{k;t}=X_{j;t_0}\;\;\text{ or }\;\;\;X_{k;t}=X_{j;t_0}^{-1}.$$

Similar to the argument for the statement (i)$\Longrightarrow$ (ii), now we rule out the possibility of $X_{k;t}=X_{j;t_0}^{-1}$ with the help of the condition $A_{k;t}=A_{j;t_0}$. Assume by contradiction that $X_{k;t}=X_{j;t_0}^{-1}$. Then the $k$-th column of $C_t^{B_{t_0};t_0}$ is the $j$-th column of $-I_n$, by Theorem \ref{thmsf} (ii). So we have $$c_{jk;t}^{B_{t_0};t_0}=-1<0.$$

 By Theorem \ref{thmcg}, we know that
 \begin{eqnarray}\label{eqncg}
 (G_t^{B_{t_0};t_0})^{\rm T}(SC_t^{B_{t_0};t_0}S^{-1})=I_n,
 \end{eqnarray}
 where  $S=\mathrm{diag}(s_1,\dots,s_n)$ is a skew-symmetrizer of $B_{t_0}$.

 By $A_{k;t}=A_{j;t_0}$, we know that the $k$-th column  of $G_t^{B_{t_0};t_0}$ equals the $j$-th column vector of $I_n$. So the $k$-th row of $(G_t^{B_{t_0};t_0})^{\rm T}$ equals the $j$-th row of $I_n$. By comparing the $k$-th row of the two sides of the equality (\ref{eqncg}), we know that the $j$-th row of $SC_t^{B_{t_0};t_0}S^{-1}$ equals the $k$-th row of $I_n$. In particular, the $(j,k)$-entry $s_jc_{jk;t}^{B_{t_0};t_0}s_{k}^{-1}$ of $SC_t^{B_{t_0};t_0}S^{-1}$ equals $1$ and thus $$c_{jk;t}^{B_{t_0};t_0}=\frac{s_{k}}{s_j}>0.$$
 This contradicts $c_{jk;t}^{B_{t_0};t_0}=-1<0$. So $X_{k;t}\neq X_{j;t_0}^{-1}$ and thus  $X_{k;t}=X_{j;t_0}$.
\end{proof}

Let $\mathcal U$ be an upper cluster algebra and $\mathcal X$ a cluster Poisson algebra. We say that $\mathcal U$ and $\mathcal X$ have the {\em same type} if they have the same exchange matrix at a vertex $t_0$ of $\mathbb T_n$.

\begin{theorem}\label{thmbij}
Let $\mathcal X$ be a cluster Poisson algebra and $\mathcal U$ an upper cluster algebra of the same type as $\mathcal X$. Denote $\overline X_t$  the Poisson cluster of $\mathcal X$ at $t$ to distinguish the $\mathscr X$-cluster $X_t$ of $\mathcal U$ at $t$. Each mutation $t_k=\mu_k(t)$ gives an $\mathscr X$-exchange pair $(\overline X_{k;t},\overline X_{k;t_k})$ of $\mathcal X$ and an $\mathscr A$-exchange pair $(A_{k;t},A_{k;t_k})$ of $\mathcal U$. Then

\begin{itemize}
\item[(i)] The map $\psi:\overline X_{k;t}\mapsto (A_{k;t},A_{k;t_k})$ gives a bijection from the set of cluster Poisson variables of $\mathcal X$ to the set of  $\mathscr A$-exchange pairs of $\mathcal U$;

\item[(ii)] The map $\psi_1:(\overline X_{k;t}, \overline X_{k;t_k})\mapsto (A_{k;t},A_{k;t_k})$ gives a bijection from the set of $\mathscr X$-exchange pairs of $\mathcal X$ to  the set of $\mathscr A$-exchange pairs of $\mathcal U$.
\end{itemize}
\end{theorem}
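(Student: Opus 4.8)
The plan is to deduce everything from Theorem \ref{mainthm}, applied to the upper cluster algebra $\mathcal U_{\mathrm{uc}}$ with universal coefficient semifield. Fix a vertex $t_0$ at which $\mathcal X$ and $\mathcal U$ share the exchange matrix $B_{t_0}$, and let $\mathcal U_{\mathrm{uc}}$ be the upper cluster algebra with universal coefficient semifield and initial exchange matrix $B_{t_0}$. By Theorem \ref{thmsf}(ii) the cluster Poisson variable $X_{k;t}$ depends only on $(B_{t_0},t_0;k,t)$, and the $\mathscr X$-variables of $\mathcal U_{\mathrm{uc}}$ form an $\mathscr X$-seed pattern with universal coefficient semifield; since $\mathcal X$ has the same type, we identify $\overline X_{k;t}$ with $X_{k;t}$. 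Moreover, by Proposition \ref{proexb}(iii) the map $\rho$ induces a bijection from the $\mathscr A$-exchange pairs of $\mathcal U_{\mathrm{uc}}$ to those of $\mathcal U$, and $\rho$ (acting coordinatewise on pairs) carries $\psi$ and $\psi_1$ for $\mathcal U_{\mathrm{uc}}$ to $\psi$ and $\psi_1$ for $\mathcal U$; hence it suffices to prove (i) and (ii) for $\mathcal U=\mathcal U_{\mathrm{uc}}$.

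For (i): by definition $\psi$ sends the cluster Poisson variable $X_{k;t}$ to the $\mathscr A$-exchange pair $(A_{k;t},A_{k;t_k})$ attached to the mutation $t_k=\mu_k(t)$, so $\psi$ takes values in the set of $\mathscr A$-exchange pairs. That $\psi$ is well defined --- equal cluster Poisson variables are sent to the same pair --- is exactly the implication $(\mathrm{i})\Rightarrow(\mathrm{v})$ of Theorem \ref{mainthm}. Surjectivity is immediate, since every $\mathscr A$-exchange pair of $\mathcal U_{\mathrm{uc}}$ is of the form $(A_{k;t},A_{k;t_k})=\psi(X_{k;t})$ for some seed $t$ and index $k$. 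Injectivity is the converse implication $(\mathrm{v})\Rightarrow(\mathrm{i})$: if $\psi(X_{k;t})=\psi(X_{j;t_0})$, i.e.\ $(A_{k;t},A_{k;t_k})=(A_{j;t_0},A_{j;t_1})$, then $X_{k;t}=X_{j;t_0}$. This proves (i).

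For (ii): the $\mathscr X$-seed mutation formula gives $\overline X_{k;t_k}=\overline X_{k;t}^{-1}$, so the rule $\beta\colon \overline X_{k;t}\mapsto(\overline X_{k;t},\overline X_{k;t_k})$ is a well-defined map from cluster Poisson variables to $\mathscr X$-exchange pairs. It is surjective because every $\mathscr X$-exchange pair of $\mathcal X$ arises as $(\overline X_{k;t},\overline X_{k;t_k})$ for some $t,k$, and injective because equality of two such pairs forces equality of their first coordinates. Hence $\beta$ is a bijection, and since $\psi_1\circ\beta=\psi$ with $\psi$ a bijection by (i), the map $\psi_1=\psi\circ\beta^{-1}$ is a bijection as well.

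The point that requires care is organizational rather than computational: one must justify the replacement of $\mathcal U$ by $\mathcal U_{\mathrm{uc}}$ --- namely that ``same type'' lets Proposition \ref{proexb} transport $\mathscr A$-exchange pairs between the two --- and the identification of the cluster Poisson variables of $\mathcal X$ with the $\mathscr X$-variables $X_{k;t}$ of $\mathcal U_{\mathrm{uc}}$ via Theorem \ref{thmsf}(ii). Once this is in place, the whole statement becomes a formal consequence of the equivalence $(\mathrm{i})\Leftrightarrow(\mathrm{v})$ in Theorem \ref{mainthm}, which is where the substantive work has already been done.
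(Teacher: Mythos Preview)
Your proof is correct and follows essentially the same approach as the paper's: reduce to $\mathcal U=\mathcal U_{\mathrm{uc}}$ via Proposition~\ref{proexb}, identify the cluster Poisson variables of $\mathcal X$ with the $\mathscr X$-variables of $\mathcal U_{\mathrm{uc}}$, and then invoke the equivalence $(\mathrm{i})\Leftrightarrow(\mathrm{v})$ of Theorem~\ref{mainthm} for part (i) and the relation $\overline X_{k;t_k}=\overline X_{k;t}^{-1}$ for part (ii). Your version is simply more explicit in separating out well-definedness, injectivity, and surjectivity.
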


\begin{proof}
(i) Thanks to Proposition \ref{proexb}, we can just assume that $\mathcal U=\mathcal U_{\rm uc}$ is an upper cluster algebra with universal coefficient semifield. In this case, the cluster Poisson variables of $\mathcal X$ can be viewed as the $\mathscr X$-variables of $\mathcal U=\mathcal U_{\rm uc}$. Then (i) follows from Theorem \ref{mainthm} (i)(v).

(ii) This follows from (i) and the fact that in any $\mathscr X$-exchange pair $(\overline X_{k;t},\overline X_{k;t_k})$, one has $\overline X_{k;t_k}=\overline X_{k;t}^{-1}$.
\end{proof}
\begin{remark}
For (upper) cluster algebras of finite type, a different proof of Theorem \ref{thmbij} can be found in \cite[Corollary 1.2]{S}.
\end{remark}

\subsection{Exchange graphs of cluster Poisson algebras}\label{sec72}

Let $\mathcal X$ be a cluster Poisson algebra. In this subsection, we prove that  the  seeds of  $\mathcal X$ whose Poisson clusters contain particular cluster Poisson variables form a connected subgraph of the exchange graph of $\mathcal X$.

\begin{lemma}\label{lemxi}
Let $\mathcal U_{\rm uc}$ be an upper cluster algebra with universal coefficient semifield, and let $t_0$ and $t_1$ be two seeds of $\mathcal U_{\rm uc}$ with $t_1=\mu_k(t_0)$. For a given $1\leq j\leq n$, denote $I(t_0)=\{j\}\sqcup\{i\mid b_{ij}^{t_0}\neq 0\}$ and $I(t_1)=\{j\}\sqcup\{i\mid b_{ij}^{t_1}\neq 0\}$. If
$k\notin I(t_0)$, then we have $X_{j;t_0}=X_{j;t_1}$ and $I(t_0)=I(t_1)$.
\end{lemma}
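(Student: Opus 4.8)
The statement is a local stability result: mutating at an index $k$ that is "far" from $j$ (meaning $k$ is neither $j$ itself nor a neighbour of $j$ in the quiver/exchange matrix $B_{t_0}$) leaves the cluster Poisson variable $X_{j;\cdot}$ unchanged. My plan is to work directly with the $\mathscr X$-seed mutation formula and with the universality of the coefficient semifield. First I would unpack the hypothesis $k\notin I(t_0)$: it says $j\neq k$ and $b_{kj}^{t_0}=0$. I would then apply the $\mathscr X$-seed mutation rule
$$X_{j;t_1}=X_{j;t_0}X_{k;t_0}^{\max(b_{kj}^{t_0},0)}(1\oplus X_{k;t_0})^{-b_{kj}^{t_0}}$$
(valid since $j\neq k$), and observe that $b_{kj}^{t_0}=0$ makes both the monomial factor and the $(1\oplus X_{k;t_0})$ factor trivial, so $X_{j;t_1}=X_{j;t_0}$. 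This is the heart of the argument for the first conclusion; it uses only the definition of $\mathscr X$-seed mutation, not universality.

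\textbf{Equality of the index sets.} For $I(t_0)=I(t_1)$ I must show that for $i\neq k$ (the $i=k$ case is handled separately, see below) one has $b_{ij}^{t_0}\neq 0 \iff b_{ij}^{t_1}\neq 0$, and also that $k\in I(t_1)$ fails, i.e. $b_{kj}^{t_1}=0$. The matrix mutation formula gives $b_{kj}^{t_1}=-b_{kj}^{t_0}=0$, so indeed $k\notin I(t_1)$, matching $k\notin I(t_0)$. For $i\neq k$ and $i\neq j$ the mutation formula reads
$$b_{ij}^{t_1}=b_{ij}^{t_0}+\operatorname{sgn}(b_{ik}^{t_0})\max(b_{ik}^{t_0}b_{kj}^{t_0},0);$$
since $b_{kj}^{t_0}=0$ the correction term vanishes, so $b_{ij}^{t_1}=b_{ij}^{t_0}$, and in particular the two are zero simultaneously. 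The only remaining subtlety is $i=j$: but $b_{jj}^{t_0}=b_{jj}^{t_1}=0$ always (skew-symmetrizable matrices have zero diagonal), so $j$ belongs to both $I(t_0)$ and $I(t_1)$ by the disjoint-union convention regardless. Assembling these, $I(t_0)=\{j\}\sqcup\{i\mid b_{ij}^{t_0}\neq0\}=\{j\}\sqcup\{i\mid b_{ij}^{t_1}\neq0\}=I(t_1)$.

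\textbf{Where universality (possibly) enters, and the main obstacle.} I expect the proof to be essentially this short, because the hypothesis $k\notin I(t_0)$ is exactly the condition under which the mutation formulas degenerate. The one point I would double-check carefully is whether "$X_{j;t_0}=X_{j;t_1}$" is being asserted as an equality of elements of the universal semifield $\mathbb Q_{\mathrm{sf}}(X_{1;t_0},\dots,X_{n;t_0})$ (in which case the computation above literally proves it, since we derived $X_{j;t_1}=X_{j;t_0}\cdot 1 \cdot 1$ as an identity in the semifield) or whether some subtler identification is intended; in the universal setting there is no coincidental collapsing of distinct formal expressions, so the naive computation is exactly what is needed and universality is what guarantees we may read the mutation formula as a genuine equality rather than merely an equality after specialization. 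Thus the main (mild) obstacle is simply bookkeeping: being careful that the index $j$ itself, which is excluded from the "$b_{ij}\neq 0$" part of $I$ but included via the disjoint union, is handled correctly, and that the $i=k$ entry is controlled by $b_{kj}^{t_1}=-b_{kj}^{t_0}$. No serious difficulty is anticipated; this lemma is a direct computation from the defining recursions.
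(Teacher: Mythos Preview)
Your proposal is correct and takes essentially the same approach as the paper, which simply states that ``the results follow from $k\notin I(t_0)$ and the definition of mutation.'' You have merely written out the details of that one-line justification by unpacking the $\mathscr X$-seed and matrix mutation formulas; the computations are all correct.
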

\begin{proof}
The results follow from $k\notin I(t_0)$ and the definition of mutation.
\end{proof}

\begin{lemma}\label{lemiff}
Let $\mathcal U_{\rm uc}$ be an upper cluster algebra with universal coefficient semifield and $t, t_0$  two seeds of $\mathcal U_{\rm uc}$. Then $X_{j;t_0}\in X_{t}$ if and only if
$\{A_{j;t_0}\}\sqcup\{A_{i;t_0}\mid  b_{ij}^{t_0}\neq 0\}\subseteq A_{t}$.
\end{lemma}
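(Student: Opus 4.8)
The plan is to read off both implications from the equivalence $(\mathrm{i})\Leftrightarrow(\mathrm{iv})$ of Theorem \ref{mainthm}. Two preliminary remarks are in order. First, the union $\{A_{j;t_0}\}\sqcup\{A_{i;t_0}|b_{ij}^{t_0}\neq 0\}$ is genuinely disjoint, because $B_{t_0}$ is skew-symmetrizable and hence has zero diagonal, so $b_{jj}^{t_0}=0$ and $A_{j;t_0}$ is not among the (pairwise distinct) cluster variables $A_{i;t_0}$ with $b_{ij}^{t_0}\neq 0$. Second, since Theorem \ref{mainthm} applies symmetrically to its two seeds, applying it with the seed $t_0$ and index $j$ in the role of $(t,k)$ and the seed $t$ and index $k$ in the role of $(t_0,j)$ shows that $X_{k;t}=X_{j;t_0}$ is also equivalent to the ``mirror'' of $(\mathrm{iv})$, namely
$$A_{k;t}=A_{j;t_0}\quad\text{and}\quad\{A_{i;t_0}\,|\,b_{ij}^{t_0}\neq 0\}\subseteq A_t.$$

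For ``$\Rightarrow$'', assume $X_{j;t_0}\in X_t$, say $X_{j;t_0}=X_{k;t}$ for some $k$. The mirror of $(\mathrm{iv})$ then gives $A_{j;t_0}=A_{k;t}\in A_t$ together with $\{A_{i;t_0}|b_{ij}^{t_0}\neq 0\}\subseteq A_t$, whence $\{A_{j;t_0}\}\sqcup\{A_{i;t_0}|b_{ij}^{t_0}\neq 0\}\subseteq A_t$. For ``$\Leftarrow$'', assume $\{A_{j;t_0}\}\sqcup\{A_{i;t_0}|b_{ij}^{t_0}\neq 0\}\subseteq A_t$. In particular $A_{j;t_0}\in A_t$, say $A_{j;t_0}=A_{k;t}$, and $\{A_{i;t_0}|b_{ij}^{t_0}\neq 0\}\subseteq A_t$; this is exactly the mirror of $(\mathrm{iv})$, so $X_{k;t}=X_{j;t_0}$ and therefore $X_{j;t_0}\in X_t$.

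There is no genuine obstacle here: once Theorem \ref{mainthm} is in hand the lemma is a formal consequence, and the only points that require any care are the trivial zero-diagonal observation that legitimizes the disjoint union and the bookkeeping of which seed and index play which role in Theorem \ref{mainthm}. If one wished to avoid quoting Theorem \ref{mainthm} for the ``$\Leftarrow$'' direction, one could argue directly: the hypothesis makes the Laurent expansion of $A_{j;\mu_j(t_0)}$ with respect to $A_t$ have denominator exactly $A_{k;t}$ and reduced numerator, so Proposition \ref{lemdex} identifies the $\mathscr A$-exchange pairs $(A_{k;t},A_{k;\mu_k(t)})$ and $(A_{j;t_0},A_{j;\mu_j(t_0)})$, after which $(\mathrm{v})\Rightarrow(\mathrm{i})$ of Theorem \ref{mainthm} concludes; but this is a longer route to the same statement.
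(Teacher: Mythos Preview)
Your proof is correct but differs from the paper's in the backward direction. For ``$\Rightarrow$'' both you and the paper invoke Theorem \ref{mainthm} (the paper cites (i)$\Leftrightarrow$(iii), you use the mirror of (iv); either works). For ``$\Leftarrow$'', however, the paper does \emph{not} re-apply Theorem \ref{mainthm}. Instead it observes that $U=\{A_{j;t_0}\}\sqcup\{A_{i;t_0}|b_{ij}^{t_0}\neq 0\}$ is a common subset of $A_{t_0}$ and $A_t$, invokes the connectivity result Theorem \ref{thmacon} to obtain a mutation path from $t_0$ to (a seed equivalent to) $t$ along which no variable in $U$ is mutated, and then applies Lemma \ref{lemxi} iteratively along this path to conclude that $X_{j;t_0}$ survives into $X_t$.

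Your symmetry observation---that Theorem \ref{mainthm} applies with the roles of $(t,k)$ and $(t_0,j)$ exchanged---is valid and yields a cleaner, shorter argument that avoids both Lemma \ref{lemxi} and the connectivity theorem. The paper's longer route has the incidental benefit that the same connectivity-plus-Lemma \ref{lemxi} mechanism is exactly what is needed in the subsequent proof of Theorem \ref{thmgraph}, so the paper is in effect rehearsing that argument here; but as a proof of the lemma in isolation, your approach is more efficient.
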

\begin{proof}
$\Longrightarrow$: This follows from Theorem \ref{mainthm} (i)(iii).

$\Longleftarrow$: By $\{A_{j;t_0}\}\sqcup\{A_{i;t_0}\mid  b_{ij}^{t_0}\neq 0\}\subseteq A_{t}$, we know that $$U:=\{A_{j;t_0}\}\sqcup\{A_{i;t_0}\mid  b_{ij}^{t_0}\neq 0\}$$ is a common subset of $A_t$ and $A_{t_0}$. Then by Theorem \ref{thmacon}, there exists a  sequence of mutations $(\mu_{k_1},\ldots,\mu_{k_s})$ such that the $\mathscr A$-seed at $t_s=\mu_{k_s}\cdots\mu_{k_1}(t_0)$ is equivalent to the $\mathscr A$-seed at $t$ and the cluster variables in $U$ remain unchanged when we do each mutation along the sequence of mutations $(\mu_{k_1},\ldots,\mu_{k_s})$. Set $I=\{j\}\sqcup\{i\mid b_{ij}^{t_0}\neq 0\}$. We know that $k_{\ell}\notin I$ for any $\ell=1,\ldots,s$. Applying Lemma \ref{lemxi} iteratively, we get $X_{j;t_0}\in X_{t_s}$. Because the two seeds $t$ and $t_s$ are equivalent,  we get $X_{j;t_0}\in X_t$.
\end{proof}

\begin{theorem}\label{thmgraph}
Let $\mathcal X$ be a cluster Poisson algebra. Then the  seeds of  $\mathcal X$ whose Poisson clusters contain particular cluster Poisson variables form a connected subgraph of the exchange graph of $\mathcal X$.
\end{theorem}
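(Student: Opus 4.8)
The plan is to reduce the statement to its $\mathscr A$-analogue, Theorem \ref{thmacon}, by passing through an upper cluster algebra with universal coefficient semifield. First I would fix an upper cluster algebra $\mathcal U_{\rm uc}$ with universal coefficient semifield having the same initial exchange matrix as $\mathcal X$, exactly as in the proof of Theorem \ref{thmbij}. Then the $\mathscr X$-seed pattern underlying the $\mathscr A$-seed pattern of $\mathcal U_{\rm uc}$ has the same initial $\mathscr X$-seed as $\mathcal X$, hence coincides with it; so the cluster Poisson variables of $\mathcal X$ are precisely the $\mathscr X$-variables $X_{k;t}$ of $\mathcal U_{\rm uc}$, and ${\bf EG}(\mathcal X)$ is the exchange graph of that $\mathscr X$-seed pattern. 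The forgetful assignment $(B_t,X_t,A_t)\mapsto(B_t,X_t)$ commutes with mutation $\mu_k$ and with relabeling, so it induces a surjective graph morphism $\pi\colon {\bf EG}(\mathcal U_{\rm uc})\to{\bf EG}(\mathcal X)$ which on vertices sends the class of the $\mathscr A$-seed at $t$ to the class of the $\mathscr X$-seed at $t$. (Using the equivalence (i)$\Leftrightarrow$(iv) of Theorem \ref{mainthm} coordinatewise along any permutation identifying two equivalent $\mathscr X$-seeds, one even sees that equivalent $\mathscr X$-seeds come from equivalent $\mathscr A$-seeds, so $\pi$ is an isomorphism; but only surjectivity will be used.)

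Next I would translate the defining condition using Lemma \ref{lemiff}. Write the prescribed cluster Poisson variables as $X_{j_1;s_1},\dots,X_{j_r;s_r}$ and set
$$U:=\bigcup_{\ell=1}^{r}\Big(\{A_{j_\ell;s_\ell}\}\sqcup\{A_{i;s_\ell}\mid b_{ij_\ell}^{s_\ell}\neq 0\}\Big),$$
a finite set of cluster variables of $\mathcal U_{\rm uc}$. By Lemma \ref{lemiff}, the $\mathscr X$-seed at a vertex $t$ has all of $X_{j_1;s_1},\dots,X_{j_r;s_r}$ in its Poisson cluster $X_t$ if and only if the $\mathscr A$-cluster $A_t$ contains every element of $U$. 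Therefore, if $D$ denotes the set of seeds of $\mathcal X$ whose Poisson clusters contain $X_{j_1;s_1},\dots,X_{j_r;s_r}$, then $\pi^{-1}(D)$ is exactly the set of seeds of $\mathcal U_{\rm uc}$ whose clusters contain $U$.

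Finally I would invoke Theorem \ref{thmacon} for the cluster algebra having the same seed pattern as $\mathcal U_{\rm uc}$ (whose exchange graph is ${\bf EG}(\mathcal U_{\rm uc})$): the seeds whose clusters contain $U$ form a connected subgraph of ${\bf EG}(\mathcal U_{\rm uc})$, so $\pi^{-1}(D)$ is connected. Transporting forward: given two vertices of $D$, lift them along the surjection $\pi$ to vertices of $\pi^{-1}(D)$, join the lifts by a path inside $\pi^{-1}(D)$, and push this path down along $\pi$; since $\pi(\pi^{-1}(D))\subseteq D$ and consecutive vertices of the image are equal or adjacent in ${\bf EG}(\mathcal X)$, one obtains a path inside $D$. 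Hence the full subgraph of ${\bf EG}(\mathcal X)$ induced on $D$ is connected. If no seed of $\mathcal X$ contains all the prescribed Poisson variables, then $D=\varnothing$ and there is nothing to prove.

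The genuine mathematical content is all packaged in Lemma \ref{lemiff} (already established), so the remaining work is bookkeeping: identifying ${\bf EG}(\mathcal X)$ with the exchange graph of the $\mathscr X$-seed pattern underlying $\mathcal U_{\rm uc}$, checking that $\pi$ is a well-defined surjective graph morphism, and transporting connectedness along it. The one point to be careful about is that "connected subgraph" must be read as the full subgraph induced on $D$, so one should verify, as above, that the path obtained by pushing a path in $\pi^{-1}(D)$ forward along $\pi$ stays in $D$ and uses only edges with both endpoints in $D$; this is immediate from surjectivity of $\pi$ together with $\pi(\pi^{-1}(D))\subseteq D$. I do not expect any serious obstacle beyond setting up these identifications cleanly.
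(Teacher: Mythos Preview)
Your proposal is correct and follows essentially the same approach as the paper: pass to $\mathcal U_{\rm uc}$, use Lemma \ref{lemiff} to convert the condition ``$X_{j_\ell;s_\ell}\in X_t$'' into ``$U\subseteq A_t$'', invoke Theorem \ref{thmacon} on the $\mathscr A$-side, and transport connectedness back. The only differences are cosmetic---the paper fixes a base seed $t_0$ containing all prescribed Poisson variables and works directly with mutation sequences rather than your graph morphism $\pi$---but the substance is identical.
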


\begin{proof}
Let $\mathcal U=\mathcal U_{\rm uc}$ be the upper cluster algebra with universal coefficient semifield of the same type as $\mathcal X$. In this case, the cluster Poisson variables (respectively, Poisson clusters) of $\mathcal X$ can be viewed as the $\mathscr X$-variables (respectively, $\mathscr X$-clusters) of $\mathcal U=\mathcal U_{\rm uc}$. Now we work on the upper cluster algebra $\mathcal U$.

 Fix a subset $V$ of $X_{t_0}$.  Let $t$ be any seed of $\mathcal U$ such that its $\mathscr X$-cluster $X_t$ satisfies $V\subseteq X_t$. It suffices for us to show that there exists a sequence of mutations $(\mu_{k_1},\ldots,\mu_{k_s})$ such that the $\mathscr X$-seed at $t_s=\mu_{k_s}\cdots\mu_{k_1}(t_0)$ is equivalent to the $\mathscr X$-seed at $t$ and the $\mathscr X$-variables in $V$ remain unchanged when we do each mutation along the sequence of mutations $(\mu_{k_1},\ldots,\mu_{k_s})$.

Let $U:=\bigcup\limits_{X_{j;t_0}\in V}(\{A_{j;t_0}\}\sqcup\{A_{i;t_0}\mid b_{ij}^{t_0}\neq 0\})$. By Lemma \ref{lemiff}, we know that for a seed $t^\prime$ of $\mathcal U$, its $\mathscr X$-cluster $X_{t^\prime}$ satisfies $V\subseteq X_{t^\prime}$ if and only if its cluster $A_{t^\prime}$ satisfies $U\subseteq A_{t^\prime}$.

By $X_{t_0}\supseteq V$ and $X_t\supseteq V$, we get that $A_{t_0}\supseteq U$ and $A_t\supseteq U$.
 Then by Theorem \ref{thmacon}, there exists a  sequence of mutations $(\mu_{k_1},\ldots,\mu_{k_s})$ such that the $\mathscr A$-seed at $t_s=\mu_{k_s}\cdots\mu_{k_1}(t_0)$ is equivalent to the $\mathscr A$-seed at $t$ and the cluster variables in $U$ remain unchanged when we do each mutation along the sequence of mutations $(\mu_{k_1},\ldots,\mu_{k_s})$. So the $\mathscr X$-seed at $t_s=\mu_{k_s}\cdots\mu_{k_1}(t_0)$ is equivalent to the $\mathscr X$-seed at $t$ and the $\mathscr X$-variables in $V$ remain unchanged when we do each mutation along the sequence of mutations $(\mu_{k_1},\ldots,\mu_{k_s})$. This completes the proof.
\end{proof}

\section*{Appendix: The proof of Proposition \ref{pro:atomic}}\label{app}

In this appendix, we give the proof of Proposition \ref{pro:atomic}. The main idea for the proof follows that of \cite{GS_2022} with some modifications and simplifications.

Keep the notations in Section \ref{sec:40}. In particular, we have   ${\bf inv}=\{Z_{p+1},\cdots,Z_m\}$ and
$${\mathcal L}(t,{\bf inv})=\mathbb K[Z_1,\cdots,Z_p,Z_{p+1}^{\pm1},\cdots,Z_m^{\pm1}][A_{1;t}^{\pm1},\cdots,A_{n;t}^{\pm1}].$$
Clearly, ${\mathcal L}(t,{\bf inv})$ is factorial. Thus any  $0\neq M\in {\mathcal L}(t,{\bf inv})$ has a decomposition $M=r\cdot a_1a_2\cdots a_s$ satisfying that
\begin{itemize}
    \item $r$ is invertible in ${\mathcal L}(t,{\bf inv})$;
    \item each $a_i$ is irreducible in ${\mathcal L}(t,{\bf inv})$.
\end{itemize}
Moreover, the non-negative integer $s$ only depends on $M$, not on the choice of such a decomposition. Hence, $$\lambda_t:M\mapsto s$$ defines a map from ${\mathcal L}(t,{\bf inv})\setminus\{0\}\rightarrow \mathbb N$. The following result is clear from the definition of $\lambda_t$.

\begin{lemma}\label{lem:app1}
\begin{itemize}
    \item [(i)] $\lambda_t(M)=0$ if and only if $M$ is invertible in  ${\mathcal L}(t,{\bf inv})$.
    \item[(ii)] $\lambda_t(M\cdot N)=\lambda_t(M)+\lambda_t(N)$ for any  $M,N\in {\mathcal L}(t,{\bf inv})\setminus\{0\}$.
\end{itemize}
\end{lemma}

Fix a seed $t_0$ of $\mathcal U$. Let $t_k=\mu_k(t_0)$ for $k=1,\ldots,n$ and denote by
$$\widetilde {\mathcal U}({\bf inv})= \bigcap_{j=0}^n\mathcal L(t_j,{\bf inv}).$$

\begin{lemma} \label{lem:app2}
\begin{itemize}
    \item [(i)] $\mathcal A({\bf inv})\subseteq \mathcal U({\bf inv})\subseteq \widetilde {\mathcal U}({\bf inv})\subseteq \mathcal L(t_0,{\bf inv})$.
    \item[(ii)] Let $R$ be one of the three domains
$\mathcal A({\bf inv}),\; \mathcal U({\bf inv}),\;\widetilde {\mathcal U}({\bf inv})$. Then the set $R^\times$ of invertible elements in $R$ is 
    $$R^\times =\{\kappa Z_{p+1}^{c_{p+1}}\cdots Z_m^{c_m}\mid \kappa\in\mathbb K^\times,\; c_{p+1},\ldots,c_m\in \mathbb Z\}.$$
\end{itemize}
\end{lemma}
\begin{proof}
    (i) This is clear. 
    
    (ii) The proof is the same as that of \cite[Theorem 2.2]{GLS13}.
\end{proof}

Recall that for each seed $t$ of $\mathcal U({\bf inv})$, we defined a map $\lambda_t$. In particular, 
 for the seed $t_j$, we have the map $\lambda_{t_j}:\mathcal L(t_j,{\bf inv})\setminus\{0\}\rightarrow \mathbb N$, where $j=0,\ldots,n$. Let 
 \begin{eqnarray}\label{eqn:lambda}
    \lambda:\widetilde {\mathcal U}({\bf inv})\setminus\{0\} \rightarrow \mathbb N \nonumber
 \end{eqnarray}
 be the map defined by $\lambda(M):=\sum_{j=0}^n\lambda_{t_j}(M)$, where $M\in \widetilde {\mathcal U}({\bf inv})\setminus\{0\}$.


 \begin{lemma}\label{lem:app3}
     \begin{itemize}
         \item [(i)] $\lambda(M)=0$ if and only if $M$ is invertible in  $\widetilde {\mathcal U}({\bf inv})$.
         \item [(ii)]  $\lambda(M\cdot N)=\lambda(M)+\lambda(N)$ for any  $M,N\in\widetilde {\mathcal U}({\bf inv})\setminus\{0\}$.
     \end{itemize}
 \end{lemma}
\begin{proof}
(i) Since $\lambda(M)=\sum_{j=0}^n \lambda_{t_j}(M)$ and each $\lambda_{t_j}(M)$ takes non-negative values, we know that $\lambda(M)=0$ if and only if $\lambda_{t_j}(M)=0$ for any $j$. By Lemma \ref{lem:app1} (i), we know that $\lambda_{t_j}(M)=0$ for any $j$ if and only if $M$ is invertible in $\mathcal L(t_j,{\bf inv})$ for any $j$. This happens if and only if $M$ is invertible in $\bigcap_{j=0}^n\mathcal L(t_j,{\bf inv})=\widetilde {\mathcal U}({\bf inv})$.

(ii) This follows from Lemma \ref{lem:app1} (ii).
\end{proof}

\begin{lemma}\label{lem:app4}
    Let $R$ be a non-zero subring of $\widetilde {\mathcal U}({\bf inv})$. Suppose whenever an element $r\in R$ is invertible in $\widetilde {\mathcal U}({\bf inv})$, it is invertible in $R$. Then $R$ is atomic. In particular, $\widetilde {\mathcal U}({\bf inv})$ itself is atomic.
\end{lemma}

Indeed, the function $\lambda$ is a very special case of \emph{length function} on $R$ in the sense of \cite[Definition 1.1.3]{geroldinger2006non}, and the atomicity of $R$ is implied by \cite[Proposition 1.1.4]{geroldinger2006non}. Let us give a proof for the convenience of the reader.

\begin{proof}

Fix a non-zero, non-invertible element $r\in R$. We need to show that it can be decomposed as a product of irreducible elements in $R$.

Let $r=a_1\cdots a_s$ be a decomposition of $r$ such that each $a_i$ is a non-invertible element in $R\setminus\{0\}$. Such a decomposition exists, because we can always take $s=1$ and $a_1=r$.

Fix a decomposition $r=a_1\cdots a_s$ as above. This decomposition can be viewed as decomposition in $\widetilde {\mathcal U}({\bf inv})\setminus\{0\}$. Then by Lemma \ref{lem:app3} (ii), we have $$\lambda(r)=\lambda(a_1)+\ldots +\lambda(a_s).$$
Since each $a_i$ is not invertible in $R$ and, by the assumption, we know that it is also not invertible in $\widetilde {\mathcal U}({\bf inv})$. Then by Lemma \ref{lem:app3} (i), we have $\lambda(a_i)\in\mathbb Z_{\geq 1}$.
Hence, 
\begin{eqnarray}\label{eqn:r>s}
    \lambda(r)=\lambda(a_1)+\ldots +\lambda(a_s)\geq s.\nonumber
\end{eqnarray}

Since $s$ is bounded by $\lambda(r)$, there exists a decomposition $r=a_1\cdots a_s$ such that $s$ is maximal. We claim that in such a decomposition, each $a_i$ is irreducible in $R$. Otherwise, 
there exists some $i_0$ such that $a_{i_0}$ is not irreducible in $R$. Without loss of generality, we can assume $i_0=s$.
Then there exist two non-invertible elements $a_s',a_s''$ in $R\setminus\{0\}$ such that $a_s=a_s'a_s''$. Thus we have $$r=a_1\cdots a_{s-1}a_s'a_s'',$$ which is a decomposition of $r$ of length $s+1$ satisfying that each factor is not invertible in $R\setminus\{0\}$. This contradicts the choice of $s$. Hence,  $r=a_1\cdots a_s$ is a decomposition of $r\in R$ satisfying that each $a_i$ is irreducible in $R$. Therefore, $R$ is atomic.
\end{proof}

\begin{proof}[Proof of Proposition \ref{pro:atomic}] Thanks to Lemma \ref{lem:app2}, we can apply  Lemma \ref{lem:app4} to the case $R=\mathcal U({\bf inv})$ or $R=\mathcal A({\bf inv})$. Hence, $\mathcal U({\bf inv})$ and $\mathcal A({\bf inv})$ are atomic.
\end{proof}

\bibliographystyle{alpha}
\bibliography{myref}
\end{document}